\newcommand\myurl[1]{\url{#1}}
\numberwithin{equation}{section}
\newenvironment{customthm}[1]
  {\innercustomthm}
  {\endinnercustomthm}
\newenvironment{customcor}[1]
  {\innercustomcor}
  {\endinnercustomcor}
\newtheorem{thm}{Theorem}[section]
\newtheorem{prop}[thm]{Proposition}
\newtheorem{cor}[thm]{Corollary}
\newtheorem{lem}[thm]{Lemma}
\newtheorem{subclaim*}{Subclaim}
\newtheorem{subclaim}{Subclaim}[thm]
\newtheorem{step}{Step}
\theoremstyle{definition}
\newtheorem{define}[thm]{Definition}
\theoremstyle{remark}
\newtheorem{rem}[thm]{Remark}
\newcommand{\ve}[1]{\boldsymbol{\mathbf{#1}}}
\newcommand{\R}{\mathbb{R}}
\newcommand{\Z}{\mathbb{Z}}
\newcommand{\N}{\mathbb{N}}
\newcommand{\Q}{\mathbb{Q}}
\newcommand{\C}{\mathbb{C}}
\renewcommand{\d}{\partial}
\renewcommand{\subset}{\subseteq}
\renewcommand{\tilde}{\widetilde}
\renewcommand{\hat}{\widehat}
\newcommand{\iso}{\cong}
\DeclareMathOperator{\codim}{{codim}}
\DeclareMathOperator{\Crit}{{Crit}}
\DeclareMathOperator{\Diff}{{Diff}}
\DeclareMathOperator{\ev}{{ev}}
\DeclareMathOperator{\gr}{{gr}}
\DeclareMathOperator{\id}{{id}}
\DeclareMathOperator{\ind}{{ind}}
\DeclareMathOperator{\Int}{{int}}
\DeclareMathOperator{\im}{{im}}
\DeclareMathOperator{\MCG}{{MCG}}
\DeclareMathOperator{\red}{{red}}
\DeclareMathOperator{\Spin}{{Spin}}
\DeclareMathOperator{\Sing}{{Sing}}
\DeclareMathOperator{\Span}{{Span}}
\DeclareMathOperator{\Sym}{{Sym}}
\DeclareMathOperator{\Tors}{{Tors}}
\newcommand{\bD}{\mathbb{D}}
\newcommand{\bF}{\mathbb{F}}
\newcommand{\bK}{\mathbb{K}}
\newcommand{\bS}{\mathbb{S}}
\newcommand{\bT}{\mathbb{T}}
\newcommand{\cD}{\mathcal{D}}
\newcommand{\cG}{\mathcal{G}}
\newcommand{\cH}{\mathcal{H}}
\newcommand{\cK}{\mathcal{K}}
\newcommand{\cM}{\mathcal{M}}
\newcommand{\cN}{\mathcal{N}}
\newcommand{\cQ}{\mathcal{Q}}
\newcommand{\cR}{\mathcal{R}}
\newcommand{\cT}{\mathcal{T}}
\newcommand{\cU}{\mathcal{U}}
\newcommand{\cV}{\mathcal{V}}
\newcommand{\cW}{\mathcal{W}}
\newcommand{\cY}{\mathcal{Y}}
\newcommand{\frA}{\mathfrak{A}}
\newcommand{\frB}{\mathfrak{B}}
\newcommand{\frS}{\mathfrak{S}}
\newcommand{\frd}{\mathfrak{d}}
\newcommand{\frj}{\mathfrak{j}}
\newcommand{\frs}{\mathfrak{s}}
\newcommand{\frt}{\mathfrak{t}}
\newcommand{\as}{\ve{\alpha}}
\newcommand{\bs}{\ve{\beta}}
\newcommand{\gs}{\ve{\gamma}}
\newcommand{\ds}{\ve{\delta}}
\newcommand{\vs}{\ve{v}}
\newcommand{\xs}{\ve{x}}
\newcommand{\ys}{\ve{y}}
\newcommand{\ws}{\ve{w}}
\newcommand{\zs}{\ve{z}}
\newcommand{\GrCob}{\mathsf{GrCob}}
\newcommand{\FlGr}{\mathsf{FlGr}}
\newcommand{\CF}{\mathit{CF}}
\newcommand{\HF}{\mathit{HF}}
\renewcommand{\a}{\alpha}
\renewcommand{\b}{\beta}
\newcommand{\g}{\gamma}
\newcommand{\PD}{\mathit{PD}}
\newcommand{\bmP}{{\bm{P}}}
\renewcommand{\bar}{\overline}
\DeclareMathOperator{\Sw}{Sw}
\title{Graph cobordisms and Heegaard Floer homology}
\author{Ian Zemke}
\address{Department of Mathematics\\Princeton University\\  Princeton, NJ 08544, USA}
\email{izemke@math.princeton.edu}
\begin{document}

\begin{abstract}We construct a graph TQFT for the minus flavor of Heegaard Floer homology. Our graph TQFT extends Ozsv\'{a}th and Szab\'{o}'s TQFT for closed and connected 3-manifolds, and allows for cobordisms with disconnected ends. As an application, we give an explicit formula for the chain homotopy type of the $\pi_1$-action on Heegaard Floer homology. We show that on homology the $\pi_1$-action is trivial on the plus, minus and infinity flavors, but give examples where it is non-trivial on the hat flavor.
\end{abstract}

\maketitle

\setcounter{tocdepth}{1}
\tableofcontents

\section{Introduction}Heegaard Floer homology, introduced by Ozsv\'{a}th and Szab\'{o}, associates functorial invariants to 3- and 4-manifolds. To an oriented, closed and connected 3-manifold $Y$, with a $\Spin^c$ structure $\frs\in \Spin^c(Y)$, they construct groups
\[
\HF^-(Y,\frs),\quad \HF^\infty(Y,\frs), \quad \HF^+(Y,\frs) \quad \text{and}\quad  \hat{\HF}(Y,\frs),
\]
which are modules over the ring $\bF_2[U]$ \cite{OSDisks}.  Ozsv\'{a}th and Szab\'{o} showed that these modules fit into the framework of a (3+1)-dimensional TQFT:
\begin{thm}[\cite{OSTriangles}]\label{thm:OScobordismtheorem} Suppose $W$ is an oriented, compact 4-manifold with boundary $\d W=-Y_0\sqcup Y_1$, and $W$, $Y_0$ and $Y_1$ are all non-empty and connected. If $\frs\in \Spin^c(W)$,  and $\circ\in \{+,-,\infty, \wedge\}$, then there is a functorial map
\[
F_{W,\frs}\colon \HF^\circ(Y_0,\frs|_{Y_0})\to \HF^\circ(Y_1,\frs|_{Y_1}),
\]
which depends on a choice of path in $W$, connecting basepoints in $Y_0$ and $Y_1$.
\end{thm}

 The dependence on a choice of path was not explicitly stated in Ozsv\'{a}th and Szab\'{o}'s original work, though their construction implicitly depends on a choice of  path. Indeed, we will show that the maps appearing in Theorem~\ref{thm:OScobordismtheorem} are not invariants without some extra choice of data; see Corollary~\ref{cor:E}, below.

The goal of this paper is two-fold. Firstly, we determine precisely the dependence of the maps in Theorem~\ref{thm:OScobordismtheorem} on a choice of path. Secondly, and more broadly, we provide a TQFT framework for Heegaard Floer homology which incorporates the basepoints in a natural way and extends Ozsv\'{a}th and Szab\'{o}'s framework to disconnected 3- and 4-manifolds.

Throughout the paper we work over the field $\bF_2:=\Z/2 \Z$.

\subsection{The graph TQFT}

Extending  their original construction for 3-manifolds \cite{OSDisks}, Ozsv\'{a}th and Szab\'{o} described Heegaard Floer complexes $\CF^-(Y,\ws,\frs)$ for  3-manifolds with collections of basepoints $\ws\subset Y$ \cite{OSLinks}. For the purposes of the introduction, the complex $\CF^-(Y,\ws,\frs)$ is a free, finitely generated chain complex over the polynomial ring $\bF_2[U]$. Later, we will consider an algebraic generalization over a more general polynomial ring.

In this paper, we define a notion of cobordism between manifolds with collections of basepoints. A \emph{ribbon graph cobordism} $(W,\Gamma)$ from $(Y_0,\ws_0)$ to $(Y_1,\ws_1)$ is a cobordism $W$ from $Y_0$ to $Y_1$ which contains an embedded graph $\Gamma$, whose intersection with $Y_i$ is $\ws_i$. Furthermore, $\Gamma$ is decorated with a \emph{formal ribbon structure}, i.e. a choice of cyclic ordering of the edges adjacent to each vertex of $\Gamma$.

Our present paper centers on proving the following:

\begin{customthm}{A}\label{thm:A}Suppose $(W,\Gamma)\colon (Y_0,\ve{w}_0)\to (Y_1,\ve{w}_1)$ is a ribbon graph cobordism and $\frs\in \Spin^c(W)$. The construction in this paper yields two chain maps,
\[
F_{W,\Gamma,\frs}^A,\, F_{W,\Gamma,\frs}^B \colon \CF^-(Y_0,\ve{w}_0,\frs|_{Y_0})\to \CF^-(Y_1,\ve{w}_1,\frs|_{Y_1}),
\]
which  are diffeomorphism invariants of $(W,\Gamma)$, up to $\bF_2[U]$-equivariant chain homotopy.
\end{customthm}

Theorem~\ref{thm:A} also applies to $\CF^\infty$, $\CF^+$ and $\hat{\CF}$, since they can be recovered via tensor products with $\CF^-$; see Section~\ref{sec:HFcomplexes}.

The type-$A$ and type-$B$ cobordism maps satisfy the following symmetry:
 \begin{equation}
 F_{W,\Gamma,\frs}^A\simeq F_{W,\bar{\Gamma},\frs}^B, \label{eq:changecyclicordersAB}
 \end{equation}
  where $\bar{\Gamma}$ denotes $\Gamma$ with cyclic orders reversed \cite{HMZConnectedSum}*{Lemma~5.9}. Despite this symmetry, it is natural to describe the $A$ and $B$ versions separately.

Unlike Ozsv\'{a}th and Szab\'{o}'s original construction,  Theorem~\ref{thm:A} applies to cobordisms which  are disconnected, or which have disconnected 3-manifolds appearing in their ends, as long as each component contains a basepoint.

\emph{Diffeomorphism invariance} in Theorem~\ref{thm:A} amounts to the following. Suppose that $(W,\Gamma)\colon (Y_0,\ws_0)\to (Y_1,\ws_1)$ and $(W',\Gamma')\colon (Y_0',\ws_0')\to (Y_1',\ws_1')$ are two graph cobordisms, and $D\colon (W,\Gamma)\to (W',\Gamma')$ is an orientation preserving diffeomorphism. The map $D$ restricts to give diffeomorphisms $d_0\colon Y_0\to Y_0'$ and $d_1\colon Y_1\to Y_1'$. Let $\frs\in \Spin^c(W)$ and $\frs':=D_*(\frs)$. The following diagram, up to chain homotopy:
\begin{equation}
\begin{tikzcd}
\CF^-(Y_0,\ws_0,\frs|_{Y_0})\arrow{r}{(d_0)_*}\arrow{d}{F_{W,\Gamma,\frs}^A} & \CF^-(Y_0',\ws_0',\frs'|_{Y_0'})\arrow{d}{F_{W',\Gamma',\frs'}^A}\\
\CF^-(Y_1,\ws_1,\frs|_{Y_1})\arrow{r}{(d_1)_*}& \CF^-(Y_1',\ws_1',\frs'|_{Y_1'}).\label{eq:diffeomorphism-invariance}
\end{tikzcd}
\end{equation}

Concerning graph cobordisms where the graph consists of a collection of paths from $Y_0$ to $Y_1$, we prove the following:

\begin{customthm}{B}\label{thm:B}Suppose that $(W,\Gamma)\colon (Y_0,\ve{w}_0)\to (Y_1,\ve{w}_1)$ is a graph cobordism, and $\Gamma$ consists of a collection of paths, each connecting $\ws_0$ to $\ws_1$.
\begin{enumerate}
\item The $A$ and $B$ versions of the maps coincide:
\[
F_{W,\Gamma,\frs}^A\simeq F_{W,\Gamma,\frs}^B.
\]
 \item  If $\phi\colon (Y,\ws)\to (Y,\ws)$ is an orientation preserving diffeomorphism, let $W(\phi)$ denote the \emph{mapping cylinder} (i.e. $[0,1]\times Y$, with $\{0\}\times Y$ identified with $Y$ via $\id_Y$ and $\{1\}\times Y$ identified with $Y$ via $\phi$). Then
 \[
F_{W(\phi), [0,1]\times \ws, \frs}^A\simeq F_{W(\phi), [0,1]\times \ws, \frs}^B\simeq \left(\phi_*\colon \CF^-(Y,\ws, \frs)\to \CF^-(Y,\ws, \phi_*\frs)\right). 
 \]
 \item Suppose $(W,\gamma)\colon (Y_0,w_0)\to (Y_1,w_1)$ is a cobordism such that $W,$ $Y_0$ and $Y_1$ are nonempty and connected, and $\gamma$ is a path from $w_0$ to $w_1$. Then $F_{W,\gamma,\frs}^A\simeq F_{W,\gamma,\frs}^B$, and both maps coincide with the map defined by Ozsv\'{a}th and Szab\'{o}.
\end{enumerate}
\end{customthm}

Analogous to the cobordism maps defined by Ozsv\'{a}th and Szab\'{o}, the graph cobordism maps satisfy a $\Spin^c$ composition law:

\begin{customthm}{C}\label{thm:C}Suppose that $(W,\Gamma)$ is ribbon graph cobordism which decomposes as a composition $(W,\Gamma)=(W_2,\Gamma_2)\cup (W_1,\Gamma_1)$. If $\frs_1$ and $\frs_2$ are $\Spin^c$ structures on $W_1$ and $W_2$, then
\[
F_{W_2,\Gamma_2,\frs_2}^A\circ F_{W_1,\Gamma_1,\frs_1}^A\simeq \sum_{\substack{\frs\in \Spin^c(W)\\
\frs|_{W_2}=\frs_2\\
\frs|_{W_1}=\frs_1}} F_{W,\Gamma,\frs}^A.
\]
The same relation holds for the type-$B$ graph cobordism maps.
\end{customthm}

\subsection{Moving basepoints and the $\pi_1$-action}

If $w\in Y$, there is a fibration
\begin{equation}
\Diff(Y,w)\to \Diff(Y)\xrightarrow{\ev_w} Y.\label{eq:LESfibration}
\end{equation}
The long exact sequence of homotopy groups for the fibration in Equation~\eqref{eq:LESfibration} gives a homomorphism
\[
\pi_1(Y,w)\to \MCG(Y,w),
\]
where $\MCG(Y,w)$ denotes the \emph{based mapping class group} of orientation preserving diffeomorphisms modulo smooth isotopies which fix the point $w$.

If $\gamma\in \pi_1(Y,w)$, we write
\[
\gamma_*\colon \CF^-(Y,w,\frs)\to \CF^-(Y,w,\frs)
\]
for the induced diffeomorphism map.

 If $w\in \ws$ is a chosen of basepoint, there is a $+1$ graded endomorphism
\[
\Phi_w\colon \CF^-(Y,\ws,\frs)\to \CF^-(Y,\ws,\frs).
\]

%
% The endomorphism $\Phi_w$ can be defined by counting holomorphic disks on a Heegaard diagram via the formula
%\begin{equation}
%\Phi_w(\ve{x})=U^{-1}\sum_{\substack{\phi\in \pi_2(\ve{x},\ve{y})\\
%\mu(\phi)=1}} n_w(\phi) \# \hat{\cM}(\phi) U^{n_{\ve{w}}(\phi)}\cdot \ve{y}
%\end{equation}

One interpretation of the map $\Phi_w$ is as the map for the \emph{broken path cobordism}, shown in Figure~\ref{fig::59}. Alternatively, $\Phi_w$ can be interpreted as the map for a count of holomorphic disks on a Heegaard diagram; see Equation~\eqref{eq:Phi-map-def}.

\begin{figure}[ht!]
	\centering
	%% Creator: Inkscape inkscape 0.92.3, www.inkscape.org
%% PDF/EPS/PS + LaTeX output extension by Johan Engelen, 2010
%% Accompanies image file '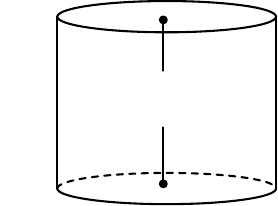' (pdf, eps, ps)
%%
%% To include the image in your LaTeX document, write
%%   \input{<filename>.pdf_tex}
%%  instead of
%%   \includegraphics{<filename>.pdf}
%% To scale the image, write
%%   \def\svgwidth{<desired width>}
%%   \input{<filename>.pdf_tex}
%%  instead of
%%   \includegraphics[width=<desired width>]{<filename>.pdf}
%%
%% Images with a different path to the parent latex file can
%% be accessed with the `import' package (which may need to be
%% installed) using
%%   \usepackage{import}
%% in the preamble, and then including the image with
%%   \import{<path to file>}{<filename>.pdf_tex}
%% Alternatively, one can specify
%%   \graphicspath{{<path to file>/}}
%% 
%% For more information, please see info/svg-inkscape on CTAN:
%%   http://tug.ctan.org/tex-archive/info/svg-inkscape
%%
\begingroup%
  \makeatletter%
  \providecommand\color[2][]{%
    \errmessage{(Inkscape) Color is used for the text in Inkscape, but the package 'color.sty' is not loaded}%
    \renewcommand\color[2][]{}%
  }%
  \providecommand\transparent[1]{%
    \errmessage{(Inkscape) Transparency is used (non-zero) for the text in Inkscape, but the package 'transparent.sty' is not loaded}%
    \renewcommand\transparent[1]{}%
  }%
  \providecommand\rotatebox[2]{#2}%
  \newcommand*\fsize{\dimexpr\f@size pt\relax}%
  \newcommand*\lineheight[1]{\fontsize{\fsize}{#1\fsize}\selectfont}%
  \ifx\svgwidth\undefined%
    \setlength{\unitlength}{133.00322434bp}%
    \ifx\svgscale\undefined%
      \relax%
    \else%
      \setlength{\unitlength}{\unitlength * \real{\svgscale}}%
    \fi%
  \else%
    \setlength{\unitlength}{\svgwidth}%
  \fi%
  \global\let\svgwidth\undefined%
  \global\let\svgscale\undefined%
  \makeatother%
  \begin{picture}(1,0.74058408)%
    \lineheight{1}%
    \setlength\tabcolsep{0pt}%
    \put(0,0){\includegraphics[width=\unitlength,page=1]{fig59.pdf}}%
    \put(0.10661187,0.36535524){\color[rgb]{0,0,0}\makebox(0,0)[rt]{\lineheight{2.50000024}\smash{\begin{tabular}[t]{r}$\Phi_w$\end{tabular}}}}%
    \put(0,0){\includegraphics[width=\unitlength,page=2]{fig59.pdf}}%
    \put(0.60659211,0.03884388){\color[rgb]{0,0,0}\makebox(0,0)[lt]{\lineheight{2.50000024}\smash{\begin{tabular}[t]{l}$w$\end{tabular}}}}%
    \put(0.60659211,0.68329664){\color[rgb]{0,0,0}\makebox(0,0)[lt]{\lineheight{2.50000024}\smash{\begin{tabular}[t]{l}$w$\end{tabular}}}}%
  \end{picture}%
\endgroup%

	\caption{\textbf{The broken path graph cobordism for the endomorphism $\Phi_w$.} The underlying 4-manifold is $[0,1]\times Y$. When $w$ is not the only basepoint, there are additional, unbroken strands from the bottom to the top.}\label{fig::59}
\end{figure}

 An analog of the map $\Phi_w$ for link Floer homology was discovered by Sarkar~ \cite{SarkarMovingBasepoints}  in the context of basepoint moving diffeomorphisms on link Floer homology.

\begin{customthm}{D}\label{thm:D}Suppose $(Y,\ve{w})$ is a multi-pointed 3-manifold, $w\in \ve{w}$ and $\gamma\in \pi_1(Y,w)$. Then the diffeomorphism map $\gamma_*$ on $\CF^-(Y,\ws,\frs)$ satisfies
\[
\gamma_*\simeq \id+\Phi_w\circ A_\gamma,
\]
 where $\Phi_w$ is the broken path cobordism map   and $A_\gamma$ denotes the action of the class  $[\gamma]\in H_1(Y,\Z)/\Tors$.
\end{customthm}

Using Theorem~\ref{thm:D}, we exhibit 3-manifolds $Y$ where the induced map 
\[
\gamma_*\colon \hat{\HF}(Y,w,\frs)\to \hat{\HF}(Y,w,\frs)
\]
is not the identity:
\begin{customcor}{E}\label{cor:E}Let $Y$ be a 3-manifold and $\frs\in \Spin^c(Y)$.
\begin{enumerate}
\item  If $\frs$ is torsion and there is an $x\in \HF^+(Y,w,\frs)$ such that 
\[
U\cdot x=0 \quad \text{ and }\quad x\not \in U\cdot \HF^+(Y,w,\frs),
\]
then $\pi_1(Y\# S^1\times S^2,w)$ acts non-trivially on
\[
\hat{\HF}(Y\# S^1\times S^2,w).
\]
\item Suppose $[\gamma]\in H_1(Y;\Z)$ is a class whose action on $\HF^+(Y,w,\frs)$ does not vanish. If $|\ws|\ge 2$, then the diffeomorphism map $\gamma_*$ acts non-trivially on the $\bF_2[U]$-module
\[
\HF^-(Y,\ws,\frs).
\]
\end{enumerate}
\end{customcor}

Note that $Y=\Sigma(2,3,7)$ satisfies the conditions of Part (1) of Corollary~\ref{cor:E}. The manifold $Y=S^1\times S^2$ satisfies Part (2).

When we restrict Theorem~\ref{thm:D} to 3-manifolds with a single basepoint $w$, the formula defining $\Phi_w$ has the following algebraic interpretation. View $\CF^-(Y,w,\frs)$ as a free module over $\bF_2[U]$ with basis equal to the set of intersection points. The differential can be written as a matrix over this basis, with entries in $\bF_2[U]$. The endomorphism $\Phi_w$ is obtained by differentiating each entry of this matrix with respect to $U$. Using this basis, we can also define a derivative map $d/d U$ as an endomorphism of $\CF^-(Y,w,\frs)$, which is not $U$-equivariant. The Leibniz rule implies
\begin{equation}
\Phi_w=\d\circ \frac{d}{d U}+\frac{d}{d U} \circ \d.\label{eq:chainhomotopyddU}
\end{equation}

As a consequence, we prove the following folklore result known to experts:

\begin{customcor}{F}\label{cor:F}If $(Y,w)$ is a singly based 3-manifold and $\gamma\in \pi_1(Y,w)$, then the induced map $\gamma_*$ is equal to the identity on the homology groups $\HF^-(Y,w,\frs)$, $\HF^\infty(Y,w,\frs)$ and $\HF^+(Y,w,\frs)$, but not necessarily $\hat{\HF}(Y,w,\frs)$. Consequently, if $(W,\gamma)$ is path cobordism between two singly based 3-manifolds $(Y_0,w_0)$ and $(Y_1,w_1)$, then the cobordism map
\[
F_{W,\gamma,\frs}\colon \HF^\circ(Y_0,w_0,\frs|_{Y_0})\to \HF^\circ(Y_1,w_1,\frs|_{Y_1})
\]
is independent of $\gamma$ if $\circ\in \{-,\infty,+\}$.
\end{customcor}

The chain homotopy $H=d/d U$ appearing in Equation~\eqref{eq:chainhomotopyddU} is not $\bF_2[U]$ equivariant, and hence does not induce a chain homotopy on $\hat{\CF}$.

In another direction, if $\lambda$ is a path between two basepoints $w_1,w_2\in \ws\subset Y$, there is a diffeomorphism $\Sw_\lambda\colon (Y,\ws)\to (Y,\ws)$ which swaps the two basepoints $w_1$ and $w_2$ along the path $\lambda$ (this is well defined, up to isotopy, since $Y$ is 3-dimensional). Using the graph TQFT, we prove  in Proposition~\ref{prop:basepointswappingmap} that
\[
\Sw_\lambda\simeq \Phi_{w_1}A_\lambda+A_\lambda\Phi_{w_2}\simeq A_{\lambda} \Phi_{w_1}+\Phi_{w_2}A_{\lambda}.
\]

\subsection{Outline of the construction of the graph TQFT}

We now describe the main ingredients of our graph TQFT. The first ingredients are the 4-dimensional handle attachment maps,  similar to the maps defined by Ozsv\'{a}th and Szab\'{o} \cite{OSTriangles}, with the exception that we define maps for 0-handles and 4-handles, and our construction of 1-handle maps and 3-handle maps is more flexible than their construction. There are two additional, novel ingredients of the graph TQFT: the \emph{free-stabilization maps}, and the \emph{relative homology maps}. 

If $w\not \in \ws$, we define two free-stabilization maps
\[
S_w^+\colon \CF^-(Y,\ws,\frs)\to \CF^-(Y,\ws\cup \{w\}, \frs), \text{ and }
\]
\[
S_w^-\colon \CF^-(Y,\ws\cup \{w\}, \frs)\to \CF^-(Y,\ws,\frs).
\]
The maps $S_w^+$ and $S_{w}^-$ are induced by the graph cobordisms shown in Figure~\ref{fig::58}.

\begin{figure}[ht!]
	\centering
	%% Creator: Inkscape inkscape 0.92.3, www.inkscape.org
%% PDF/EPS/PS + LaTeX output extension by Johan Engelen, 2010
%% Accompanies image file '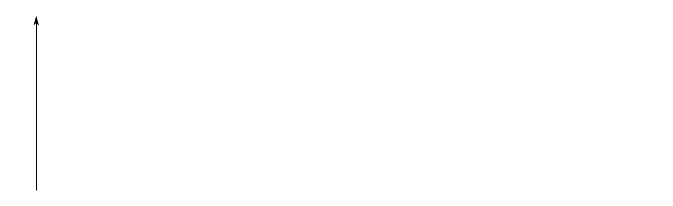' (pdf, eps, ps)
%%
%% To include the image in your LaTeX document, write
%%   \input{<filename>.pdf_tex}
%%  instead of
%%   \includegraphics{<filename>.pdf}
%% To scale the image, write
%%   \def\svgwidth{<desired width>}
%%   \input{<filename>.pdf_tex}
%%  instead of
%%   \includegraphics[width=<desired width>]{<filename>.pdf}
%%
%% Images with a different path to the parent latex file can
%% be accessed with the `import' package (which may need to be
%% installed) using
%%   \usepackage{import}
%% in the preamble, and then including the image with
%%   \import{<path to file>}{<filename>.pdf_tex}
%% Alternatively, one can specify
%%   \graphicspath{{<path to file>/}}
%% 
%% For more information, please see info/svg-inkscape on CTAN:
%%   http://tug.ctan.org/tex-archive/info/svg-inkscape
%%
\begingroup%
  \makeatletter%
  \providecommand\color[2][]{%
    \errmessage{(Inkscape) Color is used for the text in Inkscape, but the package 'color.sty' is not loaded}%
    \renewcommand\color[2][]{}%
  }%
  \providecommand\transparent[1]{%
    \errmessage{(Inkscape) Transparency is used (non-zero) for the text in Inkscape, but the package 'transparent.sty' is not loaded}%
    \renewcommand\transparent[1]{}%
  }%
  \providecommand\rotatebox[2]{#2}%
  \newcommand*\fsize{\dimexpr\f@size pt\relax}%
  \newcommand*\lineheight[1]{\fontsize{\fsize}{#1\fsize}\selectfont}%
  \ifx\svgwidth\undefined%
    \setlength{\unitlength}{323.03502203bp}%
    \ifx\svgscale\undefined%
      \relax%
    \else%
      \setlength{\unitlength}{\unitlength * \real{\svgscale}}%
    \fi%
  \else%
    \setlength{\unitlength}{\svgwidth}%
  \fi%
  \global\let\svgwidth\undefined%
  \global\let\svgscale\undefined%
  \makeatother%
  \begin{picture}(1,0.30492066)%
    \lineheight{1}%
    \setlength\tabcolsep{0pt}%
    \put(0,0){\includegraphics[width=\unitlength,page=1]{fig58.pdf}}%
    \put(0.04077399,0.1504277){\color[rgb]{0,0,0}\makebox(0,0)[rt]{\lineheight{2.5}\smash{\begin{tabular}[t]{r}$S_{w}^+$\end{tabular}}}}%
    \put(0,0){\includegraphics[width=\unitlength,page=2]{fig58.pdf}}%
    \put(0.26705809,0.26998452){\color[rgb]{0,0,0}\makebox(0,0)[lt]{\lineheight{2.50000024}\smash{\begin{tabular}[t]{l}$w$\end{tabular}}}}%
    \put(0,0){\includegraphics[width=\unitlength,page=3]{fig58.pdf}}%
    \put(0.63866757,0.1504277){\color[rgb]{0,0,0}\makebox(0,0)[rt]{\lineheight{2.5}\smash{\begin{tabular}[t]{r}$S_{w}^-$\end{tabular}}}}%
    \put(0,0){\includegraphics[width=\unitlength,page=4]{fig58.pdf}}%
    \put(0.86495241,0.01896075){\color[rgb]{0,0,0}\makebox(0,0)[lt]{\lineheight{2.50000024}\smash{\begin{tabular}[t]{l}$w$\end{tabular}}}}%
    \put(-0.10680102,0.43022568){\color[rgb]{0,0,0}\makebox(0,0)[lt]{\begin{minipage}{0.53355605\unitlength}\centering \end{minipage}}}%
  \end{picture}%
\endgroup%

	\caption{\textbf{Graph cobordisms for the free-stabilization maps.} The underlying 4-manifold is $[0,1]\times Y$.}\label{fig::58}
\end{figure}

Another new ingredient of our graph TQFT is the \emph{relative homology action}. To a path $\lambda$ between two basepoints $w_1, w_2\in \ws$ we construct an endomorphism
\[
 A_\lambda\colon \CF^-(Y,\ws,\frs)\to \CF^-(Y,\ws,\frs).
\]

 The definition of the map $A_\lambda$ is asymmetrical with respect to the $\as$ and $\bs$ curves. By switching the roles of $\as$ and $\bs$ in the construction, we obtain another map $B_\lambda$, with the same domain and codomain, which is also a chain map.

The maps $A_{\lambda}$ and $B_{\lambda}$ turn out to be the graph cobordism maps for $H$-shaped graphs in $[0,1]\times Y$ for different ribbon structures, though we will not make use of this fact in this paper; see \cite{ZemCFLTQFT}*{Lemma~14.11}.

As an intermediate step towards defining the graph cobordism maps, we construct maps for graphs embedded in a fixed 3-manifold $Y$. We say  $\cG=(\Gamma,\ws_0,\ws_1)$ is an embedded \emph{flow-graph} in $Y$ if $\Gamma\subset Y$ is an embedded ribbon graph, each point of $\ws_0$ and $\ws_1$ has valence 1 in $\Gamma$, and $\Gamma$ has no valence 0 vertices.

As a key step towards the full construction of the graph TQFT, we construct $\bF_2[U]$-equivariant chain maps
\[
\frA_{\cG},\, \frB_{\cG}\colon \CF^-(Y,\ws_0,\frs)\to \CF^-(Y,\ws_1,\frs),
\]
which we call the \emph{type-$A$} and \emph{type-$B$ graph action maps}. The two maps $\frA_\cG$ and $\frB_\cG$ are related by the same symmetry as the cobordism maps in Equation~\eqref{eq:changecyclicordersAB}.
 
If $\cG=(\Gamma,\ws_0,\ws_1)$ is a ribbon flow-graph in $Y$, then by definition
\begin{equation}
\frA_{\cG}\simeq F_{[0,1]\times Y,\Gamma',\frs}^{A}\qquad \text{and} \qquad \frB_{\cG}\simeq F_{[0,1]\times Y,\Gamma',\frs}^B,
\end{equation}
where $\Gamma'\subset [0,1]\times Y$ is a ribbon graph which projects to $\Gamma$ and such that $\Gamma'\cap (\{0\}\times Y)=\ws_0$ and $\Gamma'\cap (\{1\}\times Y)=\ws_1$.

\subsection{Further developments}

This paper focuses on defining the graph cobordism maps, $F_{W,\Gamma,\frs}^A$ and $F_{W,\Gamma,\frs}^B$, and proving invariance. In subsequent papers, further properties and applications have been explored, which we briefly summarize.

 Hendricks, Manolescu and the author showed that pair-of-pants graph cobordisms containing a trivalent graph induce chain homotopy equivalences between $\CF^-(Y_1\# Y_2)$ and $\CF^-(Y_1)\otimes  \CF^-(Y_2)$ \cite{HMZConnectedSum}*{Proposition~5.2}, giving a cobordism perspective on Ozsv\'{a}th and Szab\'{o}'s connected sum formula \cite{OSProperties}*{Theorem~1.5}. We used this fact to prove a connected sum formula for involutive Heegaard Floer homology \cite{HMZConnectedSum}*{Theorem~1.1}.
 
 In another subsequent paper, we prove that the graph cobordism maps for the \emph{trace cobordism}
 \[
 ([0,1]\times Y,[0,1]\times \{w\})\colon (-Y,w)\sqcup (Y,w)\to \emptyset,
 \]
 are chain homotopic to the canonical trace pairing between $\CF^-(Y)$ and $\CF^-(-Y)$ \cite{ZemDualityMappingTori}*{Theorem~1.6}. A similar result holds for the \emph{cotrace cobordism}, obtained by turning around the trace cobordism. As an application, the author computed the Heegaard Floer mixed invariants of mapping tori in terms of Lefschetz numbers on $\HF_{\red}^+(Y)$ \cite{ZemDualityMappingTori}*{Theorem~1.1}.

In another direction, the author described a TQFT for link Floer homology \cite{ZemCFLTQFT}. It turns out that the link Floer TQFT recovers the graph TQFT in a natural way; see \cite{ZemCFLTQFT}*{Theorem~C} for a precise statement. One interpretation of the dependence on cyclic orders from this paper can be seen as an artifact of the relation with graphs embedded on surfaces.

The graph cobordism maps are invariant under a more general equivalence than ambient diffeomorphism of the 4-manifold, as stated in Equation~\eqref{eq:diffeomorphism-invariance}. The cobordism maps are also invariant under homotopies of the graph inside $W$ which are only required to be smooth on each edge, and hence may not extend to a smooth isotopy of $W$; see Definition~\ref{def:smooth-graph-isotopy}. Using the aforementioned connection with link cobordisms, it turns out that the graph cobordisms are invariant under modifications of the graph which preserve the \emph{ribbon equivalence class} of the ribbon graph; see \cite{ZemCFLTQFT}*{Corollary~D} for a precise statement.

\subsection{Further commentary}

It is worthwhile to compare the cobordism maps in Heegaard Floer homology to the cobordism maps in Kronheimer and Mrowka's construction of  monopole Floer homology  \cite{KMMonopole}. It follows from work of Kutluhan, Lee and Taubes \cite{KLTHF=HM1}, \cite{KLTHF=HM2}, \cite{KLTHF=HM3}, \cite{KLTHF=HM4}, and \cite{KLTHF=HM5}, and independently Colin, Ghiggini and Honda \cite{CGHHF=ECH0}, \cite{CGHHF=ECH1}, \cite{CGHHF=ECH2} and \cite{CGHHF=ECH3}, that certain versions of Heegaard Floer homology, monopole Floer homology, and embedded contact homology are isomorphic. A proof that these isomorphisms extend to the level of 4-manifold invariants has not yet appeared in the literature.

In monopole Floer homology, as well as embedded contact homology  the chain complexes are defined independently of a basepoint, but the action of $U$ requires a choice of basepoint, and a path can be used to construct a chain homotopy between the two $U$ maps; see \cite{HutchingsTaubes}*{Section~2.5}.

In a different direction, we note that an early version of Theorem~\ref{thm:D} appeared in \cite{ZemHatGraphTQFT},  for $\hat{\HF}$. The proof in \cite{ZemHatGraphTQFT} used Juh\'{a}sz's sutured TQFT \cite{JDisks} \cite{JCob} as well as the contact gluing map of Honda, Kazez and Mati\'{c} \cite{HKMTQFT}, and hence had a different flavor than the one we explore in this paper. It is an interesting question whether the graph cobordism maps described in this paper have an interpretation in terms of the contact gluing map, perhaps using a limiting construction as in \cite{EVZLimit} \cite{GollaLimit} or using a minus version of sutured Floer homology described by Alishahi and Eftekhary \cite{AErefinedSFH}.

\subsection{Outline of the paper}

In Section~\ref{categories}, we define the category of ribbon graph cobordisms, and some related cobordism categories which appear in our paper. In Section~\ref{sec:background}, we describe some background material on Heegaard Floer homology, and  prove several results about admissibility for multi-pointed Heegaard Floer diagrams. In Section~\ref{sec:relhommap}, we construct the relative homology action. In Section~\ref{sec:freestab}, we construct the free-stabilization maps, for adding or removing a basepoint. In Section~\ref{sec:graphaction}, we combine the free-stabilization maps with the relative homology action to construct a restricted version of our TQFT, which we call the graph action map. In Sections~\ref{sec:1-handles} and~\ref{sec:2-handles}, we describe maps for 4-dimensional handles. In Sections~\ref{sec:constructionI} and~\ref{sec:constructionII}, we define the graph cobordism maps and prove invariance (Theorem~\ref{thm:A}). In Section~\ref{sec:compositionlaw}, we prove the composition law (Theorem~\ref{thm:C}). In Section~\ref{sec:normalizationI}, we give a summary of the proof of the normalization axiom (Theorem~\ref{thm:B}). In Section~\ref{sec:movingbasepoints}, we perform several technical, holomorphic curve arguments to finish the proof of the normalization axiom, and also give several formulas for basepoint moving diffeomorphism maps (in particular, Theorem~\ref{thm:D} and Corollaries~\ref{cor:E} and \ref{cor:F}).

\section{Acknowledgments}
I would like to thank Antonio Alfieri, Andr\'{a}s Juh\'{a}sz, Kristen Hendricks, Robert Lipshitz, Yajing Liu,  Ciprian Manolescu, Mike Miller, and Matthew Stoffregen for helpful conversations. I would like to thank Ciprian Manolescu and Peter Ozsv\'{a}th for helping me with the holomorphic geometry related to the 1- and 3-handle maps.

\section{Categorical Preliminaries}
\label{categories}
In this section, we define a category of graph cobordisms $\GrCob$, and a related category $\FlGr(Y)$ of immersed flow-graphs in a fixed 3-manifold $Y$.

\subsection{The graph cobordism category}

The objects of $\GrCob$ are the following:

\begin{define}
 A \emph{multi-pointed 3-manifold} is a pair $(Y,\ws)$ consisting of a closed, oriented 3-manifold $Y$ (not necessarily connected), together with a finite collection of basepoints $\ws\subset Y$, such that each component of $Y$ contains at least one basepoint.
\end{define}

Morphisms in $\GrCob$ are the following:

\begin{define} A \emph{ribbon graph cobordism} from $(Y_0,\ws_0)$ to  $(Y_1,\ws_1)$ is a pair $(W,\Gamma)$ satisfying the following:
\begin{enumerate}
\item $W$ is a cobordism from $Y_0$ to $Y_1$.
\item $\Gamma$ is an embedded graph in $W$ such that $\Gamma\cap Y_i=\ws_i$. Furthermore, each point of $\ws_i$ has valence 1 in $\Gamma$.
\item $\Gamma$ has finitely many edges and vertices, and no vertices of valence 0.
\item  The embedding of $\Gamma$ is smooth on each edge.
\item $\Gamma$ is decorated with a \emph{formal ribbon structure}, i.e. a formal choice of cyclic ordering of the edges adjacent to each vertex. 
\end{enumerate}
\end{define}

\begin{rem}
 In $\GrCob$, the embedding of a graph need not respect the formal ribbon structure (e.g. the embedding of $\Gamma$ near a vertex need not map the adjacent edges into a 2-plane centered at the vertex in a way which respects the cyclic ordering). 
\end{rem}

The identity graph cobordism from $(Y,\ws)$ to itself is $([0,1]\times Y, [0,1]\times \ws)$.

As always in cobordism categories, for $\GrCob$ to form an honest category, we must also include parametrizing diffeomorphisms of the boundary as data of a morphism, and we must also quotient by diffeomorphisms which respect these parametrizations. (Otherwise, there is no identity morphism). To simplify exposition, we suppress this fact.

Some care is required when defining equivalences of graph cobordisms, since vertices of valence greater than 1 are inherently non-smooth. In the morphism sets of $\GrCob$, we quotient by diffeomorphisms of the ambient 4-manifold, as well as the following notion of graph isotopy:

\begin{define}\label{def:smooth-graph-isotopy} Suppose that $W$ is a 4-manifold and $\Gamma$ is an abstract graph. We say that a continuous map
\[
i\colon [0,1]\times \Gamma\to W
\]
is a \emph{smooth isotopy of embedded graphs} if there is a finite subset $\ve{v}\subset \Gamma$, consisting of valence 1 vertices, such that the following are satisfied:
\begin{enumerate}
\item For each $t$, the map $i|_{\{t\}\times \Gamma}$ is a topological embedding.
\item $i^{-1}(\d W)=[0,1]\times \ve{v}$, and $i$ is constant on $\ve{v}$.
\item If $e\subset \Gamma$ is an edge, then the restriction $i|_{[0,1]\times e}$ is smooth.
\end{enumerate}
\end{define}

\subsection{The flow-graph category}
\label{sec:flow-graph}
There is a related category of interest to us, which we call the \emph{flow-graph} category of $Y$. The objects of $\FlGr(Y)$ are collections of basepoints $\ws\subset Y$ such that each component of $Y$ contains at least one basepoint. The morphisms in $\FlGr$ are the following:

\begin{define} Suppose $Y$ is a closed, oriented 3-manifold and $\ws_0$ and $\ws_1$ are two collections of basepoints in $Y$. We say a tuple $\cG=(\Gamma,i,\vs_0,\vs_1)$ is an immersed \emph{ribbon flow-graph} from $\ws_0$ to $\ws_1$ if the following are satisfied:
\begin{enumerate}
\item $\Gamma$ is an abstract ribbon graph. Furthermore, $i\colon \Gamma\to Y$ is an immersion.
\item Each vertex of $\Gamma$ has valence at least 1.
\item $\ve{v}_0$ and $\ve{v}_1$ disjoint collections of valence 1 vertices in $\Gamma$. Furthermore $i$ maps $\ve{v}_0$ bijectively to $\ws_0$ and $i$ maps $\ve{v}_1$ bijectively to $\ws_1$.
\end{enumerate}
\end{define}

We identify two ribbon flow-graphs $(\Gamma,i,\ve{v}_0,\ve{v}_1)$ and $(\Gamma',i',\ve{v}_0',\ve{v}_1')$ if there is a homeomorphism $h\colon \Gamma\to \Gamma'$ such that $i'=h\circ i$.

If $\cG=(\Gamma,i,\ve{v}_0,\ve{v}_1)$ is a ribbon flow-graph, we say that $\ve{v}_0$ and $\ve{v}_1$ are the \emph{boundary} vertices of $\Gamma$. We say all other vertices are \emph{interior} vertices. We will usually omit the immersion $i$ from the notation, and write simply $\cG=(\Gamma,\ws_0,\ws_1)$ for a flow-graph. For convenience, we will usually assume that our flow-graphs are embedded. 

We  use the following notion of equivalence of immersed flow-graphs:

\begin{define} Suppose that $\Gamma$ is an abstract graph with pairwise disjoint sets of valence 1 vertices $\ve{v}_0$ and $\ve{v}_1$. If $\ws_0,\ws_1$ are two collections of basepoints in $Y$, we say a continuous map
\[
h\colon [0,1]\times \Gamma\to Y
\]
is a \emph{smooth homotopy of immersed flow-graphs} from $\ws_0$ to $\ws_1$ if the following are satisfied:
\begin{enumerate}
\item For each $t$, the map $h|_{\{t\}\times \Gamma}$ is locally an embedding.
\item  $h(\{t\}\times \vs_0)=\ws_0$ and $h(\{t\}\times \vs_1)=\ws_1$ for all $t$.
\item If $e\subset \Gamma$ is an edge, then $h|_{[0,1]\times e}$ is smooth.
\end{enumerate}
\end{define}

The identity flow-graph from $\ws$ to itself consists of the pair $(\Gamma,i)$ where $\Gamma$ is the disjoint union of $|\ws|$ copies of the interval $[0,1]$, and $i$ is a small perturbation of the map which sends a copy of $[0,1]$ to the corresponding basepoint in $\ws$.

There is a functor from $\FlGr(Y)$ to $\GrCob$, which sends an immersed flow-graph $(\Gamma,i,\ve{v}_0,\ve{v}_1)$ to a graph cobordism $([0,1]\times Y,\Gamma')\colon (Y, i(\ve{v}_0))\to (Y, i(\ve{v}_1))$, where $\Gamma'$ is an embedded graph in $[0,1]\times Y$ which projects to $\Gamma$ (up to ambient isotopy).

\section{Heegaard Floer homology}
 \label{sec:background}

In this section, we recall Ozsv\'{a}th and Szab\'{o}'s construction of the Heegaard Floer complexes for multi-pointed 3-manifolds.

\subsection{Multi-pointed Heegaard diagrams and the Heegaard Floer complexes}

\begin{define}\label{def:multipointeddiagram}
Suppose $(Y,\ws)$ is a multi-pointed 3-manifold. A \emph{multi-pointed Heegaard diagram} $\cH=(\Sigma, \ve{\alpha},\bs,\ve{w})$ for $(Y,\ve{w})$ is a tuple satisfying the following:
\begin{enumerate}
\item $\Sigma$ is a closed, oriented surface, embedded in $Y$, such that $\ve{w}\subset \Sigma\setminus (\as\cup \bs)$. Furthermore, $\Sigma$ splits $Y$ into two handlebodies, $U_{\a}$ and $U_{\b}$, oriented so that $\Sigma=\d U_\a=-\d U_\b$.
\item $\ve{\alpha}=\{\alpha_1,\dots, \alpha_{n}\}$ is a collection of $n=g(\Sigma)+|\ws|-1$ pairwise disjoint, simple, closed curves on $\Sigma$, bounding pairwise disjoint compressing disks in $U_{\a}$. Each component of $\Sigma\setminus \as$ is planar and contains a single basepoint.
\item $\ve{\beta}=\{\beta_1,\dots, \beta_n\}$ is a collection of pairwise disjoint, simple, closed curves on $\Sigma$ bounding pairwise disjoint compressing disks in $U_{\b}$. Each component of $\Sigma\setminus \bs$ is planar and contains a single basepoint.
\end{enumerate}
\end{define}
We will also require Heegaard diagrams to satisfy an admissibility requirement; see Section \ref{sec:admissibility}.

\subsection{The  Heegaard Floer chain complexes}
\label{sec:HFcomplexes}

In this section, we describe Ozsv\'{a}th and Szab\'{o}'s Heegaard Floer chain complexes.

If $\ws=\{w_1,\dots, w_n\}$ is a set of basepoints in $Y$, we define the free polynomial ring
\[
\bF_2[U_{\ws}]:=\bF_2[U_{w_1},\dots, U_{w_n}].
\]
Write $\bF_2[U_{\ws},U_{\ws}^{-1}]$ for the ring obtained by formally inverting each of the variables $U_{w_i}$. 

If $\ve{k}=(k_1,\dots ,k_n)$ is an $n$-tuple, write
\[
U_{\ws}^{\ve{k}}:=U_{w_1}^{k_1}\cdots U_{w_n}^{k_n}.
\]

% If $\sigma\colon \ws\to \bmP$ is a coloring, we will also consider the coloring-modified version
%\[
%\CF^-(Y,\ws^\sigma,\frs):=\CF^-(Y,\ws,\frs)^\sigma=\CF^-(Y,\ws,\frs)\otimes_{\bF_2[U_{\ws}]} \cR_{\bmP}^-,
%\]
%which is a chain complex over the ring $\cR_{\bmP}^-$.

%To the careful reader, we remark that since there are many different Heegaard diagram  the objects $\CF^-(Y,\ws,\frs)$ that we describe in this paper are not genuine chain complexes, but rather transitive systems of chain complexes in the $\bF_2[U_{\ws}]$-module chain homotopy category.

Suppose $(Y,\ws)$ is a connected, multi-pointed 3-manifold and $\frs\in \Spin^c(Y)$. Pick a diagram $\cH=(\Sigma,\as,\bs,\ws)$ of $(Y,\ws)$, and  consider the two tori 
\[
\bT_{\alpha}:=\alpha_1\times \cdots \times \alpha_{n}\quad \text{and} \quad \bT_{\beta}:=\beta_1\times \cdots \times \beta_{n},
\]
inside the symmetric product $\Sym^{n}(\Sigma)$, where $n=g(\Sigma)+|\ws|-1$.

Ozsv\'{a}th and Szab\'{o} construct a map
\[
\frs_{\ve{w}}\colon \bT_{\alpha}\cap \bT_{\beta}\to \Spin^c(Y);
\]
 See \cite{OSDisks}*{Section~2.6} for details on the construction.

As an $\bF_2[U_{\ws}]$-module, $\CF^-(\cH,\frs)$ is defined to be the free $\bF_2[U_{\ws}]$-module generated by intersection points $\ve{x}\in \bT_{\a}\cap \bT_{\b}$ with 
\[
\frs_{\ws}(\xs)=\frs.
\]
We define
\begin{equation}
\CF^\infty(\cH,\frs):=\CF^-(\cH,\frs)\otimes_{\bF_2[U_{\ws}]} \bF_2[U_{\ws}, U_{\ws}^{-1}]\quad \text{and} \quad \CF^+(\cH,\frs):=\CF^\infty(\cH,\frs)/\CF^-(\cH,\frs).\label{eq:CFinfty/plusdef}
\end{equation}

To equip $\CF^-(\cH,\frs)$ with a differential, we  pick an auxiliary path  $(J_s)_{s\in [0,1]}$ of almost complex structures on $\Sym^{g+|\ws|-1}(\Sigma)$. We write $\cM_{J_s}(\phi)$ for the moduli space of $J_s$-holomorphic maps $u\colon [0,1]\times \R\to \Sym^{g+|\ws|-1}(\Sigma)$ which represent the class $\phi$. The moduli space $\cM_{J_s}(\phi)$ has a natural action of $\R$, corresponding to reparametrization of the source. We write
\[
\hat{\cM}_{J_s}(\phi):=\cM_{J_s}(\phi)/\R,
\]

 For a sufficiently generic $J_s$,  we  define the differential 
\[
\d_{J_s}\colon \CF^-(\cH,\frs)\to \CF^-(\cH,\frs)
\]
via the formula
\begin{equation}
\d_{J_s}(\xs)=\sum_{\substack{\phi\in \pi_2(\xs,\ys)
\\ \mu(\phi)=1}} \# \hat{\cM}_{J_s}(\phi) U_{\ws}^{n_{\ws}(\phi)} \cdot \ys,\label{eq:defdiff}
\end{equation}
extended linearly over $\bF_2[U_{\ws}]$.

The endomorphism $\d_{J_s}$ satisfies
\[
\d_{J_s}\circ \d_{J_s}=0.
\] 
We refer the reader to \cite{OSLinks}*{Lemma~4.3} for a proof.

% which we briefly sketch. If $\xs,\ys\in \bT_{\a}\cap \bT_{\b}$, the $\ys$ coefficient  of $(\d_{J_s}\circ \d_{J_s} )(\xs)$ can be identified with the count of pairs $(u_1,u_2)$, where $u_1$ is an index 1 holomorphic strip connecting $\xs$ to some $\zs\in \bT_{\a}\cap \bT_{\b}$, and $u_2$ is an index 1  strip connecting $\zs$ and $\ys$. One considers the ends of the compactification of the index 2 moduli spaces connecting $\xs$ and $\ys$. If $\xs\neq \ys$, then the ends of index 2 moduli spaces correspond exactly to pairs of index 1 holomorphic strips $(u_1,u_2)$. If $\xs=\ys$, there are additional ends corresponding to index 2 boundary degenerations bubbling off. However using the count of boundary degenerations from \cite{OSLinks}*{Theorem~5.5}, it follows that boundary degenerations either generically do not occur (if $|\ws|=1$) or come in canceling pairs (if $|\ws|>1$). 

We write $\CF^-_{J_s}(\cH,\frs)$ for the $\bF_2[U_{\ws}]$-module $\CF^-(\cH,\frs)$ equipped with differential $\d_{J_s}$. When there is no ambiguity, we will usually drop the $J_s$ subscripts from both the chain complexes and the differential.

If $(Y,\ws)$ is a disconnected, multi-pointed 3-manifold, then a Heegaard diagram for $(Y,\ws)$ is a disjoint union of Heegaard diagrams for each component. The Heegaard Floer complex for such a diagram is the tensor product over $\bF_2$ of the complexes for each component.

\subsection{Coloring the Heegaard Floer complexes}

In this section, we describe a way of algebraically modifying the Heegaard Floer complexes  to achieve functoriality of the Heegaard Floer complexes under graph cobordisms.

\begin{define}\label{def:coloring} If $X$ is a topological space, a \emph{coloring} of $X$  is a function
\[
\sigma\colon C_0(X)\to \bmP,
\]
where $C_0(X)$ denotes the set of connected components of $X$, and $\bmP$ is a finite set.
\end{define}

When $X$ is a finite set (such as a set of basepoints), we view a coloring as a map from $X$ to $\bmP$.

If $\bmP=\{p_1,\dots, p_n\}$ is a finite set, write $\cR_{\bmP}$ for the $n$-variable polynomial ring
\[
\cR_{\bmP}:=\bF_2[U_{p_1},\dots, U_{p_n}].
\]

If $\sigma\colon \ws\to \bmP$ is a coloring, then $\sigma$ induces an action of $\bF_2[U_{\ws}]$ on $\cR_{\bmP}$, defined via the formula
\[
U_w\cdot x:=U_{\sigma(w)}\cdot x,
\]
for $x\in \cR_{\bmP}^-$.  Hence, if $M$ is an $\bF_2[U_{\ws}]$-module, we can form an $\cR_{\bmP}$-module $M^\sigma$ as the tensor product
\begin{equation}
M^\sigma:=M\otimes_{\bF_2[U_{\ws}]} \cR_{\bmP},\label{eq:colormoduleM}
\end{equation}
which we think of as being obtained by formally identifying $U_{w_i}$ with $U_{\sigma(w_i)}$.

If $(Y,\ws)$ is a multi-pointed 3-manifold, and $\sigma\colon \ws\to \bmP$ is a coloring, then we write $\CF^-(\cH,\frs,\sigma)$ for the complex
\begin{equation}
\CF^-(\cH,\sigma,\frs):=\CF^-(\cH,\frs)^\sigma=\CF^-(\cH,\frs)\otimes_{\bF_2[U_{\ws}]} \cR_{\bmP}. \label{eq:color-modified-complex-def}
\end{equation}

We briefly highlight the behavior of the complexes under disjoint unions:

\begin{rem} Suppose $(Y,\{w_1,w_2\})$ is the disjoint union of $(Y_1,w_1)$ and $(Y_2,w_2)$. If $\cH_i$ is a diagram for $(Y_i,w_i)$, then $\cH_1\sqcup \cH_2$ is a diagram for $(Y,\{w_1,w_2\})$. By definition 
\[
\CF^-(\cH_1\sqcup \cH_2, \frs_1\sqcup \frs_2):= \CF^-(\cH_1,\frs_1)\otimes_{\bF_2} \CF^-(\cH_2,\frs_2),
\]
which is a module over $\bF_2[U_{w_1},U_{w_2}]$.

Let $\bmP$ be the singleton $\{p\}$, and let  $\sigma\colon \{w_1,w_2\}\to \bmP$ denote the unique map.  Let $\sigma_1$ and $\sigma_2$ denote the restrictions of $\sigma$ to $\{w_1\}$ and $\{w_2\}$. Then
\[
\CF^-(\cH_1\sqcup \cH_2,\sigma,\frs_1\sqcup \frs_2)\iso \CF^-(\cH_1,\sigma_1,\frs_1)\otimes_{\bF_2[U_{p}]} \CF^-(\cH_2,\sigma_2,\frs_2),
\]
which is a module over the single variable polynomial ring $\cR_{\bmP}=\bF_2[U_{p}]$.

\end{rem}

\subsection{Lipshitz's cylindrical reformulation}

For many technical arguments in this paper, it will be convenient to work with Lipshitz's \emph{cylindrical reformulation} of Heegaard Floer homology \cite{LipshitzCylindrical}. If $(\Sigma,\as,\bs,w)$ is a Heegaard diagram for the singly pointed 3-manifold $(Y,w)$, Lipshitz shows that instead of counting holomorphic disks in $\Sym^{g}(\Sigma)$, one can instead count holomorphic curves which map into the 4-manifold $\Sigma\times [0,1]\times \R$. The equivalence is (morally) due to a \emph{tautological correspondence} between holomorphic disks mapping into $\Sym^g(\Sigma)$ and holomorphic curves (of higher genus) mapping into $\Sigma\times [0,1]\times \R$ whose projection to $[0,1]\times \R$ are $g$-fold branched covers. Lipshitz's cylindrical setting has a similar extension for multi-pointed Heegaard diagrams; see \cite{OSLinks}*{Section~5.2}.

We now describe the almost complex structures and moduli spaces which we consider in this paper (mostly taken directly from or slightly modified from \cite{LipshitzCylindrical}).

If $(\Sigma,\as,\bs,\ws)$ is a multi-pointed Heegaard diagram, we focus on almost complex structures $J$ on $\Sigma\times [0,1]\times \R$ which satisfy the following axioms (from \cite{LipshitzCylindrical}):

\begin{enumerate}[label=($J$\arabic*), ref=$J$\arabic*]
\item\label{def:J1} $J$ is tamed by the product symplectic form.
\item\label{def:J2} There is a finite collection of points $P\subset \Sigma\setminus (\as\cup \bs)$ such that $J$ is split (i.e. equal to $\frj_\Sigma\times \frj_{\bD}$) on a cylindrical neighborhood of $P\times [0,1]\times \R$.
\item\label{def:J3} $J$ is translation invariant in the $\R$ factor.
\item\label{def:J4} $J(\d/\d s)=\d/\d t$.
\item\label{def:J5} $J$ preserves the 2-planes $T(\Sigma\times \{(s,t)\})$ for all $(s,t)\in [0,1]\times \R$.
\end{enumerate}

Sometimes we will need to consider slightly more harshly perturbed almost complex structures, satisfying the following alternative to Axiom~\eqref{def:J5}:

\begin{enumerate}[label=($J$\arabic*$'$), ref=$J$\arabic*$'$]
\setcounter{enumi}{4}
\item \label{def:J5'} There is a 2-plane distribution $\xi$ on $\Sigma\times [0,1]$ such that the restriction of $\omega$ to $\xi$ is non-degenerate, $J$ preserves $\xi$ and the restriction of $J$ to $\xi$ is compatible with $\omega$. We further assume that $\xi$ is tangent to $\Sigma\times \{pt\}$ near $(\as\cup \bs)\times [0,1]\times \R$ and near $\Sigma\times \{0,1\}\times \R$.
\end{enumerate}

Following~\cite{LipshitzCylindrical}, we will consider holomorphic curves $u:S\to \Sigma\times [0,1]\times \R$, such that $S$ is a Riemann surface with boundary, as well as positive punctures $p_1,\dots, p_n$ and  negative punctures $q_1,\dots, q_n$ (where $n=g(\Sigma)+|\ws|-1$), satisfying the following:
 \begin{enumerate}[label=($M$\arabic*), ref=$M$\arabic*]
 \item\label{def:M1} $S$ is a smooth (not nodal) Riemann surface.
 \item\label{def:M2} $u(\d S)\subset (\as\times \{1\}\times \R)\cup (\bs\times \{0\}\times \R)$.
 \item\label{def:M3} $\lim_{z\to p_i} (\pi_{\R}\circ u)(z)=-\infty$ and $\lim_{z\to q_i} (\pi_{\R}\circ u)(z)=\infty$.
 \item\label{def:M4} $u$ has finite energy.
  \item\label{def:M5} $\pi_{[0,1]\times \R}\circ u$ is locally non-constant.
 \item\label{def:M6} $u$ is an embedding.
 \end{enumerate}

When considering almost complex structures which satisfy \eqref{def:J5'} instead of \eqref{def:J5}, we will need to consider the following weaker version of the \eqref{def:M5} axiom:

\begin{enumerate}[label=($M$\arabic*$'$), ref=$M$\arabic*$'$]
\setcounter{enumi}{4}
\item \label{def:M5'} There is no non-empty open subset $U\subset S$ such that $\pi_{[0,1]\times \R}\circ u|_U$ is constant, and takes value near $\{0,1\}\times \R$ (in the sense of \eqref{def:J5'}).
\end{enumerate}

There is a similar description of holomorphic triangle maps in Lipshitz's cylindrical setting. Let $\Delta$ denote an unbounded, triangular region in the complex plane with three cylindrical ends, each identified with $[0,1]\times [0,\infty)$. Following \cite{LipshitzCylindrical}*{Section~10.2}, we consider almost complex structures on $\Sigma\times \Delta$ satisfying the following:
\begin{enumerate}[label=($J'$\arabic*), ref=$J'$\arabic*]
\item\label{def:J'1} $J$ is tamed by the split symplectic form on $\Sigma\times \Delta$.
\item \label{def:J'2}There is a finite collection of points $P\subset \Sigma\setminus(\as\cup\bs\cup\gs)$ with at least one point in each component of $\Sigma\setminus (\as\cup \bs\cup \gs)$ such that $J$ is split on product neighborhood of $P\times \Delta$. 
\item\label{def:J'3} In the cylindrical ends of $\Delta$, $J$ is equal to a cylindrical almost complex structure satisfying \eqref{def:J1}--\eqref{def:J5}.
\item\label{def:J'4} The projection map $\pi_\Delta:\Sigma\times \Delta\to \Delta$ is holomorphic and the tangent space of each fiber of $\pi_\Sigma$ is a complex line.
\end{enumerate}

At times, we will need to deal with more harshly perturbed almost complex structures on $\Sigma\times \Delta$, which satisfy the following alternatives to Axioms~\eqref{def:J'3} and \eqref{def:J'4}:
\begin{enumerate}[label=($J'$\arabic*$'$), ref=$J'$\arabic*$'$]
\setcounter{enumi}{2}
\item\label{def:J'3'} In the cylindrical ends of $\Delta$, $J$ agrees with cylindrical almost complex structures satisfying $(J1)$-$(J4)$ and $(J5')$, above.
\item\label{def:J'4'} The 2-planes of $T(\{p\}\times \Delta)$ are complex lines of $J$ for all $p\in \Sigma$.
\item\label{def:J'5'} The 2-planes $T(\Sigma\times \{d\})$, for $d\in \Delta$, are complex lines for $J$ near $(\as\cup \bs\cup \gs)\times \Delta$ and on $\Sigma\times U$ for an open subset $U\subset \Delta$ containing the three  components of $\d \Delta$.
\end{enumerate}

\subsection{Expected dimensions and transversality}

For some technical arguments, we will need compute the expected dimension of certain moduli spaces, and also know when the expected dimension is generically correct. To give precise results, our expected dimensions must take into account the source curve $S$ of a holomorphic map $u\colon S\to \Sigma\times [0,1]\times \R$, and not just the homology class. If $S$ is a topological source, and $\phi$ is a homology class, we define
\[
\cM(S,\phi)
\]
to be the set of holomorphic curves $u\colon S\times [0,1]\times \R$ which satisfy \eqref{def:M1}--\eqref{def:M5} (but possibly not~\eqref{def:M6}). Near any curve where $D\bar{\d}$ achieves transversality, the set $\cM(S,\phi)$ will be a smooth manifold of dimension equal to the Fredholm index of $D\bar{\d}$ at $u$.

Lipshitz proved that if $u\colon S\to \Sigma\times [0,1]\times \R$ is a holomorphic curve which satisfies \eqref{def:M1}--\eqref{def:M6} (in particular, $u$ is embedded), then the Fredholm index is equal to the Maslov index. More generally, at curves which only satisfy \eqref{def:M1}--\eqref{def:M5}, the Fredholm index satisfies
\[
\ind(u)=\mu(\phi)-2\Sing(u),
\]
where $\Sing(u)$ denotes the number of double points of $u$ in an equivalent singularity (see \cite{LipshitzErrata}*{Proposition~4.2$'$}). 

If $X\subset \Sym^n([0,1]\times \R)$ is a smooth submanifold, $p\in \Sigma\setminus (\as\cup \bs)$ is a point, and $\phi$ is a homology class with $n_p(\phi)=n$, there is a \emph{matched moduli space}
\[
\cM(S,\phi,X):=\{u\in \cM(S,\phi): \rho^p(u)\in X\},
\]
where and $\rho^p\colon \cM(S,\phi)\to \Sym^{n}([0,1]\times \R)$ is the map
\begin{equation}
\rho^p(u):=(u\circ \pi_{[0,1]\times \R})\left((u\circ \pi_{\Sigma})^{-1}(p)\right).\label{eq:rhopdefinition}
\end{equation}
The analysis becomes simpler if we restrict to submanifolds $X\subset \Sym^{n}([0,1]\times \R)$ which avoid the \emph{fat diagonal}, i.e. the codimension 2 subset of points with at least one repeated entry.

We state the following transversality result:

\begin{prop}\label{prop:transversality} Suppose $J$ is a generic almost complex structure on $\Sigma\times [0,1]\times \R$ satisfying \eqref{def:J1}--\eqref{def:J5}. Near any holomorphic curve $u\colon S\to \Sigma\times [0,1]\times \R$ satisfying \eqref{def:M1}--\eqref{def:M5}, the moduli space $\cM(S,\phi)$ is transversely cut out and of dimension
\[
\ind(u)=\mu(\phi)-2\Sing(u).
\]
If $X\subset \Sym^n([0,1]\times \R)$ is a submanifold which avoids the fat diagonal, then near any curve $u\in \cM(S,\phi,X)$ satisfying \eqref{def:M1}--\eqref{def:M5}, the space $\cM(S,\phi,X)$ is transversely cut out and of dimension
\[
\mu(\phi)-2 \Sing(u)-\codim(X).
\]

If $J$ is a generic almost complex structure satisfying \eqref{def:J1}--\eqref{def:J4} and \eqref{def:J5'}, then the statements hold at holomorphic curves which satisfy \eqref{def:M1}--\eqref{def:M4} and \eqref{def:M5'} with no multiply covered closed components, and with no components $S_0$ such that $\pi_{[0,1]\times \R}\circ u|_{S_0}$ is constant and takes on a value near $\{0,1\}\times \R$ (in the sense of \eqref{def:J5'}).
\end{prop}

See \cite{LipshitzCylindrical}*{Sections~3 and 4} for proofs of the statements about the unmatched moduli spaces near embedded curves, and see \cite{LipshitzErrata} for the statements about the unmatched moduli spaces near non-embedded curves. See \cite{JTNaturality}*{Section~9.3} for an account of the proof involving the matched moduli spaces.

An analog of Proposition~\ref{prop:transversality} holds for the moduli spaces of holomorphic triangles satisfying the natural analogs of \eqref{def:M1}--\eqref{def:M5} for almost complex structures satisfying \eqref{def:J'1}--\eqref{def:J'4}. Furthermore, there is also a version for more harshly perturbed almost complex structures satisfying \eqref{def:J'1}, \eqref{def:J'2}, \eqref{def:J'3'}, \eqref{def:J'4'} and \eqref{def:J'5'} using the moduli space axioms obtained by modifying \eqref{def:M1}--\eqref{def:M4}, and \eqref{def:M5'} in the obvious ways; see \cite{JTNaturality}*{Section~9.3}.

\subsection{Naturality of Heegaard Floer homology}

 For functoriality, we need to understand the relation between the chain complexes obtained from different Heegaard diagrams of the same 3-manifold.

The following is standard:
\begin{lem}\label{lem:Heegaardmoves} Any two Heegaard diagrams for a multi-pointed 3-manifold can be connected by a sequence of the following moves:
 \begin{enumerate}
\item\label{HDmove:1} Isotopies of the $\ve{\alpha}$ or $\bs$ curves not passing over the $\ve{w}$ basepoints.
\item\label{HDmove:2} Handleslides of the $\ve{\alpha}$ curves across each other or handleslides of the $\bs$ curves across each other.
\item\label{HDmove:3} Simple stabilizations or destabilizations.
\item\label{HDmove:4} Pushing forward the diagram $(\Sigma,\as,\bs,\ws)$ under an automorphism $\phi$ of $(Y,\ws)$ which is isotopic to $\id_Y$, relative to $\ws$.
\end{enumerate}
\end{lem}

We explain Move~\eqref{HDmove:3}, appearing in Lemma~\ref{lem:Heegaardmoves}. Suppose $B^3\subset Y$ is a closed, embedded 3-ball which intersects $\Sigma$ in a disk and is disjoint from $\as\cup \bs\cup \ws$. We say a diagram $(\Sigma',\as',\bs',\ws)$ is a \emph{simple stabilization} of $(\Sigma,\as,\bs,\ws)$ if $\Sigma$ agrees with $\Sigma'$ outside of $B^3$, $\as'=\as\cup \{\alpha'\}$, $\bs'=\bs\cup \{\beta'\}$, and $\Sigma'\cap B^3$ consists of a once punctured torus. Furthermore, $\alpha'$ and $\beta'$  intersect in a single, transverse intersection point, and are contained in the region $\Sigma'\cap B^3$. A \emph{simple destabilization} is the inverse of a simple stabilization.

To each of the moves appearing in Lemma~\ref{lem:Heegaardmoves}, Ozsv\'{a}th and Szab\'{o} \cite{OSDisks} associated a chain map between the corresponding Heegaard Floer complexes.

We now state the following naturality result:

\begin{prop} Suppose that $(Y,\ws)$ is a multi-pointed 3-manifold. To each pair $(\cH,J)$ and $(\cH',J')$, there is a well defined map
\[
\Psi_{(\cH,J)\to (\cH',J')}\colon \CF^-_{J}(\cH,\frs)\to \CF^-_{J'}(\cH',\frs),
\]
 which is well defined up to $\bF_2[U]$-equivariant chain homotopy. Furthermore, the following are satisfied:
\begin{enumerate}
\item If $(\cH,J)$, $(\cH',J')$ and $(\cH'',J'')$ are three diagrams with almost complex structures, then $\Psi_{(\cH,J)\to (\cH'',J'')}\simeq \Psi_{(\cH',J')\to (\cH'',J'')}\circ \Psi_{(\cH,J)\to (\cH',J')}$.
\item $\Psi_{(\cH,J)\to (\cH,J)}\simeq \id_{\CF^-_{J}(\cH,\frs)}.$
\end{enumerate}
\end{prop}
\begin{proof}The details of the proof are due to many authors. Ozsv\'{a}th and Szab\'{o} \cite{OSDisks}  constructed the transition maps $\Psi_{(\cH,J)\to (\cH',J')}$ and proved they were quasi-isomorphisms (establishing invariance of the isomorphism type of the homology groups). They also proved most of the Floer theoretic results necessary to show well-definedness of $\Psi_{(\cH,J)\to (\cH',J')}$; see \cite{OSTriangles}*{Theorem~2.1}. Lipshitz showed that the transition maps are chain homotopy equivalences \cite{LipshitzCylindrical}*{Proposition~11.4}, as opposed to quasi-isomorphisms. Juh\'{a}sz and Thurston pointed out that \cite{OSTriangles}*{Theorem~2.1} contains a gap, since the space of isotopies taking one embedded Heegaard surface to another is not connected. The proof of invariance of the transition maps is completed in \cite{JTNaturality}, by a careful topological analysis of the space of Heegaard splittings, followed by verification that the Heegaard Floer transition maps have no monodromy around a special loop of Heegaard diagrams (the  \emph{simple handleswap} loop \cite{JTNaturality}*{Definition~2.31}).
\end{proof}

We now describe the maps associated to the moves in Lemma~\ref{lem:Heegaardmoves}, as well as changes of the almost complex structure.

 If $\cH=(\Sigma,\as,\bs,\ws)$ is a diagram for $(Y,\ws)$, and $J$ and $J'$ are two almost complex structures on $\Sigma\times [0,1]\times \R$, then the transition map
$\Psi_{(\cH,J)\to (\cH,J')}$ is defined by picking a non-cylindrical almost complex structure  $\tilde{J}$ on $\Sigma\times [0,1]\times \R$ which agrees with $J$ on $\Sigma\times [0,1]\times (-\infty,-1]$ and $J'$ on $\Sigma\times [0,1]\times [1,\infty)$, and counting Maslov index 0 $\tilde{J}$-holomorphic curves in $\Sigma\times [0,1]\times \R$.

Next, we consider the maps associated to handleslides and isotopies of $\as$ and $\bs$. In this case, the transition map $\Psi_{(\cH,J)\to (\cH',J')}$ can be computed by a sequence of holomorphic triangle maps. If $\as'$ is obtained from $\as$ by a handleslide or isotopy, and the triple $(\Sigma,\as',\as,\bs,\ws)$ satisfies an admissibility condition (see Definition~\ref{def:admissibilitytriple}), the map $\Psi_{(\cH,J)\to (\cH',J')}$ can be computed by counting holomorphic triangles via the formula
\begin{equation}
\Psi_{(\cH,J)\to (\cH',J)}(-)= F_{\a',\a,\b}(\Theta_{\a',\a}^+\otimes -).\label{eq:changeofdiagmras=triangles}
\end{equation}
In Equation~\eqref{eq:changeofdiagmras=triangles}, $\Theta_{\a',\a}^+$ denotes a cycle in $\CF^-(\Sigma,\as',\as,\ws,\frs_0)$ which represents the top graded homogeneous element of $\HF^-(\Sigma,\as',\as,\ws,\frs_0)$, and $\frs_0$ denotes the torsion $\Spin^c$ structure on $(S^1\times S^2)^{\# g(\Sigma)}$.
 
If the triple $(\Sigma,\as',\as,\bs,\ws)$ is not admissible, the transition map $\Psi_{(\cH,J)\to (\cH',J')}$ is obtained by a composition of triangle maps as in Equation~\eqref{eq:changeofdiagmras=triangles}. We will often write $\Psi_{\as\to \as'}^{\bs}$ for the transition map $\Psi_{(\cH,J)\to (\cH',J')}$, in this situation.

Similarly moves of the $\bs$ curves can be computed using the holomorphic triangle maps, in an analogous fashion. If $\bs'$ is obtained from $\bs$ via a sequence of handleslides or isotopies, we write $\Psi_{\as}^{\bs\to \bs'}$ for the corresponding transition map.

If $\cH=(\Sigma,\as,\bs,\ws)$ is a Heegaard diagram for $(Y,\ws)$, and $\cH'=(\Sigma',\as\cup \{\alpha'\},\bs\cup \{\beta'\},\ws)$ is a simple stabilization, then we consider the map $\sigma\colon \CF^-(\Sigma,\as,\bs,\ws,\frs)\to \CF^-(\Sigma',\as',\bs',\ws,\frs)$ defined by the formula
\[
\sigma(\xs):=\xs\times c,
\]
extended $\bF_2[U_{\ws}]$-equivariantly, where $\{c\}=\alpha'\cap \beta'$.

If $J$ is an almost complex structure on $\Sigma\times [0,1]\times \R$ which is split in a neighborhood of the connected sum point, and $T>0$, then one can construct an almost complex structure $J(T)$ on $\Sigma'\times [0,1]\times \R$ which has a connected sum neck of length $T$ inserted. According to the proof of \cite{LipshitzCylindrical}*{Proposition~12.5}, if $T$ is sufficient large, the map $\sigma$ will satisfy
\[
\sigma\circ \d_{J}=\d_{J(T)}\circ \sigma.
\]
For appropriately large $T$, the transition map from $(\cH,J)$ to $(\cH',J(T))$ is defined to be $\sigma$. The meaning of ``appropriately large'' can be made precise: We say a neck length $T>0$ satisfies \emph{stabilization condition} \eqref{eq:stabilizationcondition1} (and can be used to compute the simple stabilization map) if for all $T'\ge T$, there is an almost complex structure $\tilde{J}$ on $\Sigma\times [0,1]\times \R$ interpolating $J(T)$ and $J(T')$ so that
\begin{equation}
\Psi_{\tilde{J}}(\xs\times c)=\xs\times c,
\tag{SC-1}\label{eq:stabilizationcondition1}
\end{equation}
for all $\xs\in \bT_{\a}\cap \bT_{\b}$. We will consider a similar stabilizing condition when we define the free-stabilization, 1-handle and 3-handle maps; see Conditions~\eqref{eq:stabilizationcondition2} and \eqref{eq:stabilizationcondition3}.

If $J'$ is a general almost complex structure on $\Sigma'\times [0,1]\times\R$ (possibly not satisfying Condition~\eqref{eq:stabilizationcondition1})  then the transition map $\Psi_{(\cH,J)\to (\cH',J')}$ is defined as the composition
\begin{align*}
\Psi_{(\cH,J)\to (\cH',J')}&:=\Psi_{(\cH',J(T))\to (\cH',J')}\circ \Psi_{(\cH,J)\to (\cH',J(T))}\\
&=\Psi_{(\cH',J(T))\to (\cH',J')}\circ \sigma,
\end{align*}
for a $T$ which satisfies Condition~\eqref{eq:stabilizationcondition1}.

Finally, if $(\cH',J')$ is obtained by pushing forward $\cH$ under a diffeomorphism $\phi$ of $(Y,\ws)$ which is isotopic to the identity, relative to $\ws$, then the transition map $\Psi_{(\cH,J)\to (\cH',J')}$ is defined to be the tautological map $\phi_*$ induced by $\phi$.

If $(Y,\ws)$ is a multi-pointed 3-manifold and $\sigma\colon \ws\to \bmP$ is a coloring, then the $\cR_{\bmP}$-modules $\CF^-(\cH,\sigma,\frs)$ from Equation~\eqref{eq:color-modified-complex-def} form a transitive system of chain complexes, for which we write $\CF^-(Y,\ws^\sigma,\frs)$.

\subsection{Admissibility of Heegaard diagrams}
\label{sec:admissibility}

In order to achieve finite counts of holomorphic disks and triangles for $\CF^-$, Ozsv\'{a}th and Szab\'{o} define several admissibility conditions (\emph{weak admissibility} and \emph{strong $\frs$-admissibility}) for singly pointed Heegaard diagrams, triples and quadruples \cite{OSDisks}*{Sections~4 and 8}. For multi-pointed diagrams of integer homology spheres, they also described a weaker version of admissibility  (\emph{weak admissibility}) \cite{OSLinks}*{Section~3.4}, though this is not sufficient for our purposes.  In this section, we extend their work on strong $\frs$-admissibility to multi-pointed diagrams of arbitrary 3-manifolds (see Definition~\ref{def:strongadmissibility}).

\begin{define} If $(\Sigma,\as,\bs,\ws)$ is a multi-pointed Heegaard diagram, a \emph{periodic domain} is an  integral 2-chain $P$ on $\Sigma$ with boundary equal to a linear combination of the $\as$ and $\bs$ curves,  with $n_w(P)=0$ for all $w\in \ws$.  A \emph{periodic class} is a homology class $\phi\in \pi_2(\xs,\xs)$ for some $\xs\in \bT_{\a}\cap \bT_{\b}$ such that $n_{w}(\phi)=0$ for all $w\in \ws$.
\end{define}

The domain of a periodic class is a periodic domain, though we usually will not make a distinction between the two.
If $R$ is a ring, we can also consider the set of $R$-valued periodic domains, for which we write $\Pi_R$.

There is a natural map
\[
H\colon \Pi_{\Z}\to H_2(Y\setminus \ws;\Z),
\]
obtained by capping  $\d P$ with an integral combination of compressing disks for the $\as$ and $\bs$ curves. The construction of $H$ works for other rings $R$, as well.

If $\phi\in \pi_2(\xs,\ys)$ is a homology class of disks, we write $D(\phi)$ for the domain of $\phi$, viewed as a 2-chain on $\Sigma$. If $\phi\in \pi_2(\xs,\xs)$ is a periodic class,  define
\[
H(\phi):=H(D(\phi)).
\]

If $P\in \Pi_{\Z}$ is a periodic domain, if $\lambda$ is an integral 1-chain on $\Sigma$, with boundary equal to an integer sum of $\ws$ basepoints, then we will write $a(\lambda,P)$ for the integer obtained by summing each local difference of the class $\lambda$ across each $\as$ curve as one traverses $\lambda$. We define an integer $b(\lambda,P)$ analogously, by summing differences of the domain $P$ across the $\bs$ curves as one traverses $\lambda$. See Section~\ref{subsec:A-lambda-construction} for more details on $a(\lambda,P)$ and $b(\lambda,P)$.

%Write 
%\[
%i_*\colon H_1(\Sigma\setminus N(\ws), \d (\Sigma \setminus N(\ws));\Z)\to H_1(Y\setminus N(\ws), \d N(\ws);\Z)
%\]
%for the map induced by inclusion.

If $\lambda$ is an integral 1-chain on $\Sigma$, with boundary equal to a linear combination of the $\ws$ basepoints, then by pushing the interior of $\gamma$ into the $\as$ handlebody, we obtain the formula
\begin{equation}
a(\lambda,P)=\#(\lambda\cap H(P)),\label{eq:periodicclassalambda}
\end{equation}
Similarly, if $\gamma$ is an integral 1-cycle on $\Sigma$, then 
\begin{equation}
a(\gamma,P)=\#(\gamma\cap H(P))=\langle\PD[\gamma], H(P)\rangle.\label{eq:periodicclassalambda2}
\end{equation}

A helpful topological fact is the following:
\begin{lem}\label{lem:isotoH_2}The map $H\colon \Pi_{R}\to H_2(Y\setminus \ws;R),$
is an isomorphism for any ring $R$.
\end{lem}
\begin{proof}
Since $\Pi_{\Z}$ and $H_2(Y\setminus \ws;\Z)$ are free $\Z$-modules, it is sufficient to show the claim for $R=\Z$.

To see that $H$ is a surjection, note that $Y\setminus N(\ws)$ is obtained by attaching 2-handles to $[0,1]\times (\Sigma\setminus N(\ws))$ along $\{0\}\times \as$ and $\{1\}\times \bs$. By putting a closed 2-cycle into general position, we can ensure that it intersects the co-cores of the 2-handles transversely. After a homotopy, it becomes homologous to a 2-chain in $\Sigma\setminus N(\ws)$ with boundary equal to a linear combination of the $\as$ and $\bs$ curves, together with some linear combination of the cores of the 2-handles. Hence $H$ is surjective.

For injectivity, suppose $H(P)=0\in H_2(Y\setminus \ws;\Z)$. By Equation~\eqref{eq:periodicclassalambda}, $a(\lambda,P)=0$ for any $\lambda$ which is a closed curve or a path connecting two basepoints in $\ws$. Similarly  $b(\lambda,P)=0$ for any such $\lambda$. By considering $\lambda$ arcs or curves which are dual to $\as$ and $\bs$ curves, it follows that $H(P)$ has no changes across any of the $\as$ and $\bs$ curves. Since $P$ also has zero multiplicity at the basepoints, $P$ must be zero everywhere. 
\end{proof}

By abuse of notation, we will also write $H(P)$ or $H(\phi)$ for the induced class in $H_2(Y;\Z)$. The capping map $H$  satisfies
\begin{equation}
\mu(\phi)=\langle c_1(\frs), H(\phi)\rangle+2\sum_{w\in \ws} n_{w}(\phi).\label{eq:Chernclassformula}
\end{equation}
Equation~\eqref{eq:Chernclassformula} can be proven by a simple modification of Ozsv\'{a}th and Szab\'{o}'s proof for singly pointed diagrams \cite{OSProperties}*{Proposition~7.5}.

We make the following definition, extending Ozsv\'{a}th and Szab\'{o}'s \emph{strong $\frs$-admissibility} condition \cite{OSDisks}*{Section~4} to multi-pointed diagrams:

\begin{define}\label{def:strongadmissibility} If $\frs\in \Spin^c(Y)$, we say a Heegaard diagram $\cH=(\Sigma,\as,\bs,\ws)$ is \emph{strongly $\frs$-admissible} if for each $N>0$ and each non-trivial periodic domain $P\in \Pi_{\Z}$, the inequality
\[
\langle c_1(\frs), H(P)\rangle =2N\ge 0
\]
implies that $P$ has some multiplicity strictly greater than $N$.
\end{define}

Strong $\frs$-admissibility ensures finiteness of the differential on $\CF^-$:

\begin{lem}\label{lem:stronglyadmissiblefinitedisks}If $\cH$ is strongly $\frs$-admissible, and $j$ is fixed, then there are only finitely many homology classes $\phi\in \pi_2(\ve{x},\ve{y})$ with $\mu(\phi)=j$ and $D(\phi)\ge 0$.
\end{lem}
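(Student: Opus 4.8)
The plan is to adapt the standard argument of Ozsv\'{a}th--Szab\'{o} for a single basepoint (\cite{OSDisks}*{Lem. 4.14}) to the multipointed setting, where the only substantive change is that the relevant ``large coefficient'' estimate must be phrased in terms of the Maslov index formula of Lemma \ref{lem:chernmaslovindex} rather than the single-basepoint version. First I would fix $\ve{x}, \ve{y} \in \bT_\alpha \cap \bT_\beta$ and observe that, by Lemma \ref{lem:acceptabledisks}, two classes $\phi, \phi' \in \pi_2(\ve{x},\ve{y})$ differ by a periodic class $\cP = \phi - \phi' \in \pi_2(\ve{x},\ve{x})$, so it suffices to show that the set of periodic classes $\cP$ such that both $\cD(\phi_0) + \cD(\cP) \ge 0$ and $\mu(\phi_0) + \mu(\cP) = j$ (for some fixed reference class $\phi_0$) is finite. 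Since $\mu$ is additive under splicing, the index constraint pins down $\mu(\cP) = j - \mu(\phi_0)$, a fixed integer.

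Next I would invoke the structure of the group of periodic domains: it is a finitely generated free abelian group, spanned by $[\Sigma]$ together with finitely many basic periodic domains $\cP_1, \dots, \cP_k$ (the image of $H_2(Y;\Z)$, together with the $\alpha$- and $\beta$-periodic pieces). Write $\cP = n[\Sigma] + \sum a_i \cP_i$. By Lemma \ref{lem:chernmaslovindex} applied to $\cP$, we have $\mu(\cP) = \langle c_1(\frs), H(\cP)\rangle + 2\sum_{i=1}^\ell n_{w_i}(\cP)$. The key point is that the conditions $n_{\ve{w}}(\phi) \equiv n_{\ve{w}}(\phi_0)$ is \emph{not} imposed here (we are bounding all classes of index $j$, not just those with prescribed $n_{\ve{w}}$), so instead I would argue as follows: if there were infinitely many such $\cP$, then some sequence $\cP^{(m)}$ would have unbounded coefficients, and after passing to a subsequence and subtracting a bounded class we could assume $\cP^{(m)} = m \cQ$ for a single nonzero periodic domain $\cQ$ with $\cD(\phi_0) + m\,\cD(\cQ) \ge 0$ for all $m$; letting $m \to \infty$ forces $\cD(\cQ) \ge 0$, and since $\cQ \ne 0$ and periodic domains summing to a nonnegative domain and having all local multiplicities of one sign must be nonnegative multiples of $[\Sigma]$ (any genuinely nontrivial nonnegative periodic domain would contradict weak, hence strong, admissibility), we get $\cQ = c[\Sigma]$ with $c > 0$. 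But then $\mu(\cP^{(m)}) = \mu(m c [\Sigma]) = \langle c_1(\frs), mc[Y]\rangle + 2\sum_i n_{w_i}(mc[\Sigma]) = 2\ell \cdot mc \to \infty$ (using $n_{w_i}([\Sigma]) = 1$ and $H([\Sigma]) = 0$), contradicting $\mu(\cP^{(m)})$ being the fixed value $j - \mu(\phi_0)$.

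Alternatively, and perhaps more cleanly, I would follow the original proof verbatim: strong $\frs$-admissibility gives, for every nontrivial periodic domain $\cP$ with $\langle c_1(\frs), H(\cP)\rangle = 2N > 0$, a local multiplicity exceeding $N$; combined with the index formula this controls the coefficients of $\cD(\phi)$ from below in terms of $\mu(\phi)$, and since a nonnegative domain with bounded Euler measure and bounded multiplicities over a fixed finite set of regions lies in a finite set, finiteness follows. I expect the main obstacle to be purely bookkeeping: making precise the dichotomy ``either $\langle c_1(\frs), H(\cP)\rangle > 0$, where strong admissibility bites, or $H(\cP)$ pairs trivially with $c_1(\frs)$, where one must instead use the $2\sum_i n_{w_i}$ term together with nonnegativity and the definition of weak admissibility to bound things'' --- this is exactly the place where the passage from one basepoint to $\ell$ basepoints requires the replacement of $n_z$ by $\sum_{i=1}^\ell n_{w_i}$ throughout, as already flagged in the proof of Lemma \ref{lem:chernmaslovindex}. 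Everything else is a routine compactness/finiteness argument on the finitely generated lattice of domains.
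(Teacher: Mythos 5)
Your overall strategy — adapt the proof of \cite{OSDisks}*{Lem.\ 4.14} using the renormalization argument and the multi-pointed Maslov/Chern formula of Lemma \ref{lem:chernmaslovindex} — is the right one, but there are two genuine gaps.

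First, in your main approach, the reduction ``after passing to a subsequence and subtracting a bounded class we could assume $\cP^{(m)} = m\cQ$'' is not justified. A sequence of lattice points escaping to infinity in a polyhedron need not lie on a ray, even after subtracting a bounded term. What the paper actually does is normalize $\cP^{(m)}$ by its Euclidean norm, extract a convergent subsequence to obtain a real-valued direction $\cQ_\infty$ satisfying $\cD(\cQ_\infty)\ge 0$ and $\mu(\cQ_\infty)=0$ (the inhomogeneous term $\cD(\phi_0)$ is killed in the limit), then approximate $\cQ_\infty$ by a nonzero rational — and hence, after clearing denominators, integral — class with the same properties. One obtains \emph{some} nonzero integral class $\cQ$ with these properties, not a factorization of the original sequence.

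Second, and more substantively, you are treating $\pi_2(\ve{x},\ve{x})$ as if it were $\Z[\Sigma]\oplus(\text{periodic domains})$; in the multi-pointed setting it is not. A class in $\pi_2(\ve{x},\ve{x})$ decomposes as $\cP+\cA$ with $\cP$ a periodic domain (i.e.\ $n_{\ve{w}}=0$) and $\cA$ a $\Z$-linear combination of \emph{all} $\ell$ components of $\Sigma\setminus\ve{\alpha}$, not merely a multiple of $[\Sigma]$. Your claim that the limiting class $\cQ$ must equal $c[\Sigma]$ is therefore false in general, and the contradiction you derive via $\mu(c[\Sigma])=2\ell c$ does not apply. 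The paper instead runs the admissibility argument on the piece $\cP$ alone: one shows $\cA\ge 0$, computes $\langle c_1(\frs),H(-\cP)\rangle = 2N$ with $N=\sum_j n_{w_j}(\cA)\ge 0$, so that strong admissibility forces a coefficient of $-\cP$ to exceed $N$; since each coefficient of $\cA$ is at most $N$, this contradicts $\cP+\cA\ge 0$. This is the step your proposal does not supply, and it is where the multi-basepoint structure genuinely enters.

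Your alternative sketch (``controls the coefficients of $\cD(\phi)$ from below in terms of $\mu(\phi)$,'' ``bounded Euler measure'') is too vague to close either gap: neither boundedness claim is established, and even if it were, boundedness over a proper subset of regions does not give finiteness.
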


\begin{proof} The proof is a modification of Ozsv\'{a}th and Szab\'{o}'s proof for singly pointed diagrams \cite{OSDisks}*{Lemma~4.14}.

 Fix any $\psi\in \pi_2(\ve{x},\ve{y})$ with $\mu(\psi)=j$.  If $\phi\in \pi_2(\ve{x},\ve{y})$ is another class then we can uniquely write
\[
D(\phi)=D(\psi)+P+A
\]
where $P\in \Pi_{\Z}$, and $A$ is a $\Z$-linear combination of the components of $\Sigma\setminus \as$. If $D(\phi)\ge 0$, then
\begin{equation}
-D(\psi)\le P+A.\label{eq:psianddomains}
\end{equation}
 
Suppose to the contrary of the main statement, that there is an infinite sequence  $\phi_n$ of distinct classes in $\pi_2(\xs,\ys)$, with $\mu(\phi_n)=j$ and $D(\phi_n)\ge 0$. We obtain an infinite sequence of pairwise distinct pairs $(P_n,A_n)$ which satisfy Equation~\eqref{eq:psianddomains}.

Since there are only finitely many components of $\Sigma\setminus (\as\cup \bs)$, it follows that  
\[
\|P_n+A_n\|_\infty\to \infty,
\]
 where $\|D\|_\infty$ denotes the maximum absolute value of the components of a domain $D$.

The coefficients of $P_n+A_n$ are bounded below by the coefficients of  $-D(\psi)$. Similarly, since $P_n$ satisfies $n_w(P_n)=0$ for all $n$, it follows that $n_{w}(A_n)=n_w(\phi_n)-n_w(\psi)$. Since $D(\phi_n)\ge 0$, it follows that the coefficients of $A_n$ are also bounded from below.

Since $P_n$ is zero on $\ws$, and $A_n$ is determined by its values on $\ws$, we have
\begin{equation}
\|A_n\|_\infty\le \|P_n+A_n\|_\infty. \label{eq:An-bounded}
\end{equation}
By the triangle inequality
\begin{equation}
\|P_n\|_{\infty}\le \|A_n\|_\infty+\|P_n+A_n\|_\infty.\label{eq:Pn-bounded}
\end{equation}

 Equations~\eqref{eq:An-bounded} and ~\eqref{eq:Pn-bounded} imply $A_n/\|P_n+A_n\|_\infty$ and $P_n/\|A_n+P_n\|_\infty$ are both bounded, and hence admit subsequences which converge to real domains $A_\infty$ and $P_\infty$, respectively.

Since $\mu(\phi_n)=\mu(\psi)=j$, we have that 
\begin{equation}
\mu(P_n^{\xs}+A_n^{\xs})=0,\label{eq:Maslov-index-Pn+An}
\end{equation} where $P_n^{\xs},A_n^{\xs}\in \pi_2(\xs,\xs)$ denote the periodic classes corresponding to $P_n$ and $A_n$. Combining Equations~\eqref{eq:Chernclassformula} and~\eqref{eq:Maslov-index-Pn+An}, as well as the fact that $H(A_n)=0\in H_2(Y;\Z)$ and  $n_w(P_n)=0$, we obtain
\begin{equation}
0=\mu(P_n^{\xs}+A_n^{\xs})=\langle c_1(\frs), H(P_n)\rangle+\sum_{w\in \ws} 2n_{w}(A_n).\label{eq:theMaslovindexiszero}
\end{equation}
Taking limits in Equation~\eqref{eq:theMaslovindexiszero}, we obtain
\begin{equation}
\langle c_1(\frs), H(P_\infty)\rangle+\sum_{w\in \ws} 2n_{w}(A_\infty)=0.\label{eq:relationinvolvinglimitsdomains}
\end{equation}
 Since the multiplicities of $A_n$ and $A_n+P_n$ are bounded below, and $\|A_n+P_n\|_\infty\to \infty$, we conclude 
 \begin{equation}
 A_\infty\ge 0 \qquad \text{and}\qquad P_\infty+A_\infty\ge 0.\label{eq:A_infty>0}
 \end{equation}
If there is a real pair $(P_\infty,A_{\infty})$  satisfying Equations~\eqref{eq:relationinvolvinglimitsdomains} and ~\eqref{eq:A_infty>0}, then it is not hard to see that there is also a nearby rational domain which also satisfies Equations~\eqref{eq:relationinvolvinglimitsdomains} and \eqref{eq:A_infty>0}. By clearing denominators, we can find a pair $(P',A')$ of integral domains, satisfying the same relations.
  
  By Equations~\eqref{eq:relationinvolvinglimitsdomains} and \eqref{eq:A_infty>0},
\[
\langle c_1(\frs), H(-P')\rangle=2N\ge 0
\]
 where $N=\sum_{w\in \ws} n_{w}(A').$ Hence $-P'$ has a multiplicity which is greater than $N$ by strong admissibility. Since $A'\ge 0$, this contradicts the fact that $A'+P'\ge 0$. 
\end{proof}

We now prove that all multi-pointed Heegaard diagrams can be made admissible by performing an isotopy:

\begin{prop}\label{prop:windforadmiss}If $\cH=(\Sigma,\as,\bs,\ws)$ is a multi-pointed Heegaard diagram for $(Y,\ws)$, and $\frs$ is a fixed $\Spin^c$ structure on $Y$, then $\cH$ is isotopic to a strongly $\frs$-admissible diagram. 
\end{prop}
\begin{proof}
Our proof is a modification of Ozsv\'{a}th and Szab\'{o}'s procedure for achieving strong $\frs$-admissibility for singly pointed diagrams \cite{OSDisks}*{Lemma~5.4}, and their procedure for achieving weak admissibility for multi-pointed diagrams of integer homology spheres \cite{OSLinks}*{Proposition~3.6}.

Pick a collection of closed curves $\gamma_1,\dots, \gamma_k$ and arcs $\lambda_{1}\dots, \lambda_{n}$ on $\Sigma$ satisfying the following:
\begin{enumerate}
\item $\gamma_1,\dots, \gamma_k$ are pairwise disjoint, simple, closed curves which span $H_1(Y;\Z)$.
\item Each $\lambda_i$ is an embedded arc with boundary equal to two basepoints of $\ws$.
\item The interiors of $\lambda_i$ and $\lambda_j$ are disjoint if $i\neq j$, and $\lambda_i$ is disjoint from $\gamma_j$ for all $i$ and $j$.
\item Each basepoint of $\ws$ is in the boundary of at least one $\lambda_i$, and $\lambda_1\cup\cdots \cup \lambda_n$ is connected.
\end{enumerate}
The above conditions imply that $\gamma_1,\dots, \gamma_k,\lambda_1,\dots, \lambda_n$ span $H_1(Y,\ws;\Z)$.

Such  a collection $\gamma_1,\dots, \gamma_k,$ $\lambda_1,\dots, \lambda_n$ can be constructed as follows. Let $\as_0$ be any tuple of $g(\Sigma)$ attaching curves on $\Sigma$, which are disjoint from $\ws$ and which bound compressing disks in the handlebody $U_{\a}$, such that $\Sigma\setminus \as_0$ is a connected, planar surface. The curves $\gamma_1,\dots, \gamma_k$ can be chosen to be duals of the curves of $\as_0$. The complement of $\as_0$ is planar, connected, and contains all of the $\ws$ basepoints. The $\lambda_i$ can be chosen to form an embedded tree, such that each $\lambda_i$ connects one $\ws$ basepoint to a chosen central basepoint of $\ws$.

Write $\Pi'_{\Q}$ for the set of rational 2-chains on $\Sigma$ of the form
\[
P-\frac{\langle c_1(\frs), H(P)\rangle}{2}\cdot [\Sigma] \quad \text{ for  } \quad P\in \Pi_{\Q}.
\]
Following Ozsv\'{a}th and Szab\'{o}'s terminology, we call such domains $\frs$-\emph{renormalized} periodic domains. The groups $\Pi_{\Q}$ and $\Pi_{\Q}'$ are canonically isomorphic, so there is a well defined capping map
\[
H\colon \Pi'_{\Q}\to H_2(Y\setminus \ws; \Q),
\]
which is an isomorphism by Lemma~\ref{lem:isotoH_2}.

Let $R_1,\dots, R_n$ be a collection of pairwise disjoint, embedded rectangles in $\Sigma$, such that the following hold:
\begin{enumerate}
\item $\lambda_i\cap R_i$ is a connected arc.
\item $\lambda_i\cap R_j=\emptyset$ if $i\neq j$, and $\gamma_i\cap R_j=\emptyset$ for all $i$ and $j$.
\item If $\tau$ is an attaching curve in $\as$ or $\bs$, then $\tau\cap \lambda_i\subset R_i$. 
\end{enumerate}

Fix $N>0$. We perform the following two winding moves to $\as$  to construct a diagram $\cH_N=(\Sigma,\as_N,\bs)$:
\begin{enumerate}[ref= W-\arabic*, label= (W-\arabic*):]
\item\label{num:wind2} Let $\gamma_i^+$ and $\gamma^-_i$ denote two small, parallel pushoffs of $\gamma_i$, which are disjoint from each other and $\gamma_i$. Wind the $\as$ curves positively $N$ times around $\gamma_i^+$, and negatively $N$ times around $\gamma_i^-$, as shown in Figure~\ref{fig::61}
\item\label{num:wind1}  Along each $\lambda_i$, we perform a zig-zag move to the $\as$ curves, as shown in Figure~\ref{fig::60}. The zig-zag move is supported in the rectangle $R_i$.
\end{enumerate}

We write $\Pi_{\Q,N}'$ for the set of $\frs$-renormalized periodic domains on $\cH_N$.

The groups $\Pi_{\Q,N}'$ and $\Pi_{\Q}'$ are canonically isomorphic: indeed Lemma~\ref{lem:isotoH_2} implies both are canonically isomorphic to $H_2(Y\setminus \ws;\Q)$. Write
\[
\cW_{N}\colon \Pi_{\Q}'\to \Pi_{\Q,N}'
\]
for this isomorphism.

\begin{subclaim}\label{subclaim:wind=>admiss}For sufficiently large $N$, any nonzero $\frs$-renormalized periodic domain on $\cH_N$ has both positive and negative multiplicities.
\end{subclaim}

It is straightforward to see that Subclaim~\ref{subclaim:wind=>admiss} implies that for sufficiently large $N$, $\cH_N$ is strongly $\frs$-admissible.

We now prove Subclaim~\ref{subclaim:wind=>admiss}. Define an $L^\infty$-norm on $H_2(Y\setminus \ws;\Q)$ via the formula
\[
\|\sigma \|_{\infty}^{Y\setminus \ws} =\max\left \{|\# (\lambda_1\cap \sigma)|, \dots,  |\# (\lambda_{n}\cap \sigma)|, |\# (\gamma_1\cap \sigma)|, \dots, |\# (\gamma_k\cap \sigma)|\right\},
\]
where $\# (\lambda_i\cap \sigma)$ denotes the algebraic intersection number. Similarly, define an $L^\infty$-norm on $H_2(Y;\Q)$ via the formula
\[
\|\sigma\|_{\infty}^{Y}:=\max\left \{|\# (\gamma_1\cap \sigma)|, \dots, |\# (\gamma_k\cap \sigma)|\right\}.
\]

Let $\bS_{\Q}'$ denote the unit sphere in $\Pi_{\Q}'$ with respect to the $\|\cdot\|_{\infty}^{Y\setminus \ws}$-norm, i.e.
\[
\bS_{\Q}':=\{P\in \Pi_{\Q}': \|H(P)\|_{\infty}^{Y\setminus \ws}=1\}.
\]

It is sufficient to show Subclaim~\ref{subclaim:wind=>admiss} for $\frs$-renormalized periodic domains $P$ such that $H(P)\in \bS_{\Q}'$,  since scaling by $\Q$ preserves the property of having positive and negative coefficients.

Since $\gamma_1,\dots, \gamma_k$ span $H_1(Y;\Z)$, we can write
\begin{equation}
\PD[c_1(\frs)]=a_1[\gamma_1]+\cdots +a_k [\gamma_k],\label{eq:decompose-c1-frs-gamma-i}
\end{equation}
for some $a_i\in \Z$. Let $M$ denote the quantity
\[
M:=\sum_{i=1}^k |a_i|.
\]

Pick a real number $\epsilon$ so that 
\begin{equation}
0<\epsilon<\frac{1}{2}\qquad \text{and}\qquad  M \epsilon<\frac{1}{2}.\label{eq:epsilonsmallish}
\end{equation}
%
% \frac{1}{2\cdot  k\cdot  M} \quad \text{and} \quad \epsilon<\frac{1}{2}.\label{eq:epsilonsmallish}
%(If the denominator in Equation~\eqref{eq:epsilonsmallish} is zero, we only require $\epsilon<\tfrac{1}{2}$.)

 Let $C$ be the maximum absolute value of any multiplicity of any domain in $\bS_{\Q}'$, and let $N>0$ be an integer which satisfies 
\begin{equation}
N\cdot \epsilon >C.\label{eq:Nlarge}
\end{equation}
 We will show that if $N$ satisfies Equation~\eqref{eq:Nlarge}, then any nonzero $\frs$-renormalized domain on $\cH_N$ has both positive and negative multiplicities.

Suppose that $P\in \bS_{\Q}'$. Note that by definition $\cW_N$ does not change the induced class in $H_2(Y\setminus \ws;\Z)$, and hence $\cW_N$ does not change either $\|\cdot\|_{\infty}^{Y\setminus \ws}$ or $\|\cdot\|_{\infty}^{Y}$. We break the argument into two cases:
\begin{enumerate}
\item\label{case:bigabshom} $\|H(P)\|_{\infty}^{Y}>\epsilon$.
\item\label{case:bigrelhom} $\|H(P)\|_{\infty}^{Y}\le \epsilon$. 
\end{enumerate}

In Case~\eqref{case:bigabshom}, let $\gamma_i$ be a curve so that $|\#(\gamma_i\cap H(P))|>\epsilon$. By Equation~\eqref{eq:periodicclassalambda},  $|a(\gamma_i, P)| >\epsilon$. Pick $x\in \gamma_i\setminus (\as\cup \bs)$ and let $x^+$ and $x^-$ be nearby  points on $\gamma^+_i\setminus (\as\cup \bs)$ and $\gamma_i^-\setminus (\as\cup \bs)$.

\begin{figure}[ht!]
\centering
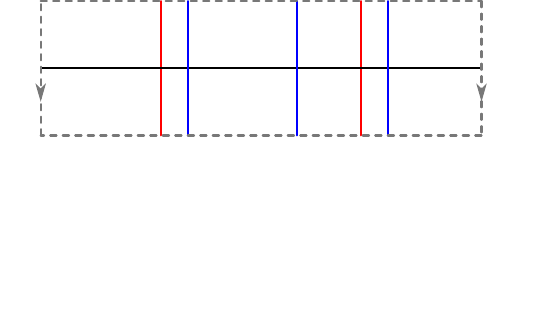
\caption{\textbf{Winding move~\eqref{num:wind2} near a closed curve $\gamma_i$ on $\Sigma$.} The arc $l^+$ is shown. The left and right sides of each rectangle are identified to form an annulus on $\Sigma$.}\label{fig::61}
\end{figure}

Let $l^+$ be an arc from $x$ to $x^+$, which is contained in a neighborhood of $\gamma_i$ and does not intersect  $\as$ or $\bs$. On $\cH_N$, $l^+$ intersects a curve in $\as_N$ with $N$ times the geometric multiplicity that $\gamma_i$ intersects the corresponding curve of $\as$. Furthermore, the arc $l^+$ can be concatenated with an arc which connects $x$ and $x^+$ and is disjoint from  $\as_N$, such that the concatenation is homologous to $N\cdot \gamma$ on $\Sigma$ (see the bottom of Figure~\ref{fig::61}). Writing $l^-$ for a similar path from $x$ to $x^-$, it follows that
\begin{equation}
a(l^+, \cW_N(P))=N\cdot a(\gamma_i, P)\quad \text{and} \quad a(l^-,\cW_N(P))=-N\cdot a(\gamma_i,P).\label{eq:al=agamma}
\end{equation}

Observing that $l^+$ and $l^-$ intersect no $\bs$ curves, it follows from Equation~\eqref{eq:al=agamma} that
\begin{equation}
n_{x^+}(\cW_{N}(P))=n_{x}(P)+N\cdot a(\gamma_i,P)\qquad \text{and} \qquad n_{x^-}(\cW_{N}(P))=n_{x}(P)-N\cdot a(\gamma_i,P).\label{eq:compn_xafterwinding}
\end{equation}
Together, Equations~\eqref{eq:Nlarge} and \eqref{eq:compn_xafterwinding} imply that $\cW_{N}(P)$ has both positive and negative multiplicities, proving Subclaim~\ref{subclaim:wind=>admiss} in Case~\eqref{case:bigabshom}.

Next, we consider Case~\eqref{case:bigrelhom}. Combining Equations~\eqref{eq:decompose-c1-frs-gamma-i} and ~\eqref{eq:epsilonsmallish} and the triangle inequality, we obtain 
\begin{equation}
|\langle c_1(\frs), H(P) \rangle|=\left|\sum_{i=1}^k a_i\langle \PD[\gamma_i], H(P) \rangle\right|\le \sum_{i=1}^k |a_i| \cdot |\# (\gamma_i\cap H(P) )|\le M\cdot \epsilon<\frac{1}{2}.\label{eq:boundchernclass}
\end{equation}

%applying H\"{o}lder's inequality to Equation~\eqref{eq:epsilonsmallish} implies
%\begin{equation}
%|\langle c_1(\frs),H(P)\rangle|<\|\PD [c_1(\frs)]\|_\infty^Y\cdot \sum_{i=1}^b |\#( \gamma_i \cap H(P))|\le \|\PD[c_1(\frs)]\|_\infty^Y \cdot b\cdot \epsilon <\frac{1}{2}.\label{eq:boundchernclass}
%\end{equation}

\begin{figure}[ht!]
\centering
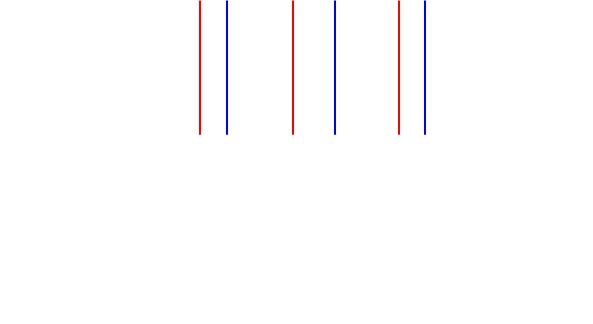
\caption{\textbf{The zig-zag move~\eqref{num:wind1} near an arc $\lambda_i$ with $\d \lambda_i=\{w_1,w_2\}.$} The arcs $l_i$ and $l_i'$ near $w_1$ and $w_2$ are shown. The move is supported in the rectangle $R_i$, whose boundary is the dashed box.}\label{fig::60}
\end{figure}

Define $m:=-\langle c_1(\frs), H(P)\rangle$, which is the multiplicity of $\cW_N(P)$ at each basepoint $w\in \ws$. By Equation~\eqref{eq:boundchernclass},   $m\in (-\tfrac{1}{2},\tfrac{1}{2})$. Furthermore, since $\| H(P)\|^{Y\setminus \ws}_\infty=1$, there is a $\lambda_i$ such that $a(\lambda_i, P)=1$.  We consider the two arcs $l_i$ and $l_i'$ on $\Sigma$, shown in Figure~\ref{fig::60}, which intersect no curves on $\Sigma$, and intersect only the $\as$ curves on $\cH_N$. Note that
\[
a(\lambda_i,\cW_N(P))=a(l_i,\cW_N(P))=a(l_i',\cW_N(P)),
\]
since $l_i$ and $l_i'$ can both be completed to curves on $\Sigma$ which are isotopic to $\lambda_i$, and whose only intersections with $\as$ occur along $l_i$ or $l_i'$ (see the dashed lines on the bottom of Figure~\ref{fig::60}). Since $l_i$ and $l_i'$ do not intersect the $\bs$ curves, it follows that the difference in multiplicity of $\cW_N(P)$ between the two points of $\d l_i$ is $a(\lambda_i,\cW_N(P))=1$, and similarly for the two points of $\d l_i'$.  Hence $\cW_{N}(P)$ has multiplicities of $m+1$ and $m-1$. Since $|m|<1/2$, it follows that $m-1<0$ and $m+1>0$. Subclaim~\ref{subclaim:wind=>admiss} follows in Case~\eqref{case:bigrelhom}, completing the proof.
\end{proof}

\subsection{Admissibility for Heegaard triples and quadruples}

We also need to consider admissibility of Heegaard triples and multi-diagrams.

If $(\Sigma,\as,\bs,\gs,\ws)$ is a Heegaard triple, Ozsv\'{a}th and Szab\'{o} construct the 4-manifold
\begin{equation}
X_{\a,\b,\g}:=(\Delta\times \Sigma)\cup (e_{\a}\times U_{\a})\cup (e_{\b}\times U_{\b})\cup (e_{\g}\times U_{\g}),\label{eq:X_abgdef}
\end{equation}
where $\Delta$ is a triangle with boundary edges $e_{\a}$, $e_{\b}$ and $e_{\g}$, and $U_{\a}$, $U_{\b}$ and $U_{\g}$ are standard handlebodies with boundary identified with $\Sigma$ \cite{OSDisks}*{Section~8.1}. The manifold $X_{\a,\b,\g}$ satisfies
\[
\d X_{\a,\b,\g}=-Y_{\a,\b}\sqcup -Y_{\b,\g}\sqcup Y_{\a,\g}.
\]
Ozsv\'{a}th and Szab\'{o} \cite{OSDisks}*{Section~8.1.4} construct  a map
\begin{equation}
\frs_{\ws}\colon \pi_2(\xs,\ys,\zs)\to \Spin^c(X_{\a,\b,\g}).\label{eq:spincmapdef}
\end{equation}

\begin{define}\label{def:admissibilitytriple}
If $(\Sigma,\as,\bs,\gs,\ws)$ is a multi-pointed Heegaard triple, we say that an integral 2-chain $P$ on $\Sigma$ is a \emph{triply periodic domain} if $\d P$ is a linear combination of the $\as$, $\bs$ and $\gs$ curves, and $n_{w}(P)=0$ for all $w\in \ws$.
\end{define}

If $\cQ=(\Sigma,\as,\bs,\gs,\ds,\ws)$ is a multi-pointed Heegaard quadruple, a \emph{quadruply periodic domain} on $\cQ$ is defined analogously.

Ozsv\'{a}th and Szab\'{o} define a notion of strong admissibility for Heegaard triples \cite{OSDisks}*{Definition~8.8}, which extends to the multi-pointed setting:

\begin{define}Suppose $\cT=(\Sigma,\as,\bs,\gs,\ws)$ is a multi-pointed Heegaard triple. If $\frs\in \Spin^c(X_{\a,\b,\g})$, we say $\cT$ is \emph{strongly $\frs$-admissible} if each nonzero triply periodic domain $P$, which can be written as a sum of doubly periodic domains
\[
P=P_{\a,\b}+P_{\b,\g}+P_{\a,\g}
\]
with
\[
\langle c_1(\frs),H(P_{\a,\b}) \rangle+\langle c_1(\frs),H(P_{\b,\g}) \rangle+\langle c_1(\frs),H(P_{\a,\g}) \rangle=2N\ge 0,
\]
has a multiplicity which is strictly greater than $N$.
\end{define}

A straightforward extension of Lemma~\ref{lem:stronglyadmissiblefinitedisks} and Proposition~\ref{prop:windforadmiss} gives the following (compare \cite{OSDisks}*{Lemmas~8.10 and 8.11}):

\begin{lem}\label{lem:finiteness-triangles} Suppose that $\cT=(\Sigma,\as,\bs,\gs,\ws)$ is a Heegaard triple.
\begin{enumerate}
\item If $\cT$ is strongly $\frs$-admissible, $\xs$, $\ys$ and $\zs$ are fixed intersection points on $\cT$, and $j$ is a fixed integer, then there are only finitely many $\psi\in \pi_2(\xs,\ys,\zs)$ with $\frs_{\ws}(\psi)=\frs$, $\mu(\psi)=j$ and $D(\psi)\ge 0$.
\item If  $\frs\in \Spin^c(X_{\a,\b,\g})$, then $\cT$ is isotopic to a strongly $\frs$-admissible Heegaard triple.
\end{enumerate}
\end{lem}

Given a Heegaard quadruple $(\Sigma,\as,\bs,\gs,\ds,\ws)$, there is a four-ended cobordism $X_{\a,\b,\g,\delta}$, as well as a map
\[
\frs_{\ws}\colon \pi_2(\ws,\xs,\ys,\zs)\to \Spin^c(X_{\a,\b,\g,\delta}).
\]
There are two natural decompositions of the 4-manifold $X_{\a,\b,\g,\delta}$:
\[
X_{\a,\b,\g,\delta}=X_{\a,\b,\delta}\cup_{Y_{\b,\delta}} X_{\b,\g,\delta}=X_{\a,\g,\delta}\cup_{Y_{\a,\g}} X_{\a,\b,\g}.
\]

For the purposes of ensuring finiteness of holomorphic rectangle counts, we need finiteness of homotopy classes in an entire $\delta H^1(Y_{\b,\delta})+\delta H^1(Y_{\a,\g})$-orbit $\frS$ in $\Spin^c(X_{\a,\b,\g,\delta})$. 

\begin{define} Suppose $\cQ=(\Sigma,\as,\bs,\gs,\ds,\ws)$ is a multi-pointed Heegaard quadruple and $\frS$ is a $\delta H^1(Y_{\b,\delta})+\delta H^1(Y_{\a,\g})$-orbit of $\Spin^c$ structures on $X_{\a,\b,\g,\delta}$. We say that $\cQ$ is \emph{strongly $\frS$-admissible} if whenever $\frs\in \frS$ and $P$ is a nonzero periodic class which can be written as a sum of doubly periodic domains 
\[
P=\sum_{\{\xi,\eta\}\subset \{\a,\b,\g,\delta\}} P_{\xi,\eta}
\]
such that
\[
 \sum_{\{\xi,\eta\}\subset \{\a,\b,\g,\delta\}} \langle c_1(\frs|_{Y_{\xi,\eta}}), H(P_{\xi,\eta})\rangle=2N\ge 0,
\]
then $P$ has a multiplicity strictly greater than $N$.
\end{define}

If $\cQ$ is strongly $\frS$-admissible Heegaard quadruple, then a finiteness result for positive rectangle classes of a fixed Maslov index, similar to Lemma~\ref{lem:finiteness-triangles} can be proven by adapting Lemma~\ref{lem:stronglyadmissiblefinitedisks}.

If $\cQ$ is an arbitrary Heegaard quadruple and $\frS$ is an $\delta H^1(Y_{\b,\delta})+\delta H^1(Y_{\a,\g})$-orbit, then strong $\frS$-admissibility can be achieved by modifying the winding and zig-zag procedure from Proposition~\ref{prop:windforadmiss} as long as the quantity
\[
 \sum_{\{\xi,\eta\}\subset \{\a,\b,\g,\delta\}} \langle c_1(\frs|_{Y_{\xi,\eta}}), H(P_{\xi,\eta})\rangle,
\]
is independent of the choice of $\frs\in \frS$. In particular, strong admissibility can be achieved by winding the curves $\as$, $\bs$, $\gs$ and $\ds$ as long as
\[
\delta H^1(Y_{\b,\delta})|_{Y_{\a,\g}}=0 \quad \text{and} \quad \delta H^1(Y_{\a,\g})|_{Y_{\b,\delta}}=0.
\]

\section{Relative homology action}
\label{sec:relhommap}

Ozsv\'{a}th and Szab\'{o} constructed an action of $\Lambda^* (H_1(Y;\Z)/\Tors)$ on the singly pointed Heegaard Floer groups $\HF^\circ(Y,w,\frs)$ \cite{OSDisks}*{Section~4.2.5}. In this section, we describe similar maps on the multi-pointed Floer complexes for closed loops in $Y$, as well as paths between pairs of basepoints. We call these maps the \emph{relative homology action}. Our construction is similar to an action of relative homology on sutured Floer homology constructed by Ni  \cite{Ni-homological}.

If $(Y,\ws)$ is a multi-pointed 3-manifold, and $\lambda$ is a path between two basepoints, $w_1,w_2\in \ws$, we  construct two maps
\[
A_{\lambda},\, B_\lambda\colon \CF^-(Y,\ws,\frs)\to \CF^-(Y,\ws,\frs),
\]
which we call the \emph{type-$A$} and \emph{type-$B$} relative homology maps, respectively.

We will show that $A_\lambda$ and $B_{\lambda}$ satisfy
\[
\d A_\lambda+A_\lambda \d=\d B_\lambda+B_{\lambda} \d=U_{w_1}+U_{w_2}.
\]
See Lemma~\ref{lem:Alambdachainhomotopy}. 

If $\gamma$ is a closed loop in $Y$, there are similar chain maps $A_\gamma$ and $B_{\gamma}$. For a closed curve $\gamma$, the maps $A_\gamma$ and $B_{\gamma}$ coincide, and both agree with the familiar action of $H_1(Y;\Z)/\Tors$. See Lemma~\ref{lem:AgBgrel}.

%We note that our description of the relative homology maps is similar to an action of relative homology on sutured Floer homology described by Ni , as well as a description of the standard homology action in the cylindrical setting due to Lipshitz \cite{LipshitzCylindrical}*{Section~8.1}.

\subsection{Construction of the relative homology action}
\label{subsec:A-lambda-construction}

Suppose $\cH=(\Sigma,\as,\bs,\ws)$ is a multi-pointed Heegaard diagram for $(Y,\ws)$. If $\lambda$ is a path in $Y$ from $w_1$ to $w_2$, we can homotope $\lambda$ so that it has image in $\Sigma$ and is an immersion. Furthermore, we can assume that $\lambda$ intersects the $\as$ and $\bs$ curves transversely, and is disjoint from all intersections $\alpha_i\cap \beta_j$.  Let $a_1,\dots, a_m$ denote the points of intersection between $\lambda$ and the $\ve{\alpha}$ curves. Let $b_1,\dots, b_k$ denote the points of intersection between $\lambda$ and the $\bs$ curves. Given a homology class of disks $\phi\in \pi_2(\xs,\ys)$,  we let $d_{i}^{\alpha,\lambda}(\phi)$ denote the difference between the multiplicities of $\phi$ on the two sides of the point $a_i$. Similarly we let $d_{i}^{\b,\lambda}(\phi)$ denote the difference between the multiplicities of $\phi$ on the two sides of $b_i$. We define
\begin{equation}
a(\lambda,\phi)=\sum_{i=1}^m d_{i}^{\a,\lambda}(\phi) \quad \text{and} \quad b(\lambda,\phi)=\sum_{i=1}^k d_{i}^{\b,\lambda}(\phi),\label{eq:defa,blambda}
\end{equation}
which we view as elements of $\bF_2$. An orientation of $\lambda$ allows us to lift the quantities $a(\lambda,\phi)$ and $b(\lambda,\phi)$ to $\Z$, which will occasionally be useful.

We define the endomorphism $A_\lambda$ via the formula
\begin{equation}
A_{\lambda}(\ve{x})=\sum_{\ve{y}\in \bT_\alpha\cap \bT_\beta} \sum_{\substack{\phi\in \pi_2(\ve{x},\ve{y})\\ \mu(\phi)=1}}a(\lambda, \phi) \# \hat{\cM}(\phi)U_{\ve{w}}^{n_{\ve{w}}(\phi)}\cdot \ve{y}.\label{eq:Alambdadef}
\end{equation}
The map $B_{\lambda}$ similarly to Equation~\eqref{eq:Alambdadef}, by replacing the factor of $a(\lambda,\phi)$ with $b(\lambda,\phi)$.

If $\gamma$ is a closed curve in $Y$, we can similarly modify Equation~\eqref{eq:Alambdadef} to define homology actions $A_\gamma$ and $B_\gamma$ on $\CF^-(Y,\ws,\frs)$.

\subsection{Properties of the relative homology action}

In this section, we prove some basic properties of the relative homology action.

\begin{lem}\label{lem:Alambdachainhomotopy} Suppose $\cH=(\Sigma,\as,\bs,\ws)$ is a diagram for $(Y,\ws)$, and $\lambda$ is an immersed path in $\Sigma$ which connects two basepoints $w_1$ and $w_2$. Then
\[
A_{\lambda} \d+\d A_{\lambda}=B_{\lambda} \d+\d B_{\lambda}=U_{w_1}+U_{w_2}.
\]
If $\gamma$ is a closed, immersed curve on $\Sigma$, then
\[
A_\gamma \d+\d A_\gamma=B_\gamma \d+\d B_\gamma =0.
\]
\end{lem}

\begin{proof}We focus on the claim about the  map $A_\lambda$, when $\lambda$ connects two basepoints $w_1$ and $w_2$. The other claims are straightforward modifications.

Our argument proceeds by counting the ends of index 2 moduli spaces. Suppose that $\phi\in \pi_2(\ve{x},\ve{z})$ is a homology class of disks with $\mu(\phi)=2$. The 1-dimensional space $\hat{\cM}(\phi):=\cM(\phi)/\R$ admits a compactification $\bar{\hat{\cM}(\phi)}$, whose ends correspond to strip breaking and Maslov index 2 boundary degenerations. Since compact 1-manifolds have an even number of ends, we have $\# (\d \bar{\hat{\cM}(\phi)})=0$, and hence
\begin{equation}
a(\lambda, \phi)\cdot \# \left(\d\bar{\hat{\cM}(\phi)}\right)=0.\label{eq:endsofmodulispace}
\end{equation}

If $\phi_1\in \pi_2(\xs,\ys)$ and $\phi_2\in \pi_2(\ys,\zs)$ are two homology classes, then
\begin{equation}
a(\lambda,\phi_2*\phi_1)=a(\lambda,\phi_2)+a(\lambda,\phi_1).\label{eq:alambdaadditive}
\end{equation}

We consider separately the cases that $\ve{x}\neq \ve{z}$ or $\ve{x}=\ve{z}$.
Consider first the case that $\ve{x}\neq \ve{z}$. We write $\pi_2^{\a}(\xs)$ and $\pi_2^{\b}(\xs)$ for the groups of homology classes of $\as$ and $\bs$ degenerations on $(\Sigma,\as, \bs,\ws)$. We can view $\pi_2^{\a}(\xs)$ (resp. $\pi_2^{\b}(\xs)$) as the set of integral 2-chains on $\Sigma$ with boundary equal to a linear combination of the $\as$ curves (resp. $\bs$ curves).

If $w\in \ws$ and $\xs\in \bT_{\a}\cap \bT_{\b}$, then there is a unique class $A_w^{\xs}\in \pi_2^{\a}(\xs)$ whose domain has multiplicity 1 at $w$, and 0 on the other components of $\Sigma\setminus \as$. A class $B_w^{\xs}\in \pi_2^{\b}(\xs)$ is similarly specified. Furthermore, any homology class $A^{\xs}\in \pi_2^{\a}(\xs)$ decomposes as
\[
A^{\xs}=\sum_{w\in \ws} n_w(A) \cdot A_{w}^{\xs}.
\]
Using Lipshitz's formula for the Maslov index \cite{LipshitzCylindrical}*{Equation~8}, we compute 
\[
\mu(A_w^{\xs})=\mu(B_w^{\xs})=2.
\]
 Consequently, if $A^{\xs}\in \pi_2^\a(\xs)$ and $B^{\xs}\in \pi_2^{\b}(\xs)$, then
\begin{equation}
\mu(A^{\xs})=2\sum_{w\in \ws} n_w(A^{\xs})\quad \text{and} \quad \mu(B^{\xs})=2\sum_{w\in \ws} n_w(B^{\xs}). \label{eq:indexbounddegen}
\end{equation}

It follows that if $\phi$ is a Maslov index index 2 class and a broken holomorphic curve appears in $\d \bar{\hat{\cM}(\phi)}$ which contains a non-trivial boundary degeneration, then the remaining curves must have Maslov index 0, and hence must represent the constant class, by transversality. Hence, if $\ve{x}\neq \ve{z}$ then boundary degenerations cannot occur in the ends of $\hat{\cM}(\phi)$, so Equation~\eqref{eq:endsofmodulispace} implies that for each index 2 class $\phi\in \pi_2(\xs,\zs)$, 
\begin{align*}
0&=a(\lambda, \phi)\sum_{\substack{\ve{y}\in \bT_\alpha\cap \bT_\beta\\
\phi_1\in \pi_2(\ve{x},\ve{y}),\phi_2\in \pi_2(\ve{y},\ve{z})\\
\mu(\phi_1)=\mu(\phi_2)=1\\
\phi_1*\phi_2=\phi}} \# \hat{\cM}(\phi_1)\#\hat{\cM}(\phi_2) U_{\ve{w}}^{n_{\ve{w}}(\phi_1)+n_{\ve{w}}(\phi_2)}\cdot \ve{z}\\
&=\sum_{\substack{\ve{y}\in \bT_\alpha\cap \bT_\beta\\
\phi_1\in \pi_2(\ve{x},\ve{y}),\phi_2\in \pi_2(\ve{y},\ve{z})\\
\mu(\phi_1)=\mu(\phi_2)=1\\
\phi_1*\phi_2=\phi}} (a(\lambda, \phi_1)+a(\lambda,\phi_2)) \# \hat{\cM}(\phi_1)\#\hat{\cM}(\phi_2) U_{\ve{w}}^{n_{\ve{w}}(\phi_1)+n_{\ve{w}}(\phi_2)}\cdot \ve{z}.
\end{align*}

 Summing over all $\phi\in \pi_2(\ve{x},\ve{z})$ with $\mu(\phi)=2$, we get that the $\ve{z}$ component of $(A_{\lambda}\d+\d A_{\lambda})(\ve{x})$ is zero.

If $\ve{x}=\ve{z}$, there may be  ends of $\bar{\hat{\cM}(\phi)}$ corresponding to boundary degenerations. If $\phi$ is one of the classes $A_w^{\xs}$, we write $\hat{\cN}^{\a}(\phi)$ for the moduli space of cylindrical $\as$-boundary degenerations
\[
u\colon S\to \Sigma\times [0,\infty)\times \R
\]
 representing $\phi$, modulo conformal automorphisms of $[0,\infty)\times \R$.

Hence, the $\xs$ component of $(\d A_\lambda+A_\lambda \d)(\xs)$ is equal to 
\[
\sum_{w\in \ws} a(\lambda,A_{w}^{\xs})\cdot  \# \hat{\cN}^{\alpha}(A_w^{\xs})U_w\cdot \ve{x}+\sum_{w\in \ws} a(\lambda,B_w^{\xs})\cdot\# \hat{\cN}^{\beta}(B_w^{\xs}) U_w\cdot \ve{x}.
\]
We note that $a(\lambda,A_w^{\xs})=0$ unless $w\in \{w_1,w_2\}$, in which case 
\[
a(\lambda,A_{w_1}^{\xs})=a(\lambda,A_{w_2}^{\xs})=1.
\]
Furthermore, $a(\lambda, B_w^{\xs})=0$ for all $w\in \ws$. The counts of the moduli spaces of boundary degenerations were proven by Ozsv\'{a}th and Szab\'{o} \cite{OSLinks}*{Theorem~5.5}: for generic almost complex structure,
\begin{equation}
\# \hat{\cN}(A_w^{\xs})\equiv \begin{cases}1 \pmod{2}& \text{ if } |\ws|>1,\\
0& \text{ if } |\ws|=1. \label{eq:count-boundary-degenerations}
\end{cases}
\end{equation}

 Hence, the $\xs$-component of
\[
(A_{\lambda}\d+\d A_{\lambda}+U_{w_1}+U_{w_2})(\ve{x})
\]
 is zero. Combining this with the computation in the case that $\ve{x}\neq \ve{z}$, the proof is complete.
\end{proof}

We now consider the interaction between the holomorphic triangle maps and the relative homology maps. If $(\Sigma,\as,\bs,\gs,\ws)$ is a multi-pointed Heegaard triple, and $\lambda$ is an immersed path between two basepoints $w_1,w_2\in \ws$, as an extension of Equation~\eqref{eq:defa,blambda}  let $a(\lambda,\phi)$, $b(\lambda,\phi)$ and $c(\lambda,\phi)$ denote the sums of differences of the multiplicities of $\phi$ across the $\as$, $\bs$, or $\gs$ curves, respectively. Let $A_\lambda$, $B_{\lambda}$ and $C_{\lambda}$ denote the endomorphisms of $\CF^-(\Sigma,\as,\bs,\ws)$, $\CF^-(\Sigma,\bs,\gs,\ws)$ and $\CF^-(\Sigma,\as,\gs,\ws)$, defined by modifying Equation~\eqref{eq:Alambdadef}. Note that all three endomorphisms $A_\lambda,$ $B_\lambda$ and $C_\lambda$ are defined on all three complexes.

\begin{lem}\label{lem:relhomtrianglegeneral} Suppose that $(\Sigma,\as,\bs,\gs,\ws)$ is a multi-pointed Heegaard triple and $\frs\in \Spin^c(X_{\a,\b,\g})$. Then 
\begin{align*}
F_{\a,\b,\g,\frs} (A_\lambda\otimes \id)&\simeq A_\lambda\circ F_{\a,\b,\g,\frs}(\id\otimes \id)\\
 F_{\a,\b,\g,\frs}(B_\lambda\otimes \id)&\simeq F_{\a,\b,\g,\frs}(\id\otimes B_\lambda)\\
 F_{\a,\b,\g,\frs}(\id \otimes C_\lambda)&\simeq C_\lambda\circ  F_{\a,\b,\g,\frs}(\id\otimes \id),
\end{align*}
as maps from $\CF^-(\Sigma,\as,\bs,\ws,\frs|_{Y_{\a,\b}})\otimes_{\bF_2[U_{\ws}]} \CF^-(\Sigma,\bs,\gs,\ws,\frs|_{Y_{\b,\g}})$ to $\CF^-(\Sigma,\as,\gs,\ws,\frs|_{Y_{\a,\g}})$.
\end{lem}
\begin{proof} Consider the first relation, involving $A_\lambda$. The subsequent two relations involving $B_\lambda$ and $C_{\lambda}$ can be proven \emph{mutatis mutandis}. We prove the relation by counting the ends of index 1 moduli spaces of triangles. Suppose that $\psi\in \pi_2(\xs,\ys,\zs)$ is a homology class with $\frs_{\ws}(\psi)=\frs$, with $\mu(\psi)=1$. The moduli space $\cM(\psi)$ can be compactified into a compact 1-manifold $\bar{\cM(\psi)}$ whose ends consist  of pairs consisting of an index 1 holomorphic strip, and an index 0 holomorphic triangle. Since compact 1-manifolds have an even number of ends, we have
\[
\sum_{\substack{\psi\in \pi_2(\xs,\ys,\zs)\\ \mu(\psi)=1\\
\frs_{\ws}(\frs)=\frs}} a(\lambda,\psi) \# (\d \bar{\cM(\psi)})\cdot U_{\ws}^{n_{\ws}(\psi)}=0. 
\]
If $\phi$ is a homology class of disks and $\psi$ is a homology class of triangles, then similar to Equation~\eqref{eq:alambdaadditive}, 
\[
a(\lambda,\psi*\phi)=a(\lambda,\psi)+a(\lambda,\phi).
\]
It follows that 
\begin{equation}
\begin{split}
&A_\lambda\circ F_{\a,\b,\g,\frs}(\id\otimes \id)+F_{\a,\b,\g,\frs}(A_\lambda\otimes \id)+F_{\a,\b,\g,\frs}(\id \otimes A_\lambda)\\
=& H^{A,\lambda}_{\a,\b,\g,\frs}\circ (\d_{\a,\b}\otimes \id+\id\otimes \d_{\b,\g})+\d_{\a,\g}\circ H^{A,\lambda}_{\a,\b,\g,\frs},
\end{split}
\label{eq:largehomotopyinvolvingAlambda}
\end{equation}
where $H^{A,\lambda}_{\a,\b,\g,\frs}$ is the map defined on intersection points by the formula
\[
 H^{A,\lambda}_{\a,\b,\g,\frs}(\ve{x}\otimes \ve{y})=\sum_{\substack{\ve{z}\in \bT_\alpha\cap \bT_{\g}\\ \psi\in \pi_2(\ve{x},\ve{y},\ve{z})\\
\mu(\psi)=0}}a(\lambda,\psi) \# \cM(\psi)U_{\ve{w}}^{n_{\ve{w}}(\psi)}\cdot\ve{z},
\]
and extended $\bF_2[U_{\ws}]$-equivariantly.

Finally, we note that if $\phi$ is a homology class of disks on the diagram $(\Sigma,\bs,\gs,\ws)$, then the quantities $a(\lambda,\phi)$ vanish, since $\phi$ has no changes across the $\as$ curves. Hence the map
\[
A_\lambda\colon \CF^-(\Sigma,\bs,\gs,\ws,\frs|_{\b\g})\to \CF^-(\Sigma,\bs,\gs,\ws,\frs|_{\b\g})
\]
vanishes. Combining this fact with Equation~\eqref{eq:largehomotopyinvolvingAlambda}, the main statement follows.
\end{proof}

\begin{lem}\label{lem:splicingrelhom}
\begin{enumerate}
\item Suppose that $\lambda$ is a path on $\Sigma$ connecting  a pair of basepoints on $\cH$ which can be written as a concatenation $\lambda_2* \lambda_1$ of two paths which connect pairs of basepoints. Then
\[
A_{\lambda_{2}* \lambda_{1}}=A_{\lambda_{2}}+A_{\lambda_{1}}.
\]
\item Suppose that $\lambda$ is a path on $\Sigma$ connecting a pair of basepoints, and $\gamma$ is a closed loop on $\Sigma$, which has non-trivial intersection with $\gamma$. If $\lambda*\gamma$ denotes the path obtained by splicing $\gamma$ into $\lambda$, then
\[
A_{\gamma*\lambda}=A_{\gamma}+A_{\lambda}.
\]
\item If $\gamma$ and $\gamma'$ are two closed curves on $\Sigma$, then
\[
A_{\gamma'*\gamma}=A_{\gamma'}+A_{\gamma}.
\]
\end{enumerate}
The same relations hold for the type-$B$ relative homology maps.

\end{lem}
\begin{proof} The first claim follows immediately from Equation~\eqref{eq:Alambdadef} since 
\[
a(\lambda_2*\lambda_1,\phi)=a(\lambda_{2},\phi)+a(\lambda_{1},\phi),
\]
for any homology class of disks $\phi$. The second and third claims are proven similarly.
\end{proof}
%
%\marginpar{Delete this paragraph?}
%
%Lemma~\ref{lem:splicingrelhom} has a simple reformulation in terms integral 1-chains: if $\ve{\lambda}$ and $\ve{\lambda}'$ are two integral 1-chains on $\Sigma$, with boundary equal to a sum of basepoints in $\ws$, then
%\[
%A_{\ve{\lambda}+\ve{\lambda}'}=A_{\ve{\lambda}}+A_{\ve{\lambda}'}.
%\]

We now compute the commutator of the relative homology maps:

\begin{lem}\label{lem:relhomcommutator}Suppose $\lambda_1$ and $\lambda_2$ are two paths connecting pairs of basepoints in $\ws$. Then
\[
A_{\lambda_1}A_{\lambda_2}+A_{\lambda_2}A_{\lambda_1}=\sum_{w\in \d\lambda_1\cap \d\lambda_2} U_{w}.
\]
\end{lem}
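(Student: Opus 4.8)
The plan is to run the degeneration argument of Lemma~\ref{lem:Alambdachainhomotopy}, now carrying the two weights $a(\lambda_1,-)$ and $a(\lambda_2,-)$ simultaneously. Fix intersection points $\ve x,\ve z$. Expanding $A_{\lambda_1}A_{\lambda_2}+A_{\lambda_2}A_{\lambda_1}$ and grouping pairs of Maslov index $1$ disks by their glued class, the coefficient of $U_{\ve w}^{n_{\ve w}(\phi)}\cdot\ve z$ coming from a fixed $\phi\in\pi_2(\ve x,\ve z)$ with $\mu(\phi)=2$ is
\[\sum_{\substack{\phi_1*\phi_2=\phi\\ \mu(\phi_1)=\mu(\phi_2)=1}}\big(a(\lambda_2,\phi_1)a(\lambda_1,\phi_2)+a(\lambda_1,\phi_1)a(\lambda_2,\phi_2)\big)\,\#\hat\cM(\phi_1)\,\#\hat\cM(\phi_2),\]
and, using additivity $a(\lambda_i,\phi_1*\phi_2)=a(\lambda_i,\phi_1)+a(\lambda_i,\phi_2)$ together with $\Z_2$-coefficients, the bracket rewrites as $a(\lambda_1,\phi)a(\lambda_2,\phi_1)+a(\lambda_2,\phi)a(\lambda_1,\phi_1)$, where $a(\lambda_i,\phi)$ is constant over all breakings of $\phi$.

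To evaluate this I would pass to the surface--intersection picture: $A_{\lambda_i}=A_{K_{\lambda_i}^{\alpha\alpha}}$, and since $A_K$ depends only on the relative homology class of $K$, one may isotope the two surfaces to distinct heights in the $I$-factor so that $K_{\lambda_1}^{\alpha\alpha}\cap K_{\lambda_2}^{\alpha\alpha}$ consists exactly of the $\R$-invariant lines lying over the common endpoints in $E(\lambda_1)\cap E(\lambda_2)$ (these are unavoidable, since both surfaces limit to $\{w\}\times\{1\}\times\R$ near such a $w$). One then decorates a Maslov index $2$ curve $u$ by the number of ordered pairs of intersection points, one on $K_{\lambda_1}^{\alpha\alpha}$ and one on $K_{\lambda_2}^{\alpha\alpha}$, whose $\pi_\R$-coordinates appear in a fixed order. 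This function of $u$ is locally constant on $\hat\cM(\phi)$ away from the finitely many $u$ whose image meets $K_{\lambda_1}^{\alpha\alpha}\cap K_{\lambda_2}^{\alpha\alpha}$, at which it jumps by $1$, and its two limits at a two-story end $(\phi_1,\phi_2)$ recover precisely the two orderings appearing in $A_{\lambda_1}A_{\lambda_2}$ and $A_{\lambda_2}A_{\lambda_1}$. Summing over the ends of $\bar{\hat\cM(\phi)}$ — two-story buildings and, when $\ve x=\ve z$, the $\alpha$- and $\beta$-boundary degenerations of \cite{OSMulti} — together with the jumps should then produce the asserted identity.

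The boundary-degeneration and jump bookkeeping yields the right-hand side as in Lemma~\ref{lem:Alambdachainhomotopy}: $a(\lambda_i,-)$ vanishes on every $\beta$-region, so no $\beta$-degeneration contributes; a Maslov index $2$ positive class in $\pi_2^\alpha(\ve x)$ is a single region $A_w$ of $\Sigma\setminus\ve\alpha$ carrying exactly one basepoint $w$, with $\#\hat\cN^\alpha(A_w)=1\in\Z_2$ by \cite{OSMulti}*{Thm. 5.5} and $U_{\ve w}^{n_{\ve w}(A_w)}=U_w$; and telescoping the differences $d_j$ along $\lambda_i$ shows $a(\lambda_i,A_w)\in\Z_2$ equals $1$ precisely when $w\in E(\lambda_i)$, so the total contribution is $\sum_w a(\lambda_1,A_w)a(\lambda_2,A_w)\,U_w=\sum_{w\in E(\lambda_1)\cap E(\lambda_2)}U_w$. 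The main obstacle, I expect, is exactly the middle step: setting up the ordered-pair decoration and its degenerations carefully enough that the non-constant cross weight $a(\lambda_1,\phi)a(\lambda_2,\phi_1)+a(\lambda_2,\phi)a(\lambda_1,\phi_1)$ is accounted for on the nose — so that one obtains an honest chain-level equality rather than merely a chain homotopy — while controlling the behavior of the marked intersection points as curves degenerate onto the boundary degenerations and onto the lines over $E(\lambda_1)\cap E(\lambda_2)$.
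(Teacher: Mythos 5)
Your proposal and the paper diverge exactly where you flag the ``main obstacle,'' and that obstacle is real. The paper's proof is entirely combinatorial: starting from $\#\,\d\hat\cM(\phi)=0$ for Maslov index~$2$ classes $\phi$, it multiplies by the \emph{full} weight $a(\lambda_1,\phi)a(\lambda_2,\phi)$ and then splits this weight under $\phi=\phi_1*\phi_2$ into a ``cross term'' $a(\lambda_1,\phi_1)a(\lambda_2,\phi_2)+a(\lambda_1,\phi_2)a(\lambda_2,\phi_1)$ and a ``same term'' $a(\lambda_1,\phi_1)a(\lambda_2,\phi_1)+a(\lambda_1,\phi_2)a(\lambda_2,\phi_2)$. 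The cross term, summed over strip breakings, is the commutator; the same term is recognized as $\d H_{\lambda_1\lambda_2}+H_{\lambda_1\lambda_2}\d$ for the \emph{combinatorial} nullhomotopy
\[H_{\lambda_1\lambda_2}(\ve x)=\sum_{\substack{\phi\in\pi_2(\ve x,\ve y)\\\mu(\phi)=1}}a(\lambda_1,\phi)a(\lambda_2,\phi)\,\#\hat\cM(\phi)\,U_{\ve w}^{n_{\ve w}(\phi)}\ve y;\]
and the $\alpha$-boundary degenerations give $\sum_{w\in E(\lambda_1)\cap E(\lambda_2)}U_w$ exactly as you compute. No marked points, no appeal to $K_\lambda^{\alpha\alpha}$, no ordering of $\pi_\R$-coordinates.

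Your route instead passes to the surface-intersection picture $A_{K_\lambda^{\alpha\alpha}}$ and tries to locate the homotopy geometrically by counting ordered pairs of intersection points of a curve $u$ with $K_{\lambda_1}^{\alpha\alpha}$ and $K_{\lambda_2}^{\alpha\alpha}$, ordered by $\pi_\R$. This is precisely a two-marked-point construction, and the paper explicitly steers away from it: right after introducing $K_\lambda^{\alpha\alpha}$ it remarks that ``in proving invariance and relations involving $A_{K^{\alpha\alpha}_\lambda}$ we run into transversality issues involving holomorphic curves with marked points,'' and before Lemma~\ref{lem:squarestozeroorU} it points out that the analogous marked-point transversality ``isn't clear'' and gives a proof that ``requires no such transversality result.'' So the mechanism you are counting on — a decoration that jumps by $1$ across codimension-one walls where curves meet $K_{\lambda_1}^{\alpha\alpha}\cap K_{\lambda_2}^{\alpha\alpha}$, and whose wall-crossing is governed by two-marked-point moduli — is exactly the technology the paper deliberately avoids and does not establish.

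A secondary point: you hope to extract ``an honest chain-level equality rather than merely a chain homotopy.'' That is not what the lemma asserts (the paper's proof produces the explicit nonzero homotopy $H_{\lambda_1\lambda_2}$), and there is no reason to expect the commutator to equal $\sum_w U_w$ on the nose for an arbitrary admissible diagram; the best you should aim for is the chain homotopy. The fix for your proof is therefore not to push harder on the geometry but to abandon the surfaces altogether: keep everything in terms of the combinatorial weights $a(\lambda_i,\cdot)$, start from the total weight $a(\lambda_1,\phi)a(\lambda_2,\phi)$ on the Maslov index~$2$ class, and use its splicing decomposition to peel off both the commutator and the explicit homotopy $H_{\lambda_1\lambda_2}$ simultaneously, as the paper does.
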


\begin{proof}Our proof proceeds by counting the ends of $\bar{\hat{\cM}(\phi)}$ for classes $\phi\in \pi_2(\ve{x},\ve{z})$ with Maslov index 2. As in Lemma~\ref{lem:Alambdachainhomotopy}, there are two cases to consider:  $\ve{x}\neq \ve{z}$ and $\ve{x}=\ve{z}$.

If $\ve{x}\neq \ve{z}$ and $\phi\in \pi_2(\xs,\zs)$ has Maslov index 2, then the ends of $\bar{\hat{\cM}(\phi)}$ all correspond to strip breaking. Summing over $\phi$, we have
\begin{equation}
0=\sum_{\substack{\phi\in \pi_2(\ve{x},\ve{z})\\
\mu(\phi)=2}}a(\lambda_1,\phi)a(\lambda_2,\phi)\sum_{\substack{\ve{y}\in \bT_\alpha\cap \bT_\beta\\
\phi_1\in \pi_2(\ve{x},\ve{y}),\phi_2\in \pi_2(\ve{y},\ve{z})\\
\mu(\phi_1)=\mu(\phi_2)=1\\
\phi_1+\phi_2=\phi}} \# \hat{\cM}(\phi_1)\#\hat{\cM}(\phi_2) U_{\ve{w}}^{n_{\ve{w}}(\phi)}\cdot\ve{z}. \label{eq:relhomcom1}
\end{equation}
 Noting that 
\[
a(\lambda_1,\phi_1+\phi_2)a(\lambda_2,\phi_1+\phi_2)
\]
\[
=\bigg(a(\lambda_1,\phi_1)a(\lambda_2,\phi_2)+a(\lambda_1,\phi_2)a(\lambda_2,\phi_1)\bigg)+\bigg(a(\lambda_1,\phi_1)a(\lambda_2,\phi_1)+a(\lambda_1,\phi_2)a(\lambda_2,\phi_2)\bigg),
\]
 we can rewrite Equation~\eqref{eq:relhomcom1} to obtain
 \begin{equation}
\begin{split}
0=&\sum_{\substack{\ve{y}\in \bT_\alpha\cap \bT_\beta\\
\phi_1\in \pi_2(\ve{x},\ve{y}),\phi_2\in \pi_2(\ve{y},\ve{z})\\
\mu(\phi_1)=\mu(\phi_2)=1}}(a(\lambda_1,\phi_1)a(\lambda_2,\phi_2)+a(\lambda_1,\phi_2)a(\lambda_2,\phi_1)) \# \hat{\cM}(\phi_1)\#\hat{\cM}(\phi_2) U_{\ve{w}}^{n_{\ve{w}}(\phi_1)+n_{\ve{w}}(\phi_2)}\cdot\ve{z}
\\
+&\sum_{\substack{\ve{y}\in \bT_\alpha\cap \bT_\beta\\
\phi_1\in \pi_2(\ve{x},\ve{y}),\phi_2\in \pi_2(\ve{y},\ve{z})\\
\mu(\phi_1)=\mu(\phi_2)=1}}(a(\lambda_1,\phi_1)a(\lambda_2,\phi_1)+a(\lambda_1,\phi_2)a(\lambda_2,\phi_2))\# \hat{\cM}(\phi_1)\#\hat{\cM}(\phi_2) U_{\ve{w}}^{n_{\ve{w}}(\phi_1)+n_{\ve{w}}(\phi_2)}\cdot\ve{z}.
\end{split}
\label{eq:relhomcom2}
\end{equation}
The right side of Equation~\eqref{eq:relhomcom2} is the $\ve{z}$ component of 
\[
(A_{\lambda_1}A_{\lambda_2}+A_{\lambda_2}A_{\lambda_1}+\d H_{\lambda_1\lambda_2}+H_{\lambda_1\lambda_2}\d)(\ve{x}),
\]
 where $H_{\lambda_1\lambda_2}$ is the map
\[
H_{\lambda_1\lambda_2}(\ve{x})=\sum_{\substack{\ve{y}\in \bT_\alpha\cap \bT_\beta\\ \phi\in \pi_2(\ve{x},\ve{y})\\
\mu(\phi)=1}} a(\lambda_1,\phi)a(\lambda_2,\phi) \# \hat{\cM}(\phi) U_{\ve{w}}^{n_{\ve{w}}(\phi)}\cdot \ve{y}.
\]

If $\ve{x}=\ve{z}$, then boundary degenerations may appear in the ends of $\bar{\hat{\cM}(\phi)}$. These make an additional contribution to Equation~\eqref{eq:relhomcom1} of
\begin{equation}
\sum_{w\in \ws} a(\lambda_1,A_w^{\xs})a(\lambda_2,A_w^{\xs}) \# \hat{\cN}^{\alpha}(A_w^{\xs})U_{w}\cdot \ve{x}+\sum_{w\in \ws} a(\lambda_1,B_w^{\xs})a(\lambda_2,B_{w}^{\xs})\# \hat{\cN}^{\beta}(B_{w}^{\xs})U_w\cdot \ve{x}. \label{eq:remaining-count-commutator-rel-hom}
\end{equation}
 We note that $a(\lambda,B_w^{\xs})=0$ for all $w$, and 
 \begin{equation}
 a(\lambda,A_w^{\xs})=\begin{cases}1& \text{ if } w\in \d \lambda,\\
 0&\text{ if } w\not\in \d \lambda. \label{eq:count-A-lambda-boundarydegen}
 \end{cases}.
 \end{equation}
 The stated formula now follows from Equations~\eqref{eq:remaining-count-commutator-rel-hom}, \eqref{eq:count-A-lambda-boundarydegen} and the count of boundary degenerations from Equation~\eqref{eq:count-boundary-degenerations}.
\end{proof}

In the context of a single basepoint in Heegaard Floer homology, the homology action squares to zero (see \cite{OSDisks}*{Proposition~4.17}, \cite{LipshitzCylindrical}*{Proposition~8.6}). We prove a similar result:

\begin{lem}\label{lem:Alambda-squares-to-zero-or-U} Suppose  $\lambda$ is an immersed path in $\Sigma$ from $w_1$ to $w_2$.  Then
\[
A_\lambda\circ A_\lambda\simeq U_{w_1}\simeq U_{w_2}.
\]
 If $\gamma$ is an immersed closed curve in $\Sigma$, then
\[
A_{\gamma}\circ A_\gamma\simeq 0.
\]

\end{lem}

\begin{proof} Note that $U_{w_1}\simeq U_{w_2}$ by Lemma \ref{lem:Alambdachainhomotopy}.  We focus on the claim that $A_\lambda\circ A_\lambda\simeq U_{w_1}$. The proof of the relation $A_\gamma\circ A_\gamma\simeq 0$ follows with only minor modification.   Pick an orientation of $\lambda$, which gives a lift of $a(\lambda,\phi)$ from $\bF_2$ to $\Z$.
 
% As in Lemma~\ref{lem:Alambdachainhomotopy}, the claim proceeds by counting the ends moduli spaces of index 2 classes $\phi\in \pi_2(\xs,\zs)$. As in Lemma~\ref{lem:Alambdachainhomotopy}, there are two cases to consider: $\xs=\zs$ and $\xs\neq \zs$.  
 
 Note that both $a(\lambda,A_{w_1}^{\xs})$ and $a(\lambda,A_{w_2}^{\xs})$ are $\pm 1$, and in fact, the two quantities have opposite signs. Orient $\lambda$ so that 
 \begin{equation}
 a(\lambda,A_{w_1}^{\xs})=1\quad \text{and} \quad a(\lambda, A_{w_2}^{\xs})=-1.\label{eq:chosenorientationlambda}
 \end{equation}

Define the map
\[
H_\lambda(\ve{x})=\sum_{\substack{\ve{y}\in \bT_\alpha\cap \bT_\beta\\ \phi\in \pi_2(\ve{x},\ve{y})\\
\mu(\phi)=1}}\bigg( \frac{a(\lambda,\phi)(a(\lambda,\phi)+1)}{2}\bigg)\# \hat{\cM}(\phi)U_{\ve{w}}^{n_{\ve{w}}(\phi)} \cdot \ve{y}.
\]
 We will show that
\begin{equation}
A_\lambda^2(\ve{x})=(\d H_\lambda+H_\lambda\d+U_{w_1})(\ve{x}). \label{eq:Al^2=chainhomotopy}
\end{equation}

%
%We consider first the case that $\xs\neq \zs$. In this case, if $\phi\in \pi_2(\xs,\zs)$ has Maslov index 2, then the ends of $\bar{\hat{\cM}(\phi)}$ correspond only to strip breaking.

If $\zs\in \bT_\a\cap \bT_\b$, the $\ve{z}$ component of $(\d H_\lambda+H_\lambda \d+A_\lambda^2)(\ve{x})$ is
\begin{equation}
\sum_{\substack{\ve{y}\in \bT_\alpha\cap \bT_\beta\\ \phi_1\in \pi_2(\ve{x},\ve{y}),\phi_2\in \pi_2(\ve{y},\ve{z})\\
\mu(\phi_1)=\mu(\phi_2)=1}}\bigg(\frac{a(\lambda,\phi_1)(a(\lambda,\phi_1)+1)}{2}+\frac{a(\lambda,\phi_2)(a(\lambda,\phi_2)+1)}{2}\bigg) \# \hat{\cM}(\phi_1)\#\hat{\cM}(\phi_2)U_{\ve{w}}^{n_{\ve{w}}(\phi_1+\phi_2)}.
\label{eq:Alambdasquared2}
\end{equation}
 Rearranging, Equation~\eqref{eq:Alambdasquared2} becomes
\begin{equation}
\begin{split}
&\sum_{\substack{\ve{y}\in \bT_\alpha\cap \bT_\beta\\ \phi_1\in \pi_2(\ve{x},\ve{y}),\phi_2\in \pi_2(\ve{y},\ve{z})\\
\mu(\phi_1)=\mu(\phi_2)=1}}\bigg(\frac{a(\lambda,\phi_1+\phi_2)(a(\lambda,\phi_1+\phi_2)+1)}{2}\bigg) \# \hat{\cM}(\phi_1)\#\hat{\cM}(\phi_2)U_{\ve{w}}^{n_{\ve{w}}(\phi_1+\phi_2)}\\
+&\sum_{\substack{\ve{y}\in \bT_\alpha\cap \bT_\beta\\ \phi_1\in \pi_2(\ve{x},\ve{y}),\phi_2\in \pi_2(\ve{y},\ve{z})\\
\mu(\phi_1)=\mu(\phi_2)=1}}a(\lambda,\phi_1)a(\lambda,\phi_2) \# \hat{\cM}(\phi_1)\#\hat{\cM}(\phi_2)U_{\ve{w}}^{n_{\ve{w}}(\phi_1+\phi_2)}.
\end{split}
\label{eq:Alambdasquared1}
\end{equation}
The second summand of Equation~\eqref{eq:Alambdasquared1} the $\ve{z}$ coefficient of $(A_{\lambda}\circ A_{\lambda})(\ve{x})$. If $\xs\neq \zs$, then the first summand of Equation~\eqref{eq:Alambdasquared1} is equal to 
% 
% 
% If $\xs\neq \zs$, then
%\[
%\sum_{\substack{\ve{y}\in \bT_\alpha\cap \bT_\beta\\ \phi_1\in \pi_2(\ve{x},\ve{y}),\phi_2\in \pi_2(\ve{y},\ve{z})\\
%\mu(\phi_1)=\mu(\phi_2)=1}}\bigg(\frac{a(\lambda,\phi_1+\phi_2)(a(\lambda,\phi_1+\phi_2)+1)}{2}\bigg) \# \hat{\cM}(\phi_1)\#\hat{\cM}(\phi_2)U_{\ve{w}}^{n_{\ve{w}}(\phi_1+\phi_2)}=0,
%\]
 
\begin{equation}
\sum_{\substack{\phi\in \pi_2(\xs,\zs)\\ \mu(\phi)=2}}\frac{a(\lambda,\phi)(a(\lambda,\phi)+1)}{2}\# \d \bar{\hat{\cM}(\phi)}U_{\ve{w}}^{n_{\ve{w}}(\phi)}=0.\label{eq:computecommutatorweightedsumboundary}
\end{equation}
Hence the $\ve{z}$ component of $(H_\lambda \d+d H_\lambda+A_\lambda^2)(\ve{x})$ is zero, when $\xs\neq \zs$.

We now consider the case that $\ve{z}=\ve{x}$. The expression in Equation~\eqref{eq:computecommutatorweightedsumboundary}  is equal to $(\d H_\lambda+H_\lambda \d+A_\lambda^2)(\ve{x})$ plus the following contribution due to boundary degenerations: 
\[
\sum_{w\in \ws} \frac{a(\lambda,A_{w}^{\xs})(a(\lambda,A_{w}^{\xs})+1)}{2} \# \hat{\cN}^{\alpha}(A_{w}^{\xs})U_w \cdot \ve{x}+\sum_{w\in \ws} \frac{a(\lambda,B_{w}^{\xs})(a(\lambda,B_{w}^{\xs})+1)}{2}\# \hat{\cN}^{\beta}(B_{w}^{\xs})U_w\cdot \ve{x}.
\]
Finally, we note that $a(\lambda,B_w^{\xs})=0$ for all $w\in \ws$, while $a(\lambda,A_{w}^{\xs})=0$ unless $w\in \{w_1,w_2\}$. Furthermore, from Equation~\eqref{eq:chosenorientationlambda} it follows that
\[
\frac{a(\lambda,A_{w_1}^{\xs})(a(\lambda,A_{w_1}^{\xs})+1)}{2}=1\quad \text{and} \quad \frac{a(\lambda,A_{w_2}^{\xs})(a(\lambda,A_{w_2}^{\xs})+1)}{2}=0.
\]
It follows that the $\xs$ component of $(A_\lambda^2+\d H_\lambda+H_\lambda\d)(\xs)$ is $U_{w_1}\cdot\xs$, completing the proof.

\end{proof}

Since the Heegaard surface $\Sigma$ is embedded in $Y$, there is a natural map $i_*\colon H_1(\Sigma;\Z)\to H_1(Y;\Z)$. Since $Y$ can be built by attaching 2-handles and 3-handles to $[0,1]\times \Sigma$, it follows that $i_*$ is surjective, and $\ker (i_*)=\Span\{[\alpha_1],\dots, [\alpha_n], [\beta_1],\dots, [\beta_n]\}$. Hence
\begin{equation}
H_1(Y;\Z)\iso \frac{H_1(\Sigma;\Z)}{\Span\{[\alpha_1],\dots, [\alpha_n], [\beta_1],\dots, [\beta_n]\}}. \label{eq:H1YquotientH1Sigma}
\end{equation}

\begin{lem}\label{lem:torsion} If $\gamma$ is a closed loop on $\Sigma$ such that 
\[
i_*([\gamma])=0 \in H_1(Y;\Z)/\Tors,
\]
 then
\[
A_\gamma\simeq 0.
\]

\end{lem}

\begin{proof}Our proof is a modification Ni's proof \cite{Ni-homological}*{Lemma~2.4} of a closely related result. Suppose that $\gamma$ is an integral 1-cycle on $\Sigma$ such that $i_*(k\cdot \gamma)=0$, for some integer $k\neq 0$.

 By orienting $\gamma$, we obtain a lift of the quantities $a(\gamma,\phi)$ to $\Z$. We can assume that $\gamma$ is immersed, intersects $\as$ and $\bs$ transversely, and is disjoint from any intersections of the $\as$ and $\bs$ curves. From the isomorphism in Equation~\eqref{eq:H1YquotientH1Sigma}, it follows that the class $k\cdot [\gamma]\in H_1(\Sigma;\Z)$ can be written as an integral combination of the $\as$ and $\bs$ curves. 
 
 Consequently, there is an integral 2-chain $S$ on $\Sigma$, such that
\[
\d S=k\cdot \g+  C,
\]
where $C$ is an integral 1-cycle on $\Sigma$ which consists of an integral combination of small pushoffs of the $\as$ and $\bs$ curves on $\Sigma$.

Let $n_S\colon \bT_{\a}\cap \bT_{\b}\to \Z$ denote the function
\[
n_{S}(\xs)=\left(\sum_{x\in \xs} n_x(S)\right).
\]
We claim that if $\phi\in \pi_2(\xs,\ys)$, then
\begin{equation}
-a(k\cdot \gamma+ C, \phi)=n_S(\ys)-n_S(\xs).\label{eq:relforchainhom}
\end{equation}
To establish Equation~\eqref{eq:relforchainhom}, we note that the homology class $\phi$ determines a collection of paths $g+|\ws|-1$ arcs $\ve{a}(\phi)$, which are contained in the $\as$ curves and run from the points of $\xs$ to the points of $\ys$.  The quantity $a(k\cdot \gamma+C,\phi)$ can be reinterpreted as the oriented intersection number 
\[
\#( \ve{a}(\phi)\cap (k\cdot \gamma+C)).
\]
On the other hand $\ve{a}(\phi)\cap S$ is a compact 1-manifold, and hence the algebraic count of its ends is zero. By the Leibniz rule
\[
0=\# \d (\ve{a}(\phi)\cap S)=\#(\d\ve{a}(\phi)\cap S)+ \#(\ve{a}(\phi)\cap \d S),
\]
from which Equation~\eqref{eq:relforchainhom} follows. Using additivity of $(\gamma,\phi)$ with respect to $\gamma$, from Equation~\eqref{eq:relforchainhom} we obtain
\begin{equation}
-k\cdot  a(\gamma,\phi)-a(C,\phi)=n_S(\ys)-n_S(\xs).\label{eq:expandoutintnum}
\end{equation}

If $\a'$ is a small pushoff of an $\as$ curve, then $a(\a',\phi)=0$, since there are no changes of $\phi$ across any $\as$ curves as one traverses $\a'$. Similarly if $\b'$ is a small pushoff of a $\bs$ curve, then $a(\b',\phi)=0$, since $\b'$ intersects no $\bs$ curves, so the sum of differences of $\phi$ across the $\as$ curves as one traverses $\b'$ telescopes, and is zero. With this observation, Equation~\eqref{eq:expandoutintnum} now reads
\begin{equation}
-k\cdot a(\gamma,\phi)=n_S(\ys)-n_S(\xs).\label{eq:gotridofCs}
\end{equation}
The right hand side of Equation~\eqref{eq:gotridofCs} does not depend on $\phi$. It follows that modulo $k$, the expression $n_S(\xs)$ is independent of $\ve{x}$, for $\xs$ representing a fixed $\Spin^c$ structure. By adding copies of $[\Sigma]$ to $S$, we can ensure that $n_S(\xs)$ is divisible by $k$ for each $\xs\in \bT_{\a}\cap \bT_{\b}$ representing $\frs$. Hence
\begin{equation}
-a(\gamma,\phi)=\frac{n_S(\ys)}{k}-\frac{n_S(\xs)}{k}.\label{eq:dividebyn}
\end{equation}
We define the map
\[
H_S\colon \CF^-(\Sigma,\as,\bs,\ws,\frs)\to \CF^-(\Sigma,\as,\bs,\ws,\frs)
\]
via the formula
\[
H_S(\xs)=\frac{n_{S}(\xs)}{k}\cdot \xs,
\]
extended equivariantly over $\bF_2[U_{\ws}]$.

After projecting to $\bF_2$, Equation~\eqref{eq:dividebyn} implies  that
\[
A_\gamma(\xs)=(\d H_S+H_S\d)(\xs)
\]
\end{proof}

We now describe a simple relation between the maps $A_\lambda$ and $B_{\lambda}$. To describe their relation, we must introduce a new map. If $w\in \ws$ is a basepoint, we define $\Phi_{w}\colon \CF^-(\Sigma,\as,\bs,\ws,\frs)\to \CF^-(\Sigma,\as,\bs,\ws,\frs)$ via the formula
\begin{equation}
\Phi_w(\xs):=U_w^{-1}\cdot \sum_{\ys\in \bT_{\a}\cap \bT_{\b}} \sum_{\substack{\phi\in \pi_2(\xs,\ys)\\ \mu(\phi)=1}} n_{w}(\phi) \# \hat{\cM}(\phi) U_{\ws}^{n_{\ws}(\phi)} \cdot \ys, \label{eq:defPhi}
\end{equation}
extended $\bF_2[U_{\ws}]$-equivariantly. Despite the initial factor of $U_w^{-1}$, the map $\Phi_w$ maps $\CF^-$ into $\CF^-$. Furthermore,  $\Phi_w$ is a chain map; see Lemma~\ref{lem:claim:phi0}, below.

\begin{lem}\label{lem:AgBgrel}Suppose that $(\Sigma,\as,\bs,\ws)$ is a diagram for $(Y,\ws)$, $\lambda$ is an immersed curve on $\Sigma$ with endpoints $w_1$ and $w_2$, and $\gamma$ is an immersed closed curve on $\Sigma$.
 Then
 \begin{enumerate}
 \item \label{eq:relAgBg1}$A_{\gamma}=B_{\gamma}$, and
 \item \label{eq:relAgBg2} $A_{\lambda}+B_{\lambda}=U_{w_1}\Phi_{w_1}+U_{w_2}\Phi_{w_2}.$
 \end{enumerate}
\end{lem}
\begin{proof} The map $A_\gamma$ counts holomorphic strips  weighted by the factor $a(\gamma,\phi)$, while $B_{\gamma}$ counts holomorphic disks weighted by $b(\gamma,\phi)$. The sum $a(\gamma,\phi)+b(\gamma,\phi)$ is  the total change in multiplicity of $\phi$ across all curves (either $\as$ or $\bs$), however this is zero since $\gamma$ is a closed curve. Part~\eqref{eq:relAgBg1} follows.

For an arc $\lambda$ connecting basepoints $w_1$ and $w_2$, we instead have
\[
a(\lambda,\phi)+b(\lambda,\phi)=n_{w_1}(\phi)-n_{w_2}(\phi),
\]
from which Part~\eqref{eq:relAgBg2} follows.
\end{proof}

\subsection{Naturality of the relative homology action}

 In this section, we prove the following:
 
\begin{prop}\label{prop:naturalityrelativehomology} Suppose that $\cH=(\Sigma,\as,\bs,\ws)$ is a diagram for $(Y,\ws)$ and $\lambda$ and $\lambda'$ are two immersed paths in $\Sigma$ from $w_1$ to $w_2$ which represent the same element in $H_1(Y, \{w_1,w_2\};\Z)/\Tors$. Then
\[
A_\lambda\simeq A_{\lambda'}.
\] 
Furthermore, if $\cH$ and $\cH'$ are two diagrams for $(Y,\ws)$, then
\[
A_\lambda \circ \Psi_{\cH\to \cH'}\simeq \Psi_{\cH\to \cH'}\circ A_{\lambda}.
\]
The same holds for the map $B_{\lambda}$, as well as the homology action associated to closed loops in $Y$.
\end{prop}

As a first step towards Proposition~\ref{prop:naturalityrelativehomology}, we prove that the maps $A_{\lambda}$ commute with the change of almost complex structure maps:

\begin{lem}\label{lem:relhomcommutechangeacstr} Suppose $J$ and $J'$ are two cylindrical almost complex structures on $\Sigma\times [0,1]\times \R$ which satisfy axioms \eqref{def:J1}--\eqref{def:J5}. Then
\[
A_{\lambda}\circ \Psi_{J\to J'}\simeq \Psi_{J\to J'}\circ  A_{\lambda},
\]
 where $\Psi_{J\to J'}$ is the change of almost complex structures map. The same relation holds for the type-$B$ maps.
\end{lem}
\begin{proof} To compute $\Psi_{J\to J'}$ one first picks a non-cylindrical almost complex structure $\tilde{J}$ on $\Sigma\times [0,1]\times \R$ which agrees with $J$ on $\Sigma\times [0,1]\times (-\infty,-1]$ and agrees with $J'$ on $\Sigma\times [0,1]\times [1,\infty)$. The map $\Psi_{J\to J'}$ is defined via the formula
\[
\Psi_{J\to J'}(\xs)=\sum_{\ys\in \bT_{\a}\cap \bT_{\b}} \sum_{\substack{\phi\in \pi_2(\xs,\ys)\\ \mu(\phi)=0}} \# \cM_{\tilde{J}}(\phi)U^{n_{\ws}(\phi)}_{\ws} \cdot \ys,
\]
extended linearly over $\bF_2[U_{\ws}]$. 

We define the map $H_{\tilde{J}, \lambda}^{A}\colon \CF^-(\Sigma,\as,\bs,\ws,\frs)\to \CF^-(\Sigma,\as,\bs,\ws,\frs)$ via the formula
\[
H_{\tilde{J},\lambda}^A(\xs):=\sum_{\ys\in \bT_{\a}\cap \bT_{\b}} \sum_{\substack{\phi\in \pi_2(\xs,\ys)\\ \mu(\phi)=0}} a(\lambda,\phi)\# \cM_{\tilde{J}}(\phi)U^{n_{\ws}(\phi)}_{\ws} \cdot \ys,
\]
extended linearly over $\bF_2[U_{\ws}]$.

If $\phi\in \pi_2(\xs,\ys)$ is a Maslov index 1 homology class, then the 1-dimensional moduli space $\cM_{\tilde{J}}(\phi)$ can be compactified into a compact 1-manifold, whose ends correspond to pairs of index 0 $\tilde{J}$-holomorphic curves, and index 1 $J$- or $J'$-holomorphic curves.

Using Equation~\eqref{eq:alambdaadditive} (additivity of the quantity $a(\lambda,\phi)$ with respect to $\phi$), it follows that
\[
A_{\lambda}\circ \Psi_{J\to J'}+\Psi_{J\to J'}\circ A_{\lambda}+\d_{J'}\circ  H_{\tilde{J},\lambda}^A+H_{\tilde{J},\lambda}^A\circ \d_{J}=0.
\]
 The relation for the type-$B$ homology actions is proved analogously.
\end{proof}

Towards proving that  $A_\lambda$  commutes with changes of the $\as$ curves, we prove the following:

\begin{lem}\label{lem:relativehomologycommutestriangleII} Suppose that $(\Sigma,\as',\as,\bs,\ws)$ is a Heegaard triple such that $(\Sigma,\as',\as,\ws)$ is a diagram for $(S^1\times S^2)^{\# k}$, for some $k$. Suppose that $\lambda$ is an immersed curve in $\Sigma$, which is endpoints on $w_1$ and $w_2$, or $\lambda$ is an immersed closed curve. Let $A'_\lambda$, $A_{\lambda}$ and $B_{\lambda}$ denote the relative homology maps defined by counting changes over the $\as'$, $\as$ or $\bs$ curves, respectively. Suppose that $\Theta_{\a',\a}^+\in \CF^-(\Sigma,\as',\as,\ws,\frs_0)$ is a cycle which represents the top degree generator of homology, and $\frs\in \Spin^c(X_{\a',\a,\b})$ restricts to $\frs_0$ on $Y_{\a',\a}$. Then
\begin{enumerate}
\item\label{eq:trianglesandrelhom2a} $F_{\a',\a,\b,\frs}\left(\Theta_{\a',\a}^+\otimes A_{\lambda}(-)\right)\simeq A'_{\lambda}\circ F_{\a',\a,\b,\frs}\left(\Theta_{\a',\a}^+\otimes -\right)$,
\item\label{eq:trianglesandrelhom2b} $ F_{\a',\a,\b,\frs}\left(\Theta_{\a',\a}^+\otimes B_{\lambda}(-)\right)\simeq B_{\lambda}\circ F_{\a',\a,\b,\frs}\left(\Theta_{\a',\a}^+\otimes -\right)$.
\end{enumerate}
An analogous statement holds for triples $(\Sigma,\as,\bs,\bs',\ws)$ where $(\Sigma,\bs,\bs',\ws)$ is a diagram for $(S^1\times S^2)^{\# k}$. 
\end{lem}
\begin{proof} We focus on the claim when $\lambda$ is an immersed path connecting $w_1$ and $w_2$. The claim when $\lambda$ is an immersed closed curve is a simple modification.

 Part~\eqref{eq:trianglesandrelhom2b} follows immediately from Lemma~\ref{lem:relhomtrianglegeneral}, though Part~\eqref{eq:trianglesandrelhom2a} does not follow from a symmetric argument. 
 
By using Lemmas~\ref{lem:relhomtrianglegeneral} and~\ref{lem:AgBgrel} we see 
\begin{equation}
\begin{split}
&F_{\a',\a,\b,\frs}\left(\Theta_{\a',\a}^+\otimes A_{\lambda}(-)\right)+A'_{\lambda}\circ F_{\a',\a,\b,\frs}\left(\Theta_{\a',\a}^+\otimes -\right)\\
\simeq &F_{\a',\a,\b,\frs}\left((A'_{\lambda}+A_{\lambda})(\Theta_{\a',\a}^+\right)\otimes -)\\
\simeq &F_{\a',\a,\b,\frs}\left((U_{w_1}\Phi_{w_1}+U_{w_2}\Phi_{w_2})(\Theta_{\a',\a}^+)\otimes -\right).
\label{eq:firstrelationfromtriangles}
\end{split}
\end{equation}
However, from Equation~\eqref{eq:defPhi} we see that the maps $\Phi_{w_i}$ are $+1$ graded chain maps. Since $[\Theta_{\a',\a}^+]$ is the highest graded non-zero element of $\HF^-(\Sigma,\as',\as,\ws,\frs_0)$, we must have 
\[
[\Phi_{w_1}(\Theta_{\a',\a}^+)]=[\Phi_{w_2}(\Theta_{\a',\a}^+)]=0\in \HF^-(\Sigma,\as',\as,\ws,\frs_0).
\]
The associativity relations for holomorphic triangles imply that the map $F_{\a',\a,\b,\frs}(\d \eta,-)$ is chain homotopic to the zero map, for any $\eta\in \CF^-(\Sigma,\as,\as',\ws,\frs_0)$. Hence
\begin{equation}
F_{\a',\a,\b,\frs}\left((U_{w_1}\Phi_{w_1}+U_{w_2}\Phi_{w_2})(\Theta_{\a',\a}^+)\otimes -\right)\simeq 0.\label{eq:triangleswithphis=0}
\end{equation}
Equations~\eqref{eq:firstrelationfromtriangles} and~\eqref{eq:triangleswithphis=0} imply Part~\eqref{eq:trianglesandrelhom2b}, completing the proof.
\end{proof}

\begin{cor}\label{cor:relhomcomma/bmove} Suppose $\Sigma$ is a Heegaard surface for $(Y,\ws)$, and $\lambda\subset \Sigma$ is either an immersed, closed curve, or an immersed path connecting two basepoints. If $\as$ and $\as'$ are attaching curves for the $\alpha$-handlebody, and $\bs$ and $\bs'$ are attaching curves for the $\beta$-handlebody, then
\begin{enumerate}
\item $\Psi_{\as\to \as'}^{\bs}\circ A_{\lambda} \simeq A_{\lambda} \circ \Psi_{\as\to \as'}^{\bs}$, and
\item $\Psi_{\as}^{\bs\to \bs'}\circ A_{\lambda}\simeq A_{\lambda} \circ \Psi_{\as}^{\bs\to \bs'}$.
\end{enumerate}
The same holds for the type-$B$ maps.
\end{cor}
\begin{proof} The transition maps $\Psi_{\as\to \as'}^{\bs}$ and $\Psi_{\as}^{\bs\to \bs'}$ can both be computed via a sequence of holomorphic triangle maps, so the result follows from Lemma~\ref{lem:relativehomologycommutestriangleII}.
\end{proof}

Next we consider the transition maps associated to simple stabilizations of the Heegaard surface.

\begin{lem}\label{lem:relhomologycommutesstab} Suppose that $\cH'$ is a simple stabilization of $\cH$, and let $\sigma$ denote the transition map from $\CF^-_{J}(\cH,\frs)$ to $\CF^-_{J(T)}(\cH',\frs)$. If $\lambda\subset \Sigma$ is an immersed, closed loop or path  connecting two basepoints, then
\[
A_\lambda\circ \sigma=\sigma\circ A_{\lambda}.
\]
\end{lem}
\begin{proof}The proof follows from the same count of holomorphic curves used to prove stabilization invariance \cite{LipshitzCylindrical}*{Proposition~12.5}. If $k$ is an integer, let $\phi_k\in \pi_2(c,c)$ denote the class which has multiplicity $k$ on the single domain of $(\bT^2,\alpha_0,\beta_0)$. The class $\phi_k$ has Maslov index $2k$. If $\phi\in \pi_2(\xs,\ys)$ is a homology class of disks on $\cH$, with multiplicity $k$ at the connected sum point, then using Lipshitz's formula for the Maslov index \cite{LipshitzCylindrical}*{Equation~8} together with the fact that a disk has Euler measure 1, we obtain
\[
\mu(\phi\# \phi_k)=\mu(\phi).
\]
 Furthermore, any homology class of disks on $\cH'$ can be written as such a connected sum.

 According to the proof of \cite{LipshitzCylindrical}*{Proposition~12.5}, for sufficiently large $T$, one has
 \begin{equation}
\# \hat{\cM}_{J}(\phi)=\# \hat{\cM}_{J(T)}(\phi\# \phi_k). \label{eq:stabilizedmodulicounts}
 \end{equation}  A straightforward computation shows that
 \begin{equation}
a(\lambda,\phi)=a(\lambda,\phi\# \phi_k).\label{eq:stabilizedAlambdacounts}
 \end{equation}
The claim follows immediately from Equations~\eqref{eq:stabilizedmodulicounts} and \eqref{eq:stabilizedAlambdacounts}.
\end{proof}

We now prove well-definedness of the relative homology actions:
\begin{proof}[Proof of Proposition~\ref{prop:naturalityrelativehomology}]
Suppose that $\lambda$ and $\lambda'$ are paths from $w_1$ to $w_2$ in $Y$. The claim that $A_{\lambda}\simeq A_{\lambda'}$ if $\lambda$ and $\lambda'$ represent homologous elements of $H_1(Y, \{w_1,w_2\};\Z)/\Tors$ is proven in Lemma~\ref{lem:torsion}.

To show that the map $A_\lambda$ commutes with the transition maps up to chain homotopy, it is sufficient to show that $A_{\lambda}$ commutes with the transition maps associated to changes of the almost complex structure, as well as each elementary Heegaard move from Lemma~\ref{lem:Heegaardmoves}.  Commutation of the relative homology maps with the transition maps associated to changing the almost complex structure is proven in Lemma~\ref{lem:relhomcommutechangeacstr}. Commutation with the maps associated to isotopies and handleslides of the $\as$ and $\bs$ curves is proven in Corollary~\ref{cor:relhomcomma/bmove}. Commutation with the simple stabilization maps is proven in Lemma~\ref{lem:relhomologycommutesstab}. Commutation with the maps induced by isotopies of the Heegaard surface inside of $Y$ is tautological.
\end{proof}

\section{Free-stabilization maps}
\label{sec:freestab}

In this section we describe maps for adding or removing a basepoint, which we call the \emph{free-stabilization} maps. Suppose $(Y,\ws)$ is a multi-pointed 3-manifold, $w\not \in \ws$, and 
\[
\sigma\colon \ws\to \bmP\quad  \text{and} \quad \sigma'\colon \ws\cup \{w\}\to \bmP
\]
are colorings satisfying $\sigma'|_{\ws}=\sigma$. In this section, we describe homomorphisms of $\cR_{\bmP}$-modules
\[
S_w^+ \colon \CF^-(Y,\ws^\sigma,\frs)\to \CF^-(Y,(\ws\cup \{w\})^{\sigma'},\frs),
\]
and
\[
S_w^- \colon \CF^-(Y,(\ws\cup \{w\})^{\sigma'},\frs)\to  \CF^-(Y,\ws^\sigma,\frs).
\]
Since the maps $S_{w}^+$ and $S_{w}^-$ are $\cR_{\bmP}$-equivariant, they induce maps on the $+$, $\infty$ and $\wedge$ flavors as well, by tensoring with the identity map (see Equation~\eqref{eq:CFinfty/plusdef}).

We now state the formula defining the free-stabilization maps. Suppose that $\cH=(\Sigma,\as,\bs,\ws)$ is a diagram for $(Y,\ws)$ such that $w\in \Sigma\setminus (\as\cup \bs)$. Pick a small disk $D\subset \Sigma\setminus (\as\cup \bs)$ containing the point $w$. Pick two curves $\alpha_0$ and $\beta_0$ inside of $D$, such that
\[
|\alpha_0\cap \beta_0|=2,
\]
and both $\alpha_0$ and $\beta_0$ bound a disk containing $w$.  The two intersection points of $\alpha_0\cap \beta_0$ are distinguished by their relative Maslov index. Let $\theta^+$ and $\theta^-$ denote the higher and lower graded intersection points respectively. See Figure~\ref{fig::20}.

\begin{figure}[ht!]
\centering
%% Creator: Inkscape inkscape 0.92.3, www.inkscape.org
%% PDF/EPS/PS + LaTeX output extension by Johan Engelen, 2010
%% Accompanies image file '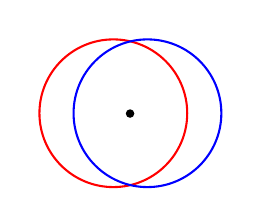' (pdf, eps, ps)
%%
%% To include the image in your LaTeX document, write
%%   \input{<filename>.pdf_tex}
%%  instead of
%%   \includegraphics{<filename>.pdf}
%% To scale the image, write
%%   \def\svgwidth{<desired width>}
%%   \input{<filename>.pdf_tex}
%%  instead of
%%   \includegraphics[width=<desired width>]{<filename>.pdf}
%%
%% Images with a different path to the parent latex file can
%% be accessed with the `import' package (which may need to be
%% installed) using
%%   \usepackage{import}
%% in the preamble, and then including the image with
%%   \import{<path to file>}{<filename>.pdf_tex}
%% Alternatively, one can specify
%%   \graphicspath{{<path to file>/}}
%% 
%% For more information, please see info/svg-inkscape on CTAN:
%%   http://tug.ctan.org/tex-archive/info/svg-inkscape
%%
\begingroup%
  \makeatletter%
  \providecommand\color[2][]{%
    \errmessage{(Inkscape) Color is used for the text in Inkscape, but the package 'color.sty' is not loaded}%
    \renewcommand\color[2][]{}%
  }%
  \providecommand\transparent[1]{%
    \errmessage{(Inkscape) Transparency is used (non-zero) for the text in Inkscape, but the package 'transparent.sty' is not loaded}%
    \renewcommand\transparent[1]{}%
  }%
  \providecommand\rotatebox[2]{#2}%
  \newcommand*\fsize{\dimexpr\f@size pt\relax}%
  \newcommand*\lineheight[1]{\fontsize{\fsize}{#1\fsize}\selectfont}%
  \ifx\svgwidth\undefined%
    \setlength{\unitlength}{127.40696184bp}%
    \ifx\svgscale\undefined%
      \relax%
    \else%
      \setlength{\unitlength}{\unitlength * \real{\svgscale}}%
    \fi%
  \else%
    \setlength{\unitlength}{\svgwidth}%
  \fi%
  \global\let\svgwidth\undefined%
  \global\let\svgscale\undefined%
  \makeatother%
  \begin{picture}(1,0.80159992)%
    \lineheight{1}%
    \setlength\tabcolsep{0pt}%
    \put(0,0){\includegraphics[width=\unitlength,page=1]{fig62.pdf}}%
    \put(0.46936006,0.15089858){\color[rgb]{0,0,0}\makebox(0,0)[lt]{\lineheight{1.25}\smash{\begin{tabular}[t]{l}$\theta^-$\end{tabular}}}}%
    \put(0.46936006,0.68094323){\color[rgb]{0,0,0}\makebox(0,0)[lt]{\lineheight{1.25}\smash{\begin{tabular}[t]{l}$\theta^+$\end{tabular}}}}%
    \put(0.51160969,0.39528047){\color[rgb]{0,0,0}\makebox(0,0)[lt]{\lineheight{1.25}\smash{\begin{tabular}[t]{l}$w$\end{tabular}}}}%
    \put(0,0){\includegraphics[width=\unitlength,page=2]{fig62.pdf}}%
    \put(0.31726606,0.65366344){\color[rgb]{1,0,0}\makebox(0,0)[rt]{\lineheight{1.25}\smash{\begin{tabular}[t]{r}$\alpha_0$\end{tabular}}}}%
    \put(0.63919207,0.65773931){\color[rgb]{0,0,1}\makebox(0,0)[lt]{\lineheight{1.25}\smash{\begin{tabular}[t]{l}$\beta_0$\end{tabular}}}}%
    \put(0.90268634,0.64644978){\color[rgb]{0,0,0}\makebox(0,0)[lt]{\lineheight{1.25}\smash{\begin{tabular}[t]{l}$p_0$\end{tabular}}}}%
  \end{picture}%
\endgroup%

\caption{\textbf{A free-stabilization.} We can think of the dashed circle as the connected sum tube. Alternatively, if we collapse the dashed circle to a point $p_0$, we get a doubly pointed diagram on $S^2$. }\label{fig::20}
\end{figure}

For appropriately chosen almost complex structures on $\Sigma\times [0,1]\times \R$ (described precisely in Section~\ref{subsec:gluingdata}), the maps $S_{w}^+$ and $S_w^-$ are defined via the formulas
\begin{equation}
\begin{split}
S_w^+(\xs)&=\xs\times \theta^+, \\
S^-_w(\ve{x}\times \theta^-)=\ve{x}\qquad &\text{ and } \qquad S^-_w(\ve{x}\times \theta^+)=0,
\end{split}\label{def:freestabilizationmaps}
\end{equation}
extended $\cR_{\bmP}$-equivariantly.

\subsection{Gluing data for stretching the neck}
\label{subsec:gluingdata}

In this section, we describe precisely which almost complex structures we use to define the free-stabilization maps.

It is convenient to view the free-stabilization operation as taking the connected sum of $\cH$ with the diagram $(S^2,\alpha_0,\beta_0,w_0,p_0)$, at the points $w$ and $p_0$. 

%
%
%
%Given an almost complex structure $J$ on $\Sigma\times [0,1]\times \R$ which is split on a neighborhood of $D\times [0,1]\times \R$ (i.e. of the form $j_1\times j_2$) we can construct an almost complex structure $J(T)$ by inserting a neck $T$ along $\d D$.  
% 

%To achieve well defined maps on the level of transitive systems of chain complexes, we need to parametrize the set of ways to build an almost complex structure which is stretched on a circle bounding a disk containing $\alpha_0$, $\beta_0$ and $w$.

Fix an embedded disk $D_0\subset S^2\setminus (\alpha_0\cup \beta_0)$, centered at $p_0$. We make the following definition to parametrize the construction of a stretched almost complex structure:

\begin{define}\label{def:gluingdata}Suppose $\cH=(\Sigma,\as,\bs,\ve{w})$ is a diagram for $(Y,\ve{w})$ and $w\in \Sigma\setminus (\ws\cup \as\cup \bs)$. We call a tuple $\frd=(J^\frd,J_0^\frd,D,\iota)$ a \emph{gluing datum for free-stabilizing at $w$} if the following hold:
\begin{enumerate}

\item $D\subset \Sigma\setminus (\as\cup \bs)$ is a closed disk containing $w$.

\item $J^\frd$ is an almost complex structure on $\Sigma\times [0,1]\times \R$ which is split on $D$.

\item $J_0^\frd$ is an almost complex structure on $S^2\times [0,1]\times \R$ which is split on $D_0$.

\item $\iota\colon S^2\setminus (\tfrac{1}{2}\cdot D_0)\to D$ is an embedding which maps $w_0$ to $w$. Furthermore, $\iota$ maps the annulus $D_0\setminus (\tfrac{1}{2}\cdot D_0)$ conformally onto $D\setminus (\tfrac{1}{2}\cdot D)$. Here $\tfrac{1}{2}\cdot D_0$ denotes the subdisk of radius $\tfrac{1}{2}$ centered at $p_0$, obtained from the unique (up to rotation) conformal identification of the pair $(D_0,p_0)$ with $(\{z\in \C: |z|\le 1\}, 0)$. 
\end{enumerate}
\end{define}

Given a gluing datum $\frd$ for free-stabilizing at $w$, we can form a diagram 
\[
\cH^+:=(\Sigma, \as\cup\iota(\alpha_0), \bs\cup \iota(\beta_0), \ws\cup \{w\}),
\]
which depends on the choice of $\frd$ through the embedding $\iota$ (though we suppress this from the notation).

We define the stretched almost complex structure $J^\frd(T)$ on $\Sigma\times [0,1]\times \R$, whenever $T>0$. We begin by defining $J^\frd(T)$ when $T=2$, by letting $J^\frd(2)$ coincide with $J^\frd$ and $J_0^\frd$  on $(\Sigma\setminus (\tfrac{1}{2}\cdot D))\times [0,1]\times \R$ and $(S^2\setminus (\tfrac{1}{2}\cdot D_0))\times [0,1]\times \R$, respectively. For $T\ge 2 $, we construct an almost complex structure $J^\frd(T)$ by replacing the almost complex structure in the annulus region $D\setminus (\tfrac{1}{2}\cdot D)$ with one conformally equivalent to the annulus $D\setminus (1/T)\cdot D$. We set $J^\frd(T)=J^\frd(2)$ if $2\ge T>0$.

Recall that if $J$ and $J'$ are two cylindrical almost complex structures on $\Sigma\times [0,1]\times \R$, the transition map $\Psi_{(\cH,J)\to (\cH,J')}$ can be computed by picking a non-cylindrical almost complex structure $\tilde{J}$ on $\Sigma\times [0,1]\times \R$ which agrees with $J$ on $\Sigma\times [0,1]\times (-\infty, -1]$ and with $J'$ on $\Sigma\times [0,1]\times [1,\infty)$. The transition map is defined by counting index 0 $\tilde{J}$-holomorphic curves via the formula:
\begin{equation}
\Psi_{(\cH,J)\to (\cH,J')}(\xs):=\sum_{\substack{\phi\in \pi_2(\xs,\ys)\\ \mu(\phi)=0}} \# \cM_{\tilde{J}}(\phi)U_{\ws}^{n_{\ws}(\phi)} \cdot \ys.\label{eq:a-c-s-transitionmap}
\end{equation}
Write $\Psi_{\tilde{J}}$ for the map $\Psi_{(\cH,J)\to (\cH,J')}$ appearing in Equation~\eqref{eq:a-c-s-transitionmap}, computed using $\tilde{J}$.

\begin{define}\label{def:longenoughfreestab}Suppose $\cH$ is a Heegaard diagram and $\frd$ is a gluing datum for free-stabilizing at $w$. We say that a real number $T>0$ satisfies \emph{stabilizing condition}~\eqref{eq:stabilizationcondition2} if for any two $T_1,T_2\ge T$, there is a non-cylindrical almost complex structure $\tilde{J}$ on $\Sigma\times[0,1]\times \R$ interpolating $J^\frd(T_1)$ and $J^\frd(T_2)$, such that for all $\xs\in \bT_{\a}\cap \bT_{\b},$ we have
\begin{equation}
\begin{split}
\Psi_{\tilde{J}}(\xs\times \theta^+)&=\xs\times \theta^+,\quad\text{ and }\\
\Psi_{\tilde{J}}(\xs\times \theta^-)&=\xs\times \theta^-+\sum_{\ys\in \bT_{\a}\cap \bT_{\b}} C_{\xs,\ys}\cdot \ys\times \theta^+,
\end{split}
\tag{SC-2}\label{eq:stabilizationcondition2}
\end{equation}
for some $C_{\xs,\ys}\in \bF_2[U_{\ws}]$ (which may depend on $\frd$, $T_1$ and $T_2$).
\end{define}

We define
\[
S_{w}^+\colon \CF^-_{J^\frd}(\cH,\sigma,\frs)\to \CF^-_{J^\frd(T)}(\cH^+, \sigma',\frs),
\]
using Equation~\eqref{def:freestabilizationmaps} whenever $T$ satisfies condition~\eqref{eq:stabilizationcondition2}. The map $S_w^-$ is also defined using Equation~\eqref{def:freestabilizationmaps}, under the same assumption. 

If $J'$ is an arbitrary almost complex structure on $\Sigma\times [0,1]\times \R$, then the map $S_{w}^+$ from $\CF^-_{J^\frd}(\cH,\sigma,\frs)$ to $\CF^-_{J'}(\cH^+, \sigma',\frs)$ is defined as the composition of the map in Equation~\eqref{def:freestabilizationmaps} together with the transition map $\Psi_{J^\frd(T)\to J'}$.

\begin{prop}\label{prop:freestab-Tsufflargeexist}
If $\frd$ is a gluing datum for free-stabilizing at $w$, then there is a $T>0$ which satisfies stabilizing condition \eqref{eq:stabilizationcondition2}.
\end{prop}

Before proving Proposition~\ref{prop:freestab-Tsufflargeexist}, we prove a  Maslov index formula:

\begin{lem}\label{lem:indexdisksonsphere} Let $(S^2,\alpha_0,\beta_0,w,p_0)$ denote the diagram in Figure~\ref{fig::20}. If $x,y\in \alpha_0\cap \beta_0$ and $\phi_0\in \pi_2(x,y)$ is a homology class of disks, then
\[
\mu(\phi_0)=2n_w(\phi_0)+2n_{p_0}(\phi_0)+\gr(x,y),
\]
where $\gr(x,y)$ denotes the relative Maslov grading between $x$ and $y$.

Furthermore, if $m_1(\phi_0),$ $m_2(\phi_0),$ $m_3(\phi_0)$, and $m_4(\phi_0)$ denote the multiplicities of $\phi_0$ in the components of $S^2\setminus (\alpha_0\cup \beta_0)$, then
\[
\mu(\phi_0)=m_1(\phi_0)+m_2(\phi_0)+m_3(\phi_0)+m_4(\phi_0).
\]
\end{lem}

\begin{proof}The first formula is equivalent to 
\[
\gr(x,y)=\mu(\phi)-2n_w(\phi)-2n_{p_0}(\phi),
\]
which is the definition of the relative Maslov grading $\gr(x,y)$.

To prove the second formula, we verify it for a constant homology class $e_x\in \pi_2(x,x)$ (for which the claim is trivial), and then note that it respects slicing in bigons. Since any two classes on this diagram can be related by splicing in bigons, the formula follows in general.
\end{proof}

\begin{proof}[Proof of Proposition~\ref{prop:freestab-Tsufflargeexist}] We focus on the claim that if $T_1$ and $T_2$  satisfy \eqref{eq:stabilizationcondition2}, then $\tilde{J}$ can be chosen so that 
\[
\Psi_{\tilde{J}}(\xs\times \theta^+)=\xs\times \theta^+.
\]
The claim about $\xs\times \theta^-$ (which is the dual statement) follows by a simple modification.

We will write $p$ for the point $w$, viewed as a point on $\cH$, and write $w$ for the new basepoint on $\cH^+$.

Suppose $\phi\# \phi_0\in \pi_2(\xs\times \theta^+, \ys\times y)$ is a homology class of disks with Maslov index 0, for $y\in \{\theta^+,\theta^-\}$.  We will show that if $T_1$ and $T_2$ are sufficiently large, then $\tilde{J}$ can be chosen so that if $\phi\# \phi_0$ has a $\tilde{J}$-holomorphic representative, then  $\phi\# \phi_0$ is the constant class, $e_{\xs}\times e_{\theta^+}\in \pi_2(\xs\times \theta^+,\xs\times \theta^+)$. Furthermore, we will show that $e_{\xs}\times e_{\theta^+}$ always has a unique representative, which will imply the statement.

Suppose that $T_{1,i}$ and $T_{2,i}$ are a sequence of neck lengths which both approach $+\infty$. We can pick a sequence of interpolating almost complex structures $\tilde{J}_i$ such that $(\Sigma\times [0,1]\times \R, \tilde{J}_i)$ contains the almost complex manifold $((\Sigma\setminus N_i)\times [0,1]\times \R, J)$, where $N_i$ is a nested sequence of open balls on $\Sigma$ whose intersection is $\{p\}$. 

Suppose $u_i$ is a sequence of $\tilde{J}_i$-holomorphic curves representing $\phi\# \phi_0$. By adapting \cite{LipshitzCylindrical}*{Proposition~12.4}, we can extract a broken limiting curve on the punctured manifold $(\Sigma\setminus \{p\})\times [0,1]\times \R$. Such a holomorphic curve can be completed over $\{p\}\times [0,1]\times \R$ to obtain a (potentially broken) representative $\cU$ of the homology class $\phi$ on $(\Sigma,\as,\bs,\ws)$.

We have 
\begin{equation}
\begin{split}
\mu(\phi\# \phi_0)&=\mu(\phi)+\mu(\phi_0)-2n_{p_0}(\phi_0)\\
&=\mu(\phi)+\gr(\theta^+,y)+2n_{w}(\phi_0).\label{eq:indexbrokenlimitacstr}
\end{split}
\end{equation}
The first equality of Equation~\eqref{eq:indexbrokenlimitacstr} is justified by Lipshitz's formula for the Maslov index \cite{LipshitzCylindrical}*{Equation~8}, together with the fact that a disk has Euler measure 1. The second equality follows from  Lemma~\ref{lem:indexdisksonsphere}.

Since $\phi$ admits a broken representative for $J$,  we conclude that $\mu(\phi)\ge 0$ by transversality. Since the last line of Equation~\eqref{eq:indexbrokenlimitacstr} involves only non-negative terms, and the sum is zero, we conclude that
\[
\mu(\phi)=\gr(\theta^+,y)=n_{w}(\phi_0)=0.
\]

Since $\mu(\phi)=0$ and $\phi$ admits a broken $J$-holomorphic representative, and $J$ is cylindrical, it follows that $\xs=\ys$ and $\phi$ is the constant class, $e_{\xs}$, by transversality. Since $\gr(\theta^+,y)=0,$ it follows that $y=\theta^+$. Since $n_{p_0}(\phi_0)=n_{w}(\phi_0)=0$, as well, it follows that $\phi_0$ is the constant class $e_{\theta^+}$.

Conversely,  $e_{\xs}\times e_{\theta^+}$ admits a unique  $\tilde{J}_i$-holomorphic representative for any $i$, since each $\tilde{J}_i$ is cylindrical in a neighborhood of $(\xs\times \{\theta^+\})\times [0,1]\times \R$. The proof is complete.
\end{proof}

\subsection{Free-stabilization and the differential}

In this section, we prove that the free-stabilization maps are chain maps. The argument is essentially the same as \cite{OSLinks}*{Proposition~6.5}. We repeat the argument since we will later prove several refinements and analogous holomorphic curve counts.

\begin{prop}\label{prop:free-stabdifferential}
Suppose $\cH$ is a Heegaard diagram for $(Y,\ws)$, $\cH^+$ is its free stabilization at $w$, and $\frd$ is a gluing datum for the free-stabilization. Then for all $T$ which satisfy \eqref{eq:stabilizationcondition2},
\begin{equation}
\begin{split}
\d_{\cH^+,J^\frd(T)}(\xs\times \theta^+)&=\d_{\cH,J^\frd}(\xs)\otimes \theta^+, \quad \text{and} \\
\d_{\cH^+,J^\frd(T)}(\xs\times \theta^-)&=\d_{\cH,J^\frd}(\xs)\otimes \theta^-+\sum_{\ys\in \bT_{\a}\cap \bT_{\b}} C_{\xs,\ys}\cdot \ys\otimes \theta^+,
\end{split}
\label{eq:free-stab-diff}
\end{equation}
for $C_{\xs,\ys}\in \bF_2[U_{\ws},U_w]$ (which depend on $T$ and $\frd$).
\end{prop}

\begin{proof}
Since the transition maps $\Psi_{J^\frd(T_1)\to J^\frd(T_2)}$ are chain maps, it is easy to check that Condition~\eqref{eq:stabilizationcondition2} algebraically implies that if Equation~\eqref{eq:free-stab-diff} holds for some $T$ which satisfies Condition~\eqref{eq:stabilizationcondition2}, then it also holds for all $T$ which satisfy \eqref{eq:stabilizationcondition2}. Hence, it is sufficient to establish Equation~\eqref{eq:free-stab-diff} for any sufficiently large $T$.

Equation~\eqref{eq:free-stab-diff} is implied by the following two subclaims:
\begin{enumerate}[ref= d-\arabic*, label= (d-\arabic*):]
\item\label{num:diff-fs-1} The $\ys\times \theta^-$ coefficient of $\d_{\cH^+, J^\frd(T)}(\xs\times \theta^+)$
vanishes, whenever $T$ is sufficiently large.  
\item\label{num:diff-fs-2} If $\theta\in \{\theta^+,\theta^-\}$ and $T$ is sufficiently large, then the $\ys\times \theta$ coefficient  of $\d_{\cH^+, J^\frd(T)}(\xs\times \theta)$ (an element of $\bF_2[U_{\ws}, U_w]$) is equal to the $\ys$ coefficient of $\d_{\cH,J^\frd}(\xs)$ (an element of $\bF_2[U_{\ws}]$).
\end{enumerate}

We will write $p$ for the point $w$ on $\Sigma$, viewed as a point on $\cH$, and write $w$ for the basepoint on the free-stabilized diagram $\cH^+$.

 Let $\phi\# \phi_0\in \pi_2(\xs\times x,\ys\times y)$ be a Maslov index 1 class. Pick a sequence of neck-lengths $T_i$ approaching $+\infty$, and consider a sequence $u_i$ of $J^\frd(T_i)$-holomorphic curves representing $\phi\# \phi_0$. For such a sequence, we can extract a broken limit consisting of collections $\cU$, $\cU_m$ and $\cU_0$, where $\cU$ is a collection on $\Sigma\setminus \{p\}\times [0,1]\times \R$ whose total class is $\phi$, and $\cU_0$ is a collection on $S^2\setminus \{p_0\}\times [0,1]\times \R$ whose total class is $\phi_0$. The collection $\cU_m$ consists of curves in the tube region $S^1\times \R\times [0,1]\times \R$ (ultimately, we will rule out any non-trivial curves in $\cU_m$, due to codimension considerations).  The curves in  $\cU$ and $\cU_0$ may be completed over $p$ and $p_0$ to obtain curves on the diagrams $(\Sigma,\as,\bs)$ and $(S^2,\alpha_0,\beta_0)$. The process of obtaining limiting curves is described in \cite{LipshitzCylindrical}*{Proposition~12.4}.

Since $\phi$ has the broken holomorphic representative $\cU$, it follows from Proposition~\ref{prop:transversality} that
\begin{equation}
\mu(\phi)\ge 0.\label{eq:diffmasU>=0}
\end{equation}
On the other hand,  Equation~\eqref{eq:indexbrokenlimitacstr} implies
\begin{equation}
\mu(\phi\# \phi_0)=\mu(\phi)+\gr(x,y)+2n_{w}(\phi_0).\label{eq:diffMasexcision}
\end{equation}

We first consider Subclaim~\eqref{num:diff-fs-1}, when $x=\theta^+$ and $y=\theta^-$. In this case, we conclude from Equation~\eqref{eq:diffMasexcision} that
\[
\mu(\phi)=0 \quad \text{and} \quad n_w(\phi_0)=0.
\]
By transversality, it follows that $\phi$ is the constant class $e_{\xs}$, and $\phi_0$ is one of the two bigons in the free-stabilization region which have zero multiplicity over $w$. Both classes have unique representatives for any  almost complex structure, and hence have canceling contribution to the differential. Subclaim~\eqref{num:diff-fs-1} is established.

We now consider Subclaim~\eqref{num:diff-fs-2}. In this case, Equation~\eqref{eq:diffMasexcision} implies
\[
\mu(\phi)=1\quad  \text{and} \quad n_w(\phi_0)=0.
\]

For the moment, we trim off all ghost curves from $\cU$, $\cU_m$, and  $\cU_0$, i.e. components of the limit which have constant image in $\Sigma\times [0,1]\times \R$ or $S^2\times [0,1]\times \R$. (We will shortly prove that generically no ghost curves appear).

Having trimmed off ghost curves, we claim that transversality is achieved at the remaining curves in $\cU$. By Proposition~\ref{prop:transversality}, this amounts to showing that the limiting curves satisfy  \eqref{def:M1}--\eqref{def:M5}.  The only axiom which is non-trivial is \eqref{def:M5}, i.e. that the limiting curves have no components $v$ with $\pi_{[0,1]\times \R}\circ v$ constant. Having trimmed off ghost curves, we can assume that any curve $v$ with $\pi_{[0,1]\times \R}\circ v$ constant has $\pi_{\Sigma}\circ v$ non-constant. Such a curve $v$ has Maslov index at least 2, since its domain must be a sum of connected components of $\Sigma\setminus \as$ and $\Sigma\setminus \bs$, each weighted with a non-negative integer.  If we delete from $\cU$ all such curves $v$, then we obtain a curve at which transversality is achieved, by Proposition~\ref{prop:transversality}.  However the Maslov index of the remaining components is at most $-1$ (since $\mu(\phi)=1$ and we have removed curves whose total Maslov index is at least 2). There are no holomorphic curves with Maslov index $-1$ at which transversality is obtained, so such curves $v$ are prohibited from appearing in $\cU$.

It follows that $\cU$ (after trimming ghost curves) consists of a single curve $u\colon S\to \Sigma\times [0,1]\times \R$ satisfying \eqref{def:M1}--\eqref{def:M5}. Since $\phi$ has Maslov index 1, from Proposition~\ref{prop:transversality} it also follows that $u$ is embedded, and hence satisfies \eqref{def:M6}.

Let \[
\rho^p\colon \cM(\phi)\to \Sym^{n_p(\phi)}([0,1]\times \R)
\]
denote the map
\[
\rho^p(u):=(u\circ \pi_{[0,1]\times \R})\left((u\circ \pi_{\Sigma})^{-1}(p)\right).
\]

Consider the 1-dimensional set
\[
X(\phi):=\left\{\rho^p(u): u\in \cM(\phi)\right\}\subset \Sym^{n_p(\phi)}([0,1]\times \R).
\]

By perturbing the almost complex structure slightly near $p$, we can assume that $X(\phi)$ is disjoint from the fat diagonal in $\Sym^{n_p(\phi)}([0,1]\times \R)$, a codimension 2 subset.

We claim that $\cU_m$ consists of a union of $n_p(\phi)$ once-covered cylinders 
\[
u\colon S^1\times \R\to S^1\times \R\times [0,1]\times \R,
\]
which each have constant projection to $[0,1]\times \R$, together with some ghost curves. This follows since the maximum modulus principle implies that the projection to $[0,1]\times \R$ of any holomorphic curve in $S^1\times \R\times [0,1]\times \R$ must be constant. The asymptotics of $\cU_m$ must match those of $u$, and hence there must be exactly $n_p(\phi)$ once-covered cylinders which each project to a different point in $[0,1]\times \R$. Any additional curves must have constant image in $S^1\times \R\times [0,1]\times \R$. Write $C$ for these cylinders.

There must be a component $u_0$ of $\cU_0$ which has the same asymptotics (as a curve on $S^2\setminus \{p_0\}\times [0,1]\times \R$) at $p_0$ as the cylinders in $\cU_m$ (which we have already reasoned are the same as the asymptotics of $u$ at $p$), i.e.
\[
\rho^p(u)=\rho^{p_0}(u_0).
\]
Write $\phi_0'$ for the homology class of $u_0$. It is not hard to see that after trimming ghost curves, $u_0$ must satisfy \eqref{def:M1}--\eqref{def:M5} (embeddedness, \eqref{def:M6}, is not yet clear). Write $S_0$ for the source curve of $u_0$.

We now show that $\cU_0$ contains no curves other than $u_0$ and possibly ghost curves, and that $u_0$ is embedded (we subsequently will rule out ghost curves). 

By Proposition~\ref{prop:transversality}, for a generically chosen almost complex structure, near $u_0$ the set $\cM(S_0,\phi_0', X(\phi))$ is a manifold of dimension 
\begin{equation}
\dim \cM(S_0,\phi_0', X(\phi))=\mu(\phi_0')-\codim (X(\phi))-2\Sing(u_0).\label{eq:dimmatched}
\end{equation}
Since $D(\phi_0')\le D(\phi_0)$, Lemma~\ref{lem:indexdisksonsphere} implies that 
\begin{equation}
\mu(\phi_0')\le \mu(\phi_0) \qquad \text{and}\qquad \mu(\phi_0)=2n_{p}(\phi).\label{eq:maslov-index-inequality-0}
\end{equation}
(The first inequality of Equation~\eqref{eq:maslov-index-inequality-0} can also be proven using a transversality argument).

Since $\codim(X(\phi))=2n_p(\phi)-1$, Equation~\eqref{eq:maslov-index-inequality-0} implies
\[
\mu(\phi_0')\le \codim (X(\phi))+1.
\]
 Hence Equation~\eqref{eq:dimmatched} reduces to
\[
\dim \cM(S_0,\phi_0',X(\phi))\le 1-2\Sing(u_0)
\]
 with equality if and only if $\mu(\phi_0')=\mu(\phi_0)$. In particular $\cM(S_0,\phi_0',X(\phi))$ is generically empty unless $u_0$ is embedded and $\phi_0'=\phi_0$. It follows that $\cU_0$ consists only of the unbroken curve $u_0$, which satisfies \eqref{def:M1}--\eqref{def:M6}, as well as possibly some ghost curves.
 
We now show that generically no ghost curves appear in $\cU$, $\cU_m$ or $\cU_0$. Our argument is essentially standard; see \cite{LOTBordered}*{Lemma~5.57}. Let us write $S_i$ for the source of  $u_i$ (a curve in our original sequence of $J^\frd(T_i)$-holomorphic curves).  As one stretches the neck, the sources $S_i$ degenerate along a collection of boundary-to-boundary arcs and closed loops, as in the Deligne-Mumford compactification of stable curves. We can assume that all of the sources $S_i$ are topologically identified with a fixed surface, which we denote by $\hat{S}$. The limiting curve is a nodal curve, with nodes on the boundary or interior, corresponding to where arcs and closed curves in $\hat{S}$ collapse.  Furthermore, we can assume that all of the curves which appear in the limit are \emph{stable}, which implies that each ghost curve which is a disk or sphere has at least three nodes. Let $G_1,\dots, G_n$ denote the ghost components.

 Let us glue any nodes connecting two ghost components, and smooth any identified pair of nodes on a single ghost component. Abusing notation slightly, we write $G_1,\dots, G_n$ for the resulting surfaces, which we call the \emph{smoothed ghosts}. Note that gluing and smoothing ghost curves preserves stability.

We make the following claims about the smoothed ghosts:
\begin{enumerate}[ref= g-\arabic*, label= (g-\arabic*):]
\item\label{claim-ghost-0} There are no ghosts which have image in $\{p\}\times [0,1]\times \R$ or $\{p_0\}\times [0,1]\times \R$.
\item\label{claim-ghost-1} Each $G_i$ has exactly one node to $S$, $S_0$, or to a cylinder in $C$ in the tube region.
\item\label{claim-ghost-2} Each $G_i$ has no boundary components with no nodes.
\item\label{claim-ghost-3} Each $G_i$ has genus at least 1.
\end{enumerate}

Claim~\eqref{claim-ghost-0} follows since the limiting collections $\cU$ and $\cU_0$ are obtained by taking collections of curves in $\Sigma\setminus \{p\}\times [0,1]\times \R$ and $S^2\setminus \{p_0\}\times [0,1]\times \R$, and completing over the points $p$ and $p_0$.

Claim~\eqref{claim-ghost-1} is established as follows.  That there is at least one node is easy to see: if there are no nodes on some $G_i$, then $u_i$ must contain a closed, null-homologous component, which is prohibited by axiom \eqref{def:M5}. To see that there cannot be more than one node connecting a ghost $G_i$ to $S$ or $S_i$, we note that such a configuration would imply that $u$ or $u_0$ had a double point, which are prohibited since $u$ and $u_0$ are embedded.

We now prove Claim~\eqref{claim-ghost-2}. If $G_i$ had a boundary component with no nodes, then for large $j$ the unbroken curve $u_j$ would have a boundary component with no punctures, which must be mapped to $\Sigma\times \{0,1\}\times \R$. However such a configuration violates the maximum modulus principle applied to $\pi_{[0,1]\times \R}\circ u_j$

Finally, Claim~\eqref{claim-ghost-3} is proven by noting that each $G_i$ is stable, and has at most one node and boundary component by Claims~\eqref{claim-ghost-1} and \eqref{claim-ghost-2}, so must have genus at least 1.

Having established Claims~\eqref{claim-ghost-0}--\eqref{claim-ghost-3}, we now prove that generically ghosts do not appear. Lipshitz's formula for the Fredholm index \cite{LipshitzCylindrical}*{Equation~6} implies
\begin{equation}
\ind(S, \phi)=d-\chi(S)+2e(D(\phi))\qquad \text{and} \qquad \ind(S_0,\phi_0)=1-\chi(S_0)+2e(D(\phi_0)),\label{eq:fredholm-index-from-chi(S)}
\end{equation}
where $d=|\as|=|\bs|$, and $e(D(\phi))$ is the Euler measure of the domain of $\phi$. We have already reasoned that the Fredholm indices in Equation~\eqref{eq:fredholm-index-from-chi(S)} are $1$ and $2n_{p}(\phi)$, respectively, since $\phi$ and $\phi_0$ have Maslov indices 1 and $2n_{p}(\phi)$ respectively, and $u$ and $u_0$ are embeddings.  Similarly
\begin{equation}
\ind\left(\hat{S},\phi\# \phi_0 \right)=(d+1)-\chi(\hat{S})+2e(D(\phi\# \phi_0)),\label{eq:fredholm-index-S-hat}
\end{equation}
which also must be 1, since the $u_i$ are embedded and have Maslov index 1, by assumption. Noting that $e(D(\phi\# \phi_0))=e(D(\phi))+e(D(\phi_0))-2n_{p_0}(\phi_0)$, Equations~\eqref{eq:fredholm-index-from-chi(S)} and ~\eqref{eq:fredholm-index-S-hat} combine to imply that
\begin{equation}
\chi(\hat{S})=\chi(S)+\chi(S_0)-2n_{p_0}(\phi_0).\label{eq:Euler-characteristicS-S0-S-hat}
\end{equation}

Finally, Equation~\eqref{eq:Euler-characteristicS-S0-S-hat} prohibits any ghost components in $\cU$, $\cU_m$ or $\cU_0$: $\hat{S}$ may be topologically constructed by gluing $S$ and $S_0$ along the $n_{p_0}(\phi_0)$ punctures, as well as gluing any ghost components in. However Claims~\eqref{claim-ghost-1}, \eqref{claim-ghost-2} and \eqref{claim-ghost-3} imply that gluing in each $G_i$ drops $\chi(\hat{S})$ by at least 2, contradicting Equation~\eqref{eq:Euler-characteristicS-S0-S-hat}. We conclude that generically no ghost curves appear in $\cU$, $\cU_m$ or $\cU_0$.

 Summarizing, any sequence $u_i$ of $J^\frd(T_i)$-holomorphic curves representing $\phi\# \phi_0$ has a subsequence which converges to a pair $(u,u_0)$ which both satisfy \eqref{def:M1}--\eqref{def:M6} and $\rho^p(u)=\rho^{p_0}(u_0)$.
 
We can view such a pair $(u,u_0)$ as being a point in the compactification of the 1-dimensional moduli space
\begin{equation}
\bigcup_{T>0} \hat{\cM}_{J^\frd(T)}(\phi\# \phi_0).\label{eq:timedependentmodulispace}
\end{equation}

Standard gluing results for holomorphic curves (see \cite{LipshitzCylindrical}*{Proposition~A.2}, \cite{BourgeoisMorseBott}*{Section~5.3}) imply that  for sufficiently large $T$, near $(u,u_0)$ the compactification of the moduli space in Equation~\eqref{eq:timedependentmodulispace}  is modeled on a half open interval times a fibered product:
\begin{equation}
[0,1)\times \left(\left( \cM_{J^\frd}(\phi)\times_{\rho} \cM_{J_0^\frd}(\phi_0)\right)/\R\right). \label{eq:fiberedproduct}
\end{equation}

Since $\mu(\phi)=1$, the set $\hat{\cM}_{J^\frd}(\phi)$ is zero dimensional. Hence  Equation~\eqref{eq:fiberedproduct} implies that for sufficiently large $T$,
\begin{equation}
\# \hat{\cM}_{J^\frd(T)}(\phi\# \phi_0)\equiv\sum_{u\in \hat{\cM}(\phi)} \# \hat{\cM}_{J_0^\frd}(\phi_0, \rho^{p}(u))\pmod{2}. \label{eq:diskcountsstretchedAC}
\end{equation}

According to \cite{OSLinks}*{Lemma~6.4}, if $k>0$ and $\ve{d}\in \Sym^{k}([0,1]\times \R)$ is in the complement of the fat diagonal, then
\begin{equation}
\sum_{\substack{\phi_0\in \pi_2(\theta,\theta)\\ n_{p_0}(\phi_0)=k\\ n_{w}(\phi_0)=0}}\# \cM_{J_0^\frd}(\phi_0,\ve{d})\equiv 1\pmod{2}. \label{eq:OS'scountmatched}
\end{equation}

Subclaim~\eqref{num:diff-fs-2} now follows by combining Equations~\eqref{eq:diskcountsstretchedAC} and ~\eqref{eq:OS'scountmatched}, together with our argument which showed that the only index 1 classes $\phi\# \phi_0\in \pi_2(\xs\times \theta, \ys\times \theta)$ which have holomorphic representatives for large $T$ have $\mu(\phi)=1$ and $n_w(\phi_0)=0$.
\end{proof}

\begin{cor}Suppose $(Y,\ws)$ is a multi-pointed 3-manifold, $w\in Y\setminus \ws$, and $\sigma\colon \ws\to \bmP$ is a coloring which is extended by $\sigma'\colon \ws\cup \{w\}\to \bmP$. For sufficiently large $T$, the maps
\[
S_w^+\colon \CF^-_{J^\frd}(\cH,\sigma,\frs)\to \CF^-_{J^\frd(T)}(\cH^+,\sigma',\frs)\quad \text{and}
\]
\[
S_w^-\colon \CF^-_{J^\frd(T)}(\cH^+,\sigma',\frs)\to  \CF^-_{J^\frd}(\cH,\sigma,\frs)
\]
are chain maps.
\end{cor}
\begin{proof}The claim is a restatement of Proposition~\ref{prop:free-stabdifferential}, using the definition of the free-stabilization maps in Equation~\eqref{def:freestabilizationmaps}.
\end{proof}

\subsection{Free-stabilization and holomorphic triangles}

In this section, we prove an analog of Proposition~\ref{prop:free-stabdifferential} for holomorphic triangles. If $\cT=(\Sigma,\as,\bs,\gs,\ws)$ is a multi-pointed Heegaard triple and $w\in \Sigma\setminus (\as\cup \bs\cup \gs\cup \ws)$, consider the triple
\[
\cT^+:=(\Sigma,\as\cup \{\a_0\}, \bs\cup \{\b_0\}, \gs\cup \{\g_0\}, \ws\cup \{w\}),
\]
where $\a_0,$ $\b_0$ and $\g_0$ are new attaching curves contained in a small neighborhood of $w$ on $\Sigma$. Furthermore, assume that the free-stabilization region has the configuration shown in Figure~\ref{fig::63}. In particular,
\[
|\alpha_0\cap \beta_0|=|\beta_0\cap \gamma_0|=|\alpha_0\cap \gamma_0|=2.
\]

\begin{figure}[ht!]
\centering
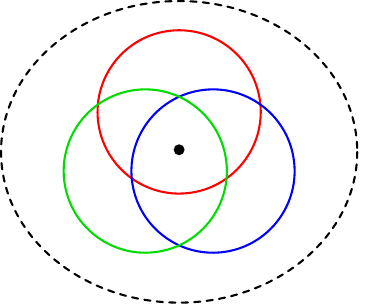
\caption{\textbf{A free-stabilized Heegaard triple.} If we collapse the dashed circle to the point $p_0$, we get the doubly based Heegaard triple $(S^2,\alpha_0,\beta_0,\gamma_0,p_0,w)$.}\label{fig::63}
\end{figure}

If $(\Sigma,\as,\bs,\gs,\ws)$ is a Heegaard triple, and $(\Sigma, \as\cup \{\alpha_0\}, \bs\cup \{\beta_0\}, \gs\cup \{\g_0\}, \ws\cup \{w\})$ is its free-stabilization, then there is a canonical diffeomorphism
\begin{equation}
X_{\a\cup \{\a_0\}, \b\cup \{\b_0\}, \g\cup \{\g_0\}}\iso X_{\a,\b,\g}, \label{eq:Xabgunchangedfreestab}
\end{equation}
where $X_{\a,\b,\g}$ is the 4-manifold defined in Equation~\eqref{eq:X_abgdef}. Equation~\eqref{eq:Xabgunchangedfreestab} follows from the fact that neither the Heegaard surface $\Sigma$ nor the handlebodies $U_{\a}$, $U_{\b}$ or $U_{\g}$ are changed by a free-stabilization.

It is straightforward to verify that the $\Spin^c$ structure map $\frs_{\ws}$ from Equation~\eqref{eq:spincmapdef} is also unchanged, in the sense that if $\psi\# \psi_0\in \pi_2(\xs\times x, \ys\times y, \zs\times z)$ is a homology class of triangles on the free-stabilized diagram, then
\[
\frs_{\ws}(\psi\# \psi_0)=\frs_{\ws}(\psi),
\]
with respect to the diffeomorphism from Equation~\eqref{eq:Xabgunchangedfreestab}.

\begin{thm}\label{thm:freestabilizetriangles}Suppose $\cT$ is a Heegaard triple, $\cT^+$ is its free-stabilization at the point $w$, and $\frd$ is a gluing datum for free-stabilization. Then for sufficiently large $T$, we have
\begin{equation*}
\begin{split}
F_{\cT^+,J^\frd(T), \frs}(S_w^+(-), S_w^+(-))&=S_w^+ F_{\cT, J^\frd, \frs}(-,-),\\
S_{w}^- F_{\cT^+, J^\frd(T), \frs}(S_w^+(-), -)&=F_{\cT,J^\frd, \frs}(-, S_w^-(-)),\\
S_{w}^-  F_{\cT^+, J^\frd(T), \frs}(-,S_w^+(-))&=F_{\cT,J^\frd, \frs}(S_w^-(-),- ).
\end{split}
\end{equation*}
\end{thm}

Before we prove Theorem~\ref{thm:freestabilizetriangles}, we prove a useful Maslov index formula:

\begin{lem}\label{lem:maslovindextriangles}Suppose that $\psi_0\in \pi_2(x,y,z)$ is a homology class of triangles on the Heegaard triple $(S^2,\alpha_0,\beta_0,\gamma_0,w,p_0)$ in Figure~\ref{fig::63}. Then
\[
\mu(\psi_0)=-\gr(x^+,x)-\gr(y^+,y)+\gr(z^+,z)+2 n_w(\psi_0)+2n_{p_0}(\psi_0).
\]
If $m_1(\psi_0),\dots, m_8(\psi_0)$ denote the multiplicities of $\psi_0$ in each of the eight regions of $S^2\setminus (\alpha_0\cup \beta_0\cup \gamma_0)$, then
\[
\mu(\psi_0)=\frac{1}{2}\left(m_1(\psi_0)+\cdots +m_8(\psi_0)-1\right).
\]
\end{lem}

\begin{proof} Both formulas can be verified using the same strategy. First, both are easily checked for the triangle class in $\pi_2(x^+,y^+,z^+)$ whose domain consists of a single component of $S^2\setminus (\a_0\cup \b_0\cup \g_0)$ with multiplicity 1. Both formulas respect splicing  in any of the 12 bigons on the diagram. Since any two triangle classes can be related by splicing in such bigons, both formulas follow in general.
\end{proof}

\begin{proof}[Proof of Theorem~\ref{thm:freestabilizetriangles}] The proof is similar to the proof of Proposition~\ref{prop:free-stabdifferential}.

Rephrasing the statement in terms of intersection points, it is sufficient to show that if $T$ is sufficiently large, then
\begin{equation}
\begin{split}
F_{\cT^+, J^\frd(T), \frs}(\xs\times x^+, \ys\times y^+)&=F_{\cT,J^\frd,\frs}(\xs, \ys)\otimes z^+,\\
  F_{\cT^+, J^\frd(T), \frs}(\xs\times x^+, \ys\times y^-)&=F_{\cT,J^\frd, \frs}(\xs, \ys)\otimes z^-+ \sum_{\zs\in \bT_{\a}\cap \bT_{\g}} C^1_{\xs,\ys,\zs}\cdot \zs\times z^+,\\
    F_{\cT^+, J^\frd(T), \frs}(\xs\times x^-, \ys\times y^+)&=F_{\cT,J^\frd, \frs}(\xs, \ys)\otimes z^-+ \sum_{\zs\in \bT_{\a}\cap \bT_{\g}} C^2_{\xs,\ys,\zs}\cdot \zs\times z^+,
\end{split}
\label{eq:free-stab-triangle}
\end{equation}
for $C^1_{\xs,\ys,\zs},C^2_{\xs,\ys,\zs}\in \bF_2[U_{\ws}, U_w]$ (which depend on $T$ and $\frd$).

Suppose that $\psi\# \psi_0\in \pi_2(\xs\times x, \ys\times y, \zs\times z)$ is a homology class of triangles with Maslov index 0. Define the quantity
\[
\delta(x,y,z):=-\gr(x^+,x)-\gr(y^+,y)+\gr(z^+,z).
\]
It is easy check that Equation~\eqref{eq:free-stab-triangle} is implied by the following two subclaims:
\begin{enumerate}[ref= f-\arabic*, label= (f-\arabic*):]
\item\label{num:TC-1} If $\delta(x,y,z)=1$, then any Maslov index 0 class in $\pi_2(\xs\times x, \ys\times y, \zs\times z)$ has no $J^\frd(T)$-holomorphic representatives if $T$ is sufficiently large.
\item\label{num:TC-2} If $\delta(x,y,z)=0$, then the $\zs\times z$ coefficient of $F_{\cT,J^\frd(T),\frs}(\xs\times x, \ys\times y)$ is equal to the $\zs$ coefficient of $F_{\cT,J^{\frd},\frs}(\xs,\ys)$.
\end{enumerate}
The classes with $\delta(x,y,z)\in \{-2,-1\}$ do not contribute to the present theorem statement.

In our proof, we will write $p\in \Sigma$ for the point $w$, viewed as a point on the unstabilized Heegaard triple, and write $w$ for the new basepoint on the free-stabilized Heegaard triple.

We compute
\begin{equation}
\begin{split}
\mu(\psi\# \psi_0)&=\mu(\psi)+\mu(\psi_0)-2n_{p_0}(\psi_0)\\
&=\mu(\psi)+\delta(x,y,z)+2n_w(\psi_0).
\end{split}
\label{eq:Masindconsumtriangles}
\end{equation}
The first equality of Equation~\eqref{eq:Masindconsumtriangles} follows from Sarkar's formula for the Maslov index \cite{SarkarMaslov}, as well as the fact that a disk has Euler measure 1. The second equality follows from Lemma~\ref{lem:maslovindextriangles}.

 Suppose  that $\psi\# \psi_0$ admits holomorphic representatives for arbitrarily large neck length parameter $T$. As in the proof of Proposition~\ref{prop:free-stabdifferential}, we may find a subsequence which converges to broken collections of curves $\cU$, $\cU_m$ and $\cU_0$, on $\Sigma\times \Delta$, $S^1\times \R\times \Delta$ and $S^2\times \Delta$, and their cylindrical ends, $\Sigma\times [0,1]\times \R$, $S^1\times \R\times [0,1]\times \R$ and $S^2\times [0,1]\times \R$. 

Consider Claim~\eqref{num:TC-1}. In particular, $\psi$ and $\psi_0$ both admit broken representatives. Using Equation~\eqref{eq:Masindconsumtriangles}, and the equalities $\mu(\psi\# \psi_0)=0$ and $\delta(x,y,z)=1$, we conclude that $\mu(\psi)=-1$. However, the triangular of Proposition~\ref{prop:transversality} implies that generically $\cM(\psi)$ is empty, contradicting the existence of the broken triangle $\cU$. Claim~\eqref{num:TC-1} follows.

We now  consider Claim~\eqref{num:TC-2}. Let $\psi\# \psi_0$ denote a class with $\delta(x,y,z)=0$, and let $\cU$, $\cU_m$ and $\cU_0$ denote broken holomorphic triangles arising in the limit. We conclude from Equation~\eqref{eq:Masindconsumtriangles} and transversality that $\mu(\psi)=0$. 

As a first step, trim off any ghost components of $\cU$, $\cU_m$ and $\cU_0$ (i.e. curves which have constant image in $\Sigma\times \Delta$ or one of its cylindrical ends). We will later see that ghost curves do not appear generically.

Next, we claim that $\cU$ cannot contain any curves $v$ mapping into $\Sigma\times \Delta$, or one of its cylindrical ends, such that $\pi_{\Delta}\circ v$ (or $\pi_{[0,1]\times \R}\circ v$) is constant, but $\pi_{\Sigma}\circ v$ is non-constant.  Such a curve $v$ must have Maslov index at least 2, since it has domain equal to a non-negative integral combination of components of $\Sigma\setminus \as$, $\Sigma\setminus \bs$ and $\Sigma\setminus \gs$. The presence of such a curve would force the remaining curves, on which transversality is obtained, to have Maslov index at most $-2$, which is impossible, generically.

Consequently, all curves in $\cU$ (except the ghost components) satisfy the analogs of \eqref{def:M1}--\eqref{def:M5}, so by the analog of Proposition~\ref{prop:transversality} for triangles, transversality is obtained at $u$. Since $\mu(\psi)=0$,  it follows from transversality that $\cU$ consists only of a single holomorphic triangle $u$ satisfying the analogs of \eqref{def:M1}--\eqref{def:M6} for holomorphic triangles (as well as possibly ghost curves). In particular, $u$ is embedded.

Arguing identically to the proof of Proposition~\ref{prop:free-stabdifferential}, the curves of $\cU_m$ generically consist of $n^p(\phi)$ once-covered cylinders, which each map to a distinct point in $\Delta$. (There may also be ghost components in $\cU_m$, though we will shortly rule those out).

There must be a curve $u_0$ in $\cU_0$ whose asymptotics (before completing over $\{p\}\times \Delta$) match those of $u$, i.e.
\begin{equation}
\rho^{p}(u)=\rho^{p_0}(u_0),\label{eq:matchingcondition}
\end{equation}
where $\rho^p\colon \cM(\psi)\to \Sym^{n_p(\psi)}(\Delta)$ is the map defined analogously to Equation~\eqref{eq:rhopdefinition}.

Let $\psi_0'$ denote the homology class of $u_0$. Since $D(\psi_0')\le D(\psi_0)$,  we conclude from Lemma~\ref{lem:maslovindextriangles} that
\begin{equation}
\mu(\psi_0')\le \mu(\psi_0)=2n_{p_0}(\psi_0).\label{eq:maslovindtridec}
\end{equation}
Define
\[
X(\psi):=\{\rho^p(u): u\in \cM(\psi)\},
\]
a 0-dimensional subset of $\Sym^{n_{p}(\psi)}(\Delta)$.

Write $S_0$ for the topological source of $u_0 $. Using Proposition~\ref{prop:transversality} and Equation~\eqref{eq:maslovindtridec}, for a generically chosen almost complex structure
\begin{equation}
\begin{split}
\dim \cM(S_0,\psi_0', X(\psi_0)) 
&=\mu(\psi_0')-\Sing(u_0)-\codim X(\psi_0)\\
&= \mu(\psi_0')-\Sing(u_0)-2n_{p_0}(u_0)
\\&\le -\Sing(u_0).
\end{split}
\end{equation}
Furthermore, equality holds if and only if $\mu(\psi_0')=\mu(\psi_0)=2n_{p_0}(u_0)$ and $\Sing(u_0)=0$. Consequently, if $\cM(S_0,\psi_0', X(\psi_0))$ is non-empty, we must generically have $\mu(\psi_0')=\mu(\psi_0)$ and $\Sing(u_0)=0$. In particular, $\cU_0$ cannot have any other components (other than ghost components), and $u_0$ must be embedded. 

%Consequently $u_0$ must satisfy the triangular analogs of \eqref{def:M1}--\eqref{def:M6}.

The appearance of ghost components in $\cU$ and $\cU_0$, which we trimmed off earlier, can be prohibited from appearing generically by using the same index argument as in the proof of Proposition~\ref{prop:free-stabdifferential}.

Summarizing, if $T_i$ is a sequence of neck lengths approach $+\infty$, then any sequence of $J^\frd(T_i)$-holomorphic triangles $u_i$ representing $\psi\# \psi_0$ has limit equal to a pair $(u,u_0)$ of holomorphic triangles satisfying \eqref{def:M1}--\eqref{def:M6} and $\rho^p(u)=\rho^{p_0}(u_0).$

As in the proof of Proposition~\ref{prop:free-stabdifferential}, for large $T$, gluing gives a bijection
 \begin{equation}
 \cM_{J^\frd(T)}(\psi\# \psi_0)\iso \cM_{J^\frd}(\psi)\times_{\rho} \cM_{J_0^\frd}(\psi_0).\label{eq:fiberedproducttriangles}
\end{equation}
Since $\cM_{J^\frd}(\psi)$ is zero dimensional, counting $\cM_{J^\frd(T)}(\psi\# \psi_0)$ is reduced to the problem of counting the elements in the matched moduli space $\cM_{J_0^\frd}(\psi_0,\ve{d})$ for  generic $\ve{d}\in \Sym^{n}(\Delta)$. We will show that for generic $J_0^\frd$,
\begin{equation}
\sum_{\substack{\psi_0\in \pi_2(x,y,z)\\
n_{w_0}(\psi_0)=0\\
n_{p_0}(\psi_0)=n}} \# \cM_{J_0^\frd}(\psi_0,\ve{d})\equiv 1 \pmod{2},\label{eq:divisorcounttriangles}
\end{equation}
whenever $\delta(x,y,z)=0$ and $\ve{d}\in \Sym^n(\Delta)$ is not in the fat diagonal.

To establish Equation~\eqref{eq:divisorcounttriangles}, we use the following cobordism argument. Consider a path $(\ve{d}_t)_{t\in [1,\infty)}$ such that
\begin{enumerate}
\item $\ve{d}_1=\ve{d}$.
\item Each $\ve{d}_t$ is not contained in the fat diagonal.
\item As $t\to \infty$, the points in $\ve{d}_t$ all travel to $\infty$ in the $\alpha$-$\beta$ cylindrical end of $\Delta$.
\item If we identify the $\alpha$-$\beta$ cylindrical end of $\Delta$ with $[0,1]\times (-\infty,0]$, then the points of $\ve{d}_t$ are spaced at least $t$ distance apart for large $t$.
\item The $[0,1]$ component of each point of $\ve{d}_t$ approaches some fixed $s_0\in (0,1)$.
\end{enumerate}

Write
\[
\ve{\cD}:=\{ \ve{d}_t: t\in [1,\infty)\}.
\]

Consider the ends of the 1-dimensional moduli space
\[
\cM_{(x,y,z)}(\ve{\cD}):=\coprod_{\substack{\psi_0\in \pi_2(x,y,z)\\
n_{w_0}(\psi_0)=0\\
n_{p_0}(\psi_0)=n}} \cM_{J_0^\frd}(\psi_0,\ve{\cD}).
\]

There are three types of ends to consider:
\begin{enumerate}[ref= e-\arabic*, label= (e-\arabic*):]
\item\label{ends:1} Curves at $t=1$.
\item\label{ends:2} Degenerations occurring at finite $t\in (1,\infty)$.
\item\label{ends:3} Limiting curves as $t\to \infty$.
\end{enumerate}

Ends of $\cM_{(x,y,z)}(\ve{\cD})$ of type~\eqref{ends:1} correspond to the disjoint union of $\cM_{J_0^\frd}(\psi_0,\ve{d})$ over all $\psi_0\in \pi_2(x,y,z)$ with $n_w(\psi_0)=0$.

Ends of $\cM_{(x,y,z)}(\ve{\cD})$ of type~\eqref{ends:2} correspond to sequences of holomorphic triangles in $\cM_{(x,y,z)}(\ve{\cD})$ degenerating into a broken holomorphic triangle at some finite $t\in (0,\infty)$. We wish to show that such a degeneration can consist only of a holomorphic strip and triangle, both satisfying \eqref{def:M1}--\eqref{def:M6}, or their triangular analogs. Furthermore, the holomorphic strip must have Maslov index 1, and zero multiplicity on $p_0$, and the triangle must have Maslov index $2n_{p_0}(\psi_0)-1$, and must match some $\ve{d}_t$. 

Let $\cV$ denote a broken holomorphic triangle appearing as the limit of a sequence of curves in $\cM_{(x,y,z)}(\ve{\cD})$ at some $t\in (1,\infty)$. There must be holomorphic triangle $v_0$ of $\cV$ which matches $\ve{d}_t$.

Delete curves $v$ in  $\cV$ which do not map into $\Sigma\times \Delta$, or such that $\pi_{\Delta}\circ v$ is constant. Since $n_w(\psi_0)=0$, there are no non-ghost components $v$ of $\cV$ such that $\pi_{\Delta}\circ v$ is constant and equals a point of $\ve{d}_t$. Consequently, after removing any components whose projection to $\Delta$ is constant, the remaining curve $v_0$ still matches $\ve{d}_t$,  and satisfies the triangular analogs of \eqref{def:M1}--\eqref{def:M5} (embeddedness, \eqref{def:M6}, is not clear yet). Write $\psi_0'$ for the homology class of $v_0$ and $S_0$ for the  source curve of $v_0$. By the triangular analog of Proposition~\ref{prop:transversality}, the space $\cM(S_0,\psi_0', \ve{\cD})$ is transversely cut out and of dimension
\begin{equation}
\mu(\psi_0')-2 \Sing(u_0)-\codim(\ve{\cD})=\mu(\psi_0')-2 \Sing(v_0)-(2|\ve{d}|-1). \label{eq:trianglesdegenerating}
\end{equation}

The quantity $\Sing(v_0)$ takes values in $\tfrac{1}{2}\cdot \Z^{\ge 0}$; a singularity along the interior contributes 1, while a singularity along the boundary contributes $\tfrac{1}{2}$. Note that $\mu(\psi_0)=2|\ve{d}|$, and $\mu(\psi_0')\le \mu(\psi_0)$ by Lemma~\ref{lem:maslovindextriangles}. Hence Equation~\eqref{eq:trianglesdegenerating} implies that there are two possibilities which are not prohibited by dimension counts: $\mu(\psi_0')=2|\ve{d}|-1$ and $\Sing(v_0)=0$ ($v_0$ is embedded); $\mu(\psi_0')=2|\ve{d}|$ and $\Sing(v_0)=\tfrac{1}{2}$  ($v_0$ has a singular point along its boundary).

We now rule out a boundary double point. A boundary double point arises when a boundary to boundary arc $c$ in the source curve collapses. Suppose $v_i\colon S\to S^2\times \Delta$ is a sequence of curves in $\cM_{(x,y,z)}(\ve{\cD})$, all sharing the same topological source, which realize such a degeneration. Write $\d_{\alpha} S$, $\d_{\beta} S$ and $\d_{\gamma}  S$ for the subsets of $\d S$ which map to $\alpha_0$, $\beta_0$ and $\gamma_0$, respectively. If $q_1$ and $q_2$ are the two ends of $c$, then both $q_1$ and $q_2$ must be contained in a single one of $\d_{\alpha} S$, $\d_{\beta} S$ or $\d_{\gamma} S$.
Condition~\eqref{def:J'4} implies that $v_i|_{\d_{\tau} S}$ is monotonic, for $\tau\in \{\alpha,\beta,\gamma\}$. Consequently, if a boundary double point forms, subarc region between $q_1$ and $q_2$ along the boundary of $S$ must map constantly to $\Delta$, in the limit. Consequently, the limiting curve must have a component which is a boundary degeneration. Since $\ve{\cD}$ is bounded away from $\d \Delta$, and all curves in $\cM_{(x,y,z)}(\ve{\cD})$ have zero multiplicity on $w$, boundary degenerations cannot form in the ends of $\cM_{(x,y,z)}(\ve{\cD})$, since any boundary degeneration on $(S^2,\alpha_0,\beta_0,\gamma_0)$,  covers one of $p_0$ or $w$. It follows that $\Sing(v_0)=0$ and 
\[
\mu(\psi_0')\in \{2|\ve{d}|, 2|\ve{d}|-1\}.
\]

 If $\mu(\psi_0')=2|\ve{d}|=\mu(\psi_0)$, then the remaining curves in the broken triangle have Maslov index 0. Using transversality, it is not hard to see that any such curves must be constant holomorphic strips in the cylindrical ends. By definition, constant strips are not stable, so cannot appear in a broken triangle in the boundary of $\cM_{(x,y,z)}(\ve{\cD})$. Hence, no curves with $\mu(\psi_0')=2|\ve{d}|$ appear in the boundary of $\cM_{(x,y,z)}(\ve{\cD})$, for $t\in (1,\infty)$.

Next, we consider the case that $\mu(\psi_0')=2|\ve{d}|-1$. In this case, the remaining curves (call them $\cV_1$) have total Maslov index 1. It is straightforward to see that this implies that $\cV_1$ consists of a single holomorphic flowline $v_1$ on $S^2\times [0,1]\times \R$ on one of the three sub-diagrams $(S^2,\alpha_0,\beta_0)$, $(S^2,\beta_0,\gamma_0)$ or $(S^2,\alpha_0,\gamma_0)$.  The curve $v_1$ must have multiplicity zero on $p_0$ and $w$. Since the differential vanishes on all three complexes, $\hat{\CF}(S^2,\alpha_0,\beta_0,p_0,w)$, $\hat{\CF}(S^2,\alpha_0,\gamma_0,p_0,w)$ and $\hat{\CF}(S^2,\beta_0,\gamma_0,p_0,w)$, ends of type~\eqref{ends:2} come in canceling pairs.

Finally we consider the ends of type~\eqref{ends:3}. Using the Maslov index formulas from Lemmas~\ref{lem:indexdisksonsphere} and~\ref{lem:maslovindextriangles}, together with the transversality result from Proposition~\ref{prop:transversality} (as well as its triangular analog for curves in $S^2\times \Delta$), we can show that the limit of a sequence of curves with $t\to \infty$ must consist of $|\ve{d}|$ flowlines $v_1,\dots, v_{|\ve{d}|}$ on $(S^2,\alpha_0,\beta_0)$, representing Maslov index 2 classes in $\pi_2(x,x)$ with $n_{p_0}=0$, each of which matches some fixed point $d\in [0,1]\times \R$, as well as a single index 0 holomorphic triangle $v$ with Maslov index 0 and $n_w(v)=n_{p_0}(v)=0$. Using  \cite{LipshitzCylindrical}*{Proposition~A.1} to glue, it follows that such ends of $\cM_{(x,y,z)}(\ve{\cD})$ at $t=\infty$ correspond to the 0-dimensional space
\begin{equation}
\bigg(\coprod_{\substack{\psi_0^0\in \pi_2(x,y,z)\\
n_{w}(\psi^0_0)=n_{p_0}(\psi_0^0)=0}} \cM(\psi_0^0)\bigg)\times \bigg( \coprod_{\substack{\phi\in \pi_2(x,x)\\
n_{p_0}(\phi)=1\\
n_{w}(\phi)=0}}\cM(\phi, d)\bigg)^{|\ve{d}|}.\label{eq:degeneratematching}
\end{equation}

It is easy to verify that there is only one non-negative class $\psi_0^0\in \pi_2(x,y,z)$ with Maslov index 0 and $n_{w}(\psi_0^0)=n_{p_0}(\psi_0^0)=0$. Furthermore, $\psi_0^0$ has a unique representative by the Riemann mapping theorem.

Using Ozsv\'{a}th and Szab\'{o}'s count from Equation~\eqref{eq:divisorcounttriangles}, it follows that the number of elements in the set in Equation~\eqref{eq:degeneratematching} is 1, modulo 2. In particular, counting up all ends of $\cM_{(x,y,z)}(\ve{\cD})$, Equation~\eqref{eq:divisorcounttriangles} follows.

Combining Equations~\eqref{eq:fiberedproducttriangles} and~\eqref{eq:divisorcounttriangles} yields Claim~\eqref{num:TC-2}, completing the proof.
\end{proof}

\subsection{Independence from the gluing datum}

%We recall from Section~\ref{subsec:gluingdata} that the free-stabilization maps required a choice of a gluing datum $\frd$ (Definition~\ref{def:gluingdata}) which was a 5-tuple of choices. Using a gluing datum $\frd$, we can construct the glued-together almost complex structure $J^\frd(T)$ with neck length $T$. We note that the precise embedding of the diagram $\cH^+$ in $Y$ also depends on $\frd$, through the choice of $\iota$.

 In this section, we prove that the free-stabilization maps are independent of the choice of gluing datum:

\begin{prop}\label{prop:independencegluingdatum}
Suppose $\cH$ is diagram for $(Y,\ws)$, $\frd_1$ and $\frd_2$ are two gluing data for free-stabilizing at $w$, and  $T_1$ and $T_2$ satisfy the stabilizing condition~\eqref{eq:stabilizationcondition2} for $\frd_1$ and $\frd_2$, respectively. Write $\cH_1^+$ and $\cH_2^+$ for the two free-stabilized diagrams constructed from $\cH,$ $\frd_1$ and $\frd_2$ (note that $\cH_1^+$ and $\cH_2^+$ differ only in the embedding of $\alpha_0$ and $\beta_0$ in the free-stabilization region). The following diagram commutes up to chain homotopy:
\begin{equation}
 \begin{tikzcd}[column sep=4cm]\CF^-_{J^{\frd_1}}(\cH,\frs)\arrow{r}{\Psi_{J^{\frd_1}\to J^{\frd_2}}}\arrow{d}{S_w^+}& \CF^-_{J^{\frd_2}}(\cH,\frs\arrow{d}{S_w^+})\\
 \CF^-_{J^{\frd_1}(T_1)}(\cH_1^+,\frs)\arrow{r}{\Psi_{(\cH_1^+,J^{\frd_1}(T_1))\to (\cH_2^+,J^{\frd_2}(T_2))}} &\CF^-_{J^{\frd_2}(T_2)}(\cH_2^+,\frs).
 \end{tikzcd} 
 \label{eq:comdiagchangegluedatam}
\end{equation}
An analogous relation holds for the maps $S_w^-$.
\end{prop}
\begin{proof}
Note that if the claim holds for any particular $T_1$ and $T_2$ which satisfy \eqref{eq:stabilizationcondition2}, then it holds for all $T_1$ and $T_2$ which satisfy condition~\eqref{eq:stabilizationcondition2}.

Let us call two gluing data $\frd_1$ and $\frd_2$ \emph{equivalent}, and write $\frd_1\sim \frd_2$, if for all sufficiently large $T_1$ and $T_2$, the diagram in Equation~\eqref{eq:comdiagchangegluedatam}, as well as its analog for $S_{w}^-$, commute up to chain homotopy. The relation of two gluing data being equivalent is transitive and reflexive. It is sufficient to show that all generic gluing data are equivalent.

The proof is an elaboration of the proofs of Propositions~\ref{prop:freestab-Tsufflargeexist} and \ref{prop:free-stabdifferential}. We prove the main statement in 3 steps. Write
\[
\frd_1=(J_1,J_{0,1},D_1,\iota_1) \quad \text{and} \quad \frd_2=(J_2,J_{0,2},D_2,\iota_2).
\]

\setcounter{step}{0}

\begin{step}\label{step:ac1} If $D_1=D_2$, $J_1|_{D_i\times [0,1]\times \R}=J_2|_{D_i\times [0,1]\times \R}$, $J_{0,1}=J_{0,2}$ and $\iota_1=\iota_2$, then $\frd_1\sim \frd_2$.
\end{step}

We can pick an almost complex structure $\tilde{J}$ on $\Sigma\times [0,1]\times \R$ which interpolates $J_1$ and $J_2$ and is cylindrical on $D_i\times [0,1]\times \R$. Since $J_{0,1}=J_{0,2}$ and $\iota_1=\iota_2$, using $\tilde{J}$ we can construct almost complex structures $\tilde{J}(T)$ which interpolate $J^{\frd_1}(T)$ and $J^{\frd_2}(T)$ and agree with $\tilde{J}$ on $(\Sigma\setminus \frac{1}{T} \cdot D)\times [0,1]\times \R$.

 We claim that if $T$ is sufficiently large, then
\begin{equation}
\begin{split}
\Psi_{\tilde{J}(T)}(\xs\times \theta^+)&=\Psi_{\tilde{J}}(\xs)\times \theta^+, \quad \text{and}\\
\Psi_{\tilde{J}(T)}(\xs\times \theta^-)&=\Psi_{\tilde{J}}(\xs)\times \theta^-+\sum_{\ys\in \bT_{\a}\cap \bT_{\b}} C_{\xs,\ys}\cdot  \ys\times \theta^+,
\end{split}
\label{eq:changeacstrucfixedonD}
\end{equation}
for  $C_{\xs,\ys}\in \bF_2[U_{\ws}]$ (depending on $T$, $\frd_1$ and $\frd_2$). Note that Equation~\eqref{eq:changeacstrucfixedonD} is is equivalent to the statement that $\frd_1\sim \frd_2$.

Equation~\eqref{eq:changeacstrucfixedonD} follows from a modification of the proof of Proposition~\ref{prop:free-stabdifferential}. We briefly sketch the argument.

Suppose that $\phi\# \phi_0\in \pi_2(\xs\times x, \ys\times y)$ is a homology class of disks with Maslov index 0. If $T_i$ is a sequence of neck-lengths approaching $+\infty$, then from a sequence $u_i$ of $\tilde{J}(T_i)$ holomorphic curves representing $\phi\# \phi_0,$ we may extract two broken limiting curves, $\cU$ and $\cU_0$, representing $\phi$ and $\phi_0$. From Lemma~\ref{lem:indexdisksonsphere}, we know
\[
0=\mu(\phi\# \phi_0)=\mu(\phi)+\gr(x,y)+n_w(\phi_0).
\]
Equation~\eqref{eq:changeacstrucfixedonD} involves only classes with $\gr(x,y)\ge 0$. Since $\phi$ and $\phi_0$ both have broken representatives, we conclude that $\mu(\phi)\ge 0$. Furthermore,  if $\gr(x,y)=1$, then there are no representatives of $\phi\# \phi_0$. Hence if $\phi\# \phi_0$ has a representative for large $T$ and $\gr(x,y)\ge 0$, then
\[
0=\mu(\phi)=\gr(x,y)=n_w(\phi_0).
\]

Exactly one story of $\cU$ will be $\tilde{J}$-holomorphic, while the others will be $J^{\frd_1}$- or $J^{\frd_2}$-holomorphic. Since $\mu(\phi)=0$,  $\cU$ can consist only of a single, embedded curve $u$, which is $\tilde{J}$-holomorphic. The collection $\cU_0$ will have a matching component $u_0$ which satisfies $\rho^w(u)=\rho^{p}(u_0)$. Using a transversality argument, analogous to the one described in Proposition~\ref{prop:free-stabdifferential}, for generically chosen $J_0$, $u_0$ will be the only curve in $\cU_0$, and $u_0$ will be embedded. From here, the argument proceeds via a gluing argument, analogous to the proof of Proposition~\ref{prop:free-stabdifferential}.

\begin{step}\label{step:ac2} If $J_1=J_2$,  $D_1=D_2$, then $\frd_1\sim \frd_2$.
\end{step}

The transition map $\Psi_{(\cH_1^+,J^{\frd_1}(T_1))\to (\cH_2^+,J^{\frd_2}(T_2))}$ can be written as a composition of the map induced by a diffeomorphism $\phi\colon \Sigma\to \Sigma$ which is supported in a small neighborhood of $\tfrac{1}{2} \cdot D$, which moves the image of the $\alpha_0$ and $\beta_0$ curves under $\iota_1$ to their images under $\iota_2$, followed by a map $\Psi_{\tilde{J}}$ which counts index 0 $\tilde{J}$-holomorphic disks for an almost complex structure $\tilde{J}$ which interpolates $\phi_* J^{\frd_1}(T_1)$ and $J^{\frd_2}(T_2).$

The diagram
\[
 \begin{tikzcd}[column sep=3cm]\CF^-_{J}(\cH,\frs)\arrow{r}{\id }\arrow{d}{S_w^+}& \CF^-_{J}(\cH,\frs\arrow{d}{S_w^+})\\
 \CF^-_{J^{\frd_1}(T_1)}(\cH_1^+,\frs)\arrow{r}{\phi_*} &\CF^-_{\phi_*J^{\frd_1}(T_1)}(\cH_2^+,\frs),
 \end{tikzcd} 
\]
clearly commutes, so it is sufficient to show that the analogous diagram involving $\Psi_{\phi_* J^{\frd_1}(T_1)\to J^{\frd_2}(T_2)}$ also commutes. It is sufficient to show that the diagram commutes whenever $T_1$ and $T_2$ are sufficiently large. Furthermore, using condition~\eqref{eq:stabilizationcondition2}, we can assume $T_1=T_2$

Since $\phi_*$ is the identity outside of a neighborhood of $\tfrac{1}{2}\cdot D$, we can choose $\tilde{J}$ so that it agrees with $J^{\frd_i}(T)$ (for both $i=1,2$) on $ \Sigma\setminus \tfrac{1}{2} \cdot D$. However, equipped with the almost complex structure $J^{\frd_i}(T)$, $(\Sigma\setminus \tfrac{1}{2})\times [0,1]\times \R$ is holomorphically equivalent to $((\Sigma\setminus \tfrac{1}{T}\cdot D)\times [0,1]\times \R,J)$.

For a sequence $T_i$ approaching $+\infty$, we pick a sequence $\tilde{J}_i$ of interpolating almost complex structures between $\phi_*J^{\frd_1}(T_i)$ and $J^{\frd_2}(T_i)$. Given $\tilde{J}_i$-holomorphic representatives $u_i$ of an index 0 class $\phi\# \phi_0$, we extract  a limit to broken curves $\cU$ and $\cU_0$. As in Step~\ref{step:ac1}, this implies that there are no representatives of classes with $\gr(x,y)=1$, and the only remaining classes of interest have 
\[
\mu(\phi)=\gr(x,y)=n_w(\phi_0)=0.
\]
 However, $\cU$ is a broken $J$-holomorphic curve, which is a cylindrical almost complex structure. Since $\mu(\phi)=0$, it follows from transversality that $\cU$ can consist only of a single story, consisting of a representative of the constant class $e_{\xs}$. Since $\phi\# \phi_0$ has Maslov index 0, and $\phi=e_{\xs}$, we clearly must have $\phi_0=\{e_{\theta^+},e_{\theta^-}\}$. On the other hand, it is straightforward to see that the classes $e_{\xs}\times e_x$ will always have $\tilde{J}_i$-holomorphic representatives. The stated formula follows.

\begin{step}If $\frd_1$ and $\frd_2$ are generic, then $\frd_1\sim \frd_2$.
\end{step}

Write $J_i$ as $j_i\times j_{[0,1]\times \R}$ on $D_i$. By the uniformization theorem, there is  a conformal diffeomorphism between $(D_1,j_1)$ and $(D_2,j_2)$ which fixes $w$, which we can assume extends to a diffeomorphism from $\Sigma$ to itself and which fixes $\as\cup \bs\cup \ws\cup \{w\}$.

Writing $\phi$ also for the induced diffeomorphism of $\Sigma\times [0,1]\times \R$, we tautologically have a commutative diagram
\begin{equation}
 \begin{tikzcd}[column sep=2cm]\CF^-_{J_1}(\cH,\frs)\arrow{r}{\phi_* }\arrow{d}{S_w^+}& \CF^-_{\phi_*J_1}(\cH,\frs\arrow{d}{S_w^+})\\
 \CF^-_{J^{\frd_1}(T)}(\cH_1^+,\frs)\arrow{r}{\phi_*} &\CF^-_{J^{\phi_*\frd_1}(T)}(\cH_2^+,\frs).
 \end{tikzcd} 
 \label{eq:pushforwardgluingdataequivalent}
\end{equation}
Since both horizontal arrows in Equation~\eqref{eq:pushforwardgluingdataequivalent} agree with the maps from naturality, it follows that 
\[
\frd_1\sim \phi_* (\frd_1).
\]
Define the gluing datum $\frd':=(\phi_*( J_1), J_{0,2}, D_2, \iota_2)$.
 Step~\ref{step:ac2} shows that 
 \[
 \phi_*(\frd_1)\sim\frd'.
 \] 
 Step~\ref{step:ac1} implies that 
 \[
 \frd'\sim \frd_2.
 \]
 By transitivity of $\sim$, the proof is complete.
\end{proof}

\subsection{Commuting simple stabilizations and free-stabilizations}

\begin{lem}\label{lem:freestabsimplestab}The transition maps for a simple stabilization commute with the free-stabilization maps.
\end{lem}
\begin{proof}We will focus on the positive free-stabilization map $S_w^+$ since $S_w^-$ is the dual map. Write $\sigma$ for the simple stabilization map. Suppose that the following hold:
\begin{enumerate}
\item  $\cH=(\Sigma,\as,\bs,\ws)$ is a diagram for $(Y,\ws)$;
\item $\cH^+=(\Sigma,\as\cup \{\alpha_0\}, \bs\cup \{\beta_0\}, \ws\cup \{w\})$ is a free-stabilization of $\cH$;
\item  $\cH_\sigma=(\Sigma',\as\cup \{\alpha'\}, \bs\cup \{\beta'\}, \ws)$ is a simple-stabilization of $\cH$;
\item $\cH_\sigma^+=(\Sigma',\as\cup \{\alpha_0,\alpha'\}, \bs\cup \{\beta_0,\beta'\}, \ws\cup \{w\})$ is obtained from $\cH$ by performing both stabilizations.
\end{enumerate}
The formulas for $\sigma$ and $S_w^+$ appear to commute, however it is not clear that a single almost complex structure can be chosen to compute both maps, since $\sigma$ requires stabilizing condition~\eqref{eq:stabilizationcondition1} to be satisfied, while $S_w^+$ requires condition~\eqref{eq:stabilizationcondition2} to be satisfied.

The main claim amounts to showing the following subclaim:

\begin{subclaim}\label{subclaim:doubleneckstretch} If $\ve{T}=(T,T')$ is a pair of neck lengths, write $J(\ve{T})$ for the  almost complex structure on $\Sigma'\times [0,1]\times \R$ which has had a neck of length $T$ added in the free-stabilization region, and a neck of length $T'$ inserted in the the simple-stabilization region. If $\ve{T}_1$ and $\ve{T}_2$ are two pairs, all of whose components are sufficiently large, then there is a non-cylindrical almost complex structure $\tilde{J}$ on $\Sigma'\times [0,1]\times \R$, interpolating $J(\ve{T}_1)$ and $J(\ve{T}_2)$ such that
\begin{align*}
\Psi_{\tilde{J}}(\xs\times c\times \theta^+)&=\xs\times c\times \theta^+\\
\Psi_{\tilde{J}}(\xs\times c\times \theta^-)&=\xs\times c\times \theta^-+\sum_{\ys\in \bT_{\a}\cap \bT_{\b}} C_{\xs,\ys}\cdot \ys\times c\times \theta^+
\end{align*}
for $C_{\xs,\ys}\in \bF_2[U_{\ws}]$.
\end{subclaim}
The proof of Subclaim~\ref{subclaim:doubleneckstretch} is a double neck-stretching argument similar to the proof of Proposition~\ref{prop:freestab-Tsufflargeexist}. 

 We can write any homology class on $\cH^+_\sigma$ as $\phi\#\phi_0\# \phi'$, where $\phi\in \pi_2(\xs,\ys)$ is a class on $\cH$, $\phi_0\in \pi_2(x,y)$ is a class on $(S^2,\alpha_0,\beta_0)$, and $\phi'\in \pi_2(c,c)$ is a homology class on  $(\bT^2,\alpha',\beta')$. Adapting the proof of Lemma~\ref{lem:indexdisksonsphere}, we have
\begin{equation}
\mu(\phi\# \phi_0\# \phi')=\mu(\phi)+\gr(x,y)+2n_{w}(\phi_0).\label{eq:Maslovindexfree+simplestab}
\end{equation}

Suppose $\ve{T}_{1,i}$ and $\ve{T}_{2,i}$ are sequences of pairs of neck-lengths, all of whose components approach $+\infty$. We can pick a sequence of non-cylindrical almost complex structures $\tilde{J}_i$ interpolating $J(\ve{T}_{1,i})$ and $J(\ve{T}_{2,i})$ such that $(\Sigma'\times [0,1]\times \R, \tilde{J}_i)$ contains an almost complex submanifold equivalent to $((\Sigma\setminus N_i)\times [0,1]\times \R,J)$, for a nested sequence of open subsets $N_i\subset N_{i-1}\subset \Sigma$ such that $\bigcap_i N_i$ consists of exactly two points (the two connected sum points), and a cylindrical almost complex structure $J$ on $\Sigma\times [0,1]\times \R$.

If $\phi\# \phi_0\# \phi'$ admits a sequence $u_i$ of $\tilde{J}_i$-holomorphic representatives, then we can extract a broken $J$-holomorphic representative of $\phi$ on $\Sigma\times [0,1]\times \R$. By examining Equation~\eqref{eq:Maslovindexfree+simplestab}, we conclude that if $\gr(x,y)\ge 0$ (which are the only cases relevant to Subclaim~\ref{subclaim:doubleneckstretch}), then
\[
\mu(\phi)=n_w(\phi_0)=0.
\]
Since $\phi$ has a broken representative, and $\mu(\phi)=0$, we conclude that $\phi$ must be the constant class $e_{\xs}$, by transversality. Since $n_w(\phi_0)=0$, this also implies that $\phi_0$ is one of the constant classes $e_{\theta^+}$ or $e_{\theta^-}$, and $\phi'$ is the constant classes $e_{c}$. Conversely, the constant class $e_{\xs}\times e_{\theta^{\pm}} \times e_{c}$ always has a $\tilde{J}_i$-holomorphic representative. The subclaim follows, and so does the main claim.
\end{proof}

\subsection{Invariance of the free-stabilization maps}

In this section, we put our previously results together and prove invariance of the free-stabilization maps:

\begin{thm}\label{thm:invariancefreestab}The free-stabilization maps $S^+_w$ and $S_w^-$ defined in Section~\ref{subsec:gluingdata}, determine well defined chain maps on the level of transitive systems of chain complexes.
\end{thm}

Before we prove Theorem~\ref{thm:invariancefreestab}, we need the following topological result about embeddings of Heegaard diagrams:

\begin{lem}\label{lem:Heegaardmoveswithextrabasepoints} Suppose $(Y,\ws)$ is a multi-pointed 3-manifold, and $\ve{p}\subset Y\setminus \ws$ is a finite collection of points. If $\cH_1=(\Sigma_1,\as_1,\bs_1,\ws)$ and $\cH_2=(\Sigma_2,\as_2,\bs_2,\ws)$ are two Heegaard diagrams such that $\ve{p}\subset \Sigma_i\setminus (\as_i\cup \bs_i)$ for $i=1,2$, then $\cH_1$ and $\cH_2$ can be connected by a sequence of the following moves:
\begin{enumerate}
\item\label{move:HDmoveswithp1} Handleslides and isotopies of the $\as$ and $\bs$ curves (possibly passing over the points in $\ve{p}$).
\item\label{move:HDmoveswithp2} Simple stabilizations, away from $\ws\cup \ve{p}$.
\item\label{move:HDmoveswithp3} Isotopies $\phi_t\colon \Sigma\to Y$ of the Heegaard surface which are fixed on $\ws\cup \ve{p}$ for all $t$.
\end{enumerate} 
\end{lem}
\begin{proof} Consider Morse functions with gradient like vector fields $(f_1,v_1)$ and $(f_2,v_2)$ on $Y$ which induce  $\cH_1$ and $\cH_2$ (i.e. $\Sigma_i$ is a middle level set of $f_i$, and $\as_i$ is the intersection of the ascending manifolds of the index 1 critical points of $f_i$ with $\Sigma$, and  $\bs_i$ is the intersection of the descending manifolds of the index 2 critical points of $f_i$). We may pick a path $(f_t,v_t)_{t\in [1,2]}$ of functions with gradient like vector fields, connecting $(f_1,v_1)$ and $(f_2,v_2)$, such that the following hold:
\begin{enumerate}
\item The pair $(f_t,v_t)$ is Morse-Smale, at all but finitely many $t$.
\item At the finitely many $t$ where $f_t$ fails to be Morse, an index 1/2 birth or death singularity occurs (corresponding to a simple stabilization of the Heegaard surface). 
\item At the finitely many $t$ where the pair $(f_t,v_t)$ fails to be Smale, a handleslide between two $\as$ curves or $\bs$ curves occurs.
\end{enumerate}
This implies that the three stated moves suffice to connect $\cH_1$ and $\cH_2$, except that we have not yet ensured that an isotopy appearing in an instance of Move~\eqref{move:HDmoveswithp3} can be taken to respect the points $\ve{p}$.

 Generically, we can assume the following:
\begin{enumerate}
\item $\ve{p}$ is disjoint from the flowlines of any $v_t$ passing through $\ws$.
\item $\ve{p}$ is disjoint from the descending manifolds of any index 1 critical point of any $f_t$.
\item $\ve{p}$ is disjoint from the ascending manifolds of any index 2 critical point of any $f_t$.
\end{enumerate}
Let us write $\Sigma_t$ for a middle level set of each $f_t$. We can thus assume that for each $p\in \ve{p}$ and $t\in [1,2]$, there is a flowline $f_{t,p}$ which connects $p$ with a point on $\Sigma_t\setminus \ws$. By performing an isotopy of each $\Sigma_t$ in a neighborhood of $f_{t,p}$, we can ensure that $\ve{p}\subset \Sigma_t$ for all $t$. Consequently, if $\phi_t$ is an isotopy appearing in an instance of Move~\eqref{move:HDmoveswithp3}, we can assume that $\ve{p}\subset \phi_t(\Sigma)$.

Finally, if $\phi_t\colon \Sigma\to Y$ is an isotopy which fixes $\ws$ and such that $\ve{p}\subset \phi_t(\Sigma)$ for all $t$, there will generically be finitely many $t$ where the image of a point in $\ve{p}$ intersects an $\as$ or $\bs$ curve. Such an isotopy $\phi_t$ may thus be decomposed as a sequence of isotopies where $\ve{p}$ is fixed, as well as isotopies of the type appearing in Move~\eqref{move:HDmoveswithp1}, where the $\as$ and $\bs$ curves are moved but the Heegaard surface is fixed.
\end{proof}

\begin{proof}[Proof of Theorem~\ref{thm:invariancefreestab}]As $S_w^-$ is dual to $S_w^+$, we focus on the claim for $S_w^+$. The proof amounts to showing that if $\cH_1=(\Sigma_1,\as_1,\bs_1,\ws)$ and $\cH_2=(\Sigma_2,\as_2,\bs_2,\ws)$ are two diagrams such that $w\in \Sigma_i\setminus (\as_i\cup \bs_i)$, and $\frd_1$ and $\frd_2$ are two gluing data, then the following diagram commutes up to chain homotopy:
\begin{equation}
 \begin{tikzcd}[column sep=4cm]\CF^-_{J^{\frd_1}}(\cH_1,\frs)\arrow{r}{\Psi_{J^{\frd_1}\to J^{\frd_2}}}\arrow{d}{S_w^+}& \CF^-_{J^{\frd_2}}(\cH_2,\frs\arrow{d}{S_w^+})\\
 \CF^-_{J^{\frd_1}(T_1)}(\cH_1^+,\frs)\arrow{r}{\Psi_{(\cH_1^+,J^{\frd_1}(T_1))\to (\cH_2^+,J^{\frd_2}(T_2))}} &\CF^-_{J^{\frd_2}(T_2)}(\cH_2^+,\frs).
 \end{tikzcd}
 \label{eq:freestabnaturality}
\end{equation}
It suffices to show that the diagram in Equation~\eqref{eq:freestabnaturality} commutes when $\cH_1$ and $\cH_2$ differ by one of the moves listed in Lemma~\ref{lem:Heegaardmoveswithextrabasepoints} (for $\ve{p}=\{w\}$). First, we consider Move~\eqref{move:HDmoveswithp1}, isotopies and handleslides of the $\as$ and $\bs$ curves on a fixed Heegaard surface. The transition maps for handleslides and isotopies of the $\as$ and $\bs$ curves can be computed using a triangle map. The triangle counts from Theorem~\ref{thm:freestabilizetriangles}  imply that the free-stabilization maps commute with such transition maps.  Move~\eqref{move:HDmoveswithp2}, commutation of the free-stabilization maps with the simple stabilization maps, follows from Lemma~\ref{lem:freestabsimplestab}. Finally, commutation with respect to Move~\eqref{move:HDmoveswithp3}, isotopies of the Heegaard surface which fix $\ws\cup \{w\}$, is a tautology.
\end{proof}

\subsection{Commuting free-stabilization and relative homology maps}

\begin{lem}\label{lem:free-stab-H1-act-commute}Let $(Y,\ws)$ be a multi-pointed 3-manifold, and let $w\in Y\setminus \ws$. Suppose that $\lambda$ is either a closed path in $Y$, or a path which connects two basepoints in $\ws$. Then
\[
S_w^+\circ A_{\lambda}+A_\lambda\circ  S_w^+\simeq 0\quad \text{and} \quad S_w^-\circ A_\lambda+A_\lambda \circ S_w^-\simeq 0.
\]
\end{lem}
\begin{proof} For simplicity, let us consider the claim involving $S_w^+$, since $S_w^-$ is the dual map.

Let $\cH=(\Sigma,\as,\bs,\ws)$ be a diagram for $(Y,\ws)$, and let $\cH^+_w$ be its free-stabilization at $w$. We will show that for sufficiently stretched almost complex structure
\[
S_w^+\circ A_\lambda+A_{\lambda}\circ S_w^+=0.
\]

Suppose that $\xs\in \bT_{\a}\cap \bT_{\b}$ is an intersection point on $\cH$. By definition
\begin{equation}
(S_w^+\circ A_{\lambda})(\xs)=\sum_{\substack{\phi\in \pi_2(\xs,\ys)\\ \mu(\phi)=1}} a(\lambda, \phi) \# \hat{\cM}_{J^\frd}(\phi)\cdot U_{\ws}^{n_{\ws}(\phi)}\cdot (\ys\times \theta^+),\label{eq:freestabrelhom1}
\end{equation}
while
\begin{equation}
(A_{\lambda}\circ S_w^+)(\xs)=\sum_{\substack{\phi\# \phi_0\in \pi_2(\xs\times \theta^+, \ys\times y)\\ \mu(\phi\# \phi_0)=1}} a(\lambda,\phi\# \phi_0)  \# \hat{\cM}_{J^\frd(T)}(\phi\# \phi_0)\cdot U_{\ws}^{n_{\ws}(\phi)}U_w^{n_w(\phi_0)}\cdot (\ys\times y).\label{eq:freestabrelhom2}
\end{equation}

We can assume that $\lambda$ is disjoint from the free-stabilization region on $\Sigma$. Consequently 
\begin{equation}
a(\lambda,\phi\# \phi_0)=a(\lambda,\phi).\label{eq:alambdafreestab}
\end{equation}
By following the proof of Proposition~\ref{prop:free-stabdifferential}, we see that there are two types of index 1 classes in any $\pi_2(\xs\times \theta^+, \ys\times y)$ which can have holomorphic representatives for sufficiently large $T$.

The first type of class which could have representatives  has $y=\theta^-$, $\phi=e_{\xs}$, and has $\phi_0$ equal to a bigon supported in the free-stabilization region. For such a class, we have $a(\lambda,\phi\# \phi_0)=a(\lambda,e_{\xs})=0$, by Equation~\eqref{eq:alambdafreestab}, so such classes make no contribution to Equation~\eqref{eq:freestabrelhom2}.

The second type of class which could have holomorphic representatives has $\mu(\phi)=1$, $\phi_0\in \pi_2(\theta^+,\theta^+)$ and $n_w(\phi_0)=0$. Using the counts of holomorphic disks representing classes in $\pi_2(\xs\times \theta^+, \ys\times \theta^+)$ established in Equation~\eqref{eq:diskcountsstretchedAC} and~\eqref{eq:OS'scountmatched}, together with Equation~\eqref{eq:alambdafreestab}, we conclude that Equations~\eqref{eq:freestabrelhom1} and ~\eqref{eq:freestabrelhom2} coincide. The proof is complete.
\end{proof}

\subsection{Compositions of free-stabilizations}

We now prove that free-stabilization maps for different basepoints may be commuted.

\begin{prop}\label{prop:free-stabs-commute}For any  $\circ_1,\circ_2\in \{+,-\}$, the free-stabilization maps satisfy
 \[
S_{w_1}^{\circ_1}S_{w_2}^{\circ_2}\simeq S_{w_2}^{\circ_2}S_{w_1}^{\circ_1},
\]
as morphisms of transitive systems of chain complexes.
\end{prop}
\begin{proof}The proof follows from a double neck stretching argument similar to Lemma~\ref{lem:freestabsimplestab}. We consider a diagram on which both free-stabilizations have been performed, with underlying Heegaard surface $\Sigma$. Let $J$ be a cylindrical almost complex structure on $\Sigma\times [0,1]\times \R$, viewed as a complex structure for the unstabilized diagram. If $\ve{T}=(T_1,T_2)$ is a pair of neck lengths, let $J(\ve{T})$ denote an almost complex structure obtained by inserting two necks of length $T_1$ and $T_2$.

We recall that the free-stabilization maps require stabilizing condition~\eqref{eq:stabilizationcondition2} to be satisfied, however it is not immediately clear that condition~\eqref{eq:stabilizationcondition2} can be achieved on two necks simultaneously.

We prove the following:

\begin{subclaim}\label{subclaim:neckstretching} Suppose that $\ve{T}$ and $\ve{T}'$ are two pairs of neck-lengths for free-stabilizing at $w_1$ and $w_2$. If all of components of $\ve{T}$ and $\ve{T}'$ are sufficiently large, then a non-cylindrical almost complex structure $\tilde{J}$ may be chosen which interpolates $J(\ve{T})$ and $J(\ve{T}')$ and satisfies
\begin{equation}
\begin{split}
\Psi_{\tilde{J}}(\xs\times \theta_1^+\times \theta_2^+)&= \xs\times \theta_1^+\times \theta_2^+
\\
\Psi_{\tilde{J}}(\xs\times \theta_1^+\times \theta_2^-)&=\xs\times \theta_1^+\times \theta_2^-+\sum_{\ys\in \bT_{\a}\cap \bT_{\b}} \left( C_{\xs,\ys}^1 \cdot  \ys\times \theta_1^+\times \theta_2^++ C_{\xs,\ys}^2\cdot  \ys\times \theta_1^-\times \theta_2^+ \right)
\\
\Psi_{\tilde{J}}(\xs\times \theta_1^-\times \theta_2^+)&=\xs\times \theta_1^-\times \theta_2^++\sum_{\ys\in \bT_{\a}\cap \bT_{\b}} \left(C_{\xs,\ys}^3 \cdot \ys\times \theta_1^+\times \theta_2^++C_{\xs,\ys}^4 \cdot \ys\times \theta_1^+\times \theta_2^-\right)
\\
\Psi_{\tilde{J}}(\xs\times \theta_1^-\times \theta_2^-)&=\xs\times \theta_1^-\times \theta_2^-
\\
+\sum_{\ys\in \bT_{\a}\cap \bT_{\b}} &\left(C_{\xs,\ys}^5\cdot  \ys\times \theta_1^+\times \theta_2^-+C_{\xs,\ys}^6\cdot  \ys\times \theta_1^-\times \theta_2^++C_{\xs,\ys}^7\cdot  \ys\times \theta_1^+\times\theta_2^+\right),
\end{split}
\label{eq:double-neck-stretch-free-stab}
\end{equation}
for various $C_{\xs,\ys}^i\in \bF_2[U_{\ws}]$, which depend on $\tilde{J}$.
\end{subclaim}

Before proving Subclaim~\ref{subclaim:neckstretching}, we briefly explain why it implies the main claim. Pick $\ve{T}=(T_1,T_2)$ so that $T_2$ is large enough to compute the free-stabilization maps at $w_2$, and then pick $T_1\gg T_2$ which is large enough to compute the free-stabilization maps at $w_1$ (after having already performed the stabilization at $w_2$). Next, pick $\ve{T}'=(T_1',T_2')$ so that $T_2'\gg T_1'$, so that $T_1'$ is large enough to compute the free-stabilization  maps at $w_1$ first, and then subsequently compute the free-stabilization maps at $w_2'$. Hence, to compute $S_{w_2}^+S_{w_1}^+$ we may use $J(\ve{T}')$, while to compute $S_{w_1}^+S_{w_2}^+$, we may use $J(\ve{T})$. To compare the two compositions on the level of morphisms of transitive systems of chain complexes, we must compose with the transition map $\Psi_{J(\ve{T})\to J(\ve{T}')}$. With this in mind, the first line of Equation~\eqref{eq:double-neck-stretch-free-stab} implies that $S_{w_2}^+ S_{w_1}^+\simeq S_{w_1}^+  S_{w_2}^+$.

Similarly, the first and the second lines of Equation~\eqref{eq:double-neck-stretch-free-stab} together imply that $S_{w_1}^+  S_{w_2}^-\simeq S_{w_2}^- S_{w_1}^+$. The first and third lines together imply that $S_{w_1}^-S_{w_2}^+\simeq S_{w_2}^+ S_{w_1}^-$. Finally, all four lines together imply that $S_{w_1}^-S_{w_2}^-\simeq S_{w_2}^- S_{w_1}^-$.

We now proceed to prove Subclaim~\ref{subclaim:neckstretching}. The transition map $\Psi_{\tilde{J}}$ counts $\tilde{J}$-holomorphic representatives of Maslov index 0 homology classes of disks. If $\phi\# \phi_0^1\# \phi_0^2\in \pi_2(\xs\times x_1\times x_2, \ys\times y_1\times y_2)$ is a homology class on the doubly free-stabilized diagram, then adapting the proof of Equation~\eqref{eq:indexbrokenlimitacstr} gives
\begin{equation}
\mu(\phi\# \phi_0^1\# \phi_0^2)=\mu(\phi)+\gr(x_1,y_1)+\gr(x_2,y_2)+2 n_{w_1}(\phi)+2n_{w_2}(\phi_2).\label{eq:Maslovindexdoublyfreestab}
\end{equation}

To prove Equation~\eqref{eq:double-neck-stretch-free-stab} it suffices to make the following three observations:
\begin{enumerate}
\item\label{observation1}  Assuming $\ve{T}$ and $\ve{T}'$ are sufficiently large, if $\mu(\phi)=0$ and $\gr(x_1,y_1)+\gr(x_2,y_2)>0$, then $\cM_{\tilde{J}}(\phi)$ is empty;
\item\label{observation2} Assuming $\ve{T}$ and $\ve{T}'$ are sufficiently large,  if $\mu(\phi)=0$,  $\gr(x_1,y_1)=\gr(x_2,y_2)=0$ and $\cM_{\tilde{J}}(\phi)$ is nonempty, then  $\phi=e_{\xs}\times e_{x_1}\times e_{x_2}$.
\item If $\phi$ is a class with $\gr(x_1,y_1)+\gr(x_2,y_2)\le 0$ but $\gr(x_1,y_1)$ and $\gr(x_2,y_2)$ are not both zero, then the count of $\cM_{\tilde{J}}(\phi)$ is irrelevant to Equation~\eqref{eq:double-neck-stretch-free-stab}. 
\end{enumerate}
Observations~\eqref{observation1} and~\eqref{observation2} are proven similar to the proof of Proposition~\ref{prop:freestab-Tsufflargeexist}. For sufficiently large $\ve{T}$ and $\ve{T}'$, by picking $\tilde{J}$ appropriately, we can ensure that $\phi$ has a broken representative for a cylindrical almost complex structure. In particular $\mu(\phi)\ge 0$. Consequently, if $\gr(x_1,y_1)+\gr(x_2,y_2)>0$, then Equation~\eqref{eq:Maslovindexdoublyfreestab} can never be zero, so there are no such classes with representatives. If Equation~\eqref{eq:Maslovindexdoublyfreestab} is zero and $\gr(x_1,y_1)=\gr(x_2,y_2)=0$, then also $\mu(\phi)=0$, so $\phi$ is a constant homology class, by transversality. Using additionally the vertex multiplicities near $x_1$ and $x_2$, we conclude that $\phi_0^1$ and $\phi_0^2$ must also be constant homology classes, so the second observation follows. The third observation is straightforward. This establishes Subclaim~\ref{subclaim:neckstretching}, and hence the main claim.
\end{proof}

We now prove a simple relation involving the free-stabilization maps:

\begin{lem}\label{lem:disjoint-strand-induces-zero} Suppose $(Y,\ws)$ is a multi-pointed 3-manifold and $w\in Y\setminus \ws$. Then
\[
S_w^-S_w^+\simeq 0.
\]
\end{lem}
\begin{proof}The relation follows immediately from the formulas for the free-stabilization maps in Equation~\eqref{def:freestabilizationmaps}.
\end{proof}

\section{Graph action map}
\label{sec:graphaction}

If $\cG=(\Gamma,\ws_0,\ws_1)$  is a flow-graph in $Y$, and $\sigma$ is a coloring of $\Gamma$ (see Section~\ref{sec:flow-graph} and Definition~\ref{def:coloring} for the definitions of a flow-graph, and coloring, respectively), in this section we construct two \emph{graph action maps}
\[
\frA_{\cG}, \frB_{\cG}\colon \CF^-(Y, \ws_0^{\sigma_0}, \frs)\to \CF^-(Y,\ws_1^{\sigma_1},\frs),
\]
where $\sigma_i:=\sigma|_{\ws_i}$.

Unlike in Section~\ref{sec:flow-graph}, where flow-graphs are allowed to be immersed, we focus on embedded flow-graphs in this section. Upgrading the results to immersed flow-graphs is straightforward (the only complication is that when $\ws_0\cap\ws_1\neq \emptyset$, one must define $\frA_\cG$ and $\frB_{\cG}$ as compositions of two graph action maps).

We think of the graph action map as a restricted version of the full graph TQFT. Indeed we will later prove that the graph cobordism map for $([0,1]\times Y, \Gamma)$ is equal to the graph action map for the graph obtained by projecting $\Gamma$ into $Y$; See Part~\eqref{claim:W=[0,1]xY-project-Gamma} of Theorem~\ref{thm:A-with-enough-ends}, below.

To define the graph action map, we fix a decomposition of $\Gamma$ into edges and vertices. We write $V(\Gamma)$ and $E(\Gamma)$ for the vertices and edges of $\Gamma$, respectively. Although we require such a decomposition for our construction, the resulting graph action maps turn out to be unchanged by subdivision; see Lemma~\ref{lem:subdivision-invariance}. Our construction of $\frA_{\cG}$ and $\frB_{\cG}$ also requires picking a further decomposition of  $\cG$ into elementary flow-graphs (see Definition~\ref{def:elementary-ribbon-flow-graphs}), as well as a choice of absolute lift of the ribbon structure.

 In this section, we prove the following:

\begin{thm}\label{thm:graph-action}Suppose $\cG=(\Gamma,\ws_0,\ws_1)$ is a ribbon flow-graph in $Y$. The maps $\frA_{\cG}$ and $\frB_{\cG}$ described in this section satisfy the following:
\begin{enumerate}[label=\emph{(\alph*)}, ref=\alph*]
\item\label{thm:graph-action:a} The maps $\frA_{\cG}$ and $\frB_{\cG}$ are independent from the choice of decomposition into elementary flow-graphs.
\item\label{thm:graph-action:b} The maps $\frA_{\cG}$ and $\frB_{\cG}$ are functorial: if $\cG$ is a flow-graph from $\ws_0$ to $\ws_1$, $\cG'$ is a flow-graph from $\ws_1$ to $\ws_2$, then
\[
\frA_{\cG'}\circ \frA_{\cG}=\frA_{\cG'\cup \cG} \quad \text{and} \quad \frB_{\cG'}\circ \frB_{\cG}=\frB_{\cG'\cup \cG}.
\]
\item\label{thm:graph-action:c} The maps $\frA_{\cG}$ and $\frB_{\cG}$ are independent from the choice of absolute lift of the ribbon structure used in their construction.
\end{enumerate}

\end{thm}

\subsection{Constructing the graph action maps}
\label{sec:Cerf-decomps-graphs}

Before defining the graph action maps, we establish some terminology.

\begin{define}
 A \emph{strong} ribbon flow-graph is a ribbon flow-graph equipped with an absolute lift of the cyclic ordering at each vertex.
\end{define}

\begin{define}\label{def:elementary-ribbon-flow-graphs} A ribbon flow-graph $\cG=(\Gamma,\ws_0,\ws_1)$ is \emph{elementary} if is one of the following 3 subtypes:
\begin{enumerate}[label= ($ER$-\arabic*), ref=$ER$-\arabic*]
\item\label{ERtype:1} (\emph{translation}) $|\ws_0|=|\ws_1|$ and each edge of $\Gamma$ connects a vertex of $\ws_0$ to a vertex of $\ws_1$.
\item \label{ERtype:2} (\emph{interior vertex}) There is a single vertex $w_s$ of $\Gamma$ which is not contained in $\ws_0\cup \ws_1$, and all edges of $\Gamma$ connect either $\ws_0$ to $\ws_1$, or a point of $\ws_0\cup \ws_1$ to $w_s$. We call $w_s$ the \emph{special vertex}.
\item \label{ERtype:3} (\emph{local extrema}) $|\ws_0|=|\ws_1|\pm 2$, and all but one edge connect $\ws_0$ to $\ws_1$. There is a single edge $e_0$ which connects two vertices of $\ws_0$ to each other, or connects two distinct vertices of $\ws_1$ together. We call $e_0$ the \emph{special edge}. There are two subtypes:
\begin{enumerate}[label= ($ER$-3\roman*), ref=$ER$-3\roman*]
\item\label{ERtype:3i} (\emph{local max}) The special edge connects two vertices of $\ws_0$.
\item\label{ERtype:3ii} (\emph{local min}) The special edge connects two vertices of $\ws_1$.
\end{enumerate}
\end{enumerate}
\end{define}

\begin{define}\label{def:initialslopes} Suppose that $\cG=(\Gamma,\ws_0,\ws_1)$ is an elementary ribbon flow-graph of type~\eqref{ERtype:2}, with special vertex $w_s$. If $e$ is an edge which is adjacent $w_s$, we say that $e$ has \emph{positive initial slope} if $e$ connects $w_s$ to $\ws_1$, and we say that $e$ has \emph{negative initial slope} if $e$ connects $w_s$ to $\ws_0$.
\end{define}

\begin{figure}[ht!]
\centering
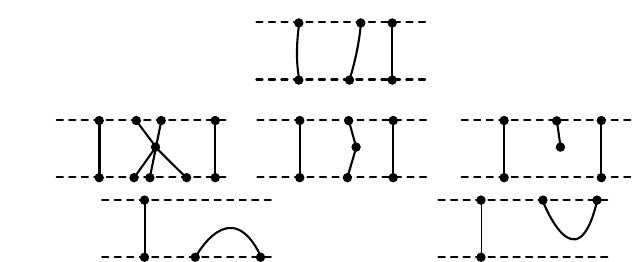
\caption{\textbf{Examples of elementary ribbon flow-graphs.} The top and bottom dashed lines indicate the incoming and outgoing vertices.}\label{fig::25}
\end{figure}

\begin{define}
\begin{enumerate}
\item We say that a (strong) ribbon flow-graph $\cG'=(\Gamma',\ws_0,\ws_1)$ is a \emph{subdivision} of $\cG=(\Gamma,\ws_0,\ws_1)$ if $\Gamma'$ is obtained by adding extra vertices to the interior of edges of $\Gamma$. If $\cG'$ is a strong ribbon flow-graph, we allow the new vertices to be added with either absolute ordering.
\item We say that two flow-graphs $\cG=(\Gamma,\ws_{0},\ws_1)$ and $\cG'=(\Gamma',\ws_0',\ws_1')$ are \emph{subdivision equivalent} if $\cG$ and $\cG'$ have a common subdivision (in particular, $\Gamma=\Gamma'$ as subsets of $Y$).
\end{enumerate}
\end{define}

We now define a notion of decomposition into elementary pieces for ribbon flow-graphs:

\begin{define} If $\cG=(\Gamma,\ws_0,\ws_1)$ is a ribbon flow-graph in $Y$, a \emph{Cerf decomposition} of $\cG$ is a sequence of ribbon flow-graphs $\cG_1,\dots, \cG_n$, where $\cG_i=(\Gamma_i,\ws_{i,0}, \ws_{i,1})$, such that the following hold:
\begin{enumerate}
\item Each $\cG_i$ is an elementary ribbon flow-graph.
\item $\ws_{1,0}=\ws_0$ and $\ws_{n,1}=\ws_1$.
\item $\ws_{0,i+1}=\ws_{1,i}$ for $i\in \{1,\dots, n-1\}$.
\item The concatenation $\cG_n\circ \cdots \circ \cG_1$ is a subdivision of $\cG$.
\item Each vertex of $V(\Gamma)\setminus (\ws_0\cup \ws_1)$ is a special vertex of some $\cG_i$. Furthermore, if $w_s$ is a special vertex of any $\cG_i$, then $w_s$ is a vertex in $V(\Gamma)$.
\end{enumerate}
\end{define}

We now define the following moves between Cerf decompositions of flow-graphs:

\begin{define}
\label{def:Cerfmoves} We say two Cerf decompositions of a flow-graph $\cG=(\Gamma,\ws_0,\ws_1)$ in $Y$ are \emph{Cerf equivalent} if they can be connected by a sequence of the following moves, or their inverses:
\begin{enumerate}[label=($CM$-\arabic*), ref=$CM$-\arabic*]
\item\label{move:CM1} (\emph{Level splitting}) Replacing an elementary flow-graph $\cG$ with a subdivision equivalent composition $\cG_2\circ \cG_1$ such that one of $\cG_1$ and $\cG_2$ is of the same type as $\cG$, and the other is of type~\eqref{ERtype:1}.
\item\label{move:CM2} (\emph{Critical point birth/death}) Replacing an elementary flow-graph $\cG$ of type \eqref{ERtype:1} with a subdivision equivalent composition $\cG_2\circ \cG_1$, such that $\cG_2$ and $\cG_1$ are of type \eqref{ERtype:3i} and \eqref{ERtype:3ii}, respectively.
\item\label{move:CM3} (\emph{Changing an initial slope})  Replacing an elementary flow-graph $\cG$ of type~\eqref{ERtype:2} with a subdivision equivalent composition $\cG_2\circ \cG_1$ such exactly one of $\cG_1$ and $\cG_2$ is of type \eqref{ERtype:3}, and the other is of type \eqref{ERtype:2} (with the same special vertex as $\cG$).
\item\label{move:CM4} (\emph{Critical value swap})  Replacing the composition of two consecutive elementary flow-graphs $\cG_2\circ \cG_1$ of type~\eqref{ERtype:3} with a subdivision equivalent composition $\cG_2'\circ \cG_1'$ such that both $\cG_2'$ and $\cG_1'$ are also of type~\eqref{ERtype:3}. Furthermore, the special edges of $\cG_2$ and $\cG_1$ are disjoint, and the special edges of $\cG_2'$ and $\cG_1'$ are disjoint.
\item\label{move:CM5} (\emph{Vertex/critical value swap)} Replacing a composition $\cG_2\circ \cG_1$, where $\cG_2$ is of type~\eqref{ERtype:2} and $\cG_1$ of type~\eqref{ERtype:3}, with a subdivision equivalent composition $\cG_2'\circ \cG_1'$ where $\cG_2'$ is of type~\eqref{ERtype:3} and $\cG_1'$ is of type~\eqref{ERtype:2}. Furthermore, we assume that  there is no path in $\cG_2\circ \cG_1$ connecting the special vertex of $\cG_2$ to the special edge of $\cG_1$.
\item\label{move:CM6} (\emph{Vertex value swap}) Replacing a composition $\cG_2\circ \cG_1$ of two elementary ribbon graphs of type~\eqref{ERtype:2} with a subdivision equivalent composition $\cG_2'\circ \cG_1'$, where $\cG_2'$ and $\cG_1'$ are also both of type~\eqref{ERtype:2}. Furthermore, we assume that there is no path in $\cG_2\circ \cG_1$ connecting the two special edges of $\cG_2$ and $\cG_1$.
\end{enumerate}
\end{define}
Examples of the moves from Definition~\ref{def:Cerfmoves} are shown in Figure~\ref{fig::26}.

\begin{figure}[ht!]
\centering
%% Creator: Inkscape inkscape 0.92.3, www.inkscape.org
%% PDF/EPS/PS + LaTeX output extension by Johan Engelen, 2010
%% Accompanies image file '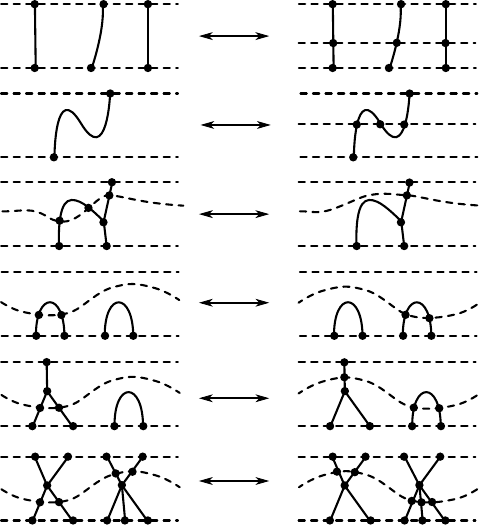' (pdf, eps, ps)
%%
%% To include the image in your LaTeX document, write
%%   \input{<filename>.pdf_tex}
%%  instead of
%%   \includegraphics{<filename>.pdf}
%% To scale the image, write
%%   \def\svgwidth{<desired width>}
%%   \input{<filename>.pdf_tex}
%%  instead of
%%   \includegraphics[width=<desired width>]{<filename>.pdf}
%%
%% Images with a different path to the parent latex file can
%% be accessed with the `import' package (which may need to be
%% installed) using
%%   \usepackage{import}
%% in the preamble, and then including the image with
%%   \import{<path to file>}{<filename>.pdf_tex}
%% Alternatively, one can specify
%%   \graphicspath{{<path to file>/}}
%% 
%% For more information, please see info/svg-inkscape on CTAN:
%%   http://tug.ctan.org/tex-archive/info/svg-inkscape
%%
\begingroup%
  \makeatletter%
  \providecommand\color[2][]{%
    \errmessage{(Inkscape) Color is used for the text in Inkscape, but the package 'color.sty' is not loaded}%
    \renewcommand\color[2][]{}%
  }%
  \providecommand\transparent[1]{%
    \errmessage{(Inkscape) Transparency is used (non-zero) for the text in Inkscape, but the package 'transparent.sty' is not loaded}%
    \renewcommand\transparent[1]{}%
  }%
  \providecommand\rotatebox[2]{#2}%
  \newcommand*\fsize{\dimexpr\f@size pt\relax}%
  \newcommand*\lineheight[1]{\fontsize{\fsize}{#1\fsize}\selectfont}%
  \ifx\svgwidth\undefined%
    \setlength{\unitlength}{231.34274832bp}%
    \ifx\svgscale\undefined%
      \relax%
    \else%
      \setlength{\unitlength}{\unitlength * \real{\svgscale}}%
    \fi%
  \else%
    \setlength{\unitlength}{\svgwidth}%
  \fi%
  \global\let\svgwidth\undefined%
  \global\let\svgscale\undefined%
  \makeatother%
  \begin{picture}(1,1.08855625)%
    \lineheight{1}%
    \setlength\tabcolsep{0pt}%
    \put(0,0){\includegraphics[width=\unitlength,page=1]{fig65.pdf}}%
    \put(0.48592612,1.03835407){\color[rgb]{0,0,0}\makebox(0,0)[t]{\lineheight{1.25}\smash{\begin{tabular}[t]{c}\eqref{move:CM1}\end{tabular}}}}%
    \put(0.48916803,0.85390814){\color[rgb]{0,0,0}\makebox(0,0)[t]{\lineheight{1.25}\smash{\begin{tabular}[t]{c}\eqref{move:CM2}\end{tabular}}}}%
    \put(0.48592612,0.6689148){\color[rgb]{0,0,0}\makebox(0,0)[t]{\lineheight{1.25}\smash{\begin{tabular}[t]{c}\eqref{move:CM3}\end{tabular}}}}%
    \put(0.48592612,0.48508816){\color[rgb]{0,0,0}\makebox(0,0)[t]{\lineheight{1.25}\smash{\begin{tabular}[t]{c}\eqref{move:CM4}\end{tabular}}}}%
    \put(0.48592612,0.28687956){\color[rgb]{0,0,0}\makebox(0,0)[t]{\lineheight{1.25}\smash{\begin{tabular}[t]{c}\eqref{move:CM5}\end{tabular}}}}%
    \put(0.48592612,0.11521622){\color[rgb]{0,0,0}\makebox(0,0)[t]{\lineheight{1.25}\smash{\begin{tabular}[t]{c}\eqref{move:CM6}\end{tabular}}}}%
    \put(0,0){\includegraphics[width=\unitlength,page=2]{fig65.pdf}}%
  \end{picture}%
\endgroup%

\caption{\textbf{Examples of the Cerf moves from Definition~\ref{def:Cerfmoves}.}}\label{fig::26}
\end{figure}

\begin{prop}\label{prop:Cerf-moves} Any two Cerf decompositions of a flow-graph $\cG$ are Cerf equivalent (i.e. can be connected by the moves in Definition~\ref{def:Cerfmoves}).
\end{prop}
\begin{proof} Suppose $\cG=(\Gamma,\ws_0,\ws_1)$ is a flow-graph in $Y$. We assume that each edge of $\Gamma$ is smoothly immersed in $Y$ (and that unless an edge has both endpoints on the same vertex, each edge is smoothly embedded). We say a function $f\colon \Gamma\to [0,1]$ is \emph{Morse} if following hold:
\begin{enumerate}
\item $f^{-1}(0)=\ws_0$ and  $f^{-1}(1)=\ws_1$.
\item If we write an edge $e$ as the image of an immersion $\hat{e}\colon [0,1]\to Y$ then $f\circ \hat{e}$ is Morse on $[0,1]$ and $0$ and $1$ are not critical points of $f\circ \hat{e}$. 
\end{enumerate}
A Cerf decomposition can be obtained by picking a generic Morse function $f\colon \Gamma\to [0,1]$, and picking a collection of values $0=t_0<t_1<\cdots < t_{n-1}<t_n=1$ such that each $f^{-1}(t_i)$ contains no critical points or vertices, and each $f^{-1}([t_i,t_{i+1}])$ contains at most one critical point of $f$ or vertex of $V(\Gamma)\setminus (\ws_0\cup \ws_1)$. Furthermore, we assume that each $f^{-1}(t)$ is nonempty for all $t\in [0,1]$. We can construct a Cerf decomposition $\cG_{n}\circ \cdots \circ \cG_{1}$ of $\cG$ by setting
\[
\cG_i=\left(f^{-1}([t_{i-1},t_i]), f^{-1}(t_{i-1}), f^{-1}(t_{i})\right).
\]
Conversely, it is not hard to see that any Cerf decomposition is induced by a Morse function, in the above sense.

Given two Cerf decompositions of $\cG$, we let $f_1$ and $f_2$ be Morse functions on $\Gamma$ which induce the two Cerf decompositions. We connect $f_1$ and $f_2$ as follows. First, we modify $f_1$ in a small neighborhood of each vertex so that it coincides with $f_2$ near $V(\Gamma)$. Write $\tilde{f}_1$ for the resulting Morse function. In particular, $\tilde{f}_1$ and $f_2$ have the same initial slopes along each edge (in the sense of Definition~\ref{def:initialslopes}). The Cerf decompositions induced by $f_1$ to $\tilde{f}_1$ are related by repeated applications of Move~\eqref{move:CM3}.

 We take a generic perturbation of a linear homotopy between $\tilde{f}_1$ and $f_2$, which is fixed in a neighborhood of the vertices $V(\Gamma)$. Write $(f_t)_{t\in [1,2]}$ for this path. Note that the critical set of $f_t$ is bounded away from $V(\Gamma)$, since $\tilde{f}_1$ and $f_2$ coincide in a neighborhood of $V(\Gamma)$. By perturbing $f_t$ slightly, the interval $[1,2]$ can be subdivided by picking
\[
1=b_1<b_2<\cdots <b_n=2.
\] 
so that one on each interval $[b_i,b_{i+1}]$, exactly one of the following holds:
 \begin{enumerate}
 \item $f_t$ is Morse for all $t\in [b_i,b_{i+1}]$, and all vertices and critical points have distinct values.
 \item $f_t$ is Morse for all $t\in [b_i,b_{i+1}]$, and all vertices and critical points have distinct values, except at a single point $t_0\in (b_i,b_{i+1})$, where two critical points or vertices exchange relative ordering.
 \item $f_t$ is Morse for all $t\in [b_i,b_{i+1}]$, except for at a single $t_0\in (b_i,b_{i+1})$, where a critical point birth-death singularity occurs along the interior of an edge. All vertices and critical points have distinct values.
 \end{enumerate}
 From these considerations, it is straightforward to see that $\tilde{f}_1$ and $f_2$ can be connected by a sequence of Moves \eqref{move:CM1}, \eqref{move:CM2}, \eqref{move:CM4}, \eqref{move:CM5} and \eqref{move:CM6}.
 
 Finally, we note one can ensure that all level sets of each function in the path $f_t$ have nonempty level sets by first modifying $f_1$ and $f_2$ near $\ws_0$, raising the value of a point near $\ws_0$ so that it is close to 1. This modification induces a sequence of Moves~\eqref{move:CM2}, \eqref{move:CM4}, and \eqref{move:CM5}.
\end{proof}

We now define the graph action map. Suppose that $\cG=(\Gamma,\ws_0,\ws_1)$ is a ribbon flow-graph in $Y$ and $\frs\in \Spin^c(Y)$. To define the map, we pick a strong ribbon structure on $\Gamma$ which lifts the ribbon structure, as well as a Cerf decomposition of $\cG$. We now define the graph action map for elementary strong ribbon flow-graphs.

If $\cG=(\Gamma,\ws_0,\ws_1)$ is an elementary ribbon flow-graph of type~\eqref{ERtype:1} or \eqref{ERtype:3}, we define the graph action map by the formula
\begin{equation}
\frA_{\cG}:=\left(\prod_{w\in \ws_0} S_w^-\right)\circ \left(\prod_{e\in E(\Gamma)} A_e\right)\circ \left(\prod_{w\in \ws_1} S_w^+\right). \label{eq:def-ER1/3-map}
\end{equation}
The map $\frB_{\cG}$ is defined using a similar formula, with the relative homology maps $B_e$ in place of $A_e$.

In an elementary flow-graph of type~\eqref{ERtype:1} or \eqref{ERtype:3}, no edges share a common vertex. Consequently, the product of the relative homology maps appearing in Equation~\eqref{eq:def-ER1/3-map} does not affect chain homotopy type of the composition by Lemma~\ref{lem:relhomcommutator}. By Proposition~\ref{prop:free-stabs-commute}, the ordering of the free-stabilization maps within the left and right factors of Equation~\eqref{eq:def-ER1/3-map} also does not affect the composition.

Next, suppose that $\cG$ is an elementary strong ribbon flow-graph of type~\eqref{ERtype:2}. Let $w_s$ denote the special vertex, and let $e_1,\dots, e_n$ denote the edges incident to $w_s$, ordered according to the strong ribbon structure. We define
\begin{equation}
\frA_{\cG}:= \left(\prod_{w\in \ws_0\cup \{w_s\}} S_{w}^- \right)\circ \left(\prod_{e\in E(\Gamma)\setminus \{e_1,\dots, e_n\} } A_e\right)\circ (A_{e_n}\circ \cdots \circ A_{e_1}) \circ \left(\prod_{w\in \ws_1\cup \{w_s\}} S_w^+\right).
\end{equation}
The  map $\frB_{\cG}$ is defined using the maps $B_e$ in place of $A_e$.

If $\cG=(\Gamma,\ws_0,\ws_1)$ is an arbitrary, strong ribbon flow-graph in $Y$, the graph action map is defined by picking a Cerf decomposition 
\[
\cG=\cG_n\circ \cdots \circ \cG_1
\]
and setting
\[
\frA_{\cG}:=\frA_{\cG_n}\circ \cdots \circ \frA_{\cG_1}.
\]
The type-$B$ map $\frB_{\cG}$ is defined similarly.

\begin{rem}
\label{rem:actual-embedding-not-important} Since the graph action map $\frA_{\cG}$ is defined using a relative homology map for each edge, the map $\frA_{\cG}$ is unchanged by replacing an edge $e$ of $\cG$ with another edge $e'$ such that $\d e=\d e'$ and $e\cup e'$ is a null-homologous loop in $Y$.
\end{rem}
\subsection{Algebraic relations in the graph TQFT}

In this section, we prove some algebraic relations involving the free-stabilization maps and the relative homology maps which will be useful in proving Theorem~\ref{thm:graph-action}.

%This section is devoted to proving
%
%\begin{thm} If $\cG$ is a ribbon flow-graph, the maps $\frA_{\cG}$ and $\frB_{\cG}$ defined above are independent of choice of Cerf decomposition, as well as the choice of lift of the ribbon structure to a strong ribbon structure.
%\end{thm}

We call the following relation the \emph{trivial strand relation} (the terminology is justified by Lemma~\ref{lem:full-trivial-strand-rel}, below).

\begin{lem}\label{lem:trivial-strand-rel} Suppose that $(Y,\ws)$ is a multi-pointed 3-manifold, $w\in Y\setminus \ws$ is a new basepoint, and $\lambda \subset Y$ is a path which connects $w$ to a point $w_0\in\ws$. Suppose that $\sigma\colon \ws\to \bmP$ and $\sigma'\colon \ws\cup \{w\}\to \bmP$ are colorings such that $\sigma'|_{\ws}=\sigma$ and $\sigma'(w)=\sigma'(w_0)$. Then, with respect to the complexes which are algebraically colored using $\sigma$ and $\sigma'$, we have
\[
S_w^- A_\lambda S_w^+\simeq S_w^- B_\lambda S_w^+\simeq \id_{\CF^-(Y,\ws^\sigma,\frs)}.
\]
\end{lem}
\begin{proof} We focus on the relation $S_w^- A_\lambda S_w^+\simeq \id_{\CF^-(Y,\ws^{\sigma}, \frs)}$. The relation involving $S_w^- B_\lambda S_w^+$ is proven similarly.

A single free-stabilized diagram and stretched almost complex structure can be chosen to compute $S_{w}^+$, $A_\lambda$ and $S_w^-$. Such a diagram is shown in Figure~\ref{fig:66}. 
 
Using the definition of the free-stabilization maps in Equation~\eqref{def:freestabilizationmaps}, it suffices to show that for a sufficiently stretched almost complex structure, the map $A_\lambda$ satisfies
\begin{equation}
A_\lambda(\xs\times \theta^+)=\xs\times \theta^-+\sum_{\ys\in \bT_{\a}\cap \bT_{\b}} C_{\xs,\ys} \cdot \ys\times \theta^+,\label{eq:trivial-strand-on-generators}
\end{equation}
for some $C_{\xs,\ys}\in \cR_{\bmP}$. In Equation~\eqref{eq:trivial-strand-on-generators}, $\theta^+$ and $\theta^-$ denote the two intersection points in the free-stabilized region.

Equation~\eqref{eq:trivial-strand-on-generators} follows from the proof of Proposition~\ref{prop:free-stabdifferential}. More explicitly, for sufficiently stretched almost complex structure, any Maslov index 1 class in $\pi_2(\xs\times \theta^+, \ys\times \theta^-)$ which supports holomorphic representatives has domain equal to one of the two bigons in the free-stabilization region. Writing $\phi_{\xs}^1$ and $\phi_{\xs}^2$ for these two classes, we note that both $\phi_{\xs}^1$ and $\phi_{\xs}^2$ have a unique holomorphic representative. An easy model computation (see Figure~\ref{fig:66}) shows that
\begin{equation}
a(\lambda,\phi_{\xs}^1)+a(\lambda,\phi_{\xs}^2)\equiv 1 \pmod 2. \label{eq:a(l,phi)-sum=1}
\end{equation}
Equation~\eqref{eq:trivial-strand-on-generators} now follows from the definition of the map $A_\lambda$ in Equation~\eqref{eq:Alambdadef},  completing the proof.
\end{proof}

\begin{figure}[ht!]
\centering
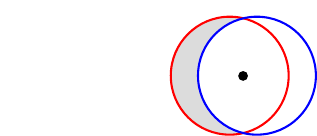
\caption{\textbf{One of the two bigons contributing to Equation~\eqref{eq:a(l,phi)-sum=1}.}}\label{fig:66}
\end{figure}

The following relation is related to subdivision invariance of the graph TQFT:

\begin{lem}\label{lem:rel-hom-relation-subdivide} Suppose that $w_0$, $w_1$ and $w_2$ are basepoints in $Y$, $w_1\not \in \{w_0,w_2\}$, and $\ws$ is a collection of basepoints containing $w_0$ and $w_2$, but not $w_1$. Suppose that $\lambda_1$ and $\lambda_2$ are paths in $Y$ satisfying $\d \lambda_i=\{w_{i-1},w_i\}$, and write $\lambda_2 * \lambda_1$ for the concatenation. Then 
\[
A_{\lambda_2 * \lambda_1}\simeq S_{w_1}^- A_{\lambda_2}A_{\lambda_1} S_{w_1}^+,
\]
as endomorphisms of $\CF^-(Y,\ws^\sigma,\frs)$, for a coloring $\sigma$ which identifies $U_{w_0}$, $U_{w_1}$ and $U_{w_2}$ with a common variable, $U$.
\end{lem}
\begin{proof} We  compute
\begin{align*}A_{\lambda_2*\lambda_1}& \simeq A_{\lambda_2* \lambda_1}(S^-_{w_1}A_{\lambda_1} S^+_{w_1})&&(\text{Lemma}~\ref{lem:trivial-strand-rel})\\
 & \simeq S^-_{w_1} A_{\lambda_2*\lambda_1}A_{\lambda_1}S^+_{w_1}&&(\text{Lemma}~\ref{lem:free-stab-H1-act-commute})\\
 &\simeq S^-_{w_1}(A_{\lambda_2}+A_{\lambda_1})A_{\lambda_1} S^+_{w_1}&&(\text{Lemma}~\ref{lem:splicingrelhom})\\
 &\simeq S^-_{w_1}A_{\lambda_2}A_{\lambda_1}S^+_{w_1}+S^-_{w_1}U S^+_{w_1}&&(\text{Lemma}~\ref{lem:Alambda-squares-to-zero-or-U})\\
 &\simeq  S^-_{w_1} A_{\lambda_2}A_{\lambda_1}S^+_{w_1}&&(\text{Lemma~\ref{lem:disjoint-strand-induces-zero}}),
\end{align*}
completing the proof.
\end{proof}

In the following lemma, and also henceforth, we write $S_{w_n w_{n-1}\cdots w_1}^{+}$ for the composition
\[
S_{w_n w_{n-1}\cdots w_1}^{+}:=S_{w_n}^+\circ S_{w_{n-1}}^+\circ \cdots \circ S_{w_1}^+,
\]
and use a similar notation for negative free-stabilizations. By Proposition~\ref{prop:free-stabs-commute}, the map $S_{w_n w_{n-1}\cdots w_1}^{+}$ is independent of the ordering of the basepoints $w_1,\dots, w_n$.

\begin{lem} \label{lem:subdivide-edge-thirds}
Suppose that $w_0,$ $w_1$, $w_2$ and $w_3$ are distinct points in $Y$, and $\lambda_1,$ $\lambda_2$ and $\lambda_3$ are paths in $Y$ such that $\d \lambda_i=\{w_{i-1},w_i\}$. See Figure~\ref{fig::67}. Furthermore, suppose that $\ws$ is collection of basepoints on $Y$, containing $w_0$ and $w_3$, but not $w_1$ and $w_2$. If $\tau\colon \{1,2,3\}\to \{1,2,3\}$ is a permutation, then
\[
S_{w_1w_2}^- A_{\lambda_{\tau(3)}} A_{\lambda_{\tau(2)}} A_{\lambda_{\tau(1)}} S_{w_1w_2}^+\simeq S_{w_1w_2}^- A_{\lambda_3} A_{\lambda_2}A_{\lambda_1} S_{w_1w_2}^+,
\]
as endomorphisms of $\CF^-(Y,\ws^\sigma,\frs)$, where $\sigma$ is a coloring which identifies $U_{w_0}$,  $U_{w_1}$, $U_{w_2}$, and  $U_{w_3}$ with a common variable,  $U$.
\end{lem}

\begin{figure}[ht!]
\centering
%% Creator: Inkscape inkscape 0.92.3, www.inkscape.org
%% PDF/EPS/PS + LaTeX output extension by Johan Engelen, 2010
%% Accompanies image file '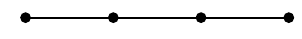' (pdf, eps, ps)
%%
%% To include the image in your LaTeX document, write
%%   \input{<filename>.pdf_tex}
%%  instead of
%%   \includegraphics{<filename>.pdf}
%% To scale the image, write
%%   \def\svgwidth{<desired width>}
%%   \input{<filename>.pdf_tex}
%%  instead of
%%   \includegraphics[width=<desired width>]{<filename>.pdf}
%%
%% Images with a different path to the parent latex file can
%% be accessed with the `import' package (which may need to be
%% installed) using
%%   \usepackage{import}
%% in the preamble, and then including the image with
%%   \import{<path to file>}{<filename>.pdf_tex}
%% Alternatively, one can specify
%%   \graphicspath{{<path to file>/}}
%% 
%% For more information, please see info/svg-inkscape on CTAN:
%%   http://tug.ctan.org/tex-archive/info/svg-inkscape
%%
\begingroup%
  \makeatletter%
  \providecommand\color[2][]{%
    \errmessage{(Inkscape) Color is used for the text in Inkscape, but the package 'color.sty' is not loaded}%
    \renewcommand\color[2][]{}%
  }%
  \providecommand\transparent[1]{%
    \errmessage{(Inkscape) Transparency is used (non-zero) for the text in Inkscape, but the package 'transparent.sty' is not loaded}%
    \renewcommand\transparent[1]{}%
  }%
  \providecommand\rotatebox[2]{#2}%
  \newcommand*\fsize{\dimexpr\f@size pt\relax}%
  \newcommand*\lineheight[1]{\fontsize{\fsize}{#1\fsize}\selectfont}%
  \ifx\svgwidth\undefined%
    \setlength{\unitlength}{144.90277976bp}%
    \ifx\svgscale\undefined%
      \relax%
    \else%
      \setlength{\unitlength}{\unitlength * \real{\svgscale}}%
    \fi%
  \else%
    \setlength{\unitlength}{\svgwidth}%
  \fi%
  \global\let\svgwidth\undefined%
  \global\let\svgscale\undefined%
  \makeatother%
  \begin{picture}(1,0.12827569)%
    \lineheight{1}%
    \setlength\tabcolsep{0pt}%
    \put(0,0){\includegraphics[width=\unitlength,page=1]{fig67.pdf}}%
    \put(0.06387688,0.00700685){\color[rgb]{0,0,0}\makebox(0,0)[t]{\lineheight{1.25}\smash{\begin{tabular}[t]{c}$w_0$\end{tabular}}}}%
    \put(0.35455456,0.00700685){\color[rgb]{0,0,0}\makebox(0,0)[t]{\lineheight{1.25}\smash{\begin{tabular}[t]{c}$w_1$\end{tabular}}}}%
    \put(0.64523209,0.00700685){\color[rgb]{0,0,0}\makebox(0,0)[t]{\lineheight{1.25}\smash{\begin{tabular}[t]{c}$w_2$\end{tabular}}}}%
    \put(0.93590961,0.00700685){\color[rgb]{0,0,0}\makebox(0,0)[t]{\lineheight{1.25}\smash{\begin{tabular}[t]{c}$w_3$\end{tabular}}}}%
    \put(0.22711332,0.09768622){\color[rgb]{0,0,0}\makebox(0,0)[t]{\lineheight{1.25}\smash{\begin{tabular}[t]{c}$e_1$\end{tabular}}}}%
    \put(0.51252623,0.09768622){\color[rgb]{0,0,0}\makebox(0,0)[t]{\lineheight{1.25}\smash{\begin{tabular}[t]{c}$e_2$\end{tabular}}}}%
    \put(0.80373687,0.09768622){\color[rgb]{0,0,0}\makebox(0,0)[t]{\lineheight{1.25}\smash{\begin{tabular}[t]{c}$e_3$\end{tabular}}}}%
  \end{picture}%
\endgroup%

\caption{\textbf{The configuration of vertices and edges in Lemma~\ref{lem:subdivide-edge-thirds}}.}\label{fig::67}
\end{figure}

\begin{proof} It is sufficient to show that if $\{i,j,k\}=\{1,2,3\}$ as sets, then the map $S_{w_1w_2}^- A_{\lambda_i}A_{\lambda_j} A_{\lambda_k} S_{w_1w_2}^+$ is invariant (up to chain homotopy) from switching the order of $\lambda_j$ and $\lambda_k$ or switching the order of $\lambda_i$ and $\lambda_j$. We will focus on the proof that the map is invariant under switching the order of $\lambda_j$ and $\lambda_k$; the proof for switching $\lambda_i$ and $\lambda_j$ is analogous.

There are two cases:
\begin{enumerate}
\item $\lambda_j$ and $\lambda_k$ are disjoint.
\item $\lambda_j$ and $\lambda_k$ share a vertex.
\end{enumerate}

If $\lambda_j$ and $\lambda_k$ are disjoint, then $A_{\lambda_j} A_{\lambda_k}\simeq A_{\lambda_k} A_{\lambda_j}$ by Lemma~\ref{lem:relhomcommutator}. Hence it remains to consider the case that $\lambda_j$ and $\lambda_k$ share a vertex $w_n\in \{w_1,w_2\}$. Let $w_m$ denote the other vertex in $\{w_1,w_2\}$. We compute
\begin{align*}
S^-_{w_1w_2}A_{\lambda_i}A_{\lambda_j}A_{\lambda_k}S^+_{w_1w_2}& \simeq  S^-_{w_1w_2}A_{\lambda_i}A_{\lambda_k}A_{\lambda_j}S^+_{w_1w_2}+U S^-_{w_1w_2}A_{\lambda_i}S^+_{w_1w_2}\\
&\simeq S^-_{w_1w_2}A_{\lambda_i}A_{\lambda_k}A_{\lambda_j}S^+_{w_1w_2}+U S^-_{w_m}S^-_{w_n}S^+_{w_n}A_{\lambda_i}S^+_{w_m}\\
&\simeq S^-_{w_1w_2}A_{\lambda_i}A_{\lambda_k}A_{\lambda_j}S^+_{w_1w_2}.
\end{align*}
The first chain homotopy is justified by Lemma~\ref{lem:relhomcommutator}. The second chain homotopy is justified by Lemma~\ref{lem:free-stab-H1-act-commute}, noting that $\lambda_i$ is disjoint from $w_n$. The final chain homotopy is justified by Lemma~\ref{lem:disjoint-strand-induces-zero}. The proof is complete.
\end{proof}

\subsection{Invariance from the Cerf decomposition}

We now prove that the graph action map is independent of the choice of decomposition into elementary flow-graphs:

\begin{proof}[Proof of Part \eqref{thm:graph-action:a} of Theorem \ref{thm:graph-action}] We focus on $\frA_{\cG}$, since the argument for $\frB_{\cG}$ is no different. By Proposition~\ref{prop:Cerf-moves}, it is sufficient to show invariance from Moves~\eqref{move:CM1}--\eqref{move:CM6}. Suppose that an absolute lift of the ribbon structure has been fixed.

We first consider Move~\eqref{move:CM1}, splitting levels. Consider first when $\cG$ is of type~\eqref{ERtype:1} (translation flow-graph). Further, restrict first to the case when $\cG$ consists of a single edge $e$, which goes from $w_0$ to $w_1$. The map $\frA_{\cG}$ is defined to be
\begin{equation}
\frA_{\cG}:=S_{w_0}^-A_e S_{w_1}^+.\label{eq:un-subdivided-AG}
\end{equation}
Write $e$ as the concatenation of two edges, $e_1$ and $e_2$, such that $e_1$ goes from $w_0$ to $w'$, and $e_2$ goes from $w'$ to $w_1$. Let $\cG_1$ denote the flow-graph  $(e_1,w_0,w')$, and $\cG_2$ denote the flow-graph $(e_2,w',w_1)$. See Figure~\ref{fig::91}.

\begin{figure}[ht!]
\centering
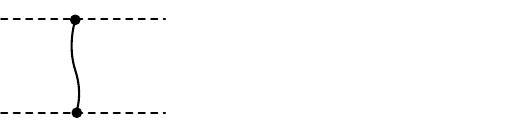
\caption{\textbf{Subdividing an elementary flow-graph of type~\eqref{ERtype:1}.} }\label{fig::91}
\end{figure}

 By definition
\begin{equation}
\frA_{\cG_2}\circ \frA_{\cG_1}:=(S_{w'}^- A_{e_2} S_{w_1}^+)(S_{w_0}^- A_{e_1} S_{w'}^+).\label{eq:level-splitting-1}
\end{equation}
Using Lemma~\ref{lem:free-stab-H1-act-commute} and Proposition~\ref{prop:free-stabs-commute}, we see Equation~\eqref{eq:level-splitting-1} is chain homotopic to
\begin{equation}
S_{w_0}^- S_{w'}^- A_{e_2}  A_{e_1} S_{w'}^+S_{w_1}^+.\label{eq:level-splitting-2}
\end{equation}
Using Lemma~\ref{lem:rel-hom-relation-subdivide}, we conclude that Equation~\eqref{eq:level-splitting-2} is chain homotopic to $S_{w_0}^- A_{e_2* e_1} S_{w_1}^+$, which coincides with the expression for $\frA_{\cG}$ in Equation~\eqref{eq:un-subdivided-AG}.

The general case of splitting an elementary flow-graph $\cG$ of type \eqref{ERtype:1} with more than one strand is no different: one applies the above manipulation to each strand, noting that the terms associated with different strands of $\cG$ commute by Lemmas~\ref{lem:free-stab-H1-act-commute} and Proposition~\ref{prop:free-stabs-commute}.

The above subdivision technique also works when $\cG$ is of type~\eqref{ERtype:2} (interior vertex flow-graphs).
 
We now consider the case that $\cG$ is of  type~\eqref{ERtype:3} (local extrema flow-graph). Let $e$ be the special edge of $\cG$, and suppose further that $\cG$ is of type~\eqref{ERtype:3i}, i.e. $e$ connects two incoming vertices, $w_1$ and $w_2$. Let $w_0$ and $w_0'$ be two new vertices on the interior of $e$, and let $e_1,$ $e_2$ and $e_3$ be the components of $e\setminus \{w_0,w_0'\}$, as shown in Figure~\ref{fig::68}.

\begin{figure}[ht!]
\centering
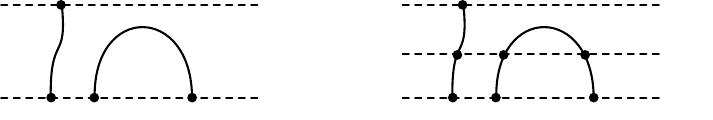
\caption{\textbf{Subdividing an elementary flow-graph of type~\eqref{ERtype:3i}.}  }\label{fig::68}
\end{figure}

We focus on the terms of $\frA_{\cG}$ and $\frA_{\cG_2}\circ \frA_{\cG_1}$ corresponding to $e$ and its subdivision. The terms corresponding to the other strands can be subdivided using the procedure described above for flow-graphs of type~\eqref{ERtype:1}. 

The terms corresponding to $e$ in $\frA_{\cG}$ are
\begin{equation}
S_{w_1w_2}^- A_e.\label{eq:subdivide-ER-type-i-1}
\end{equation}
The corresponding terms of $\frA_{\cG_2}\circ \frA_{\cG_1}$ are
\begin{equation}
S_{w_0w_0'}^- A_{e_2} S_{w_1w_2}^- A_{e_1}A_{e_3} S_{w_0w_0'}^+.\label{eq:subdivide-ER-type-i-2}
\end{equation}
We compute
\begin{align*}
S_{w_1w_2}^-A_e&\simeq S_{w_1w_2w_0'}^-A_{e_1*e_2}A_{e_3} S_{w_0'}^+&& \text{(Lemma~\ref{lem:rel-hom-relation-subdivide})} \\
&\simeq S_{w_1w_2w_0w_0'}^- A_{e_2}  A_{e_1} S_{w_0}^+A_{e_3} S_{w_0'}^+&& \text{(Lemma~\ref{lem:rel-hom-relation-subdivide})}\\
&\simeq S_{w_1w_2w_0w_0'}^- A_{e_2}A_{e_1}A_{e_3}S_{w_0'w_0}^+&&\text{(Lemma~\ref{lem:free-stab-H1-act-commute})}\\
&\simeq S_{w_0w_0'}^- A_{e_2}S_{w_1w_2}^-A_{e_1}A_{e_3}S_{w_0'w_0}^+&&\text{(Lemma~\ref{lem:free-stab-H1-act-commute})},
\end{align*}
which coincides with Equation~\eqref{eq:subdivide-ER-type-i-2}. We conclude that level splitting does not change the homotopy type of the graph action map for elementary ribbon flow-graphs of type~\eqref{ERtype:3i}. The argument for graphs of type~\eqref{ERtype:3ii} is a simple modification.

We now show independence from Move~\eqref{move:CM2}, a critical point birth along the interior of an edge. By using Move~\eqref{move:CM1} we may assume that the new critical points occur inside a flow-graph of type~\eqref{ERtype:1}. By using Lemma~\ref{lem:free-stab-H1-act-commute} and Proposition~\ref{prop:free-stabs-commute}, it is sufficient to consider the case when $\cG$ consists of a single edge $e$, and we divide $\cG$ into a composition $\cG_2\circ \cG_1$. Let $e_1$, $e_2$, $e_3$ and $e_4$ be the edges of $\cG_2\circ \cG_1$, and let $w_1,$ $w_2$, $w_3$, $w_4$ and $w_5$ be the vertices, as in Figure~\ref{fig::69}.

\begin{figure}[ht!]
\centering
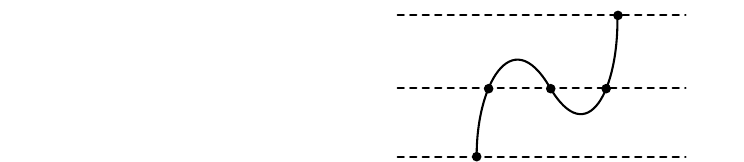
\caption{\textbf{Applying Move~\eqref{move:CM2} to a flow-graph of type~\eqref{ERtype:1}.}}\label{fig::69}
\end{figure}

By definition, 
\begin{equation}
\frA_{\cG}:=S^-_{w_1} A_e S^+_{w_5},\label{eq:pre-result-CM2}
\end{equation}
while
\begin{equation}
\frA_{\cG_2}\circ \frA_{\cG_1}:=S_{w_2w_3w_4}^-A_{e_4}A_{e_2}S^+_{w_5}S^-_{w_1}A_{e_3}A_{e_1}S^+_{w_2w_3w_4}.\label{eq:result-move-CM2}
\end{equation}
Using Lemma~\ref{lem:free-stab-H1-act-commute} and Proposition~\ref{prop:free-stabs-commute}, Equation~\eqref{eq:result-move-CM2} can be rearranged
\begin{equation}
S_{w_1w_2w_4}^- A_{e_4}(S_{w_3}^- A_{e_2}A_{e_3}S_{w_3}^+)A_{e_1}S_{w_2w_4w_5}^+.\label{eq:result-move-CM2-2}
\end{equation}
Using Lemma~\ref{lem:rel-hom-relation-subdivide}, Equation~\eqref{eq:result-move-CM2-2} becomes
\[
S_{w_1w_2w_4}^- A_{e_4}A_{e_2*e_3}A_{e_1} S_{w_2w_4w_5}^+.
\]
Proceeding similarly, using the aforementioned relations, we compute
\begin{align*}
S_{w_1w_2w_4}^- A_{e_4}A_{e_2*e_3}A_{e_1} S_{w_2w_4w_5}^+&\simeq S_{w_1w_2}^-(S_{w_4}^- A_{e_4}A_{e_2*e_3}S_{w_4}^+)A_{e_1} S_{w_2w_5}^+\\
&\simeq S_{w_1w_2}^- A_{e_2*e_3*e_4}A_{e_1} S_{w_2w_5}^+\\
&\simeq S_{w_1}^-( S_{w_3}^- A_{e_2*e_3*e_4}A_{e_1} S_{w_2}^+ )S_{w_5}^+\\
&\simeq S_{w_1}^- A_e S_{w_5}^+.
\end{align*}
Hence Equations~\eqref{eq:pre-result-CM2} and~\eqref{eq:result-move-CM2} are chain homotopic, and invariance under Move~\eqref{move:CM2} is established.

We now consider invariance under Move~\eqref{move:CM3} (changing an initial slope). Suppose $\cG$ is an elementary flow-graph of type~\eqref{ERtype:2} (an interior vertex flow-graph), with a special vertex $w_s$, and $e_1,\dots, e_n$  are the edges incident to $w_s$, ordered according to the chosen absolute lift of the cyclic ordering. Suppose we wish to change the initial slope of $e_i$, and that $e_i$ has downward initial slope in $\cG$. We decompose $\cG$ as a composition $\cG_2\circ \cG_1$, where $\cG_1$ is of type~\eqref{ERtype:2}, with special vertex $w_s$, and $\cG_2$ is of type~\eqref{ERtype:3i} (a local max). We decompose $e_i$ as the concatenation of $e$, $e'$ and $e''$, and let $w'$ and $w''$ denote the new vertices, as in Figure~\ref{fig::70}.

\begin{figure}[ht!]
\centering
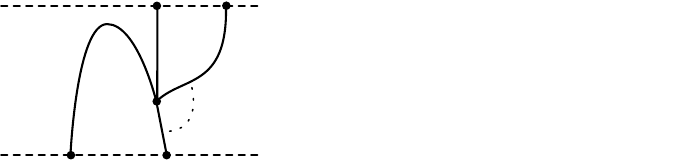
\caption{\textbf{Applying Move~\eqref{move:CM3} to a flow-graph of type~\eqref{ERtype:2}.} }\label{fig::70}
\end{figure}

Write $\ws_0$ for the incoming vertices of $\cG$, and $\ws_1$ for the outgoing vertices. By the same argument as in invariance of Move~\eqref{move:CM1}, we can reduce to the case that there are no components of $\cG$ which are disjoint from $w_s$. By definition,
\begin{equation}
\frA_{\cG}:=S_{\ws_0\cup \{w_s\}}^- A_{e_n}\cdots A_{e_i} \cdots A_{e_1} S_{\ws_1\cup \{w_s\}}^+.\label{eq:def-A_G-ER2}
\end{equation}
Using Lemmas~\ref{lem:rel-hom-relation-subdivide} and \ref{lem:free-stab-H1-act-commute}, we have
\begin{equation}
\begin{split} 
A_{e_i}&\simeq S_{w''}^-A_{e'*e}A_{e''}S_{w''}^+\\
&\simeq S_{w''w'}^-A_{e'}A_{e}S_{w'}^+ A_{e''}S_{w''}^+\\
&\simeq S_{w''w'}^- A_{e'}A_{e} A_{e''}S_{w''w'}^+
\end{split}
\label{eq:subdivide-edge-twice}
\end{equation}
Combining Equations~\eqref{eq:def-A_G-ER2} and \eqref{eq:subdivide-edge-twice} we see that 
\[
\frA_{\cG}\simeq  S_{\ws_0\cup \{w_s\}}^- A_{e_n}\cdots A_{e_{i+1}}(S_{w''w'}^- A_{e'}A_{e} A_{e''}S_{w''w'}^+)A_{e_{i-1}} \cdots A_{e_1} S_{\ws_1\cup \{w_s\}}^+.
\]
Since $e',$ $e''$, $w''$ and $w'$ are disjoint from $e_j$ when $j\neq i$, using Lemmas~\ref{lem:relhomcommutator} and \ref{lem:free-stab-H1-act-commute}, we compute that
\begin{align*}
&S_{\ws_0\cup \{w_s\}}^- A_{e_n}\cdots A_{e_{i+1}}(S_{w''w'}^- A_{e'}A_{e} A_{e''}S_{w''w'}^+)A_{e_{i-1}} \cdots A_{e_1} S_{\ws_1\cup \{w_s\}}^+\\
\simeq &(S_{w''w'}^- A_{e'}) S_{\ws_0\cup \{w_s\}}^- (A_{e_n}\cdots A_{e_{i+1}} A_{e} A_{e_{i-1}} \cdots A_{e_1})A_{e''} S_{\ws_1\cup \{w'',w',w_s\}}^+.
\end{align*}
The above expression is almost the expression for $A_{\cG_2}\circ A_{\cG_1}$, the only difference being that the remaining edges $e_1,\dots, e_{i-1}, e_{i+1},\dots e_n$ with upward initial slope have not yet been subdivided. By using Lemma~\ref{lem:rel-hom-relation-subdivide} to subdivide the remaining edges with upward initial slope, and then commuting terms exactly as in Move~\eqref{move:CM1}, we arrive at the definition of $\frA_{\cG_2}\circ \frA_{\cG_1}$, completing the proof of invariance of Move~\eqref{move:CM3}.

Invariance under Moves~\eqref{move:CM4}, \eqref{move:CM5} and \eqref{move:CM6} can be proven by adapting the above arguments. For example, invariance under Move~\eqref{move:CM4} (critical value swaps) can be proven by by using the manipulation from the proof of invariance of the maps for elementary flow-graphs of type~\eqref{ERtype:3} (local minima and maxima) under Move~\eqref{move:CM1} (level splitting) to subdivide one of the edges with a local extrema. Since the two components of the graph with local extrema are disjoint, all terms associated to one component can be commuted past the terms associated to the other component, using Lemmas~\ref{lem:relhomcommutator}, \ref{lem:free-stab-H1-act-commute} and Proposition~\ref{prop:free-stabs-commute}. A similar strategy can be used for Moves~\eqref{move:CM5} and~\eqref{move:CM6}.
\end{proof}

Since the graph action is defined by taking a decomposition of $\cG$ into elementary flow-graphs and composing the maps for each piece, functoriality (Part~\eqref{thm:graph-action:b} of Theorem~\ref{thm:graph-action}) is automatic.

\subsection{Cyclic ordering}

In our construction of the graph action map, we chose an absolute lift of the cyclic orderings. In this section, we show that the graph action map is invariant of the choice of absolute lift. The following lemma implies Part~\eqref{thm:graph-action:c} of Theorem~\ref{thm:graph-action}:

\begin{lem}\label{lem:cyclicreorder}
 Suppose that $e_1,\dots, e_n$ are edges in $Y$ such that $w_s$ has valence 1 in each $e_i$, and $e_i\cap e_j=\{w_s\}$ for all $i$ and $j$. For a coloring which identifies all of the variables $U_{w_1},\dots, U_{w_n}$ with a common variable $U$, we have
\[
S^-_{w_s} A_{e_n}A_{e_{n-1}}\cdots A_{e_1} S^+_{w_s}\simeq S^-_{w_s} A_{e_{n-1}} \cdots A_{e_1}A_{e_n}S^+_{w_s}.
\]
\end{lem}

\begin{proof} The proof is by induction. The $n=1$ case is vacuous. The $n=2$ case is easy: using Lemma~\ref{lem:relhomcommutator} we compute
\[
S^-_{w_s} A_{e_2}A_{e_1} S^+_{w_s}\simeq S^-_{w_s} U S^+_{w_s}+ S^-_{w_s} A_{e_1} A_{e_2}S^+_{w_s}\simeq  S^-_{w_s} A_{e_1} A_{e_2}S^+_{w_s},
\]

 The $n=3$ case is also relatively easy. We compute
\begin{align*}S^-_{w_s}A_{e_3}A_{e_2}A_{e_1}S_{w_s}^+&\simeq S^-_{w_s}A_{e_2}A_{e_3}A_{e_1}S_{w_s}^++ U S^-_{w_s} A_{e_1}S^+_{w_s}&&(\text{Lemma~\ref{lem:relhomcommutator}})\\
&\simeq S^-_{w_s}A_{e_2}A_{e_1}A_{e_3}S_{w_s}^++ U S^-_{w_s} (A_{e_2}+A_{e_1})S^+_{w_s}&&(\text{Lemma~\ref{lem:relhomcommutator}})\\
&\simeq S^-_{w_s}A_{e_2}A_{e_1}A_{e_3}S_{w_s}^++ U S^-_{w_s} A_{e_2*e_1} S^+_{w_s}&&(\text{Lemma~\ref{lem:splicingrelhom}})\\
&\simeq S^-_{w_s}A_{e_2}A_{e_1}A_{e_3}S_{w_s}^++U A_{e_2*e_1}S^-_{w_s} S^+_{w_s}&&(\text{Lemma~\ref{lem:free-stab-H1-act-commute}})\\
&\simeq S^-_{w_s}A_{e_2}A_{e_1}A_{e_3}S_{w_s}^+&&(\text{Lemma~\ref{lem:disjoint-strand-induces-zero}}). 
\end{align*}

We now prove the statement when $n>3$  by induction. Assume the lemma statement holds whenever $w_s$ has valence $n-1$. The idea is to replace $e_1\cup e_2$ with a $Y$-shaped graph which has valence 1 at $w_s$. See Figure~\ref{fig::71}.

Let $w_0\in Y$ be a point which is disjoint from all of the edges $e_i$. Pick a path $e$ from ${w_s}$ to $w_0$, which is also disjoint from all $e_i$. We first prove the following:
\begin{equation}
A_{e_2}A_{e_1}\simeq S^-_{w_0}A_{e_2*e}A_{e_1*e}A_{e}S^+_{w_0}. \label{eq:pull-out-a-Y-graph}
\end{equation}
 To establish Equation~\eqref{eq:pull-out-a-Y-graph}, we compute
\begin{equation}
\begin{split}
S_{w_0}^-A_{e_2*e}A_{e_1*e}A_e S^+_{w_0}&\simeq S_{w_0}^-(A_{e_2}+A_{e})(A_{e_1}+A_e)A_e S^+_{w_0}\\
&\simeq S_{w_0}^- A_{e_2}A_{e_1} A_{e}S^+_{w_0}+S_{w_0}^- (A_{e_2}A_{e}^2+A_{e}A_{e_1}A_e+A_e^3)S_{w_0}^+\\
&\simeq S^-_{w_0} A_{e_2}A_{e_1}A_eS^+_{w_0}+S_{w_0}^- (A_{e_2} U+A_{e_1}U+U A_e+UA_e)S^+_{w_0}\\
&\simeq A_{e_2}A_{e_1}S^-_{w_0}A_eS^+_{w_0}+S_{w_0}^- (A_{e_2} U+A_{e_1}U+U A_e+U A_e)S^+_{w_0}\\
&\simeq A_{e_2}A_{e_1}+S_{w_0}^- (A_{e_2} U+A_{e_1}U+U A_e+U A_e)S^+_{w_0}\\
&\simeq A_{e_2}A_{e_1}+US_{w_0}^- A_{e_2*e_1} S^+_{w_0}\\
&\simeq A_{e_2}A_{e_1}+U A_{e_2* e_1} S_{w_0}^-S_{w_0}^+\\
&\simeq A_{e_2}A_{e_1}.
\end{split}
\label{eq:pull-out-Y-graph-comp}
\end{equation}
Equation~\eqref{eq:pull-out-Y-graph-comp} is justified as follows. Line 1 is justified by additivity of the relative homology maps. Line~2 is obtained by algebra. Line~3 is justified by Lemmas~\ref{lem:relhomcommutator} and~\ref{lem:Alambda-squares-to-zero-or-U}. Line~4 is justified by Lemma~\ref{lem:free-stab-H1-act-commute}. Line~5 is justified by Lemma~\ref{lem:trivial-strand-rel}. Line~6 follows by canceling the repeated $UA_e$ terms, and using additivity of the relative homology maps. Line~7 follows from Lemma~\ref{lem:free-stab-H1-act-commute}. The final line is justified by Lemma~\ref{lem:trivial-strand-rel}. Equation~\eqref{eq:pull-out-a-Y-graph} is established.

Define new edges $e_1'$ and $e_2'$ as the concatenations
\[
e_1':=e_1* e\quad \text{and}\quad e_2':=e_2* e.
\] 
See Figure~\ref{fig::71}.

\begin{figure}[ht!]
\centering
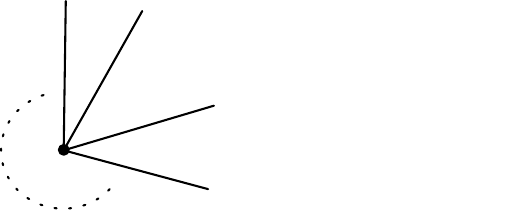
\caption{\textbf{The edges $e_1$ and $e_2$, the vertices $w_s$ and $w_0$, and the new edges $e$, $e_1',$ and $e_2'$.}}\label{fig::71}
\end{figure}

Using Equation~\eqref{eq:pull-out-a-Y-graph}, we see that
\begin{equation}
S^-_{w_s} A_{e_n}\cdots A_{e_2} A_{e_1} S^+_{w_s}\simeq S^-_{w_s} A_{e_n}\cdots A_{e_3} (S^-_{w_0} A_{e'_2} A_{e'_1} A_e S^+_{w_0}) S^+_{w_s} \label{eq:split-off-trivalent-strand1}
\end{equation}

Note that $w_0$, $e_1'$ and $e_2'$ are disjoint from $e_2,\dots, e_n$. Hence Lemmas~\ref{lem:relhomcommutator} and~\ref{lem:free-stab-H1-act-commute} and Proposition~\ref{prop:free-stabs-commute} imply that Equation~\eqref{eq:split-off-trivalent-strand1} is chain homotopic to
\begin{equation}
S_{w_0}^-A_{e_2'}A_{e'_1}(S^-_{w_s} A_{e_n}\cdots A_{e_3}A_e S^+_{w_s})S^+_{w_0}\label{eq:split-off-trivalent-strand2}
\end{equation}
By induction, we know that Equation~\eqref{eq:split-off-trivalent-strand2} is chain homotopic to
\begin{equation}
S_{w_0}^-A_{e_2'}A_{e'_1}(S^-_{w_s} A_{e_{n-1}} \cdots A_{e_3}A_e A_{e_n}S^+_{w_s})S^+_{w_0}
 \label{eq:split-off-trivalent-strand4}
\end{equation}
Commuting terms, using the same justification as above, we see that Equation~\eqref{eq:split-off-trivalent-strand4} is chain homotopic to
\begin{equation}
S^-_{w_s} A_{e_{n-1}} \cdots A_{e_3}(S_{w_0}^-A_{e_2'}A_{e'_1}A_eS^+_{w_0}) A_{e_n}S^+_{w_s}\label{eq:split-off-trivalent-strand3}
\end{equation}
Applying Equation~\eqref{eq:pull-out-a-Y-graph}, we can conclude that Equation~\eqref{eq:split-off-trivalent-strand3}
is chain homotopic to
\[
S^-_{w_s} A_{e_{n-1}} \cdots A_{e_3}A_{e_2}A_{e_1} A_{e_n}S^+_{w_s},
\]
completing the proof.
\end{proof}

\subsection{Subdivision invariance and the trivial strand relation}

In this section we prove two basic relations about the graph action map: subdivision invariance and the trivial strand relation.

\begin{lem}\label{lem:subdivision-invariance}
 Suppose $\cG=(\Gamma,\ws_0,\ws_1)$ and $\cG'=(\Gamma',\ws_0,\ws_1)$ are two ribbon flow-graphs in $Y$ such that $\Gamma'$ is obtained from $\Gamma$ by adding a vertex to the interior of an edge of $\Gamma$. Then
\[
\frA_{\cG}\simeq \frA_{\cG'}\qquad \text{and} \qquad \frB_{\cG}\simeq \frB_{\cG'}.
\]
\end{lem}
\begin{proof}We focus on the maps $\frA_{\cG}$ and $\frA_{\cG'}$. The proof is essentially the same as the argument to show invariance under level splitting (Move~\eqref{move:CM1}). Using independence of the graph action map from the choice of Cerf decomposition (Part \eqref{thm:graph-action:a} of Theorem~\ref{thm:graph-action}), it is sufficient to show the claim when $\cG$ is an elementary flow-graph of type~\eqref{ERtype:1} (a translation flow-graph).

 Suppose $e$ is the edge of $\cG$, which we wish to subdivide, and let $w_0$ and $w_1$ be the incoming and outgoing vertices of $e$. Let $w$ denote the vertex in the interior of $e$, which is added to form $\cG'$.  Note that $\cG$ is of type~\eqref{ERtype:1}, while $\cG'$ is of type~\eqref{ERtype:2}, with valence 2 special vertex $w$. Let $e_1$ and $e_2$ denote the two components of $e\setminus \{w\}$.
 
 Using Lemmas~\ref{lem:relhomcommutator}, \ref{lem:free-stab-H1-act-commute} and Proposition~\ref{prop:free-stabs-commute}, it is sufficient to show the claim when $\cG=(e,w_0,w_1)$, since the maps corresponding to other edges and vertices can be commuted past the maps for $e$.
 
   By definition,
\begin{equation}
\frA_{\cG}:=S_{w_0}^-A_e S_{w_1}^+,\label{eq:subdivide-inv-1}
\end{equation}
while
\begin{equation}
\frA_{\cG'}:=S_{w_0}^-S_{w}^- A_{e_2}A_{e_1} S_{w}^+S_{w_1}^+.\label{eq:subdivide-inv-2}
\end{equation}
Using Lemma~\ref{lem:rel-hom-relation-subdivide}, we conclude that Equations~\eqref{eq:subdivide-inv-1} and \eqref{eq:subdivide-inv-2} are chain homotopic, completing the proof.
\end{proof}

Next, we prove the \emph{trivial strand relation} for the graph action maps:

\begin{lem}\label{lem:full-trivial-strand-rel}
Suppose that $\cG=(\Gamma,\ws_0,\ws_1)$ is a ribbon flow-graph in $Y$, and $\cG'$ is obtained by adjoining a new edge $e_0$, such that $e_0\cap \Gamma$ consists of a single point, which has valence 3 in $\Gamma\cup e_0$. (We allow the new valence 3 vertex to be given either cyclic order.) Then
\[
\frA_{\cG}\simeq \frA_{\cG'}
\]
and similarly $\frB_{\cG}\simeq \frB_{\cG'}$.
\end{lem}
\begin{proof} Using independence from the choice of Cerf decomposition (Part \eqref{thm:graph-action:a} of Theorem~\ref{thm:graph-action}), it is sufficient to show the claim when $\cG$ is an elementary flow-graph of type~\eqref{ERtype:1}. As in the proof of Lemma~\ref{lem:subdivision-invariance}, it is also sufficient to consider the case when $\cG$ has one connected component. Let us write $e$ for the single edge of $\Gamma$,  $w$ for the incoming vertex of $\cG$, and $w'$ for the outgoing vertex. By definition
\begin{equation}
\frA_{\cG}= S_{w}^- A_{e}  S_{w'}^+.\label{eq:AGbefore-trivial-strand}
\end{equation}
The graph $\cG'$ can be given a Cerf decomposition into two flow-graphs $\cG_{2}\circ \cG_1$, where $\cG_2$ is an elementary flow-graph of type~\eqref{ERtype:2}, with a valence 1 special vertex, and $\cG_1$ is an elementary flow-graph of type~\eqref{ERtype:2}, with a valence 3 special vertex. Let $e_1,$ $e_2$, $e_3,$ $e_4$, $e_5$, $w_1$, $w_2$, $w_3$ and $w_4$ denote the new vertices of $\cG'$, as shown in Figure~\ref{fig::72}. 

We assume that the edges incident to the new valence 3 vertex, $w_1$, are ordered $e_1$, $e_2$ then $e_3$. The argument for the opposite cyclic ordering is a simple modification, which we leave to the reader.

\begin{figure}[ht!]
\centering
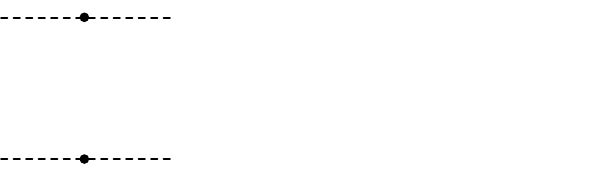
\caption{\textbf{Adding a trivial strand.} On the left is an elementary flow-graph $\cG$ of type~\eqref{ERtype:1}, with a single edge $e$. On the right is a Cerf decomposition of a graph $\cG'$ obtained by adding a trivial strand to $\cG$.}\label{fig::72}
\end{figure}

By definition,
\[
\frA_{\cG'}=S^-_{w_2w_3w_4} A_{e_4}A_{e_5}S^+_{w'w_4}S_{ww_1}^-A_{e_3}A_{e_2}A_{e_1}S^+_{w_1w_2w_3}.
\]
We manipulate the above expression for $\frA_{\cG'}$, as follows:
\begin{align*} &S^-_{w_2w_3w_4} A_{e_4}A_{e_5}S^+_{w'w_4}S_{ww_1}^-A_{e_3}A_{e_2}A_{e_1}S^+_{w_1w_2w_3}&&\\
\simeq & S^-_{w_2w_3} A_{e_4} (S_{w_4}^- A_{e_5} S_{w_4}^+) S_{w'}^+S_{ww_1}^-A_{e_3}A_{e_2}A_{e_1}S^+_{w_1w_2w_3}&&\text{(Lemma~\ref{lem:free-stab-H1-act-commute})}\\
\simeq &  S^-_{w_2w_3} A_{e_4}  S_{w'}^+S_{ww_1}^-A_{e_3}A_{e_2}A_{e_1}S^+_{w_1w_2w_3}&&\text{(Lemma~\ref{lem:trivial-strand-rel})}\\
\simeq &S^-_{ww_1w_3} (S_{w_2}^-A_{e_4}  A_{e_3}S_{w_2}^+)A_{e_2}A_{e_1}S^+_{w'w_1w_3}&&\text{(Lemma~\ref{lem:free-stab-H1-act-commute})}\\
\simeq & S^-_{ww_1w_3} A_{e_3*e_4}A_{e_2}A_{e_1}S^+_{w'w_1w_3}&&\text{(Lemma~\ref{lem:rel-hom-relation-subdivide})}\\
\simeq &S^-_{ww_1w_3} A_{e_1}A_{e_3*e_4}A_{e_2}S^+_{w'w_1w_3}&&\text{(Lemma~\ref{lem:cyclicreorder})}\\
\simeq &S^-_{ww_1} A_{e_1}A_{e_3*e_4}(S_{w_3}^-A_{e_2}S^+_{w_3})S^+_{w'w_1}&&\text{(Lemma~\ref{lem:free-stab-H1-act-commute})}\\
\simeq &S^-_{ww_1} A_{e_1}A_{e_3*e_4}S^+_{w'w_1}&&\text{(Lemma~\ref{lem:trivial-strand-rel})}\\
\simeq &S^-_{w} A_{e_1*e_3*e_4}S_{w'}^+&&\text{(Lemma~\ref{lem:rel-hom-relation-subdivide})},
\end{align*}
which agrees with the expression for $\frA_{\cG}$ in Equation~\eqref{eq:AGbefore-trivial-strand}, completing the proof.
\end{proof}

\section{1- and 3-handles}
\label{sec:1-handles}

In this section, we define maps for 4-dimensional 1-handles and 3-handles, and prove invariance. Our construction is  similar to the constructions of Ozsv\'{a}th and Szab\'{o} \cite{OSTriangles} and Juh\'{a}sz \cite{JCob}, though there are some differences. Unlike the construction from \cite{OSTriangles}, our construction allows us to consider 1-handles which join two components of a 3-manifold, or 3-handles which separate a 3-manifold into two components. Unlike Juh\'{a}sz's construction \cite{JCob}, which works only for $\hat{\CF}$, our construction applies to $\CF^-$, $\CF^+$ and $\CF^\infty$.

 In Section~\ref{sec:OS-1handle3-handle}, we show that our construction coincides with Ozsv\'{a}th and Szab\'{o}'s, for 1-handles with feet in the same component of the 3-manifold.

\subsection{Definition of the 1-handle and 3-handle maps}
\label{sec:1-handle-maps-def}
Suppose that $(Y,\ws)$ is a multi-pointed 3-manifold and $\bS^0=\{p_1,p_2\}$ is a 0-sphere in $Y\setminus \ws$. Pick a Heegaard diagram $\cH=(\Sigma,\as,\bs,\ws)$ such that
\[
\bS^0\subset \Sigma\setminus (\as\cup \bs).
\]
We construct a Heegaard surface $\hat{\Sigma}$ for the surgered 3-manifold $Y(\bS^0)$ by removing neighborhoods of $p_1$ and $p_2$ in $\Sigma$ and gluing in an annulus contained in the 1-handle region of $Y(\bS^0)$. In the  annulus region, we add two new curves $\alpha_0$ and $\beta_0$ which are homologically essential in the annulus and intersect transversely in a pair of points. The two points can be distinguished by the Maslov grading. Write $\theta^+$ and $\theta^-$ for the two points of $\alpha_0\cap \beta_0$, and $\hat{\cH}$ for the diagram $(\hat{\Sigma},\as\cup \{\alpha_0\},\bs\cup \{\beta_0\},\ws)$. 

 There is a unique $\Spin^c$ structure $\hat{\frs}\in \Spin^c(Y(\bS^0))$ which restricts to $\frs$ on $Y\setminus N(\bS^0)$ and evaluates trivially on the new 2-sphere in $Y(\bS^0)$. There is a unique 4-dimensional $\Spin^c$ structure $\frt$ on the 1-handle cobordism $W(Y,\bS^0)$ which extends $\frs$. The $\Spin^c$ structure $\frt$ restricts to $\hat{\frs}$ on $Y(\bS^0)$.

It is an easy exercise to see that if $\cH$ is strongly $\frs$-admissible then $\hat{\cH}$ is strongly $\hat{\frs}$-admissible.

 We define the 1-handle map
\[
F_{Y, \bS^0,\frt}\colon \CF^-(\cH,\sigma,\frs)\to \CF^-\left(\hat{\cH},\sigma,\hat{\frs}\right)
\]
using the formula
\begin{equation}
F_{Y,\bS^0,\frt}(\xs)=\xs\times \theta^+,\label{eq:def-1-handle}
\end{equation}
extended equivariantly over $\bF_2[U_{\ws}]$. Like the free-stabilization maps, the 1-handle and 3-handle maps require the almost complex structure to be stretched. See Definition~\eqref{def:1-handle-def-stabilizing-condition-3} for a precise definition of which almost complex structures can be chosen.

We now describe the 3-handle maps. Suppose that $\bS^2\subset Y\setminus \ws$ is an embedded 2-sphere, and $\hat{\cH}=(\hat{\Sigma}, \as\cup \{\alpha_0\}, \bs\cup \{\beta_0\},\ws)$  is a Heegaard diagram for $(Y,\ws)$ such that $\bS^2\cap \hat{\Sigma}$ consists of a circle $c$ which is disjoint from $\as\cup \bs$. Furthermore, assume that an annular neighborhood of $c$ contains both $\alpha_0$ and $\beta_0$ (which are homologically essential in this annulus) and $\alpha_0\cap \beta_0=\{\theta^+,\theta^-\}$. A diagram for $Y(\bS^2)$ may be obtained by cutting out a neighborhood of $c$, removing $\alpha_0$ and $\beta_0$, and filling in the two boundary components with disks. Write $\cH$ for the resulting diagram.

For sufficiently stretched almost complex structure, the 3-handle map 
\[
F_{Y,\bS^2,\frt}\colon \CF^-\left(\hat{\cH},\sigma,\hat{\frs}\right)\to \CF^-(\cH,\sigma,\frs)
\]
 is defined
via the formulas
\begin{equation}
F_{Y,\bS^2,\frt}(\xs\times \theta^+)=0\quad \text{and} \quad F_{Y,\bS^2,\frt}(\xs\times \theta^-)=\xs,\label{eq:def-3-handle}
\end{equation}
extended $\bF_2[U_{\ws}]$-equivariantly.

We note that the 1-handle maps and the 3-handle maps are algebraically dual, and hence any statement about the 1-handle maps has a corresponding statement about the 3-handle maps. To streamline the exposition, we focus mostly on the 1-handle maps.

\subsection{Gluing data for 1- and 3-handles}
\label{sec:gluingdata1-handles}

We now describe precisely which almost complex structures can be used to compute the 1-handle maps. The technical details are similar to the free-stabilization maps, so we will be terse.

It is convenient to view the diagram $\hat{\cH}$ for $Y(\bS^0)$, constructed in Section~\ref{sec:1-handle-maps-def}, as being obtained by connecting a diagram $(S^2,\alpha_0, \beta_0)$ to the diagram $\cH$ using two tubes attached to antipodal regions of $S^2\setminus (\alpha_0\cup \beta_0)$. Let $p_1^0$ and $p_2^0$ denote these two connected sum points on $S^2$, and fix two disks $D_{1}^0,D_2^0\subset S^2\setminus (\alpha_0\cup \beta_0)$, containing $p_1^0$ and $p_2^0$, respectively. Also fix regular neighborhoods $N(p_1)$ and $N(p_2)$ in $Y$, to construct the surgered manifold $Y(\bS^0)$.

Similar to Definition~\ref{def:gluingdata}, we make the following definition:

\begin{define}\label{def:gluing-data-1-handle}
Suppose that $\cH=(\Sigma,\as,\bs,\ws)$ is a Heegaard diagram for $(Y,\ws)$ and $p_1,p_2\in \Sigma\setminus (\ws\cup \as\cup \bs)$. We call a tuple $\frd=(J^\frd,J_0^{\frd},D_1,D_2,\iota)$ a \emph{gluing datum for a 1-handle attached at $p_1$ and $p_2$} if the following hold:
\begin{enumerate}
\item $J^{\frd}_0$ is an almost complex structure on $S^2\times [0,1]\times \R$ which is split on $D_1^0$ and $D_2^0$.
\item $D_i\subset \Sigma\setminus (\as\cup\bs\cup \ws)$ is a closed disk such that $\tfrac{1}{2}\cdot D_i$ contains $N(p_i)\cap \Sigma$, for $i\in \{1,2\}$.
\item $J^{\frd}$ is an almost complex structure on $\Sigma\times [0,1]\times \R$ which is split on $D_1$ and $D_2$.
\item $\iota\colon S^2\setminus (\tfrac{1}{2}\cdot D_{1}^0\cup \tfrac{1}{2}\cdot  D_2^0)\to Y(\bS^0)$ is a smooth embedding such that the following hold:
\begin{enumerate}
\item $\im(\iota)\cap \big(Y\setminus (N(p_1)\cup N(p_2))\big)\subset \Sigma$;
\item $\iota$ maps each annulus $D_i^0\setminus \tfrac{1}{2} \cdot D_i^0$ conformally onto  $D_i \setminus \tfrac{1}{2} \cdot D_i$.
\end{enumerate}
\end{enumerate}
\end{define}

If $\ve{T}=(T_1,T_2)$ is a pair of positive real numbers, by adapting the construction for free-stabilized almost complex structures from Section~\ref{subsec:gluingdata}, we can construct an almost complex structure $J^\frd(\ve{T})$ on $\hat{\Sigma}\times [0,1]\times \R$ with necks of length $T_1$ and $T_2$.

Analogously to Definition~\ref{def:longenoughfreestab}, we make the following definition:

\begin{define}\label{def:1-handle-def-stabilizing-condition-3}
Suppose $\cH$ is a Heegaard diagram for $(Y,\ws)$ and $\frd$ is a gluing datum for attaching a 1-handle with feet $p_1,p_2\in Y\setminus \ws$. We say a pair of neck lengths $\ve{T}$ satisfies \emph{stabilizing condition}~\eqref{eq:stabilizationcondition3} if for any two pairs of neck-lengths $\ve{T}_1$ and $\ve{T}_2$ such that $\ve{T}_1,\ve{T}_2\ge \ve{T}$, componentwise,  there is a non-cylindrical almost complex structure $\tilde{J}$ on $\Sigma\times[0,1]\times \R$, interpolating $J^{\frd}(\ve{T}_1)$ and $J^{\frd}(\ve{T}_2)$, such that for all $\xs\in \bT_{\a}\cap \bT_{\b},$ we have
\begin{equation}
\begin{split}
\Psi_{\tilde{J}}(\xs\times \theta^+)&=\xs\times \theta^+,\quad\text{ and }\\
\Psi_{\tilde{J}}(\xs\times \theta^-)&=\xs\times \theta^-+\sum_{\ys\in \bT_{\a}\cap \bT_{\b}} C_{\xs,\ys}\cdot \ys\times \theta^+,
\end{split}
\tag{SC-3}\label{eq:stabilizationcondition3}
\end{equation}
for $C_{\xs,\ys}\in \bF_2[U_{\ws}]$ (which depend on $\frd$, $\ve{T}_1$ and $\ve{T}_2$).
\end{define}

Analogous to Proposition~\ref{prop:freestab-Tsufflargeexist}, we have the following:

\begin{prop}\label{prop:1-handle-Tsufflargeexist}
If $\frd$ is a gluing datum for attaching a 1-handle at $\{p_1,p_2\}$, then there is a pair of neck lengths $\ve{T}=(T_1,T_2)$ which satisfies stabilizing condition \eqref{eq:stabilizationcondition3}.
\end{prop}

We begin with the following Maslov index formula:

\begin{lem}\label{lem:index-1-handle} Suppose $\cH=(\Sigma,\as,\bs,\ws)$ is a diagram for $(Y,\ws)$ and $\hat{\cH}=(\hat{\Sigma}, \as\cup \{\alpha_0\}, \bs\cup \{\beta_0\}, \ws)$ is a diagram for the surgered manifold $Y(\bS^0)$. If $\phi\# \phi_0\in \pi_2(\xs\times x,\ys\times y)$ is a homology class of disks on $\hat{\cH}$, where $x,y\in \{\theta^+,\theta^-\}$, then
\[
\mu(\phi\# \phi_0)=\mu(\phi)+\gr(x,y).
\]
\end{lem}
\begin{proof}
By Lemma~\ref{prop:freestab-Tsufflargeexist}, the index of $\phi_0$ is
\begin{equation}
\mu(\phi_0)=2n_{p_1^0}(\phi_0)+2n_{p_2^0}(\phi_0)+\gr(x,y).\label{eq:Maslov-ind-on-sphere-1-handle}
\end{equation}
Noting that the Euler measure of a disk is 1, Lipshitz's formula for the Maslov index \cite{LipshitzCylindrical}*{Equation~8} implies  that
\begin{equation}
\mu(\phi\# \phi_0)=\mu(\phi)+\mu(\phi_0)-2n_{p_1^0}(\phi_0)-2n_{p_2^0}(\phi_0).\label{eq:Maslov-ind-on-sphere-1-handle-2}
\end{equation}
Combining Equations~\eqref{eq:Maslov-ind-on-sphere-1-handle} and ~\eqref{eq:Maslov-ind-on-sphere-1-handle-2} implies the main statement. 
\end{proof}

With the Maslov index formula from Lemma~\ref{lem:index-1-handle},  Proposition~\ref{prop:1-handle-Tsufflargeexist} is proven by adapting the proof of Proposition~\ref{prop:freestab-Tsufflargeexist} to handle stretching two necks instead of one. The main details of the argument are unchanged, so we leave them to the reader.

\subsection{1-handles, 3-handles and the differential}

Ozsv\'{a}th and Szab\'{o} proved that their 1-handle and 3-handle maps are chain maps \cite{OSProperties}*{Proposition~6.4}. We now prove that our version of the 1-handle and 3-handle maps are also chain maps.

\begin{prop}\label{prop:1-handle-differential} Suppose that $\cH=(\Sigma,\as,\bs,\ws)$ is a diagram for $(Y,\ws)$, and  $\hat{\cH}=(\hat{\Sigma}, \as\cup \{\alpha_0\}, \bs\cup \{\beta_0\}, \ws)$  is the diagram constructed by attaching a 1-handle at $\{p_1,p_2\}\subset \Sigma\setminus(\as\cup \bs\cup \ws).$ If $\frd$ is a gluing datum for this 1-handle, and $\ve{T}$ is a pair of neck lengths satisfying stabilizing condition~\eqref{eq:stabilizationcondition3}, then
\begin{equation}
\begin{split}
\d_{\hat{\cH}, J^{\frd}( \ve{T})}(\xs\times \theta^+)&=\d_{\cH,J^{\frd}}(\xs)\otimes \theta^+,\quad \text{and}\\
\d_{\hat{\cH},J^{\frd}(\ve{T})}(\ve{x}\otimes \theta^-)&=\d_{\cH,J^{\frd}}(\xs)\otimes \theta^-+\sum_{\ys\in \bT_{\a}\cap \bT_{\b}} C_{\xs,\ys}\cdot \ys\times \theta^+,
\end{split}
\label{eq:1-handle-3-handle-differential}
\end{equation}
for $C_{\xs,\ys}\in \bF_2[U_{\ws}].$
\end{prop}
\begin{proof} The proof is essentially the same as the proof of Proposition~\ref{prop:free-stabdifferential}. 

Suppose that $\phi\# \phi_0\in \pi_2(\xs\times x,\ys\times y)$ is a class with Maslov index 1. By Lemma~\ref{lem:index-1-handle},
 \begin{equation}
 \mu(\phi\# \phi_0)=\mu(\phi)+\gr(x,y).\label{eq:Maslov-index-after-1-handle}
 \end{equation}
 
Note that Equation~\eqref{eq:1-handle-3-handle-differential} makes no claim about the counts of classes with $\gr(x,y)=-1$ (i.e. where $x=\theta^-$ and $y=\theta^+$). These counts correspond to the $C_{\xs,\ys}$ in the statement. 
 
  As in the proof of Proposition~\ref{prop:free-stabdifferential}, if $\cM_{J^{\frd}(\ve{T})}(\phi\#\phi_0)$ is nonempty for arbitrarily large $\ve{T}$, then both $\phi$ and $\phi_0$ must have broken representatives.

If $\gr(x,y)=1$, then Equation~\eqref{eq:Maslov-index-after-1-handle} implies that $\mu(\phi)=0$. Since $\phi$ has a broken representative, it follows that $\phi$ is the constant class $e_{\xs}\in \pi_2(\xs,\xs)$. In this case, $\phi\# \phi_0$ must have domain equal to one of the two bigons in the 1-handle region. These curves cancel, modulo 2, and hence make no contribution to the differential.
 
 It remains to consider classes with $\gr(x,y)=0$, i.e. $x=y=\theta^+$ or $x=y=\theta^-$. For such classes, Equation~\eqref{eq:Maslov-index-after-1-handle} implies that
 \[
\mu(\phi\# \phi_0) =\mu(\phi)=1.
 \]

Write $n_1$ and $n_2$ for $n_{p_1}(\phi)$ and $n_{p_2}(\phi)$, respectively. Consider the map
\[
\rho^{p_1,p_2}\colon \cM_{J^{\frd}}(\phi)\to \Sym^{n_1}([0,1]\times \R)\times \Sym^{n_2}([0,1]\times \R).
\] 
 Write $X(\phi)$ for the image $\rho^{p_1,p_2}(\cM(\phi))$.
 
 As in the proof of Proposition~\ref{prop:free-stabdifferential}, for large $\ve{T}$, there is a fibered product description
 \begin{equation}
 \#\hat{\cM}_{J^{\frd}(\ve{T})}(\phi\# \phi_0)\equiv \sum_{u\in \hat{\cM}(\phi)} \# \cM_{J_0^{\frd}}(\phi_0,\rho^{p_1,p_2}(u)).\label{eq:fibered-product-1-handles-disk-counts}
 \end{equation}
 Consequently, it is sufficient to show that if $\theta\in \{\theta^+,\theta^-\}$ is fixed, and $\ve{d}_1\times \ve{d}_2\in \Sym^{n_1}([0,1]\times \R)\times \Sym^{n_2}([0,1]\times \R)$ is a point with no repeated entries, then for a generic almost complex structure $J_0$ on $S^2\times [0,1]\times \R$,
 \begin{equation}
 \sum_{\substack{\phi_0\in \pi_2(\theta,\theta)\\ n_{p_1^0}(\phi_0)=n_1\\ n_{p_2^0}(\phi_0)=n_2}}\# \cM_{J_0}(\phi_0, \ve{d}_1\times \ve{d}_2)\equiv 1\pmod{2}.\label{eq:divisor-count-1-handles}
 \end{equation}

%
% Equation~\eqref{eq:divisor-count-1-handles} follows from a simple modification of the argument used to establish Equation~\eqref{eq:divisorcounttriangles}. One first establishes that the count is independent of the choice of $\ve{d}_1\times \ve{d}_2$ by considering the moduli space of curves which match a path in $\Sym^{n_1}([0,1]\times \R)\times \Sym^{n_2}([0,1]\times \R)$ which is disjoint from the fat diagonal. Next, we 
 
If $\ve{d}_{1}\times \ve{d}_{2}\in \Sym^{n_1}([0,1]\times \R)\times \Sym^{n_2}([0,1]\times \R)$  has no entries with the same $[0,1]$-component, consider the path $\ve{D}_T:=\ve{d}_1^T\times \ve{d}_2$ obtained by translating $\ve{d}_1$ upwards by $T$ units in the $\R$-direction. If $\ve{d}_1\times\ve{d}_2$ is not in the fat diagonal, but two elements share the same $[0,1]$-component, a perturbation of this path may be chosen which avoids the fat diagonal. The compactification of the 1-dimensional space
 \[
\bigcup_{T\in [0,\infty)}\coprod_{\phi_0\in \pi_2(\theta,\theta)} \cM_{J_0}(\phi_0, \ve{D}_T)
 \]
 has ends in bijection with the Cartesian product
 \begin{equation}
\bigg(\coprod_{\substack{\phi_0\in \pi_2(\theta,\theta)\\ n_{p_1^0}(\phi_0)=n_1\\
n_{p_2^0}(\phi_0)=0}} \cM(\phi_0, \ve{d}_1)\bigg)\times \bigg(\coprod_{\substack{\phi_0\in \pi_2(\theta,\theta)\\ n_{p_1^0}(\phi_0)=0\\
n_{p_2^0}(\phi_0)=n_2}} \cM(\phi_0, \ve{d}_2)\bigg).\label{eq:ends-of-moduli-1-handle-region}
 \end{equation}
 Equation~\eqref{eq:OS'scountmatched} implies that the count of the elements in Equation~\eqref{eq:ends-of-moduli-1-handle-region} is $1$, modulo 2. Equation~\eqref{eq:divisor-count-1-handles} follows. 
 
 Combining Equations~\eqref{eq:fibered-product-1-handles-disk-counts} and ~\eqref{eq:divisor-count-1-handles}, it follows that 
 \[
 \sum_{\substack{\phi_0\in \pi_2(\theta,\theta)\\
 n_{p_1^0}(\phi_0)=n_1\\
 n_{p_2^0}(\phi_0)=n_2}}\# \hat{\cM}(\phi\# \phi_0)\equiv \# \hat{\cM}(\phi)\pmod{2}.
\]
The main claim now follows.
\end{proof}

\begin{cor}
 The 1-handle maps $F_{Y,\bS^0,\frt}$ and the 3-handle maps $F_{Y,\bS^2,\frt}$ are chain maps.
\end{cor}
\begin{proof} The result is an immediate consequence of Proposition~\ref{prop:1-handle-differential}, together with the formulas for the 1-handle and 3-handle maps in Equations~\eqref{eq:def-1-handle} and~\eqref{eq:def-3-handle}.
\end{proof}

\subsection{1-handles, 3-handles and triangle maps}

Similar to Theorem~\ref{thm:freestabilizetriangles}, the 1-handle and 3-handle maps satisfy a useful relationship with the triangle maps.

Suppose $\cT=(\Sigma,\as,\bs,\gs,\ws)$ is a multi-pointed Heegaard triple, and $p_1,p_2\in \Sigma\setminus (\as\cup \bs\cup \gs\cup \ws)$ are two points. We can form a new Heegaard triple $\hat{\cT}=(\hat{\Sigma},\as\cup \{\alpha_0\}, \bs\cup \{\beta_0\}, \gs\cup \{\gamma_0\}, \ws)$ by cutting out a neighborhood of the points $p_1$ and $p_2$, gluing in an annulus to connect the new boundary components. We add three new attaching curves, $\alpha_0$, $\beta_0$ and $\gamma_0$, in the new annular region. We assume that $\alpha_0$, $\beta_0$ and $\gamma_0$ satisfy the configuration shown in Figure~\ref{fig::73}.

\begin{figure}[ht!]
\centering
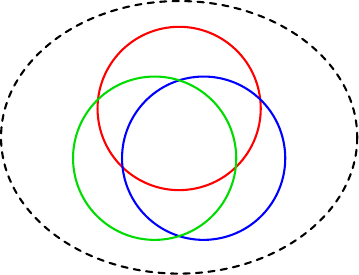
\caption{\textbf{Adding a 1-handle to a Heegaard triple.} The dashed circles are the boundaries of the new annular region.}\label{fig::73}
\end{figure}

\begin{lem}\label{lem:canonical-iso-spinc-1-handles} If $\cT=(\Sigma,\as,\bs,\gs,\ws)$ is a Heegaard triple, and $\hat{\cT}=(\hat{\Sigma},\as\cup \{\alpha_0\}, \bs\cup \{\beta_0\}, \gs\cup \{\gamma_0\}, \ws)$ is obtained by attaching a 1-handle, as above, then there is a canonical isomorphism
\begin{equation}
\Spin^c(X_{\a,\b,\g})\iso \Spin^c(X_{\a\cup \{\a_0\}, \b\cup \{\b_0\}, \g\cup \{\g_0\}}).\label{eq:canonical-iso-spinc-1-handles}
\end{equation}
\end{lem}
\begin{proof}  Write $\Sigma_0$ for the surface $\Sigma\setminus (N(p_1)\cup N(p_2))$, and write $X_{\a,\b,\g}^0$ for the 4-manifold obtained as the union
 \[
(\Delta\times\Sigma_0)\cup (e_{\a}\times U_\a^0)\cup (e_{\b}\times U_{\b}^0)\cup (e_{\g}\times U_{\g}^0),  
 \]
 where $U_{\a}^0$ is the 3-manifold with boundary and corners obtained by gluing 2-handles to $[0,1]\times \Sigma_0$ along $\as\times \{1\}$, and $U_{\b}^0$ and $U_{\g}^0$ are defined similarly. There are two restriction maps
 \begin{equation}
\Spin^c(X_{\a,\b,\g})\to \Spin^c(X^0_{\a,\b,\g})\quad \text{and} \quad  \Spin^c(X_{\a\cup \{\a_0\}, \b\cup \{\b_0\}, \g\cup \{\g_0\}})\to \Spin^c(X^0_{\a,\b,\g}).\label{eq:two-restriction-maps}
 \end{equation}
We leave it as a straightforward exercise for the reader to use the Mayer-Vietoris long exact sequences on cohomology to verify that both maps in Equation~\eqref{eq:two-restriction-maps} are isomorphisms, leading to the isomorphism in Equation~\eqref{eq:canonical-iso-spinc-1-handles}.
\end{proof}

\begin{thm}\label{thm:1-handletriangle} Suppose that $\cT$ is a Heegaard triple, and $\hat{\cT}$ is obtained by attaching a 1-handle, as in Figure~\ref{fig::73}.  Let $\frd$ be a gluing datum for the 1-handle attachment. If $\ve{T}$ is a pair of neck lengths which are sufficiently large, then with respect to the isomorphism of $\Spin^c$ structures in Equation~\eqref{eq:canonical-iso-spinc-1-handles}, we have
\begin{align*}
F_{\hat{\cT}, J^{\frd}(\ve{T}), \frs}(\xs\times x^+, \ys\times y^+)&=F_{\cT,J^\frd,\frs}(\xs, \ys)\otimes z^+,\\
  F_{\hat{\cT}, J^\frd(\ve{T}), \frs}(\xs\times x^+, \ys\times y^-)&=F_{\cT,J^\frd, \frs}(\xs, \ys)\otimes z^-+ \sum_{\zs\in \bT_{\a}\cap \bT_{\g}} C^1_{\xs,\ys,\zs}\cdot \zs\times z^+,\\
    F_{\hat{\cT}, J^\frd(\ve{T}), \frs}(\xs\times x^-, \ys\times y^+)&=F_{\cT,J^\frd, \frs}(\xs, \ys)\otimes z^-+ \sum_{\zs\in \bT_{\a}\cap \bT_{\g}} C^2_{\xs,\ys,\zs}\cdot \zs\times z^+,
\end{align*}
for $C_{\xs,\ys,\zs}^1, C_{\xs,\ys,\zs}^2\in \bF_2[U_{\ws}]$ (which depend on $\cT$,  $\frd$ and $\ve{T}$).
\end{thm}

\begin{proof} The proof follows the same line of reasoning as the proof of Theorem~\ref{thm:freestabilizetriangles}.

Suppose that $\psi\# \psi_0\in \pi_2(\xs\times x, \ys\times y, \zs\times z)$ is a homology class with Maslov index 0. Using Lemma~\ref{lem:maslovindextriangles}, Sarkar's formula for the Maslov index \cite{SarkarMaslov}, as well as the fact that the Euler measure of a disk is 1, we compute that
\begin{equation}
\mu(\psi\# \psi_0)=\mu(\psi)-\gr(x^+,x)-\gr(y^+,y)+\gr(z^+,z).\label{eq:general-maslov-index-1-handle-triangle}
\end{equation}
Define
\[
\delta(x,y,z):=-\gr(x^+,x)-\gr(y^+,y)+\gr(z^+,z).
\]
It is straightforward to check that the theorem statement follows from the following two subclaims:
\begin{enumerate}[ref= h-\arabic*, label= (h-\arabic*):]
\item\label{subclaim-1-handle-triangle-delta=1} If $\delta(x,y,z)=1$, then any Maslov index 0 class in $\pi_2(\xs\times x,\ys\times y, \zs\times z)$ has no $J^\frd(\ve{T})$-holomorphic representatives when $\ve{T}$ is sufficiently large. 
\item\label{subclaim-1-handle-triangle-delta=0} If $\delta(x,y,z)=0$, then the $\zs\times z$ coefficient of $F_{\cT,J^\frd(\ve{T}),\frs}(\xs\times x, \ys\times y)$ coincides with the $\zs$ coefficient of $F_{\hat{\cT},J^\frd,\frs}(\xs,\ys)$.
\end{enumerate}
The counts of the moduli spaces with $\delta(x,y,z)\in \{-1,-2\}$ are not relevant to the theorem statement.

Claim~\eqref{subclaim-1-handle-triangle-delta=1} is the easier of the two subclaims to verify, since it  relies only on compactness and transversality, but does not require gluing. By Equation~\eqref{eq:general-maslov-index-1-handle-triangle}, if $\delta(x,y,z)=1$, then $\mu(\psi)=-1$. If $\psi\# \psi_0$ has representatives for arbitrarily large $\ve{T}$, then $\psi$ must admit a broken  representative. However since $\mu(\psi)=-1$, there can be no broken representatives by transversality. Hence no index 0 classes with $\delta(x,y,z)=1$ have nonempty moduli spaces for sufficiently large $\ve{T}$.

We now consider Claim~\eqref{subclaim-1-handle-triangle-delta=0}. Assume $\delta(x,y,z)=0$. In this case, we have $\mu(\psi)=0$ by Equation~\eqref{eq:general-maslov-index-1-handle-triangle}. As in the proof of Theorem~\ref{thm:freestabilizetriangles}, if $\ve{T}_i$ is an increasing, unbounded sequence of pairs of neck-lengths, then any sequence of $J^{\frd}(\ve{T}_i)$-holomorphic representatives of $\psi\# \psi_0$ has a subsequence which converges to a pair $(u,u_0)$ where $u\in \cM_{J^{\frd}}(\psi)$ and $u_0\in \cM_{J^{\frd}_0}(\psi_0)$, and
\begin{equation}
\rho^{p_1,p_2}(u)=\rho^{p_1^0,p_2^0}(u_0).\label{eq:double-matching}
\end{equation}
In Equation~\eqref{eq:double-matching}, $\rho^{p_1,p_2}(u)\in  \Sym^{n_{p_1}(\psi)}(\Delta)\times \Sym^{n_{p_2}(\psi)}(\Delta)$ is the set
\[
\rho^{p_1,p_2}(u)=\bigg((\pi_\Delta\circ u)\big((\pi_\Sigma\circ u)^{-1}(p_1)\big),(\pi_\Delta\circ u)\big((\pi_\Sigma\circ u)^{-1}(p_2)\big)\bigg),
\]
and $\rho^{p_1^0,p_2^0}(u_0)$ is defined similarly.

Hence, using a gluing argument, it follows that for sufficient large $\ve{T}$ there is an identification
\[
 \cM_{J^\frd(\ve{T})}(\psi\# \psi_0)\iso \cM_{J^\frd}(\psi)\times_{\rho} \cM_{J_0^\frd}(\psi_0).
\]
Since $\cM_{J^{\frd}}(\psi)$ is zero dimensional, it is sufficient to show that if $\ve{d}_1\times \ve{d}_2\in \Sym^{n_1}(\Delta)\times \Sym^{n_2}(\Delta)$ is not in the fat diagonal, then
\begin{equation}
\sum_{\substack{\psi_0\in \pi_2(x,y,z)\\
n_{p_1^0}(\psi_0)=n_1\\
n_{p_2^0}(\psi_0)=n_2}}\# \cM(\psi_0,\ve{d}_1\times \ve{d}_2)\equiv 1, \label{eq:divisor-count-triangles-more-general}
\end{equation}
for a generic almost complex structure on $S^2\times [0,1]\times \R$.

Equation~\eqref{eq:divisor-count-triangles-more-general} is verified similarly to Equation~\eqref{eq:divisorcounttriangles}. Consider a path $(\ve{D}_t)_{t\in [1,\infty)}$ in $\Sym^{n_1}(\Delta)\times \Sym^{n_2}(\Delta)$, satisfying the following:
\begin{enumerate}
\item   $\ve{D}_1=\ve{d}_1\times \ve{d}_2$.
\item The image of $\ve{D}_t$ is disjoint from the fat diagonal.
\item All points of $\ve{D}_t$ travel into the $\alpha$-$\beta$ cylindrical end of $\Delta$.
\item For large $t$, the points of $\ve{D}_t$ are spaced at least distance $t$ apart (with respect to the identification of the $\alpha$-$\beta$ cylindrical end of $\Delta$ as $[0,1]\times (-\infty,0]$.
\item The $[0,1]$-component of all points in $\ve{D}_t$ approach a fixed $s_0\in (0,1)$.
\end{enumerate}

Write $\ve{\cD}:=\{\ve{D}_t: t\in [1,\infty)\}$ and consider the matched moduli space:
\[
\cM_{(x,y,z)}(\ve{\cD}):=\coprod_{\substack{\psi_0\in \pi_2(x,y,z)\\
n_{p_1^0}(\psi_0)=n_1\\n_{p_2^0}(\psi_0)=n_2}} \bigcup_{t\in [1,\infty)}\cM(\psi_0,\ve{D}_t).
\]
 As in the proof of Equation~\eqref{eq:divisorcounttriangles}, the space $\cM_{(x,y,z)}(\ve{\cD})$ has ends at $t=1$, at $t\in (1,\infty)$ and at $t=\infty$. The ends at $t=1$ correspond to the left hand side of Equation~\eqref{eq:divisor-count-triangles-more-general}. The ends at $t\in (1,\infty)$ correspond to index 1 strips breaking off at finite $t$, which do not pass over $p_1^0$ or $p_2^0$. The ends at $t=\infty$ correspond to the Cartesian product
\begin{equation}
\bigg(\coprod_{\substack{\psi_0^0\in \pi_2(x,y,z)\\
n_{p_1^0}(\psi^0_0)=n_{p_2^0}(\psi_0^0)=0}} \cM(\psi_0^0)\bigg)\times \bigg( \coprod_{\substack{\phi\in \pi_2(x,x)\\
n_{p_1^0}(\phi)=n_1\\
n_{p_2^0}(\phi)=0}}\cM(\phi, d)\bigg)^{n_1}\times \bigg( \coprod_{\substack{\phi\in \pi_2(x,x)\\
n_{p_1^0}(\phi)=0\\
n_{p_2^0}(\phi)=n_2}}\cM(\phi, d)\bigg)^{n_2},\label{eq:1-handle-triangle-cartesian-product}
\end{equation}
where $d\in \{s_0\}\times \R\subset [0,1]\times \R$ is a chosen point.

As in the proof of Theorem~\ref{thm:freestabilizetriangles}, the ends of $\cM_{(x,y,z)}(\ve{\cD})$ corresponding to strip breaking at finite $t$ cancel modulo 2, since $x$ is a cycle in the complex $\hat{\CF}(S^2,\alpha_0,\beta_0,p_1^0, p_2^0)$, and similarly $y$ and $z$ are cycles in their appropriate complexes.

The first factor of Equation~\eqref{eq:1-handle-triangle-cartesian-product} has total count 1, modulo 2, since it is easy to check that when $\delta(x,y,z)=0$, the only non-negative triangle class which has zero multiplicity over $p_1^0$ and $p_2^0$ is a small triangle, which clearly has a unique representative. The latter two factors of Equation~\eqref{eq:1-handle-triangle-cartesian-product} have total count 1, modulo 2, by \cite{OSLinks}*{Lemma~6.4}.

Hence, by counting the ends of $\cM_{(x,y,z)}(\ve{\cD})$, Equation~\eqref{eq:divisor-count-triangles-more-general} follows, completing the proof.
\end{proof}

\subsection{1-handles, 3-handles and gluing data}

Analogous to Proposition~\ref{prop:independencegluingdatum}, we have the following:

\begin{prop}\label{prop:independence-gluing-datum-1-handle}
Suppose $\cH=(\Sigma,\as,\bs,\ws)$ is diagram for $(Y,\ws)$, $\bS^0=\{p_1,p_2\}\subset \Sigma\setminus (\as\cup \bs\cup \ws)$ is a 0-sphere, and $\frd_1$ and $\frd_2$ are two gluing data for attaching a 1-handle at $\bS^0$. Suppose further that $\ve{T}_1$ and $\ve{T}_2$ are pairs of neck-lengths which satisfy stabilizing condition~\eqref{eq:stabilizationcondition3} for $\frd_1$ and $\frd_2$, respectively. Write $\hat{\cH}_1$ and $\hat{\cH}_2$ for the diagrams constructed by attaching a 1-handle to $\cH$ using $\frd_1$ and $\frd_2$ (note that $\hat{\cH}_1$ and $\hat{\cH}_2$ differ only by an isotopy in the 1-handle region).
The following diagram commutes up to chain homotopy:
\begin{equation}
 \begin{tikzcd}[column sep=4cm]\CF^-_{J^{\frd_1}}(\cH,\frs)\arrow{r}{\Psi_{J^{\frd_1}\to J^{\frd_2}}}\arrow{d}{F_{Y,\bS^0,\frt}}& \CF^-_{J^{\frd_2}}(\cH,\frs\arrow{d}{F_{Y,\bS^0,\frt}})\\
 \CF^-_{J^{\frd_1}(\ve{T}_1)}\left(\hat{\cH}_1,\hat{\frs}\right)\arrow{r}{\Psi_{(\hat{\cH}_1,J^{\frd_1}(\ve{T}_1))\to (\hat{\cH}_2,J^{\frd_2}(\ve{T}_2))}} &\CF^-_{J^{\frd_2}(\ve{T}_2)}\left(\hat{\cH}_2,\hat{\frs}\right).
 \end{tikzcd} 
\end{equation}
An analogous relation holds for the 3-handle maps.
\end{prop}

Proposition~\ref{prop:independence-gluing-datum-1-handle} is proven by adapting our proof of the analogous result for the free-stabilization maps in Proposition~\ref{prop:independencegluingdatum}. 

\subsection{Invariance of the 1- and 3-handle maps}

In this section, we prove that the 1-handle and 3-handle maps are invariant from the choices used in the construction.

\begin{thm}\label{thm:invariance-1-handle} 
The 1-handle and 3-handle maps determine well defined chain maps of transitive systems of chain complexes.
\end{thm}
\begin{proof}We focus on the 1-handle maps, since the 3-handle maps are dual.

Suppose $\bS^0=\{p_1,p_2\}$ is an embedded 0-sphere in $Y$, $\frt\in \Spin^c(W(Y,\bS^0))$, and  $\cH_1=(\Sigma_1,\as_1,\bs_1,\ws)$ and $\cH_2=(\Sigma_2,\as_2,\bs_2,\ws)$ are two Heegaard diagrams for $(Y,\ws)$ such that $\{p_1,p_2\}\subset \Sigma_i\setminus (\as_i\cup \bs_i\cup \ws)$ for $i\in \{1,2\}$. Let $\frd_1$ and $\frd_2$ be two  gluing data. It is sufficient to show that if $\ve{T}_1$ and $\ve{T}_2$ are two pairs of neck lengths satisfying condition~\eqref{eq:stabilizationcondition3}, then the following diagram commutes up to chain homotopy:
 \begin{equation}
\begin{tikzcd}[column sep=4cm]
\CF^-_{J^{\frd_1}}(\cH_1,\frs)\arrow{r}{\Psi_{(\cH_1,J^{\frd_1})\to (\cH_2,J^{\frd_2})}}\arrow{d}{F_{Y,\bS,\frt}} & \CF^-_{J^{\frd_2}}(\cH_2,\frs)\arrow{d}{F_{Y,\bS,\frt}}\\
\CF^-_{J^{\frd_1}(\ve{T}_1)}\left(\hat{\cH}_1, \hat{\frs}\right)\arrow{r}{\Psi_{(\hat{\cH}_2,J^{\frd_1}(\ve{T}_1))\to (\hat{\cH}_1,J^{\frd_2}(\ve{T}_2))}} & \CF^-_{J^{\frd_2}(\ve{T}_2)}\left(\hat{\cH}_2, \hat{\frs}\right).
\end{tikzcd} 
\label{eq:commutative-diagram-1-handles}
 \end{equation}

By Proposition~\ref{prop:independence-gluing-datum-1-handle}, the 1-handle maps are independent of the choice of gluing data (i.e. Equation~\eqref{eq:commutative-diagram-1-handles} commutes when $\cH_1=\cH_2$).

 By Lemma~\ref{lem:Heegaardmoveswithextrabasepoints}, we can connect $\cH_1$ and $\cH_2$ by a sequence of the following moves:
\begin{enumerate}
\item\label{move:HD-1handle1} Handleslides and isotopies of the $\as$ and $\bs$ curve (possibly passing over $p_1$ and $p_2$).
\item\label{move:HD-1handle2} Simple stabilizations, away from $\ws\cup \{p_1,p_2\}$.
\item\label{move:HD-1handle3} Changing the embedding of the Heegaard surface by an isotopy $\phi_t\colon \Sigma\to Y$ which is fixed on $\ws\cup \{p_1,p_2\}$ for all $t$.
\end{enumerate}

Since the transition maps for handleslides and isotopies can be computed using a composition of triangle maps, Theorem~\ref{thm:1-handletriangle} implies invariance under Move~\eqref{move:HD-1handle1}. Invariance under simple stabilizations away from $\ws\cup \{p_1,p_2\}$ follows from a triple neck-stretching argument, similar to Lemma~\ref{lem:freestabsimplestab}. Finally invariance under Move~\eqref{move:HD-1handle3} is tautological.
 
\end{proof}
\subsection{Further properties of the 1- and 3-handle maps}

In this section we prove several additional results about the 1-handle and 3-handle maps.

\begin{lem}\label{lem:rel-hom-1-handle-commute} Suppose that $\lambda$ is a path in $Y$, $\bS$ is an embedded 0-sphere or 2-sphere in $Y\setminus \ws$ and $\frt\in \Spin^c( W(Y,\bS))$. Suppose that $\lambda\subset Y\setminus \bS$ is a closed loop, or a path connecting two basepoints. Write $\lambda$ also for the induced path in $Y(\bS)$. Then
\[
A_{\lambda}\circ F_{Y,\bS,\frt}\simeq F_{Y,\bS,\frt}\circ A_{\lambda}.
\]
\end{lem}
\begin{proof}The proof is similar to the proof of Lemma~\ref{lem:free-stab-H1-act-commute}, and follows by examining the curves counted by the differential in Proposition~\ref{prop:1-handle-differential}, using the fact that the path $\lambda$ does not enter the 1-handle region.  We leave it to the reader to make the necessary notational modifications to the proof of Lemma~\ref{lem:free-stab-H1-act-commute}.
\end{proof}

Next, we consider commuting 1-handle and 3-handle maps amongst each other. If $\bS\subset Y$ is a union of framed $k$-spheres, we write $W(Y,\bS)$ for the cobordism from $Y$ to $Y(\bS)$ obtained by attaching a $k+1$ handles to $[0,1]\times Y$ along $\{1\}\times \bS$.  Note that if $\bS$ and $\bS'$ are two disjoint, framed spheres in $Y$, then the 4-manifolds $W(Y,\bS\cup \bS')$ and $W(Y(\bS),\bS')\cup W(Y,\bS)$
and $W(Y(\bS'),\bS)\cup W(Y,\bS')$ are diffeomorphic, via diffeomorphisms which are well defined up to isotopy.

\begin{lem}\label{lem:1-handles-commute}
 Suppose that $\bS$ and $\bS'$ are two disjoint embedded spheres of dimension 0 or 2 in $Y$. Write $\frt$ for a $\Spin^c$ structure on 
 \[
 W(Y,\bS\cup \bS')\iso W(Y(\bS),\bS')\cup W(Y,\bS)\iso W(Y(\bS'),\bS)\cup W(Y,\bS').
 \]
 Then
 \[
F_{Y(\bS),\bS', \frt|_{W(Y(\bS), \bS')}}\circ F_{Y, \bS, \frt|_{W(Y,\bS)}}\simeq 
F_{Y(\bS'),\bS,\frt|_{W(Y(\bS'),\bS)}} \circ F_{Y,\bS',\frt|_{W(Y,\bS')}}.
 \]
\end{lem}
\begin{proof} The proof follows from a quadruple neck-stretching argument, similar to the double neck-stretching argument from Proposition~\ref{prop:free-stabs-commute}, used to show that free-stabilization maps commute with each other. The present statement follows from the following subclaim:

\begin{subclaim}\label{subclaim:neck-stretching-1-handle} Suppose that $\ve{T}$ and $\ve{T}'$ are two 4-tuples of neck-lengths for attaching two 1-handles. If all of components of $\ve{T}$ and $\ve{T}'$ are sufficiently large, then a non-cylindrical almost complex structure $\tilde{J}$  interpolating $J(\ve{T})$ and $J(\ve{T}')$ may be chosen so that
\begin{equation*}
\begin{split}
\Psi_{\tilde{J}}(\xs\times \theta_1^+\times \theta_2^+)&= \xs\times \theta_1^+\times \theta_2^+
\\
\Psi_{\tilde{J}}(\xs\times \theta_1^+\times \theta_2^-)&=\xs\times \theta_1^+\times \theta_2^-+\sum_{\ys\in \bT_{\a}\cap \bT_{\b}} \left( C_{\xs,\ys}^1 \cdot  \ys\times \theta_1^+\times \theta_2^++ C_{\xs,\ys}^2\cdot  \ys\times \theta_1^-\times \theta_2^+ \right)
\\
\Psi_{\tilde{J}}(\xs\times \theta_1^-\times \theta_2^+)&=\xs\times \theta_1^-\times \theta_2^++\sum_{\ys\in \bT_{\a}\cap \bT_{\b}} \left(C_{\xs,\ys}^3 \cdot \ys\times \theta_1^+\times \theta_2^++C_{\xs,\ys}^4 \cdot \ys\times \theta_1^+\times \theta_2^-\right)
\\
\Psi_{\tilde{J}}(\xs\times \theta_1^-\times \theta_2^-)&=\xs\times \theta_1^-\times \theta_2^-
\\
+\sum_{\ys\in \bT_{\a}\cap \bT_{\b}} &\left(C_{\xs,\ys}^5\cdot  \ys\times \theta_1^+\times \theta_2^-+C_{\xs,\ys}^6\cdot  \ys\times \theta_1^-\times \theta_2^++C_{\xs,\ys}^7\cdot  \ys\times \theta_1^+\times\theta_2^+\right),
\end{split}
\end{equation*}
for various $C_{\xs,\ys}^i\in \bF_2[U_{\ws}]$, which depend on $\tilde{J}$.
\end{subclaim}

The proof Subclaim~\ref{subclaim:neck-stretching-1-handle} follows the same reasoning as the proof of Subclaim~\ref{subclaim:neckstretching}, the analogous subclaim  of Proposition~\ref{prop:free-stabs-commute}. The main difference is that we must use the index formula from Lemma~\ref{lem:index-1-handle} to compute the index of a homology class after adding two 1-handles. We leave it to the reader to verify that the argument carries over to our present context without major change.
\end{proof}

\begin{lem}\label{lem:1-handle-free-stab-commute}
 Suppose that $\bS$ is an embedded 0- or 2-sphere in a multi-pointed 3-manifold $(Y,\ws)$. If $w\in Y\setminus (\ws\cup \bS)$ is a new basepoint, then
\[
S_w^+\circ F_{Y,\bS,\frt}\simeq F_{Y,\bS,\frt}\circ S_w^+ \quad \text{and} \quad S_w^-\circ F_{Y,\bS,\frt}\simeq F_{Y,\bS,\frt}\circ S_w^-.
\]
\end{lem}
\begin{proof}  The proof follows from a triple neck-stretching argument, similar to the ones we encountered in Proposition~\ref{prop:free-stabs-commute} and Lemma~\ref{lem:1-handles-commute}. We leave the necessary modifications to the reader.
\end{proof}

\subsection{Ozsv\'{a}th and Szab\'{o}'s  1-handle and 3-handle maps}
\label{sec:OS-1handle3-handle}
Ozsv\'{a}th and Szab\'{o} originally defined the 1-handle and 3-handle maps by taking the connected sum of the Heegaard surface with a genus 1 Heegaard diagram for $S^1\times S^2$ using the same formula as in Equations~\eqref{eq:def-1-handle} and ~\eqref{eq:def-3-handle}. Morally, this amounts to picking a path between the two components of the attaching 0-sphere of a 1-handle. For showing invariance the graph cobordism maps, it it is convenient to show that our definition coincides with Ozsv\'{a}th and Szab\'{o}'s original definition~\cite{OSTriangles}*{Section~4.3}, when the feet of the 1-handle are in the same component of the 3-manifold. This amounts to the following change of almost complex structure computation:

\begin{lem}\label{lem:2-defs-1-handle-equiv}Let $\cH=(\Sigma,\as,\bs,\ws)$ denote a Heegaard diagram and let  $\cH'=(\Sigma\# \bT^2, \as\cup \{\alpha_0\}, \bs\cup \{\beta_0\},\ws)$ denote the diagram obtained by connect summing the diagram $(\bT^2,\alpha_0,\beta_0)$ for $S^1\times S^2$, as shown in Figure~\ref{fig::76}. Let $c$, $c_1$ and $c_2$ denote the three circles labeled therein. If $\ve{T}=(T,T_1,T_2)$ is a triple of positive real numbers, let $J(\ve{T})$ denotes an almost complex structure which has been stretched along $c$, $c_1$ and $c_2$, with neck-lengths $T$, $T_1$ and $T_2$. If all components of $\ve{T}$ and $\ve{T}'$ are sufficiently large, then there exists a non-cylindrical almost complex structure $\tilde{J}$ on $\Sigma\# \bT^2\times [0,1]\times \R$, interpolating $J(\ve{T})$ and $J(\ve{T}')$, such that
\begin{equation}
\begin{split}
\Psi_{\tilde{J}} (\xs\times \theta^+)&= \xs\times \theta^+\qquad \text{and}\\
\Psi_{\tilde{J}}(\ve{x}\times \theta^-)&=\ve{x}\times \theta^-+\sum_{\ys\in \bT_{\a}\cap \bT_{\b}} C_{\xs,\ys}\cdot \ys\times \theta^+,
\end{split}
\label{eq:defs-of-1-handle-maps-equivalent}
\end{equation}
for some $C_{\xs,\ys}\in \bF_2[U_{\ws}]$ (which depend on $\tilde{J}$). 
\end{lem}

\begin{figure}[ht!]
\centering
%% Creator: Inkscape inkscape 0.92.3, www.inkscape.org
%% PDF/EPS/PS + LaTeX output extension by Johan Engelen, 2010
%% Accompanies image file '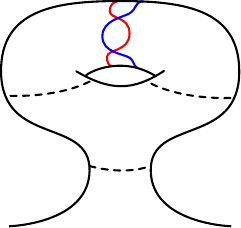' (pdf, eps, ps)
%%
%% To include the image in your LaTeX document, write
%%   \input{<filename>.pdf_tex}
%%  instead of
%%   \includegraphics{<filename>.pdf}
%% To scale the image, write
%%   \def\svgwidth{<desired width>}
%%   \input{<filename>.pdf_tex}
%%  instead of
%%   \includegraphics[width=<desired width>]{<filename>.pdf}
%%
%% Images with a different path to the parent latex file can
%% be accessed with the `import' package (which may need to be
%% installed) using
%%   \usepackage{import}
%% in the preamble, and then including the image with
%%   \import{<path to file>}{<filename>.pdf_tex}
%% Alternatively, one can specify
%%   \graphicspath{{<path to file>/}}
%% 
%% For more information, please see info/svg-inkscape on CTAN:
%%   http://tug.ctan.org/tex-archive/info/svg-inkscape
%%
\begingroup%
  \makeatletter%
  \providecommand\color[2][]{%
    \errmessage{(Inkscape) Color is used for the text in Inkscape, but the package 'color.sty' is not loaded}%
    \renewcommand\color[2][]{}%
  }%
  \providecommand\transparent[1]{%
    \errmessage{(Inkscape) Transparency is used (non-zero) for the text in Inkscape, but the package 'transparent.sty' is not loaded}%
    \renewcommand\transparent[1]{}%
  }%
  \providecommand\rotatebox[2]{#2}%
  \newcommand*\fsize{\dimexpr\f@size pt\relax}%
  \newcommand*\lineheight[1]{\fontsize{\fsize}{#1\fsize}\selectfont}%
  \ifx\svgwidth\undefined%
    \setlength{\unitlength}{115.3428398bp}%
    \ifx\svgscale\undefined%
      \relax%
    \else%
      \setlength{\unitlength}{\unitlength * \real{\svgscale}}%
    \fi%
  \else%
    \setlength{\unitlength}{\svgwidth}%
  \fi%
  \global\let\svgwidth\undefined%
  \global\let\svgscale\undefined%
  \makeatother%
  \begin{picture}(1,0.94593635)%
    \lineheight{1}%
    \setlength\tabcolsep{0pt}%
    \put(0,0){\includegraphics[width=\unitlength,page=1]{fig76.pdf}}%
    \put(0.49997281,0.27356502){\color[rgb]{0,0,0}\makebox(0,0)[t]{\lineheight{1.25}\smash{\begin{tabular}[t]{c}$c$\end{tabular}}}}%
    \put(0.24967265,0.59238642){\color[rgb]{0,0,0}\makebox(0,0)[rt]{\lineheight{1.25}\smash{\begin{tabular}[t]{r}$c_1$\end{tabular}}}}%
    \put(0.73363436,0.59238642){\color[rgb]{0,0,0}\makebox(0,0)[lt]{\lineheight{1.25}\smash{\begin{tabular}[t]{l}$c_2$\end{tabular}}}}%
    \put(0.4010187,0.78193261){\color[rgb]{0,0,1}\makebox(0,0)[rt]{\lineheight{1.25}\smash{\begin{tabular}[t]{r}$\beta_0$\end{tabular}}}}%
    \put(0.5683646,0.78279604){\color[rgb]{1,0,0}\makebox(0,0)[lt]{\lineheight{1.25}\smash{\begin{tabular}[t]{l}$\alpha_0$\end{tabular}}}}%
  \end{picture}%
\endgroup%

\caption{\textbf{The diagram $\cH'=(\Sigma\# \bT^2, \as\cup \{\alpha_0\}, \bs\cup \{\beta_0\},\ws)$ and the circles $c,$ $c_1$ and $c_2$ in Lemma~\ref{lem:2-defs-1-handle-equiv}.} }\label{fig::76}
\end{figure}

\begin{proof} First, a modification of Lemma~\ref{lem:indexdisksonsphere} implies that if $\phi\in \pi_2(\xs,\ys)$ is class of disks on $(\Sigma,\as,\bs,\ws)$, and $\phi_0\in \pi_2(x,y)$ is a class of disks on $(\bT^2,\alpha_0,\beta_0)$, then
\begin{equation}
\mu(\phi\# \phi_0)=\mu(\phi)+\gr(x,y). \label{eq:index-equiv-defs-1-handle}
\end{equation}
Note that the present lemma statement concerns only classes with $\gr(x,y)\ge 0$. 

Suppose that $\ve{T}_i$ and $\ve{T}_i'$ are two sequences of neck-lengths, all of whose components approach $+\infty$. 
We pick non-cylindrical almost complex structures $\tilde{J}_i$, interpolating $J(\ve{T}_i)$ and $J(\ve{T}_i')$, such that $(\Sigma\# \bT^2\times [0,1]\times \R, \tilde{J}_i)$ contains the almost complex submanifold $((\Sigma\setminus N_i)\times [0,1]\times \R,J)$, where $N_i$ is some nested sequence of open neighborhoods of the connected sum point $p\in \Sigma$, such that $\bigcap_{i\in \N} N_i=\{p\}$, and $J$ is a fixed, cylindrical almost complex structure on $\Sigma\times [0,1]\times \R$. Fix a Maslov index 0 class $\phi\# \phi_0$. As in the proof of Proposition~\ref{prop:freestab-Tsufflargeexist},  from a sequence $u_i$ of $\tilde{J}_i$-holomorphic representatives of $\phi\# \phi_0$, we may extract a broken representative of $\phi$. 

If $\gr(x,y)=1$ and $\phi\# \phi_0$ is a Maslov index 0 class with $\tilde{J}_i$-holomorphic representatives for large $i$, then Equation~\eqref{eq:index-equiv-defs-1-handle} implies that $\mu(\phi)=-1$. However, since $\phi$ admits a broken $J$-holomorphic representative and $J$ is cylindrical, we must have $\mu(\phi)\ge 0$, so we obtain a contradiction. Hence, such classes $\phi\# \phi_0$ have no representatives for large $i$.

If $\gr(x,y)=0$ and $\phi\# \phi_0$ is a Maslov index 0 class with $\tilde{J}_i$-holomorphic representatives for large $i$, then Equation~\eqref{eq:index-equiv-defs-1-handle} implies $\mu(\phi)=0$. Since $\phi$ admits broken holomorphic representatives for a generic, cylindrical almost complex structure, we conclude that $\phi$ is the constant class. There are no non-negative, Maslov index 0 classes on $(\bT^2,\alpha_0,\beta_0)$ with zero multiplicity over the connected sum point. Hence $\phi_0$ must also be a constant class, $e_{\theta}$ for $\theta\in \{\theta^+,\theta^-\}$. On the other hand, the class $e_{\xs\times \theta}$ always has $\tilde{J}_i$-holomorphic representatives. Equation~\eqref{eq:defs-of-1-handle-maps-equivalent} follows and the proof is complete.
\end{proof}

\section{2-handles}
\label{sec:2-handles}
In this section, we describe the cobordism maps for 2-handles. The maps we describe are essentially the same as those defined by Ozsv\'{a}th and Szab\'{o} \cite{OSTriangles}. They are also similar to the versions defined by Juh\'{a}sz \cite{JCob}  in the setting of cobordisms of sutured manifolds. 

\subsection{Definition of the 2-handle maps}

\begin{define} A \emph{framed link} $\bS^1$ in a 3-manifold $Y$ is a collection of pairwise disjoint, embedded knots $K_1,\dots, K_m\subset Y$ together with a choice framing, i.e. a choice of homology classes $\ell_i\in H_1(\d N(K_i);\Z)$ satisfying $\mu_i\cdot \ell_i=1$ for a meridian $\mu_i$ of $K_i$. 
\end{define}

Adapting Ozsv\'{a}th and Szab\'{o}'s definition in the singly pointed setting \cite{OSTriangles}*{Definition~4.1}, we make the following definition:

\begin{define}Suppose $(Y,\ws)$ is a multi-pointed 3-manifold and $\bS^1$ is a framed link in $Y$, with components $K_1,\dots, K_m$. A \emph{bouquet} $B$ of $\bS^1$ is an embedded forest in $Y$ (i.e. a collection of embedded, contractible graphs), such that each leaf of $B$ is a point on $K_i$, or a point in $\ws$. Furthermore, we assume the following:
\begin{enumerate}
\item For each $i\in \{1,\dots, m\}$, the set $B\cap K_i$ contains a single point.
\item Each connected component of $B$ intersects exactly one point in $\ws$, and furthermore that point is a leaf of $B$.
\end{enumerate} 
\end{define}

 If $B$ is a bouquet of the link $\bS^1$ in $(Y,\ws)$, then we form a manifold with boundary $Y_{B\cup \bS^1,\ws}$, by removing a regular neighborhood of $B\cup \bS^1$. We  decorate the boundary of $Y_{B\cup \bS^1,\ws}$ with a collection of \emph{sutures} (i.e. a collection of oriented, simple closed curves which divide the boundary into two subsurfaces) by adding one contractible suture for each basepoint in $\ws$.

\begin{define}Suppose that $\bS^1$ is a framed link in $Y$, with components $K_1,\dots, K_m$. We say that a Heegaard triple 
\[
(\Sigma,\as,\bs,\bs',\ws)=(\Sigma, \{\alpha_1,\dots, \alpha_n\}, \{\beta_1,\dots, \beta_n\}, \{\beta'_1,\dots, \beta_n'\}, \ws)
\]
 is \emph{subordinate} to a bouquet $B$ for the framed link $\bS^1$ in $Y$ if the following hold:
 \begin{enumerate}
 \item $(\Sigma,\as,\bs,\ws)$ is a Heegaard diagram for $(Y,\ws)$.
 \item If $\Sigma_0$ denotes the surface obtained by removing small neighborhoods of the basepoints $\ws$, then 
 \[
 (\Sigma_0,\{\alpha_1,\dots, \alpha_n\}, \{\beta_{m+1},\dots, \beta_{n}\})
 \]
 is a sutured Heegaard diagram for the sutured manifold $Y_{B\cup \bS^1,\ws}$.
 \item The curves $\beta'_{m+1},\dots, \beta'_{n}$ are small isotopies of the curves $\beta_{m+1},\dots, \beta_{n}$. Furthermore,
 \[
 |\beta_i\cap \beta_j'|=2\delta_{ij},
 \]
 whenever $m+1\le i,j\le n$.
 \item If $1\le i\le m$, then $\beta_{i}$ is a meridian of $K_i$.
 \item If $1\le i\le m$, then $\beta_{i}'$ is a longitude of $K_i$, corresponding to the framing.
 \item If $1\le i\le m$, then $\beta_{i}'$ is disjoint from the curves $\beta_{m+1},\dots, \beta_{n}$. Furthermore,
   \[
|\beta_{i}'\cap \beta_{j}| =\delta_{ij},
 \] 
 whenever $1\le i,j\le m$.
 \end{enumerate}
\end{define}

Since $Y_{\b,\b'}$ is a connected sum of $g(\Sigma)-m$ copies of $S^1\times S^2$, a theorem of Laudenbach and Po\'{e}naru \cite{LaudenbachPoenaru} implies that the diffeomorphism type of the 4-manifold obtained by attaching 3 and 4-handles to $Y_{\b,\b'}\subset \d X_{\a,\b,\b'}$ is unique. Similar to \cite{OSTriangles}*{Proposition~4.3}, we have the following simple description of the resulting 4-manifold:

\begin{lem}\label{lem:Xabb'=handle-cobordism}
Suppose $(\Sigma,\as,\bs,\bs',\ws)$ is subordinate to a bouquet for a framed link $\bS^1$ in $Y$. After filling in the boundary component $Y_{\b,\b'}\subset \d X_{\a,\b,\b'}$ with 3- and 4-handles, we obtain the handle cobordism $W(Y,\bS^1)$.
\end{lem}
\begin{proof} Add an extra product layer and view $W(Y,\bS^1)$ as the union
\begin{equation}
W(Y,\bS^1)=([1,2]\times Y(\bS^1))\cup H_2 \cup ([0,1]\times Y), \label{eq:add-product-layers}
\end{equation}
where $H_2$ is a union of copies of $D^2\times D^2$ (the 2-handles). Write
\[
Y=U_{\a}\cup ([0,1]\times \Sigma)\cup U_{\b}\qquad \text{and} \qquad Y(\bS)=U_{\a}\cup ([0,1]\times \Sigma)\cup U_{\b'},
\]
and delete 
\[
W_0:=([1,1+\epsilon]\times U_{\b'})\cup H,
\]
from $W(Y,\bS^1)$. We note that $W_0$ is a 4-dimensional handlebody. Using the description of $W(Y,\bS^1)$ from Equation~\eqref{eq:add-product-layers}, we can write
\begin{equation}
W\setminus \Int (W_0)=\left(U_{\a}\times [0,2]\right)\cup \left(U_{\b}\times [0,1]\right)\cup \left(U_{\b'}\times [1+\epsilon,2]\right)\cup ([0,2]\times [0,1]\times \Sigma).\label{eq:W(Y,S)-cut-out-middle-layer}
\end{equation}
Upon rounding corners and identifying $[0,2]\times [0,1]$ topologically with a triangle, Equation~\eqref{eq:W(Y,S)-cut-out-middle-layer} is identical to $X_{\a,\b,\b'}$, as defined in Equation~\eqref{eq:X_abgdef}.
\end{proof}

We now define the cobordism maps for 2-handle cobordisms. Suppose $\bS^1\subset Y$ is a framed link in $Y$, and $B$ is a bouquet. Let $(\Sigma,\as,\bs,\bs',\ws)$ be a Heegaard triple subordinate to $B$. There is a unique top graded intersection point $\Theta_{\beta,\beta'}^+\in \bT_{\b}\cap \bT_{\b'}$. Furthermore, it is straightforward to see that $\Theta_{\beta,\beta'}^+$ is a cycle in the complex $\CF^-(\Sigma,\bs,\bs',\ws,\frs_0)$, where $\frs_0\in \Spin^c(Y_{\b,\b'})$ is the unique torsion $\Spin^c$ structure.

By Lemma~\ref{lem:Xabb'=handle-cobordism}, there is a canonical isomorphism
\[
\Spin^c(X_{\a,\b,\b'})\iso \Spin^c(W(Y,\bS^1)).
\]
Therefore, we will not distinguish between $\Spin^c$ structures on $X_{\a,\b,\b'}$ and $W(Y,\bS^1)$.

If $\frs\in \Spin^c(W(Y,\bS^1))$, the 2-handle map
\[
F_{Y,\bS^1,\frs}\colon \CF^-(\Sigma,\as,\bs,\ws,\frs|_{Y})\to \CF^-(\Sigma,\bs,\bs',\ws,\frs|_{Y(\bS^1)})
\]
is defined as the holomorphic triangle map
\begin{equation}
F_{Y,\bS^1,\frs}(\xs):=F_{\a,\b,\b',\frs}(\xs\otimes\Theta^+_{\beta,\beta'})=\sum_{\zs\in \bT_{\a}\cap \bT_{\b'}} \sum_{\substack{\psi\in \pi_2(\xs,\Theta^+_{\b,\b'},\zs)\\
\mu(\psi)=0\\
\frs_{\ws}(\psi)=\frs}} \# \cM(\psi)U_{\ws}^{n_{\ws}(\psi)}\cdot \zs, \label{def:2-handlemap}
\end{equation}
extended $\bF_2[U_{\ws}]$-equivariantly.

\subsection{Simple stabilizations and triangle maps}

In this section, we prove that the holomorphic triangle maps are invariant under simple stabilizations. For Ozsv\'{a}th and Szab\'{o}'s original argument in the context of the symmetric product, we refer the reader to \cite{OSDisks}*{Theorem~10.4} and \cite{OSTriangles}*{Theorem~2.14}.

\begin{define} Let $(\bT^2, \delta_0,\epsilon_0,\epsilon'_0,p_0)$ denote the genus 1 Heegaard triple shown in Figure~\ref{fig::74}. If $(\Sigma,\as,\bs,\gs,\ws)$ is a Heegaard triple, we say the Heegaard triple
\[
(\Sigma', \as\cup \{\alpha_0\}, \bs\cup \{\beta_0\}, \gs\cup \{\gamma_0\}, \ws)
\]
is a \emph{simple stabilization} of $(\Sigma,\as,\bs,\gs,\ws)$ if $\Sigma'=\Sigma\# \bT^2$, and $\{\alpha_0,\beta_0,\gamma_0\}=\{\delta_0,\epsilon_0,\epsilon_0'\}$, setwise.
\end{define}

\begin{figure}[ht!]
\centering
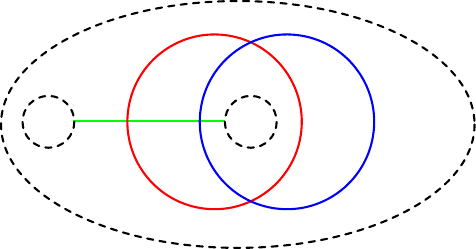
\caption{\textbf{The triple $(\bT^2,\delta_0,\epsilon_0,\epsilon_0',p_0)$ used in a simple stabilization of a Heegaard triple.} We view  $p_0$ as being the outer dashed circle, collapsed to a point. }\label{fig::74}
\end{figure}

%Note that the roles of $\epsilon_0$, $\epsilon_0'$ and $\delta_0$ as $\alpha$, $\beta$ or $\gamma$ curves gets permuted in the three stated equations of Theorem~\ref{thm:simple-stabilization-triangle-maps}.

\begin{lem}\label{lem:spinc-isomoprhism-simple-stabl}
If $(\Sigma,\as,\bs,\gs,\ws)$ is a Heegaard triple and $(\Sigma\# \bT_0, \as\cup \{\alpha_0\}, \bs\cup \{\beta_0\}, \gs\cup \{\gamma_0\}, \ws)$ is a simple stabilization, then there is a canonical isomorphism
\begin{equation}
\Spin^c(X_{\a,\b,\g})\iso \Spin^c(X_{\a\cup \{\a_0\}, \b\cup \{\b_0\}, \g\cup \{\g_0\}}),\label{eq:spin-c-canonical-isomorphism-simple-stab}
\end{equation}
\end{lem}
The proof of Lemma~\ref{lem:spinc-isomoprhism-simple-stabl} is a Mayer-Vietoris argument similar to Lemma~\ref{lem:canonical-iso-spinc-1-handles}, which we leave to the reader.

\begin{thm}\label{thm:simple-stabilization-triangle-maps} Suppose $\cT=(\Sigma,\as,\bs,\gs,\ws)$ is a Heegaard triple, $(\bT^2,\delta_0,\epsilon_0,\epsilon'_0,p_0)$ is the Heegaard triple shown in Figure~\ref{fig::74}, and $\frd$ is a gluing datum for stretching the neck. Let $a,$ $b$, $\theta^+$ and $\theta^-$ denote the intersection points shown in Figure~\ref{fig::74}. For sufficiently large $T$, we have the following:
 \begin{equation}
 \begin{split}
 F_{\a\cup \{\epsilon_0\}, \b\cup \{\epsilon_0'\}, \g\cup \{\delta_0\}, J^\frd(T), \frs}(\xs\times \theta^+, \ys\times a)&=F_{\a,\b,\g, J^\frd, \frs}(\xs,\ys)\otimes b,
 \\
 F_{\a\cup \{\delta_0\}, \b\cup \{\epsilon_0\}, \g\cup \{\epsilon_0'\}, J^\frd(T), \frs}(\xs\times b, \ys\times \theta^+)&=F_{\a,\b,\g, J^\frd, \frs}(\xs,\ys)\otimes a,
 \\
 F_{\a\cup \{\epsilon_0\}, \b\cup \{\delta_0\}, \g\cup \{\epsilon_0'\}, J^\frd(T), \frs}(\xs\times b, \ys\times a)&=F_{\a,\b,\g, J^\frd, \frs}(\xs,\ys)\otimes \theta^-+\sum_{\zs\in \bT_{\as}\cap \bT_{\gs}} C_{\xs,\ys,\zs} \cdot \zs\times \theta^+,
 \end{split}
 \label{eq:holomorphic-simple-stab-triangle-counts}
\end{equation}
for some $C_{\xs,\ys,\zs}\in \bF_2[U_{\ws}]$, which depend on $\frd$ and $T$.
\end{thm}

\begin{rem} Theorem~\ref{thm:simple-stabilization-triangle-maps} can be restated as follows. Let $\sigma_b$ denote the simple stabilization map defined on intersection points as $\sigma_b(\ve{x})= \ve{x}\times b,$ and extended equivariantly over $\bF_2[U_{\ws}]$. Define $\sigma_a$ similarly. Let $F_1^{\epsilon_0,\epsilon_0'}$ denote the 1-handle map, defined as $\ve{x}\mapsto \ve{x}\times \theta^+$, extended equivariantly over $\bF_2[U_{\ws}]$. Define the 3-handle map $F_3^{\epsilon_0,\epsilon_0'}$ via the formulas $\ve{x}\times \theta^+\mapsto 0$ and $\ve{x}\times \theta^-\mapsto \ve{x}$. Equation~\eqref{eq:holomorphic-simple-stab-triangle-counts} can be restated as
\begin{equation*}
\begin{split}
F_{\a\cup \{\epsilon_0\}, \b\cup \{\epsilon_0'\}, \g\cup \{\delta_0\}, J^\frd(T), \frs}(F_1^{\epsilon_0,\epsilon_0'}(\xs), \sigma_a(\ys))&=\sigma_b F_{\a,\b,\g, J^\frd, \frs}(\xs,\ys),
 \\
 F_{\a\cup \{\delta_0\}, \b\cup \{\epsilon_0\}, \g\cup \{\epsilon_0'\}, J^\frd(T), \frs}(\sigma_b(\xs), F_1^{\epsilon_0,\epsilon_0'}(\ys))&=\sigma_a F_{\a,\b,\g, J^\frd, \frs}(\xs,\ys),
 \\
 F_{\a\cup \{\epsilon_0\}, \b\cup \{\delta_0\}, \g\cup \{\epsilon_0'\}, J^\frd(T), \frs}(\sigma_b(\xs), \sigma_a(\ys))&=F_1^{\epsilon_0,\epsilon_0'} F_{\a,\b,\g, J^\frd, \frs}(\xs,\ys)\otimes \theta^-,
 \end{split}
\end{equation*}
\end{rem}

\begin{proof}[Proof of Theorem~\ref{thm:simple-stabilization-triangle-maps}]The proof we present is similar to the proofs of Theorems~\ref{thm:freestabilizetriangles} and~\ref{thm:1-handletriangle}. We focus on the first formula in Equation~\eqref{eq:holomorphic-simple-stab-triangle-counts}, since second and third are proven similarly.

A Maslov index computation similar to Lemma~\ref{lem:indexdisksonsphere} shows that if $\psi_0\in \pi_2(\theta^+, a, b)$, then
\[
\mu(\psi_0)=2n_{p_0}(\psi_0)+\gr(\theta^+,\theta).
\]
Consequently, if $\psi\# \psi_0\in \pi_2(\xs\times \theta,\ys\times a, \zs\times b)$, then
\begin{equation}
\mu(\psi\# \psi_0)=\mu(\psi)+\gr(\theta^+,\theta).\label{eq:Maslov-simple-stab-triangles}
\end{equation}

Next, to obtain transversality at curves which appear in our proof, we consider almost complex structures on $\Delta\times \Sigma$ satisfying~\eqref{def:J'1}, \eqref{def:J'2}, \eqref{def:J'3'}, \eqref{def:J'4'} and~\eqref{def:J'5'}; see Proposition~\ref{prop:transversality} for a precise statement of transversality. These are the same almost complex structures considered in the proof of handleswap invariance \cite{JTNaturality}*{Section~9.3}; we refer the reader there for a detailed account of a similar argument.

As in the proofs of Theorems~\ref{thm:freestabilizetriangles} and ~\ref{thm:1-handletriangle}, a sequence of $J^{\frd}(T_i)$-holomorphic representatives of $\psi\#\psi_0$ will degenerate into a pair of broken holomorphic triangles $\cU$ and $\cU_0$, representing $\psi$ and $\psi_0$, respectively. Using Equation~\eqref{eq:Maslov-simple-stab-triangles}, as well as a transversality, we conclude that $\mu(\psi)=0$. Consequently, arguing as in Theorems~\ref{thm:freestabilizetriangles} and~\ref{thm:1-handletriangle}, the broken triangles $\cU$ and $\cU_0$ each consist of a single curve, $u$ and $u_0$ respectively, which satisfy the matching condition
\[
\rho^p(u)=\rho^{p_0}(u_0),
\]
where $p$ and $p_0$ are the connected sum points. Via a gluing argument, as in the proofs of Theorems~\ref{thm:freestabilizetriangles} and ~\ref{thm:1-handletriangle}, it is sufficient to show that for a generic $\ve{d}\in \Sym^{n}(\Delta)$,  the following count holds for the matched moduli spaces on $(\bT^2, \epsilon_0,\epsilon_0',\delta_0,p_0)$:
\begin{equation}
 \sum_{\substack{\psi_0\in \pi_2(\theta^+,a,b)\\ n_{p_0}(\psi_0)=n}} \# \cM(\psi_0,\ve{d})\equiv 1 \pmod{2}.\label{eq:simple-stab-main-count}
\end{equation}
The argument to establish Equation~\eqref{eq:simple-stab-main-count} is formally similar to the argument for establishing Equations~\eqref{eq:OS'scountmatched}, \eqref{eq:divisorcounttriangles},   \eqref{eq:divisor-count-1-handles} and \eqref{eq:divisor-count-triangles-more-general}.

Pick a path $(\ve{d}_t)_{t\in [0,\infty)}$ in $\Sym^n(\Delta)$ satisfying the following:
\begin{enumerate}
\item $\ve{d}_0=\ve{d}$.
\item $\ve{d}_t$ is disjoint from the fat diagonal.
\item The points in $\ve{d}_t$ all enter into the $\epsilon_0$-$\delta_0$ cylindrical end of $\Delta\times \Sigma$.
\item For large $t$, the points in $\ve{d}_t$ are spaced at least distance $t$ apart, with respect to the Euclidean metric under the identification of the cylindrical ends as $[0,1]\times [0,\infty)$.
\item The points of $\ve{d}_t$ all approach a line $\{s_0\}\times [0,\infty)$ as $t\to \infty$.
\end{enumerate}

Write $\ve{\cD}:=\{\ve{d}_t: t\in [0,\infty)\}$, and consider the 1-dimensional moduli space 
\[
\cM_{(\theta^+,a,b)}(\ve{\cD}):=\coprod_{
\substack{
\psi_0\in \pi_2(\theta^+,a,b)\\
n_{p_0}(\psi)=n}}
\bigcup_{t\in [0,\infty)}\cM_{J^\frd(T)}(\psi_0, \ve{d}_t)
\]
We count the ends of $\cM(\ve{\cD})$. As in the proof of Equation~\eqref{eq:divisorcounttriangles}, generically, the ends correspond to the following:
\begin{enumerate}[ref= e$'$-\arabic*, label= (e$'$-\arabic*):]
\item\label{ends':1}  $t=0$.
\item\label{ends':2} Index 1 holomorphic strips breaking off at $t\in (0,\infty)$, with zero multiplicity at $p_0$.
\item\label{ends':3}  $t\to \infty$.
\end{enumerate}

The count of the ends of the form ~\eqref{ends':1} is equal to the left hand side of Equation~\eqref{eq:simple-stab-main-count}.

The ends of the form \eqref{ends':2} cancel modulo 2, since $\theta^+$ is a cycle in $\hat{\CF}(\bT^2,\epsilon_0,\epsilon'_0,p_0)$, and $a$ and $b$ are cycles in their appropriate complexes.

The ends of the form~\eqref{ends':3} correspond to the Cartesian product
\begin{equation}
\bigg(\coprod_{\substack{\psi_0^0\in \pi_2(\theta^+,a,b)\\
n_{p_0}(\psi_0^0)=0}} \cM(\psi_0^0)\bigg)\times \bigg( \coprod_{\substack{\phi\in \pi_2(b,b)\\
n_{p_0}(\phi)=n}} \cM(\phi,d)\bigg),
\label{eq:simple-stab-cartesian-product}
\end{equation}
where $d$ is a point on the line $\{s_0\}\times \R$.

We wish to show that the total count in Equation~\eqref{eq:simple-stab-cartesian-product} is 1. The total count of the left factor is 1, since there is only one class which contributes, and that class is a small triangle.  It remains to show that if $\phi$ denotes the index 2 class in $\pi_2(b,b)$ with $n_{p_0}(\phi)=1$, then
\begin{equation}
\# \cM(\phi,d)\equiv 1 \pmod 2\label{eq:Lipshitz's-counts}
\end{equation}

Note that by axiom~\eqref{def:J'3'}, $\cM(\phi,d)$ consists of holomorphic curves for an almost complex structure on $\bT^2\times [0,1]\times \R$ which satisfies \eqref{def:J1}--\eqref{def:J4} and \eqref{def:J5'}. These are precisely the almost complex structures Lipshitz used to prove stabilization invariance of the Heegaard Floer complexes. With respect to these almost complex structures, Lipshitz proves Equation~\eqref{eq:Lipshitz's-counts} while proving stabilization invariance; see \cite{LipshitzCylindrical}*{Sublemma~A.12}. We repeat Lipshitz's argument here, for the convenience of the reader.

Consider the genus 2 diagram for $S^1\times S^2$ in Figure~\ref{fig::75}. Let $\phi_1\#\phi_0$ denote the index 1 class shown. The class $\phi_1$ is a bigon, and $\phi_0$ denotes the index 2 class on $\bT^2$. By a neck stretching argument (analogous to the one above), it follows that for large neck length,
\[
\# \hat{\cM}(\phi_1\# \phi_0)=\# \hat{\cM}(\phi_1)\cdot \# \hat{\cM}(\phi_0,d).
\]
On the other hand, by invariance of Heegaard Floer homology, since the diagram represents $S^1\times S^2$, and there is only one other non-negative index 1 class (a  bigon), we must have $\# \hat{\cM}(\phi_1\# \phi_0)=1$.  Equation~\eqref{eq:Lipshitz's-counts} follows.

The other two stated formulas from Equation~\eqref{eq:holomorphic-simple-stab-triangle-counts} follow \emph{mutatis mutandis}.
\end{proof}

\begin{figure}[ht!]
\centering
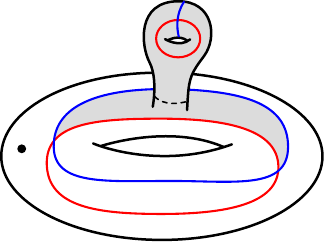
\caption{\textbf{A once stabilized diagram of $S^1\times S^2$ used in the proof of Theorem~\ref{thm:simple-stabilization-triangle-maps}.} The shaded class is $\phi_1\# \phi_0$. We stretch along the dashed line.}\label{fig::75}
\end{figure}

\subsection{Invariance of the 2-handle maps}

We now prove invariance of the 2-handle maps. Since the 2-handle maps which feature in the graph TQFT are essentially identical to those defined by Ozsv\'{a}th and Szab\'{o}, our exposition will be terse.

Extending \cite{OSTriangles}*{Lemma~4.5} to multi-pointed 3-manifolds, we have the following:
\begin{lem} \label{lem:moves-between-subordinate-triples}
 Suppose $\bS^1$ is an $m$-component framed link in $(Y,\ws)$, and $B\subset Y$ is a fixed bouquet of $\bS^1$. Then there exists a Heegaard triple $(\Sigma,\as,\bs,\bs',\ws)$ subordinate to $B$. Furthermore, two Heegaard triples subordinate to $\bS^1$ can be connected by a sequence of the following moves:
 \begin{enumerate}
 \item\label{sub-trip-move-1} An isotopy or handleslide amongst the $\as$ curves.
 \item\label{sub-trip-move-2} An isotopy or handleslide amongst the curves $\beta_{m+1},\dots, \beta_n$, followed by the corresponding move applied to $\beta_{m+1}',\dots,\beta_{n}'$.
 \item\label{sub-trip-move-3} A simple stabilization or destabilization (of the first or second types in Theorem~\ref{thm:simple-stabilization-triangle-maps}), in the complement of $B\cup \bS^1$.
 \item \label{sub-trip-move-4}For $i\in \{1,\dots, m\}$, an isotopy of $\beta_i$, or a handleslide of $\beta_i$ across one of the $\beta_{m+1},\dots, \beta_n$.
 \item\label{sub-trip-move-5} For $i\in \{1,\dots, m\}$, an isotopy of $\beta_i'$, or a handleslide of $\beta_i'$ across one of the $\beta'_{m+1},\dots, \beta_n'$. 
 \item\label{sub-trip-move-6} An isotopy $\Sigma_t$ of the Heegaard surface $\Sigma$, inside of $Y$, fixing $\ws$, such that $\Sigma_t$ intersects $B\cup \bS^1$ only along $\ws$, for all $t$.
 \end{enumerate}
\end{lem}
\begin{proof} To construct a Heegaard triple subordinate to $B$, pick a Heegaard diagram for the sutured manifold $Y_{B\cup \bS^1,\ws}$ (formed by removing a neighborhood of $B\cup \bS^1$ and adding a contractible suture for each basepoint in $\ws$). Let $(\Sigma, \alpha_1,\dots, \alpha_n, \beta_{m+1},\dots, \beta_n)$ denote this diagram. We then define $\beta_{m+1}',\dots, \beta_{n}'$ to be small isotopies of $\beta_{m+1},\dots, \beta_n$. 

A simple closed curve in $\d (Y_{B\cup \bS^1,\ws})$ which avoids the contractible regions bounded by the sutures can be projected onto $\Sigma$ to yield a curve which is in the complement of the curves $\beta_{m+1},\dots, \beta_n$. Note that the curve on $\Sigma$ obtained by projecting is only well defined up to isotopies and handleslides across the $\beta_{m+1},\dots, \beta_n$ curves. Let $\beta_{1},\dots, \beta_n\subset \Sigma$ be obtained by projecting meridians of the components of $\bS^1$ onto $\Sigma$. Let $\beta_1',\dots, \beta_n'$ denote projections of the longitudes. These curves are well defined up to Moves~\eqref{sub-trip-move-4} and~\eqref{sub-trip-move-5}.

Hence, given a sutured Heegaard diagram for $Y_{B\cup \bS^1,\ws}$ we can obtain a Heegaard triple subordinate to $B\cup \bS^1$ by the above procedure. Furthermore, any two triples constructed from a given Heegaard diagram via the above procedure can be connected by the listed moves. It remains to connect two sutured Heegaard diagrams for $Y_{B\cup \bS^1,\ws}$. Any two sutured Heegaard diagrams for $Y_{B\cup \bS^1,\ws}$ can be connected by a sequence of Heegaard moves by \cite{JDisks}*{Proposition~2.15}, which induce Moves~\eqref{sub-trip-move-1}, \eqref{sub-trip-move-2}, \eqref{sub-trip-move-3} and ~\eqref{sub-trip-move-6} on the resulting Heegaard triple. The proof is complete.
\end{proof}

We now show that the maps defined in Equation~\eqref{def:2-handlemap} are independent of the choice of bouquet, or subordinate triple:

\begin{lem} Suppose $\bS^1$ is a framed link in $(Y,\ws)$, $B_1$ and $B_2$ are two bouquets for $\bS^1$, and  $\cT_1$ and $\cT_2$ are Heegaard triples subordinate to $B_1$ and $B_2$, respectively. Write $F_{Y,\bS^1,\frs,\cT_1}$ and $F_{Y,\bS^1,\frs,\cT_2}$ for the 2-handle maps computed with $\cT_1$ and $\cT_2$. Write $\cH_1$ and $\cH_2$ for the diagrams of $Y$ induced by $\cT_1$ and $\cT_2$, respectively, and write $\cH_1'$ and $\cH_2'$ for the induced diagrams of $Y(\bS^1)$. The following diagram commutes up to homotopy:
\[
\begin{tikzcd}[column sep=2cm]\CF^-(\cH_1,\ws,\frs|_Y)\arrow{r}{\Psi_{\cH_1\to \cH_2}}\arrow{d}{F_{Y,\bS^1,\frs,\cT_1}}& \CF^-(\cH_2,\ws,\frs|_{Y})\arrow{d}{F_{Y,\bS^1,\frs,\cT_2}}\\
\CF^-(\cH_1',\ws,\frs|_{Y'})\arrow{r}{\Psi_{\cH_1'\to \cH_2'}}&\CF^-(\cH_2',\ws,\frs|_{Y'}).
\end{tikzcd}
\]
\end{lem}
\begin{proof} Our proof is no different that the original proof given by Ozsv\'{a}th and Szab\'{o} \cite{OSTriangles}*{Theorem~4.4}, so we will be terse. See also \cite{JCob}*{Theorem~6.9}.

First, we fix a bouquet $B$ for $\bS^1$ and show that $F_{Y,\bS^1,\frs,\cT}$ is independent of the triple subordinate to $B$. This amounts to proving independence from the moves in Lemma~\ref{lem:moves-between-subordinate-triples}. Invariance from Moves~\eqref{sub-trip-move-1}, \eqref{sub-trip-move-2}, \eqref{sub-trip-move-4} and~\eqref{sub-trip-move-5} all follow from a relatively straightforward argument using associativity of the triangle maps. Invariance from Move~\eqref{sub-trip-move-3} (simple stabilizations and destabilizations) follows from Theorem~\ref{thm:simple-stabilization-triangle-maps}. Finally invariance under Move~\eqref{sub-trip-move-6} (isotopies of the Heegaard surface) is tautological.

Next, one needs to prove independence from the bouquet.  The idea is that if $m+1\le i\le n$ and $1\le j\le m$, then we can handleslide $\beta_i$ twice across $\beta_j$, and handleslide $\beta_i'$ twice across $\beta_j'$ to change the bouquet, one 1-cell at a time. However the 3-manifold represented by $(\Sigma,\bs,\bs',\ws )$ is unchanged by a sequence of such handleslides, and hence the corresponding transition map will preserve the top degree generator. Consequently a simple associativity argument shows that the cobordism map $F_{Y,\bS^1,\frs,\cT}$ is unchanged.
\end{proof}

\subsection{The composition law for 2-handle maps}

We now state the $\Spin^c$ composition law for the 2-handle maps. We omit the proof, as it is identical to the proof given by Ozsv\'{a}th and Szab\'{o} for 2-handle cobordisms between singly pointed 3-manifolds \cite{OSTriangles}*{Proposition~4.9} using associativity of the holomorphic triangle maps \cite{OSDisks}*{Theorem~8.16}:

\begin{lem}\label{lem:compositionlawlinks}Suppose that $\bS_1$ and $\bS_2$ are two disjoint, framed, 1-dimensional links in $(Y,\ws)$, and $\frs_1\in \Spin^c(W(Y,\bS_1))$ and $\frs_2\in \Spin^c(W(Y(\bS_1),\bS_2)))$.
Then
\[
F_{Y(\bS_1), \bS_2,\frs_2}\circ F_{Y,\bS_1,\frs_1}=\sum_{\substack{\frs\in \Spin^c(W(Y(\bS_1),\bS_2))\\ \frs|_{W(Y,\bS_1)}=\frs_1\\
\frs|_{W(Y(\bS_1),\bS_2)}=\frs_2}} F_{Y, \bS_2\cup \bS_1,\frs}.
\]

\end{lem}

\section{Constructing the graph TQFT I}
\label{sec:constructionI}

In this section, we describe our maps for graph cobordisms which satisfy the following condition:

\begin{define}\label{def:has-enough-ends}
A cobordism $W\colon Y_0\to Y_1$ \emph{has enough ends} if each connected component of $W$ intersects both $Y_0$ and $Y_1$ non-trivially.
\end{define}

In the subsequent Section~\ref{sec:constructionII}, we define the maps cobordisms which may not have enough ends.

\subsection{Cerf theoretic preliminaries}

We need the following notion of a parametrized decomposition of a cobordism from \cite{JCob}*{Section~8.1}:

\begin{define}\label{def:enough-ends} Suppose that $W\colon Y_0\to Y_1$ is a cobordism with enough ends. A \emph{parametrized Kirby decomposition} $\cK$ of $W$ consists of the following data:
\begin{enumerate}
\item A decomposition
\[
W=W_n\circ \dots \circ W_0,
\]
where $W_i$ is a cobordism from $\cY_i$ to $\cY_{i+1}$, and $Y_0=\cY_0$ and $Y_1=\cY_{n+1}$.
\item For each $i\in \{0,\dots, n\}$, a framed link $\bS_i\subset \cY_i$, all of whose components have the same dimension.  The possibility $\bS_i=\emptyset$ is not excluded.
\item For each $i\in \{0,\dots, n\}$, a diffeomorphism $\Phi_i\colon W(\cY_i, \bS_i)\to W_i$, defined up to isotopy, such that $\Phi_i(0,y)=y$, for all $y\in \cY_i$.
\end{enumerate} Furthermore, the following are satisfied:
\begin{enumerate}
\item There is a $c\in \{0,\dots, n\}$ such that $\bS_c$ has dimension 1. 
\item  If $c>i$, then $\bS_i$ has dimension 0. If $i>c$, then $\bS_i$ has dimension 2. (Note, we allow $\emptyset$ to have any dimension).
\item If $\dim (\bS_i)=0$, then $\bS_i$ is empty or has 2 components. If $\dim(\bS_i)=2$, $\bS_i$ is empty or has 1 component.
\end{enumerate}
\end{define}

\begin{prop}\label{prop:connect-parametrized-Kirby-decomps}
Any two parametrized Kirby decompositions can be connected by a sequence of the following moves and their inverses:
\begin{enumerate}[leftmargin= 2 cm, ref= $\cK M$-\arabic*, label= ($\cK M$-\arabic*):]
\item \label{move:KM0} Adding or removing levels with $\bS=\emptyset$.
\item\label{move:KM1} Pushing $\cK$ forward under a diffeomorphism of $W$, which is the identity on $\d W$ and is isotopic to the identity relative to $\d W$.
\item\label{move:KM2} Exchanging the relative ordering of two  framed 0-spheres, which are in adjacent levels.
\item\label{move:KM3} Exchanging the relative ordering of two  framed 2-spheres, which are in adjacent levels.
\item\label{move:KM4} Handlesliding the components of $\bS_c$ (a framed 1-dimensional link) across each other.
\item\label{move:KM5} Canceling a framed 0-sphere $\bS^0$ with a framed 1-sphere $\bK^1$ in the subsequent level, if the belt sphere of $\bS^0$ intersects $\bK^1$ transversely in a single point.
\item\label{move:KM6} Canceling a framed 2-sphere $\bS^2$ with a framed 1-sphere $\bK^1$ in the previous level, if the belt sphere of $\bK^1$ intersects $\bS^2$ transversely in a single point.
\end{enumerate}
\end{prop}

We omit the proof Proposition~\ref{prop:connect-parametrized-Kirby-decomps}, and instead refer the reader to \cite{JuhaszTQFT}*{Section~2} and \cite{JCob}*{Theorem~8.9} for a careful Morse theoretic argument. We content ourselves with a more detailed topological description of  the moves in Proposition~\ref{prop:connect-parametrized-Kirby-decomps}.

Note that in Moves~\eqref{move:KM2}--\eqref{move:KM6}, we have not explained how the parametrizing diffeomorphisms are related. To this end, we note that if $\bS$ and $\bS'$ are pairwise disjoint framed spheres in $Y$, then there are canonical diffeomorphisms
\[
W(Y(\bS),\bS')\circ W(Y,\bS)\iso W(Y,\bS\cup \bS')\iso W(Y(\bS'),\bS)\circ W(Y,\bS').
\]
For example the diffeomorphism $W(Y(\bS),\bS')\circ W(Y,\bS)\iso W(Y,\bS\cup \bS')$ is obtained by noting that $W(Y(\bS),\bS')\circ W(Y,\bS)$ can be constructed by inserting a product layer $[0,1]\times Y(\bS)$ into $W(Y,\bS\cup \bS')$. Hence a diffeomorphism is obtained by picking a collar neighborhood of $Y(\bS)$ in $W(Y,\bS)$, which is unique up to isotopy. This describes the change in parametrizing diffeomorphisms in Moves~\eqref{move:KM2} and~\eqref{move:KM3}.

The change in parametrizing diffeomorphisms from  Move~\eqref{move:KM4}, a handleslide, is specified similarly. Suppose $\bK$ and $\bK'$ are two framed knots in $Y$, and $\bK''$ is obtained by handlesliding $\bK'$ across $\bK$. By our previous argument, there is a diffeomorphism between $W(Y,\bK\cup \bK')$ and $W(Y(\bK), \bK')\circ W(Y,\bK)$, which is well defined, up to isotopy. Next, $W(Y(\bK),\bK')$ and $W(Y(\bK), \bK'')$ are canonically diffeomorphic (up to isotopy), since $\bK'$ and $\bK''$ are isotopic in $Y(\bK)$. Finally, $W(Y(\bK),\bK'')\circ W(Y,\bK)$ and $W(Y,\bK\cup \bK'')$ are canonically diffeomorphic, up to isotopy, by our previous argument on reordering handles. Composing these diffeomorphisms gives a diffeomorphism between $W(Y,\bK\cup \bK')$ and $W(Y,\bK\cup \bK'')$ which is well defined up to isotopy.

For the change in parametrizing diffeomorphism after handle cancellations, Moves~\eqref{move:KM5} and \eqref{move:KM6}, we refer the reader to Juh\'{a}sz's work \cite{JuhaszTQFT}*{Definition~2.17}.

\subsection{Graphs in 4-space}

We  need the following transversality result concerning the intersection of graphs and the ascending and descending manifolds of a Morse function:

\begin{lem}\label{lem:effect-of-isotopies-on-smushed-level} Suppose that $(W,\Gamma)\colon (Y_0,\ws_0)\to (Y_1,\ws_1)$ is a graph cobordism and $f$ is a Morse function on $W$, with gradient like vector field $v$.
\begin{enumerate}
\item If $\Gamma$ is disjoint from $\Crit(f)$, then for generic $v$, $\Gamma$ is disjoint from the descending manifolds of the index 1 critical points, and the ascending manifolds of the index 3 critical points. Generically, $\Gamma$ is disjoint from both the ascending and descending manifolds of the index 2 critical points.
\item Suppose $\Gamma$ is a fixed, abstract graph, and $i_t\colon \Gamma\to W$ is a family of embeddings, whose intersection with $\d W$ is fixed for all $t$. For generic $i_t$, the image of $\Gamma$ is disjoint from $\Crit(f)$, and is disjoint from the descending manifolds of the index 1 critical points, and the ascending manifolds of the index 3 critical points. Generically, there are finitely many $t$ where $i_t(\Gamma)$ transversely intersects the ascending or descending manifold of an index 2 critical point of $f$. Generically, such intersections occur along the interior of an edge of $\Gamma$.
\end{enumerate}
\end{lem}
\begin{proof} We begin with the first claim, concerning a fixed graph.  The descending manifolds of index 1 critical points of $f$ are 1-dimensional, so generically a graph  will be disjoint, since $W$ is 4-dimensional. The same argument works for the ascending manifolds of index 3 critical points, which are also 1 dimensional. The ascending and descending manifolds of index 2 critical points are 2 dimensional, so a graph will be disjoint, generically.
 
The second claim, concerning 1-parameter families of graphs, follows from the same reasoning.
\end{proof}

\subsection{Definition of the graph cobordism maps}

Let $(W,\Gamma)\colon (Y_0,\ws_0)\to (Y_1,\ws_1)$ be a graph cobordism with enough ends. Let $\cK$ be a parametrized Kirby decomposition of $W$, which decomposes $W$ as
\[
W=W_n\circ \cdots \circ W_0.
\]
Let $c\in \{0,\dots, n\}$ denote the index of the 2-handle cobordism, and write $\cY_c$ and $\cY_{c+1}$ for the incoming and outgoing ends of $W_c$.

Suppose  $\Gamma$ is an embedded graph in $W$. The parametrizing diffeomorphisms of $\cK$ naturally equip each $W_i$ with a Morse function and gradient like vector field $(f_i,v_i)$, well defined up to isotopy in the handle attachment regions (see \cite{JuhaszTQFT}*{Lemma~2.15}). We can assume that the $(f_i,v_i)$ glue together to form a Morse function with gradient like vector field $(f,v)$ on all of $W$.

% Note we can take $f_i$ to be projection onto $[0,1]$ on $[0,1]\times (Y_i\setminus \bS_i)$, and  $v_i$ to be $\d/\d t$, where $t$ denote the first component.

We can assume, after a small perturbation of the Morse functions, that $\Gamma$ is disjoint from $\Crit(f)$. By Lemma~\ref{lem:effect-of-isotopies-on-smushed-level}, for generically chosen $v$, the graph $\Gamma$ is disjoint from the descending manifolds of the index 1 critical points,  the ascending manifolds of the index 3 critical points, as well as both the ascending and descending manifolds of the index 2 critical points.  Flow each point of $\Gamma$ along $v$ until it hits $\cY_c$. Note that we flow $\Gamma\cap (W_{n}\circ \cdots \circ W_c)$ backwards along $v$, and we flow $\Gamma\cap (W_{c-1}\circ \cdots \circ W_0)$ forward along $v$. Write 
\[
\Gamma_c\subset \cY_c
\]
 for the resulting graph. By perturbing $v$ slightly, we may assume that $\Gamma_c$ is embedded in $\cY_c$.

Upgrade $\Gamma_c$ to a flow-graph $\cG_c$ by letting the initial and terminal vertices of $\cG_c$  be the images of the  basepoints $\ws_0\subset Y_0$ and $\ws_1\subset Y_1$, under the flow of $v$.

 Let $\bS_0,\dots, \bS_n$ denote the framed links in $\cY_0,\dots, \cY_n$, associated to $\cK$, and let $\Phi_i\colon W(\cY_i,\bS_i)\to W_i$ denote the parametrizing diffeomorphisms. Let $\phi_i\colon \cY_i(\bS_i)\to \cY_{i+1}$ denote the restriction of $\Phi_i$ to the outgoing boundary of $ W(\cY_i,\bS_i)$.

The type-$A$ graph cobordism map is defined as
\begin{equation}
F_{W,\Gamma,\frs}^{A}:=\left(\prod_{i=c+1}^{n}  (\phi_i)_*\circ F_{\cY_i, \bS_i,  \frs_i}\right)\circ \left( (\phi_i)_*\circ F_{\cY_c, \bS_c, \frs_c}\right) \circ\frA_{\cG_c}\circ \left( \prod_{i=0}^{c-1}  (\phi_i)_*\circ F_{\cY_i, \bS_i, \frs_i}\right),\label{eq:graph-cobordism-definition}
\end{equation}
where $\frs_i:=\Phi_i^*(\frs|_{W_i})$.

We define the type-$B$ maps by replacing $\frA_{\cG}$ with $\frB_{\cG}$, in Equation~\eqref{eq:graph-cobordism-definition}.

\begin{thm}\label{thm:A-with-enough-ends} Suppose that $(W,\Gamma)\colon (Y_0,\ws_0)\to (Y_1,\ws_1)$ is a graph cobordism with enough ends.
\begin{enumerate}
\item\label{claim:ind-isotopies-K-fixed} For fixed parametrized Kirby decomposition $\cK$, the maps $F_{W,\Gamma,\frs}^A$ and $F_{W,\Gamma,\frs}^B$ are unchanged by smooth isotopies of the graph $\Gamma$ (in the sense of Definition~\ref{def:smooth-graph-isotopy}).
\item\label{claim:ind-of-K}  The maps $F_{W,\Gamma,\frs}^A$ and $F_{W,\Gamma,\frs}^B$ are independent from the choice of parametrized Kirby decomposition of $W$.
\item\label{claim:W=[0,1]xY-project-Gamma} If $W=[0,1]\times Y$, then $F_{W,\Gamma,\frs}^A\simeq \frA_{\cG}$, where $\cG$ is an immersed ribbon flow-graph which is homotopic to the projection of $\Gamma$ into $Y$. Similarly $F_{W,\Gamma,\frs}^B\simeq \frB_{\cG}$.
\end{enumerate}
\end{thm}
\begin{proof} 
We first verify Claim~\eqref{claim:ind-isotopies-K-fixed}, independence from isotopies of $\Gamma$, for fixed $\cK$. Lemma~\ref{lem:effect-of-isotopies-on-smushed-level} describes the codimension 1 configurations which occur for a generic smooth isotopy $\Gamma_t$. At all but finitely many $t$, the graph $\Gamma_t$ will be disjoint from the descending manifolds of the index 2 critical points. At all but these finitely many $t$, we may flow $\Gamma_t$ along a gradient like vector field (either upwards or downwards, depending on which portion of $\cK$ the graph $\Gamma_t$ lies in) until it intersects $\cY_c$, and obtain an immersed flow-graph $\cG_t$ in $\cY_c$. Homotopies of immersed flow-graphs, which fix boundary vertices, do not affect the graph action map; see Remark~\ref{rem:actual-embedding-not-important}. There are finitely many points of time $t$ where $\Gamma_t$ crosses a descending manifold of an index 2 critical point. If $t_0$ is such a value of $t$, and $\epsilon$ is sufficiently small, the induced flow-graphs $\cG_{t_0-\epsilon}$ and $\cG_{t_0+\epsilon}$ are related by sliding an edge of $\cG_{t_0-\epsilon}$ across one of the link components of the framed link $\bS_c$.

Let $\cG^-$ denote $\cG_{t_0-\epsilon}$ and let $\cG^+$ denote $\cG_{t_0+\epsilon}$. We wish to show that
\begin{equation}
F_{\cY_c,\bS_c, \frs_c}\circ \frA_{\cG^-}\simeq F_{\cY_c,\bS_c, \frs_c}\circ \frA_{\cG^+}. \label{eq:graph-action-invariant-handleslide}
\end{equation}

By Theorem~\ref{thm:freestabilizetriangles} and Lemma~\ref{lem:relativehomologycommutestriangleII}, the graph action map commutes with 2-handle maps. However in $Y(\bS)$, the flow-graphs $\cG^{-}$ and $\cG^+$ are isotopic. Consequently,
\[
F_{\cY_c,\bS_c,\frs_c}\circ \frA_{\cG^-}\simeq \frA_{\cG^-}\circ  F_{\cY_c,\bS_c,\frs_c}\simeq \frA_{\cG^+}\circ F_{\cY_c,\bS_c,\frs_c}\simeq F_{\cY_c,\bS_c,\frs_c}\circ \frA_{\cG^+},
\]
establishing Equation~\eqref{eq:graph-action-invariant-handleslide}. It follows that for fixed $\cK$, the maps $F_{W,\Gamma,\frs}^A$ and $F_{W,\Gamma,\frs}^B$  are invariant from smooth isotopies of the graph $\Gamma$.

We now show that $F_{W,\Gamma,\frs}^A$ is invariant from the choice of parametrized Kirby decomposition. It is sufficient to show invariance from the Moves~\eqref{move:KM0}--\eqref{move:KM6}  from Proposition~\ref{prop:connect-parametrized-Kirby-decomps}.

Invariance under Move~\eqref{move:KM0} (adding identity layers) is a tautology.

Invariance from Move~\eqref{move:KM1} (pushing $\cK$ forward under an isotopy) is established as follows. Let $(\Phi_t)_{t\in [0,1]}$ be an isotopy of $W$ which is fixed on $\d W$ and satisfies $\Phi_0=\id_W$. Let $\cK$ be a parametrized Kirby decomposition, and let $\cK'$ denote the pushforward of $\cK$ under $\Phi_1$. Note that tautologically, the map $F_{W,\Gamma,\frs}^A$, computed with $\cK$, agrees with $F_{W,\Phi_1(\Gamma),\frs}^A$, computed with $\cK'$. By our previous argument, the graph cobordism maps (for fixed $\cK$) are invariant under smooth isotopies of the graph, so the map $F_{W,\Phi_1(\Gamma),\frs}^A$, computed with $\cK'$, coincides with the map $F_{W,\Gamma,\frs}^A$, computed with $\cK'$.

Invariance under Moves~\eqref{move:KM2} and~\eqref{move:KM3} (reordering disjoint 1-handles or 3-handles) follows from Proposition~\ref{lem:1-handles-commute}.

Invariance under Move~\eqref{move:KM4} (handleslides) follows from Ozsv\'{a}th and Szab\'{o}'s proof \cite{OSTriangles}*{Lemma~4.14}, and is a consequence of invariance under the Heegaard Floer groups under handleslides of the $\as$ and $\bs$, together with an associativity argument.

Our proof of invariance under Move~\eqref{move:KM5} (canceling 1- and 2-handles) is formally the same as Ozsv\'{a}th and Szab\'{o}'s original proof \cite{OSTriangles}*{Lemma~4.16}. For notational simplicity, we focus on the case when we are computing the 2-handle map of a single framed knot $\bK^1$ in $Y$ which cancels a 1-handle, attached along $\bS^0\subset Y$. The general case when $\bK^1$ is replaced by a framed link with multiple components follows from essentially the same argument. We can view $\bS^0$ and $\bK^1$ as being contained in a 3-ball in the original manifold $Y$. Hence we can find a Heegaard triple subordinate to $\bK^1$ in $Y(\bS^0)$ which locally looks like Figure~\ref{fig::77}. Fix a $\Spin^c$ structure $\frs\in \Spin^c(Y)$. Let $\hat{\frs}$ denote the unique $\Spin^c$ structure on $W(Y,\bS^0)$ extending $\frs$, and let $\frt_0\in \Spin^c(W(Y(\bS^0), \bK^1))$ denote the unique $\Spin^c$ structure which evaluates trivially on the framed 2-sphere introduced by $\bS^0$, and restricts to $\frs$ on $Y\iso Y(\bS^0)(\bK^1)$.

\begin{figure}[ht!]
\centering
%% Creator: Inkscape inkscape 0.92.3, www.inkscape.org
%% PDF/EPS/PS + LaTeX output extension by Johan Engelen, 2010
%% Accompanies image file '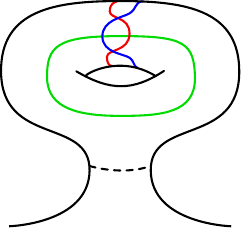' (pdf, eps, ps)
%%
%% To include the image in your LaTeX document, write
%%   \input{<filename>.pdf_tex}
%%  instead of
%%   \includegraphics{<filename>.pdf}
%% To scale the image, write
%%   \def\svgwidth{<desired width>}
%%   \input{<filename>.pdf_tex}
%%  instead of
%%   \includegraphics[width=<desired width>]{<filename>.pdf}
%%
%% Images with a different path to the parent latex file can
%% be accessed with the `import' package (which may need to be
%% installed) using
%%   \usepackage{import}
%% in the preamble, and then including the image with
%%   \import{<path to file>}{<filename>.pdf_tex}
%% Alternatively, one can specify
%%   \graphicspath{{<path to file>/}}
%% 
%% For more information, please see info/svg-inkscape on CTAN:
%%   http://tug.ctan.org/tex-archive/info/svg-inkscape
%%
\begingroup%
  \makeatletter%
  \providecommand\color[2][]{%
    \errmessage{(Inkscape) Color is used for the text in Inkscape, but the package 'color.sty' is not loaded}%
    \renewcommand\color[2][]{}%
  }%
  \providecommand\transparent[1]{%
    \errmessage{(Inkscape) Transparency is used (non-zero) for the text in Inkscape, but the package 'transparent.sty' is not loaded}%
    \renewcommand\transparent[1]{}%
  }%
  \providecommand\rotatebox[2]{#2}%
  \newcommand*\fsize{\dimexpr\f@size pt\relax}%
  \newcommand*\lineheight[1]{\fontsize{\fsize}{#1\fsize}\selectfont}%
  \ifx\svgwidth\undefined%
    \setlength{\unitlength}{115.34283122bp}%
    \ifx\svgscale\undefined%
      \relax%
    \else%
      \setlength{\unitlength}{\unitlength * \real{\svgscale}}%
    \fi%
  \else%
    \setlength{\unitlength}{\svgwidth}%
  \fi%
  \global\let\svgwidth\undefined%
  \global\let\svgscale\undefined%
  \makeatother%
  \begin{picture}(1,0.94593639)%
    \lineheight{1}%
    \setlength\tabcolsep{0pt}%
    \put(0,0){\includegraphics[width=\unitlength,page=1]{fig77.pdf}}%
    \put(0.49997281,0.27356492){\color[rgb]{0,0,0}\makebox(0,0)[t]{\lineheight{1.25}\smash{\begin{tabular}[t]{c}$c$\end{tabular}}}}%
    \put(0.4214127,0.83326221){\color[rgb]{0,0,1}\makebox(0,0)[rt]{\lineheight{1.25}\smash{\begin{tabular}[t]{r}$\beta_0$\end{tabular}}}}%
    \put(0.54797056,0.83412583){\color[rgb]{1,0,0}\makebox(0,0)[lt]{\lineheight{1.25}\smash{\begin{tabular}[t]{l}$\alpha_0$\end{tabular}}}}%
    \put(0.81217163,0.69706718){\color[rgb]{0,0.8627451,0}\makebox(0,0)[lt]{\lineheight{1.25}\smash{\begin{tabular}[t]{l}$\beta_0'$\end{tabular}}}}%
  \end{picture}%
\endgroup%

\caption{\textbf{A region of a Heegaard triple subordinate to a framed knot which cancels a 1-handle.} The almost complex structure is stretched along the curve $c$. } \label{fig::77}
\end{figure}

By Lemma~\ref{lem:2-defs-1-handle-equiv}, we can find almost complex structures on $\bT^2\times [0,1]\times \R$ and  $\Sigma\times [0,1]\times \R$ such that if $J(T)$ denotes the almost complex structure on $(\Sigma\# \bT^2)\times [0,1]\times \R$ obtained by gluing the two together, with neck length $T$ along the circle $c$ in Figure~\ref{fig::77}, then the 1-handle map $F_{Y,\bS^0,\hat{\frs}}$ satisfies
\begin{equation}
F_{Y,\bS^0,\hat{\frs}}(\xs)= \xs\times \theta^+_{\alpha_0,\beta_0}.\label{eq:1-handle-simple-form-with-other-ac}
\end{equation}

If $(\Sigma,\as,\bs,\ws)$ denotes the original Heegaard diagram of $Y$, let $\bs'$ denote small isotopies of $\bs$. The Heegaard triple $(\Sigma\# \bT^2, \as\cup \{\alpha_0\}, \bs\cup \{\beta_0\}, \bs'\cup \{\beta_0'\}, \ws)$ is subordinate to a bouquet for $\bK^1$. By Theorem~\ref{thm:simple-stabilization-triangle-maps} and Equation~\eqref{eq:1-handle-simple-form-with-other-ac},
\begin{equation}
F_{Y(\bS^0),\bK^1, \frt_0}(F_{Y,\bS^0,\hat{\frs}}(\xs))=F_{Y(\bS^0),\bK^1, \frt_0}(\xs\times \theta^+_{\alpha_0,\beta_0})=\sigma(\Psi_{\as}^{\bs\to \bs'}(\xs)),
\end{equation}
where $\sigma$ and $\Psi_{\as}^{\bs\to \bs'}$ denote the maps from naturality associated to a simple stabilization and to the small isotopy moving $\bs$ to $\bs'$, respectively. Invariance under Move~\eqref{move:KM5} follows.

Invariance under Move~\eqref{move:KM6} follows by turning around the above argument for Move~\eqref{move:KM5}.

We now verify Claim~\eqref{claim:W=[0,1]xY-project-Gamma}. Assume $W=[0,1]\times Y$. In this case, we can pick $\cK$ to have a single level, which has the empty framed sphere. The map $F_{W,\Gamma,\frs}^A$ is, by definition,  obtained by projecting $\Gamma$ into $Y$ and computing the map $\frA_{\cG}$. The same argument works for the type-$B$ map. The proof is complete.
\end{proof}

\section{Constructing the graph TQFT II: adding punctures}
\label{sec:constructionII}
In this section, we define the cobordism maps when a cobordism does not have enough ends (in the sense of Definition~\ref{def:has-enough-ends}). The idea is to remove small 4-balls from $W$, and connect the new copies of $S^3$ to  $\Gamma$ by adding a new edge.

Combined with our proof of invariance for graph cobordisms with enough ends (Theorem~\ref{thm:A-with-enough-ends}), this section concludes the construction of the graph cobordism maps and the proof of their invariance (Theorem~\ref{thm:A}).

 For a fixed  $\bmP$, the chain complex $\CF^-(\emptyset)$ is defined to be the ring $\cR_{\bmP}:=\bF_2[U_{p_1},\dots, U_{p_n}]$, where $\bmP=\{p_1,\dots, p_n\}$, with vanishing differential.

\subsection{0- and 4-handle maps}

Suppose that $(Y,\ws)$ is a multi-pointed 3-manifold, equipped with a coloring $\sigma\colon \ws\to \bmP$. Let $(S^3,w_0)$ denote a new copy of $S^3$, and let $\sigma'\colon \ws\cup \{w_0\}\to \bmP$ be an extension of $\sigma$.

If $(\Sigma,\as,\bs,\ws)$ is a diagram for $(Y,\ws)$, and $(S^2,w_0)$ is a genus 0 diagram for $S^3$ with no $\alpha$ or $\beta$ curves (alternatively, if one wants to avoid Heegaard diagrams with no curves, one can use a genus 1 diagram for $S^3$, with a single $\alpha$ and $\beta$ curve).

There is a canonical chain isomorphism
\begin{equation}
\CF^-(\Sigma,\as,\bs,\ws^{\sigma}, \frs)\iso \CF^-(\Sigma\cup S^2, \as,\bs, (\ws\cup \{w_0\})^{\sigma'},\frs\cup \frs_0),\label{eq:canonical-iso-0-4-handle}
\end{equation}
obtained by sending $U_{p_1}^{i_1}\cdots U_{p_n}^{i_n}\cdot \xs$ to $U_{p_1}^{i_1}\cdots U_{p_n}^{i_n}\cdot \xs$, where $\xs\in \bT_{\a}\cap \bT_{\b}$ and $U_{p_1},\dots, U_{p_n}$ are the variables in the ring $\cR_{\bmP}$.

The 4-manifold corresponding to a 0-handle attachment is the disjoint union of $[0,1]\times Y$ with $B^4$. 

Writing $\frs_0$ for the unique element of $\Spin^c(S^3)$, we define the 0-handle map
\[
F_{Y, 0}\colon \CF^-(Y,\ws^{\sigma}, \frs)\to \CF^-(Y\cup S^3, (\ws\cup \{w_0\})^{\sigma'},\frs\cup \frs_0)
\]
using the canonical isomorphism in Equation~\eqref{eq:canonical-iso-0-4-handle}.

There is also a 4-handle map
\[
F_{Y\cup S^3, 4}\colon  \CF^-(Y\cup S^3, (\ws\cup \{w_0\})^{\sigma'},\frs\cup \frs_0)\to \CF^-(Y,\ws^{\sigma}, \frs),
\]
also defined using the canonical isomorphism in Equation~\eqref{eq:canonical-iso-0-4-handle}.

\subsection{Puncturing graph cobordisms}

Suppose that $(W,\Gamma)\colon (Y_0,\ws_0)\to (Y_1,\ws_1)$ is a graph cobordism without enough ends. Construct a new 4-manifold $W'$ by removing a collection of 4-balls from $W$, which are disjoint from $\Gamma$. For each new copy of $S^3$ in $\d W'$, we add a new edge to $\Gamma$ which connects the new $S^3$ to a point along the interior of an edge of $\Gamma$. The new vertices may be labeled with any cyclic order. We designate the new copies of $S^3$ in the boundary as incoming or outgoing, in such a way that $(W',\Gamma')$ has enough ends, in the sense of Definition~\ref{def:enough-ends}. Let $F_0$ denote the 0-handle maps corresponding to the new incoming boundary components, and let $F_4$ denote the 4-handle maps corresponding to the new outgoing boundary components.

We now define
\begin{equation}
F^A_{W,\Gamma,\frs}:=F_4\circ F_{W',\Gamma',\frs|_{W'}}^A\circ F_0,\label{eq:punctured-map}
\end{equation}
and define $F_{W,\Gamma,\frs}^B$ similarly.

It remains to show that Equation~\eqref{eq:punctured-map} is well defined.  Since any two puncturings of $(W,\Gamma)$ can be related by a common puncturing (up to diffeomorphism), it is sufficient to show that the puncturing operation does not change the maps for cobordisms which already have enough boundary components:

\begin{prop} Suppose that $(W,\Gamma)\colon (Y_0,\ws_0)\to (Y_1,\ws_1)$ is a graph cobordism with enough ends, and $(W',\Gamma')$ is obtained by removing a 4-ball from $W$, and connecting the new boundary component to a point on the interior of an edge of $\Gamma$. If the new $S^3$ is designated as incoming, then
\[
F_{W,\Gamma,\frs}^A\simeq F_{W',\Gamma',\frs|_{W'}}^A\circ F_{Y_0,0}.
\]
If the new $S^3$ is designated as outgoing, then 
\[
F_{W,\Gamma,\frs}^A\simeq F_{Y_1\cup S^3,4}\circ F_{W',\Gamma',\frs|_{W'}}^A.
\]
\end{prop}
\begin{proof} We focus on the case when the new $S^3$ is designated as incoming. The argument for the other case is similar. A handle decomposition of $W'$ is obtained from a handle decomposition of $W$ by adding in a 1-handle which connects the new copy of $S^3$ to $Y_0$. Let $\bS^0$ denote the corresponding 0-sphere in $Y\cup S^3$, and let $F_{Y_0\cup S^3,\bS^0}$ denote the map for this 1-handle. Immediately from the definitions, we have
\begin{equation}
F_{Y_0\cup S^3,\bS^0}\circ F_{Y_0,0}=S_{w_0}^+,\label{label:0-handle+1-handle=free-stab}
\end{equation}
where $w_0$ is the basepoint which was added by the 0-handle map.

Let $\cY_c$ denote the level set in $W$ corresponding to the incoming boundary of the 2-handle portion of $W$, and let $\cG_c\subset \cY_c$ denote the flow-graph obtained by isotoping $\Gamma$ along the flowlines of a gradient like vector field on $W$. Let $\cG_c'$ denote the flow-graph obtained by isotoping $\Gamma'$ into $\cY_c$ using the flow of a gradient like vector field on $W'$. Let $w_0'$ denote the image of $w_0$ in $\cY_c$. We can assume that $\cG'_c$ is obtained from $\cG_c$ by connecting $w_0'$ to the interior of an edge of $\cG_c$ with an arc.

Using Equation~\eqref{label:0-handle+1-handle=free-stab} and the fact $S_{w_0}^+$ can be commuted with the 1-handle maps of $W$ using Lemma~\ref{lem:1-handle-free-stab-commute}, the main statement reduces to showing that
\[
\frA_{\cG_c}=\frA_{\cG'_c}\circ S_{w_0'}^+.
\]
We note that, by definition, the function $\frA_{\cG'_c}\circ S_{w_0'}^+$ is equal to the graph action map of $\cG'_c$, with $w_0'$ viewed as an interior vertex. By the trivial strand relation in Lemma~\ref{lem:full-trivial-strand-rel}, it follows that the induced map is chain homotopic to $\frA_{\cG_c}$, completing the proof.
\end{proof}

\section{The composition law}
\label{sec:compositionlaw}
We now prove the composition law for graph cobordisms:

\begin{customthm}{\ref{thm:C}} Suppose that a graph cobordism $(W,\Gamma)$ can be decomposed as the composition
\[
(W,\Gamma)=(W_2,\Gamma_2)\circ (W_1,\Gamma_1).
\]
If $\frs_1\in \Spin^c(W_1)$ and $\frs_2\in \Spin^c(W_2)$, then
\[
F_{W_2,\Gamma_2,\frs_2}^A\circ F_{W_1,\Gamma_1,\frs_1}^A\simeq \sum_{\substack{\frs\in \Spin^c(W)\\
\frs|_{W_1}=\frs_1\\
\frs|_{W_2}=\frs_2}} F_{W,\Gamma,\frs}^A.
\]
The same relation holds for the type-$B$ maps.
\end{customthm}
\begin{proof}First, note that the 0-handle maps of $F^A_{W_2,\Gamma_2,\frs_2}$ can be commuted to the right of all of the factors of $F_{W_1,\Gamma_1,\frs_1}^A$. Similarly the 4-handle maps of $F_{W_1,\Gamma_1,\frs_1}^A$ can be commuted to the left of all of the factors of $F_{W_2,\Gamma_2,\frs_2}^A$. Consequently, it is sufficient to prove the composition law for cobordisms with enough ends (in the sense of Definition~\ref{def:has-enough-ends}).

Pick parametrized Kirby decompositions  $\cK_1$ and $\cK_2$, for $W_1$ and $W_2$, respectively. We can stack $\cK_1$ and $\cK_2$ to get a decomposition of $\cK$ of $W$ into elementary cobordisms, though $\cK$ will not in general be a parametrized Kirby decomposition, as the handles do not appear in the correct order, and there are two levels with 2-handles. We will refer to a decomposition of $W$ into parametrized cobordisms, with at most two levels with 2-handles, as a \emph{semi-Kirby decomposition} of $W$.

Let $\cG_1$ denote the flow-graph in $W_1$ obtained by isotoping $\Gamma_1$ along the flow-lines of a gradient like vector field until it lies in the incoming boundary of the level of $W_1$ containing the 2-handles. Let $\cG_2$ denote the analogous flow-graph in a level of $W_2$.

The ascending manifolds of the index 3 critical points of $W_1$ are (after generic perturbation) disjoint from the descending manifolds of the index 1 and 2 critical points of $W_2$, as well as the flow-graph $\cG_2$. Consequently, we may topologically move all of the 3-handles of $\cK_1$ to the left of the 1-handles, 2-handles and flow-graph of $W_2$, to obtain a new semi-Kirby decomposition of $W$. Moving the 3-handles past the 1-handles and 2-handles of $W_2$ does not affect the composition by Theorem~\ref{thm:1-handletriangle} and Lemma~\ref{lem:1-handles-commute}. Commuting 3-handle maps past a graph action map for a flow-graph which is disjoint from the attaching sphere amounts to commuting the free-stabilization and relative homology maps (which are used to build the graph action map) past the 3-handle map. The free-stabilization and relative homology maps commute with the 3-handle maps by Lemmas~\ref{lem:rel-hom-1-handle-commute} and \ref{lem:1-handle-free-stab-commute}.

In an entirely analogous manner, the 1-handle maps of $W_2$ may be commuted to the right of the 3-handles, 2-handles and flow-graph of $W_1$.

Generically the ascending manifolds of the 2-handles in $W_2$ will be disjoint from $\cG_2$, and consequently we may topologically move $\cG_2$ past the 2-handles of $W_2$ and obtain a   flow-graph $\cG_2'$ in the same level as $\cG_1$. Commuting the graph action map of $\cG_2$ with the 2-handle map of $W_1$ amounts to commuting the free-stabilization and relative  homology maps past the 2-handle map, which can be done using Lemma~\ref{lem:relativehomologycommutestriangleII} and Theorem~\ref{thm:freestabilizetriangles}.

By Part~\eqref{thm:graph-action:b} of Theorem~\ref{thm:graph-action}, we have $\frA_{\cG_2'}\circ \frA_{\cG_1}=\frA_{\cG_2'\cup \cG_1}$.

In summary, we have shown that the composition $F_{W_2,\Gamma_2,\frs_2}^A\circ F_{W_1,\Gamma_1,\frs_1}^A$ is equal to the composition of handle maps and graph action map of a semi-Kirby decomposition, which fails to be a parametrized Kirby decomposition only in that it has two levels with 2-handles. Since these two levels are consecutive, by using the composition law for 2-handle maps in Lemma~\ref{lem:compositionlawlinks}, the general $\Spin^c$ composition law is obtained.
\end{proof}

\section{Path cobordisms and the normalization axiom}
\label{sec:normalizationI}

Suppose $(Y,\ws)$ is a multi-pointed 3-manifold. Using the composition law, it follows that if $(W,\Gamma)\colon (Y_0,\ws_0)\to (Y,\ws)$ is a graph cobordism and $\frs\in \Spin^c(W)$, then
\[
F_{[0,1]\times Y, [0,1]\times \ws,\frs|_{Y}}^A\circ F_{W,\Gamma,\frs}^A\simeq F_{W,\Gamma,\frs}^A.
\]
 This of course does not imply that $F_{W,\Gamma,\frs}^A$ is the identity map. In the study of TQFTs, the relation 
 \[
 F_{[0,1]\times Y,[0,1]\times \ws,\frt}^A\simeq \id_{\CF^-(Y,\ws,\frt)}
 \]
  is often referred to as the \emph{normalization axiom} (see \cite{TuraevQuantum}*{Section~1.4}). In this section, we prove the following (modulo a technical result, which we prove in Section~\ref{sec:movingbasepoints}):
 
 \begin{customthm}{\ref{thm:B}}Suppose that $(W,\Gamma)\colon (Y_0,\ve{w}_0)\to (Y_1,\ve{w}_1)$ is a graph cobordism, and $\Gamma$ is a collection of paths, each connecting $\ws_0$ to $\ws_1$.
\begin{enumerate}
\item The $A$ and $B$ versions coincide:
\[
F_{W,\Gamma,\frs}^A\simeq F_{W,\Gamma,\frs}^B.
\]
 \item  Suppose $\phi\colon (Y,\ws)\to (Y,\ws)$ is an orientation preserving diffeomorphism, and let $W(\phi)$ denote the \emph{mapping cylinder} (i.e. $[0,1]\times Y$, with $\{0\}\times Y$ identified with $Y$ via $\id_Y$ and $\{1\}\times Y$ identified with $Y$ via $\phi$). Then
 \[
F_{W(\phi), [0,1]\times \ws, \frs}^A\simeq F_{W(\phi), [0,1]\times \ws, \frs}^B\simeq \left(\phi_*\colon \CF^-(Y,\ws^{\sigma}, \frs)\to \CF^-(Y,\ws^{\phi_*\sigma}, \phi_*\frs)\right). 
 \]
 \item Suppose $(W,\gamma)\colon (Y_0,w_0)\to (Y_1,w_1)$ is a cobordism such that $W,$ $Y_0$ and $Y_1$ are nonempty and connected, and $\gamma$ is a path from $w_0$ to $w_1$. Then $F_{W,\gamma,\frs}^A\simeq F_{W,\gamma,\frs}^B$, and both maps coincide with the map defined by Ozsv\'{a}th and Szab\'{o}.
 \end{enumerate}
 \end{customthm}
\begin{proof} Using naturality of the Heegaard Floer groups, the first and second claims amount to showing that if $\cG=(\Gamma,\ws_0,\ws_1)$ is a flow-graph in $Y$ such that $\ws_0$ and $\ws_1$ are pairwise disjoint collections of points in $Y$, and $\Gamma$ consists of arcs, each connecting a point of $\ws_0$ to a point of $\ws_1$, then
\begin{equation}
\frA_{\cG}\simeq \frB_{\cG}\simeq (\phi_{\Gamma})_*\label{eq:graph-action=diffeo},
\end{equation}
where $\phi_{\Gamma}\colon (Y,\ws_0)\to (Y,\ws_1)$ is the diffeomorphism obtained by moving $\ws_0$ to $\ws_1$ along $\Gamma$.

Similarly, to prove the third claim, we note our maps $F_{W,\gamma,\frs}^{A}$ and $F_{W,\gamma,\frs}^B$ are defined in Equation~\eqref{eq:graph-cobordism-definition} by writing $W$ as a composition of 1-, 2- and 3-handles. The equivalence of our definition of the 1- and 3-handle maps with Ozsv\'{a}th and Szab\'{o}'s construction is established in Lemma~\ref{lem:2-defs-1-handle-equiv}. Our definition of the 2-handle maps coincides exactly with their definition. The only remaining difference between our map and theirs is that our map includes a graph action map between the 1- and 2-handle maps, for a flow-graph that consists of a single arc. Equation~\eqref{eq:graph-action=diffeo} implies that this graph action map is chain homotopic to a basepoint moving diffeomorphism map, from which the main claim follows.

It remains to establish Equation~\eqref{eq:graph-action=diffeo}. We leave this for the final section; see Theorem~\ref{thm:BM=graphact}.
\end{proof}

\section{Basepoint moving maps and the normalization axiom}
\label{sec:movingbasepoints}

If $\lambda$ is a path in $Y$ from $w$ to $w'$, in this section we prove that the diffeomorphism map $\lambda_*$, obtained by pushing $w$ to $w'$ along $\lambda$, satisfies
\begin{equation}
\lambda_*\simeq S_w^-A_\lambda S_{w'}^+.\label{eq:BM=graph-action}
\end{equation}
See Theorem~\ref{thm:BM=graphact}. Using Equation~\eqref{eq:BM=graph-action}, we finish our proof of Theorem~\ref{thm:B} (the normalization axiom) and prove Theorem~\ref{cor:F} (our formula for the $\pi_1$-action).

\subsection{A transition map computation}

A key ingredient in our proof of Theorem~\ref{thm:BM=graphact} is a computation of the transition map shown in Figure~\ref{fig::9}.

Suppose that $\cH_0=(\Sigma, \as,\bs,\ve{w}_0\cup \{z\})$ is a multi-pointed Heegaard diagram and $z$ is a distinguished basepoint. We consider the free-stabilized diagrams, 
\[
\cH_1=(\Sigma, \as\cup \{\alpha_1\},\bs\cup \{\beta_1\},\ve{w}_0\cup \{w,w'\})\quad  \text{and}\quad  \cH_2=(\Sigma, \as'\cup \{\alpha_2\},\bs'\cup \{\beta_2\},\ve{w}_0\cup \{w,w'\}),
\]
 obtained by removing $z$ from $\cH_0$, and inserting the diagrams shown in Figure~\ref{fig::9} into the region which contained $z$. The curves $\as'$ and $\bs'$ are small Hamiltonian isotopies of $\as$ and $\bs$. The case that $\ve{w}_0=\emptyset$ is not excluded. A key step in our proof of Equation~\eqref{eq:BM=graph-action} is to compute the transition map between $\cH_1$ and $\cH_2$.

\begin{figure}[ht!]
\centering
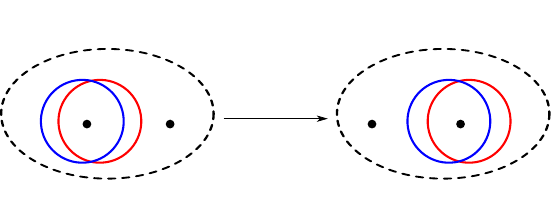
\caption{\textbf{The diagrams $\cH_1$ and $\cH_2$ considered in Theorem~\ref{thm:transitionmapcomp}.}}\label{fig::9}
\end{figure}

We introduce some notation. If $C$ is a module over the ring $\bF_2[U_z]$, we write $C^{U_z\to U_{w}}$ for the tensor product:
\[
C^{U_z\to U_w}:=C\otimes_{\bF_2[U_z]} \bF_2[U_z,U_w]/(U_w-U_z),
\]
which we think of as the module obtained by formally setting $U_z$ to the variable $U_w$.
If $F\colon C_1\to C_2$ is a map of  $\bF_2[ U_z]$-modules,  write $F^{U_{z}\to U_w}$ for the induced map
\[
F^{U_z\to U_{w}}:=F\otimes \id_{\bF_2[U_z,U_w]/(U_w-U_z)}.
\]

If $R$ is a ring of characteristic 2 and $F\colon C_1\to C_2$ is a map of $R$-modules,  write $(F)_{U_w}$ for the map of $R\otimes_{\bF_2} \bF_2[U_w]$-modules
\begin{equation}
F_{U_w}:=F\otimes \id_{\bF_2[U_w]}\colon C_1\otimes_{\bF_2} \bF_2[U_w]\to C_2\otimes_{\bF_2} \bF_2[U_w]\label{eq:extension-of-scalars}
\end{equation}

Finally, we introduce a convenient matrix notation. Let $V$ denote the 2-dimensional vector space $\langle \theta^+,\theta^-\rangle$. If $F$ is a homomorphism
\[
F\colon C_1\otimes_{\bF_2} V\to C_2\otimes_{\bF_2} V,
\]
then we will write $F$ as a $2\times 2$ block matrix. We always use the ordered basis $(\theta^+,\theta^-)$, so the first row and column of such a matrix correspond to $\theta^+$, and the second row and column to $\theta^-$.

\begin{thm}\label{thm:transitionmapcomp} Let $\cH_1$ and $\cH_2$ denote the free-stabilized diagrams  in Figure \ref{fig::9}.  There are choices of almost complex structures $J_1$ and $J_2$ on $\cH_1$  and $\cH_2$, respectively, such that $J_1$  can be used to compute $S_{w}^+$ and $S_{w}^-$, and $J_2$ can be used to compute $S_{w'}^+$ and $S_{w'}^-$, such that (for some choice of intermediate diagrams and almost complex structures)
\[
\Psi_{(\cH_1,J_1)\to(\cH_2,J_2)}= \begin{pmatrix}(\Psi_{\as\to \as'}^{\bs'})^{U_z\to U_{w'}}_{U_w}\circ (\Psi_{\as}^{\bs\to \bs'})^{U_{z}\to U_{w}}_{U_{w'}}& 0\\
*&(\Psi_{\as\to \as'}^{\bs'})^{U_{z}\to U_{w'}}_{U_{w}}\circ  (\Psi_{\as}^{\bs\to \bs'})^{U_{z}\to U_{w}}_{U_{w'}}
\end{pmatrix}.
\]
 Here $\Psi_{\as}^{\bs\to \bs'}$ denotes the transition map from $\CF^-(\Sigma,\as,\bs,\ws_0\cup \{z\},\frs)$ to $\CF^-(\Sigma, \as, \bs', \ws_0\cup \{z\},\frs)$, and $\Psi_{\as\to \as'}^{\bs'}$ is defined similarly.
\end{thm}

 We remark that the component marked $*$ can be computed explicitly. It is equal to
\begin{equation}
(\Phi_{\as\to \as'}^{\bs'})^{U_z\to U_{w'}}_{U_w}\circ \left(\sum_{i,j\ge 0} U_w^{i} U_{w'}^j (\d_{i+j+1})_{U_w,U_{w'}}\right)\circ (\Phi_{\as}^{\bs\to \bs'})^{U_z\to U_{w}}_{U_{w'}}, \label{eq:asterisk-concrete}
\end{equation}
 though we do not need this fact to prove Theorem~\ref{thm:BM=graphact}. Equation~\eqref{eq:asterisk-concrete} can be proven by a small modification of Lemma~\ref{lem:explicitchangeofalmostcomplexstructure}, below. In Equation~\eqref{eq:asterisk-concrete}, the expression $\d_{i+j+1}$ is obtained by writing the differential on $\CF^-(\cH_0,\frs)$ as
\[
(\d_{\cH_0})=\sum_{i=0}^\infty \d_i U^i_z.
\]

The proof of Theorem~\ref{thm:transitionmapcomp} involves computing several  holomorphic triangle maps and several non-cylindrical holomorphic strip counts.

\begin{figure}[ht!]
\centering
%% Creator: Inkscape inkscape 0.92.3, www.inkscape.org
%% PDF/EPS/PS + LaTeX output extension by Johan Engelen, 2010
%% Accompanies image file '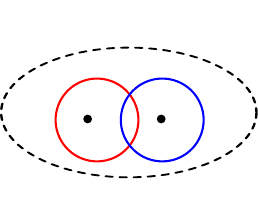' (pdf, eps, ps)
%%
%% To include the image in your LaTeX document, write
%%   \input{<filename>.pdf_tex}
%%  instead of
%%   \includegraphics{<filename>.pdf}
%% To scale the image, write
%%   \def\svgwidth{<desired width>}
%%   \input{<filename>.pdf_tex}
%%  instead of
%%   \includegraphics[width=<desired width>]{<filename>.pdf}
%%
%% Images with a different path to the parent latex file can
%% be accessed with the `import' package (which may need to be
%% installed) using
%%   \usepackage{import}
%% in the preamble, and then including the image with
%%   \import{<path to file>}{<filename>.pdf_tex}
%% Alternatively, one can specify
%%   \graphicspath{{<path to file>/}}
%% 
%% For more information, please see info/svg-inkscape on CTAN:
%%   http://tug.ctan.org/tex-archive/info/svg-inkscape
%%
\begingroup%
  \makeatletter%
  \providecommand\color[2][]{%
    \errmessage{(Inkscape) Color is used for the text in Inkscape, but the package 'color.sty' is not loaded}%
    \renewcommand\color[2][]{}%
  }%
  \providecommand\transparent[1]{%
    \errmessage{(Inkscape) Transparency is used (non-zero) for the text in Inkscape, but the package 'transparent.sty' is not loaded}%
    \renewcommand\transparent[1]{}%
  }%
  \providecommand\rotatebox[2]{#2}%
  \newcommand*\fsize{\dimexpr\f@size pt\relax}%
  \newcommand*\lineheight[1]{\fontsize{\fsize}{#1\fsize}\selectfont}%
  \ifx\svgwidth\undefined%
    \setlength{\unitlength}{123.66934157bp}%
    \ifx\svgscale\undefined%
      \relax%
    \else%
      \setlength{\unitlength}{\unitlength * \real{\svgscale}}%
    \fi%
  \else%
    \setlength{\unitlength}{\svgwidth}%
  \fi%
  \global\let\svgwidth\undefined%
  \global\let\svgscale\undefined%
  \makeatother%
  \begin{picture}(1,0.81696)%
    \lineheight{1}%
    \setlength\tabcolsep{0pt}%
    \put(0.23662178,0.48515744){\color[rgb]{1,0,0}\makebox(0,0)[rt]{\lineheight{0}\smash{\begin{tabular}[t]{r}$\alpha_1$\end{tabular}}}}%
    \put(0.75999433,0.4851572){\color[rgb]{0,0,1}\makebox(0,0)[lt]{\lineheight{0}\smash{\begin{tabular}[t]{l}$\beta_2$\end{tabular}}}}%
    \put(0.49994734,0.76095759){\color[rgb]{0,0,0}\makebox(0,0)[t]{\lineheight{0}\smash{\begin{tabular}[t]{c}$(\Sigma, \ve{\alpha},\ve{\beta}',\ve{w}_0)$\end{tabular}}}}%
    \put(0,0){\includegraphics[width=\unitlength,page=1]{fig79.pdf}}%
    \put(0.67654959,0.38414117){\color[rgb]{0,0,0}\makebox(0,0)[lt]{\lineheight{0}\smash{\begin{tabular}[t]{l}$w$\end{tabular}}}}%
    \put(0.35477998,0.38414117){\color[rgb]{0,0,0}\makebox(0,0)[lt]{\lineheight{0}\smash{\begin{tabular}[t]{l}$w'$\end{tabular}}}}%
    \put(0.49980456,0.01392953){\color[rgb]{0,0,0}\makebox(0,0)[t]{\lineheight{0}\smash{\begin{tabular}[t]{c}$\cH_{1.5}$\end{tabular}}}}%
    \put(0.49991311,0.67871127){\color[rgb]{0,0,0}\makebox(0,0)[t]{\lineheight{1.25}\smash{\begin{tabular}[t]{c}$\#$\end{tabular}}}}%
    \put(0,0){\includegraphics[width=\unitlength,page=2]{fig79.pdf}}%
  \end{picture}%
\endgroup%

\caption{\textbf{The intermediate diagram $\cH_{1.5}$.}}\label{fig::47}
\end{figure}

On  $\cH_1$, $\cH_{1.5}$, and $\cH_2$ we will write $J_\alpha$ for an almost complex structure which is stretched along the circles $c$ and $c_\alpha$ in Figure \ref{fig::48}. We write $J_\beta$ for an almost complex structure which is stretched along the circles $c$ and  $c_\beta$ in Figure \ref{fig::48}.

\begin{figure}[ht!]
\centering
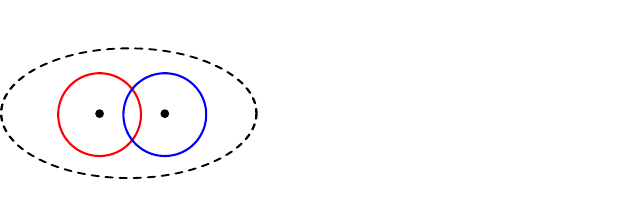
\caption{\textbf{The almost complex structures $J_\alpha$ and $J_\beta$ for $\cH_{1.5}$.}}\label{fig::48}
\end{figure}

To help simplify the statements of some of the results in this section, we prove the following:

\begin{lem}\label{lem:relative-lengths-unimportant} Let $\cH$ be one of $\cH_1,$ $\cH_{1.5}$ or $\cH_{2}$. If $\ve{T}=(T,T')$,  write $J_{\alpha}(\ve{T})$ for an almost complex structure which has been stretched along $c$ and $c_{\alpha}$, with neck-lengths $T$ and $T'$. There is a constant $N$ such that if $\ve{T}_1$ and $\ve{T}_2$ are two pairs of neck lengths, all of whose components are greater than $N$, then there is a non-cylindrical almost complex structure $\tilde{J}$ interpolating $J_\alpha(\ve{T}_1)$ and $J_{\alpha}(\ve{T}_2)$ satisfying
\[
\Psi_{J_\alpha(\ve{T}_1)\to J_\alpha(\ve{T}_2)}:=\Psi_{\tilde{J}}=\begin{pmatrix}\id &0\\
0& \id
\end{pmatrix}.
\]
\end{lem}
\begin{proof} We focus on the case that $\cH=\cH_1$. The statement is proven similarly for $\cH_{1.5}$ and $\cH_2$. The proof is a double neck stretching argument. Suppose that $\ve{T}_{1,i}$ and $\ve{T}_{2,i}$ are two sequences of pairs of neck-lengths, all of whose components approach $+\infty$.    Write $\ve{T}_{1,i}=(T_{1,i},T_{1,i}')$ and $\ve{T}_{2,i}=(T_{2,i}, T_{2,i}')$. Define
 \[
 T_{\min,i}=\min(T_{1,i},T_{2,i})\qquad \text{and}\qquad T_{\min,i}'=\min(T_{1,i}',T_{2,i}').
 \]
 We decompose neighborhoods of $c$ and $c_\alpha$ as unions of three annuli, as shown in Figure~\ref{fig::83}:
 \[
N(c)=N_1\cup N_2\cup N_3\qquad \text{and} \qquad N(c_\alpha)=N_1'\cup N_2'\cup N_3'. 
 \]
 
Construct interpolating almost complex structures $\tilde{J}_i$ between $J_{\alpha}(\ve{T}_{1,i})$ and $J_{\alpha}(\ve{T}_{2,i})$.  We require that the almost complex structures $\tilde{J}_i$ be split on each of $N_1,$ $N_3$, $N_1'$ and $N_3'$ (in particular,  $\tilde{J}_i$ is cylindrical on these regions). Furthermore, we require that $\tilde{J}_i$ be chosen so that $N_1$ and $N_3$ are both conformally equivalent to $S^1\times [0,T_{\min,i}/3]$, and so that $N_1'$ and $N_3'$ are conformally equivalent to $S^1\times [0,T_{\min,i}'/3]$. Further, we assume that $\tilde{J}_i$ is only non-cylindrical in the regions $N_2$ and $N_2'$. We can pick $\tilde{J}_i$ so that  projection from $\Sigma\times [0,1]\times \R$ to $[0,1]\times \R$ is holomorphic.
 
 \begin{figure}[ht!]
\centering
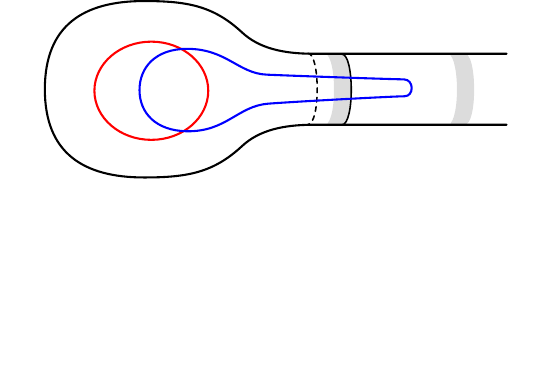
\caption{\textbf{Decomposing the stretching regions in Lemma~\ref{lem:relative-lengths-unimportant}.}}\label{fig::83}
\end{figure}

 Write 
\begin{equation}
\Psi_{\tilde{J}_i}=\begin{pmatrix} A_i& B_i\\
C_i& D_i
\end{pmatrix}.
\label{eq:transition-matrix-i}
\end{equation}
If $\phi\# \phi_0\in \pi_2(\xs\times x,\ys\times y)$ is a Maslov index 0 class of disks, then Equation~\eqref{eq:diffMasexcision} implies
\begin{equation}
\mu(\phi\# \phi_0)=\mu(\phi)+\gr(x,y)+2m_2(\phi_0).\label{eq:maslov-index-double-neck}
\end{equation}
Classes with $\gr(x,y)=1$ contribute to $C_i$, classes with $\gr(x,y)=0$ contribute to $A_i$ or $D_i$, and classes with $\gr(x,y)=-1$ contribute to $B_i$.

Given a sequence $u_i$ of $\tilde{J}_i$-holomorphic curves representing $\phi\# \phi_0$, we may extract a subsequence which converges to broken holomorphic curves on the diagrams $(S^2,\alpha_1,\beta_1^l)$, $(S^2,\beta_1^m)$ and $(\Sigma,\as,\bs)$, whose total homology class is $\phi\# \phi_0$. The curves $\beta_1^l$ and $\beta_1^m$ are the result cutting $\beta_1$ along its intersection with $c_\alpha$, and then collapsing the ends to a point; see Figure~\ref{fig::83}. Furthermore, the broken limiting curves are pseudo-holomorphic for cylindrical almost complex structures. 

In somewhat more detail, to construct such a convergent subsequence, let $D_0$ denote the disk component of $\Sigma\setminus N(c_{\alpha})$ containing $\alpha_1$, and  let $A_0$ denote the annulus between $N(c)$ and $N(c_{\alpha})$, as in Figure~\ref{fig::83}. 
Let $\Sigma_0$ denote the component of $\Sigma\setminus N(c)$ which is disjoint from $w$ and $w'$.

If $u_i$ is $\tilde{J}_i$-holomorphic, we take preimages of $u_i$ into subregions of $\Sigma\times [0,1]\times \R$ to construct holomorphic curves with additional boundary circles, $u_i^l$, $u_i^m$ and $u_i^r$, which map into the 4-manifolds $(D_0\cup N_3')\times [0,1]\times \R$, $(N_1'\cup A_0\cup N_3)\times [0,1]\times \R$ and $ (N_1\cup \Sigma_0)\times [0,1]\times \R$, respectively. These are holomorphic curves for cylindrical almost complex structures. We can view $((D_0\cup N_3')\times [0,1]\times \R, \tilde{J}_i)$ as being contained in $((D_0\cup N_3')\times [0,1]\times \R, \tilde{J}_k)$, whenever $T_{\min,i}\le T_{\min,k}$. Consequently, given such a sequence $u_i$, we may find a subsequence such that $u_i^l$, $u_i^m$ and $u_i^r$ each converge to curves in the punctured manifolds $S^2\setminus \{p_0\}\times [0,1]\times \R$, $S^1\times \R\times [0,1]\times \R$ and $\Sigma\setminus \{p\}\times [0,1]\times \R$, where $p_0$ and $p$ denote the connected sum  points corresponding to the circles $c_\alpha$ and $c$, respectively.

Consequently, $\phi$ and $\phi_0$ admit broken homomorphic representatives on $(\Sigma,\as,\bs)$ and $(S^2,\alpha_1,\beta_1^l)$.  In particular $\mu(\phi)\ge 0$, so Equation~\eqref{eq:maslov-index-double-neck} implies
\[
m_2(\phi_0)=0\qquad \text{and} \qquad \gr(x,y)\le 0.
\]
 Hence,  $B_i=0$ for large $i$, since $B_i$ counts curves with $\gr(x,y)=+1$

Next, we consider classes with $\gr(x,y)=0$ (which contribute to $A_i$ and $D_i$). Since $\phi$ admits a broken representative and has index 0 by Equation~\eqref{eq:maslov-index-double-neck}, we conclude that $\phi$ must represent the constant class $e_{\xs}$. It is straightforward to see that this also implies that $\phi_0$ is the constant class $e_x$. Conversely, since $\tilde{J}_i$ is cylindrical in a neighborhood of all the intersection points, the index 0 classes $e_{\xs}\times  e_{x}$ have unique $\tilde{J}_i$-holomorphic representatives for all $i$. Consequently $A_i=D_i=\id$.

We now consider classes with $\gr(x,y)=-1$, which contribute to $C_i$. For such classes, Equation~\eqref{eq:maslov-index-double-neck} and the inequality $\mu(\phi)\ge 0$ imply
\[
\mu(\phi)=1\qquad \text{and}\qquad  m_2(\phi_0)=0.
\]
 Since $\mu(\phi)=1$, transversality implies that the limiting curve on $(\Sigma,\as,\bs)$  is a non-broken index 1 flow line. Since projection to $[0,1]\times \R$ is $\tilde{J}_i$-holomorphic, the asymptotics of the curves on $(S^2,\beta_1^m)$ must match the curve on $(\Sigma,\as,\bs)$ at the connected sum point. Since the limiting curve on $(\Sigma,\as,\bs)$ is a genuine flow line, its asymptotics at the connected sum point consist of $m_1(\phi_0)$ points in $[0,1]\times \R$ (which we can assume are distinct, for a generically chosen almost complex structure). The only curve on the partial diagram $(S^2,\beta_1^m)$ which could match $m_1(\phi_0)$ distinct points in $[0,1]\times \R$ consists of $m_1(\phi_0)$ copies of $S^2$ which each map constantly to the $[0,1]\times \R$ factor, as well as some collection of $\beta_1^m$-boundary degenerations, which have $m_1=0$. The 2-spheres contribute equally to $m_1(\phi_0)$ and $n_2(\phi_0)$, while the boundary degenerations only contribute to $n_2(\phi_0)$. Consequently, 
\begin{equation}
n_2(\phi_0)\ge m_1(\phi_0). \label{eq:spheres-contribute-equally}
\end{equation}
Since $\phi_0\in \pi_2(\theta^-,\theta^+)$ and $m_2(\phi_0)=0$, we must have
\begin{equation}
m_1(\phi_0)+1=n_1(\phi_0)+n_2(\phi_0).\label{eq:vertex-multiplicity}
\end{equation}
(Equation~\eqref{eq:vertex-multiplicity} is satisfied for all classes $\phi_0\in \pi_2(\theta^-,\theta^+)$ with $m_2(\phi_0)=0$).
Equation~\eqref{eq:vertex-multiplicity} contradicts Equation~\eqref{eq:spheres-contribute-equally}, so $C_i$ must be zero for large $i$.
\end{proof}

 We now describe a refinement of the differential computation of Proposition~\ref{prop:free-stabdifferential}. The following computation is similar to \cite{OSLinks}*{Proposition~6.5}, though the placement of basepoints and choices of almost complex structures are different than they consider. Recall that $\cH_{1.5}$ is the stabilization of $\cH_0$ shown in Figure~\ref{fig::47}.

 \begin{lem}\label{lem:differentialcompdegen} Let $J_{\alpha}$ denote an almost complex structure on $\cH_{1.5}$ which is stretched along $c$ and $c_{\alpha}$. For sufficiently large neck lengths along $c$ and $c_{\alpha}$, we have 
 \[
\d_{\cH_{1.5},J_\alpha}=\begin{pmatrix}(\d_{\cH_0})^{U_z\to U_w}& U_w+U_{w'}\\
 0& (\d_{\cH_0})^{U_z\to U_w}
 \end{pmatrix}.
\]
If $J_{\beta}$ denotes an analogous almost complex structure stretched sufficiently along $c$ and $c_{\beta}$, then
 \[
\d_{\cH_{1.5},J_\beta}=\begin{pmatrix}(\d_{\cH_0})^{U_z\to U_{w'}}& U_w+U_{w'}\\
  0& (\d_{\cH_0})^{U_z\to U_{w'}}
  \end{pmatrix}.
\]
 \end{lem}

\begin{proof}

We focus on computing $\d_{\cH_{1.5},J_\alpha}$; the computation for $J_\beta$ is a straightforward modification.  Write
\[
\d_{\cH_{1.5},J_\alpha}=\begin{pmatrix}A& B\\
C& D
\end{pmatrix}.
\]
By Lemma~\ref{lem:relative-lengths-unimportant}, if the claim holds any pair of arbitrarily large neck lengths, then it holds for all sufficiently large pairs of neck-lengths (regardless of their relative lengths).

Let $m_1,$ $m_2,$ $n_1$ and $n_2$ denote the multiplicities of the regions of $(S^2,\alpha_1,\beta_2,w,w')$ shown in Figure \ref{fig::49}.

\begin{figure}[ht!]
\centering
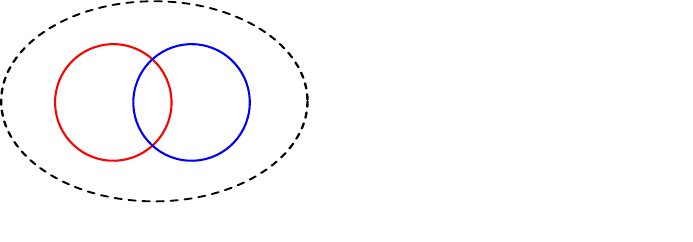
\caption{\textbf{Stretching along $c_\alpha$.} On the left are multiplicities $m_1$, $m_2$, $n_1$ and $n_2$. On the right we illustrate how the almost complex structure degenerates as we stretch $J_\alpha$ along $c_\alpha$.}\label{fig::49}
\end{figure}

 Suppose $\phi\# \phi_0\in \pi_2(\xs\times x, \ys\times y)$ is a homology class of disks on $\cH_{1.5}$.  Lemma \ref{lem:indexdisksonsphere} adapts to show 
\begin{equation}
\mu(\phi\# \phi_0)=\mu(\phi)-\gr(x,y)+2m_2(\phi_0).\label{eq:Maslov-index-H1.5}
\end{equation}

Classes with $\gr(x,y)=1$ contribute to $C$. Classes with $\gr(x,y)=0$ contribute to $A$ or $D$. Classes with $\gr(x,y)=-1$ contribute to $B$.

We begin by computing $B$. Suppose $\phi\# \phi_0\in \pi_2(\xs\times \theta^-,\ys\times \theta^+)$ is a class counted by $B$. By stretching sufficiently along $c$, we may assume that $\phi$ has a broken representative, and consequently $\mu(\phi)\ge 0$. Equation~\eqref{eq:Maslov-index-H1.5} implies that $\mu(\phi)=m_2(\phi_0)=0$. Since $\phi$ has a broken representative, it follows that $\phi$ is a constant class $e_{\xs}$. Since $m_2(\phi_0)=m_1(\phi_0)=0$, we conclude that $\phi_0$ must have domain equal to one of the two bigons going over $w$ or $w'$. These classes also have a unique holomorphic representative. Consequently, the map $B$ is multiplication by $U_w+U_{w'}$, as claimed.

We now compute the maps $A$ and $D$, which count classes with $\gr(x,y)=0$. Equation~\eqref{eq:Maslov-index-H1.5} implies that $\mu(\phi)=1$ and $m_2(\phi_0)=0$.  Let us fix a neck length along $c_\alpha$.  By the proof of Proposition~\ref{prop:free-stabdifferential}, for sufficiently large neck-length along $c$, there is an identification 

\begin{equation}
 \cM(\phi\# \phi_0)\iso  \left\{(u, u_0)\in \cM(\phi)\times \cM(\phi_0): \rho^{p}(u)=\rho^{p_0}(u_0)\right\},\label{eq:fibered-product-H_1.5}
\end{equation}
 where $p$ and $p_0$ denote the connected sum points on $\Sigma$ and $S^2$. Note that the required neck-length along $c$ for Equation~\eqref{eq:fibered-product-H_1.5} to hold may depend on the fixed neck-length along $c_\alpha$.
 
 By the proof of Proposition~\ref{prop:free-stabdifferential}, we know if $\theta\in \{\theta^+,\theta^-\}$ and $\ve{d}\in \Sym^{m}([0,1]\times \R)$ is a fixed, generic element, then
 \begin{equation}
\sum_{\substack{\phi_0\in \pi_2(x,x) \\m_2(\phi_0)=0 \\ m_1(\phi_0)=m}} \# \cM(\phi_0,\ve{d})\equiv 1 \pmod{2}.\label{eq:count-H1.5}
\end{equation}
Equations~\eqref{eq:fibered-product-H_1.5} and~\eqref{eq:count-H1.5} together imply that if $\mu(\phi)=1$ and $\theta\in \{\theta^+,\theta^-\}$, then
\begin{equation}
\# \hat{\cM}(\phi)\equiv \sum_{\substack{\phi_0\in \pi_2(\theta,\theta)\\ m_2(\phi_0)=0\\ m_1(\phi_0)=m_1(\phi)}} \# \hat{\cM}(\phi\# \phi_0)\pmod{2}.\label{eq:count-index-1-classes-H1.5}
\end{equation}
Note that Equation~\eqref{eq:count-index-1-classes-H1.5} almost gives us the desired identification of $A$ and $D$, except it does not inform us about the multiplicity on the basepoints $w'$ and $w$. By considering the multiplicities in the four regions around $\theta^+$, we know if  $\phi_0\in \pi_2(\theta,\theta)$ has $m_2(\phi)=0$, then
\begin{equation}
m_1(\phi_0)=n_1(\phi_0)+n_2(\phi_0).\label{eq:m1=n1+n2}
\end{equation}
The integer $m_1(\phi_0)$ is the power of $U_z$ contributed by $\phi$ to $\d_{\cH_0}$, while $n_1(\phi_0)$ and $n_2(\phi_0)$ are the powers of $U_w$ and $U_{w'}$, respectively, contributed by $\phi\# \phi_0$ to $\d_{\cH_{1.5}}$.

We finish our claim about the maps $A$ and $D$, it is sufficient to show that if $\ve{d}\in \Sym^{m}([0,1]\times \R)$ is a fixed point, then for almost complex structure sufficiently stretched along $c_{\alpha}$, the only classes $\phi_0\in \pi_2(\theta,\theta)$ with non-empty $\cM(\phi_0,\ve{d})$ have $n_2(\phi_0)=m_1(\phi_0)$ and $n_1(\phi_0)=0$. This implies that any curve which makes non-trivial contribution to $A$ or $D$ is counted with a  factor of $U_w^{m_1(\phi)}$ and no factor of $U_{w'}$. We prove this in the following subclaim:

\begin{subclaim} Suppose $\ve{d}\in \Sym^{m}([0,1]\times \R)$ is not contained in the fat-diagonal, and also does not contain any points of $\{0,1\}\times \R$. Suppose $\phi_0\in \pi_2(\theta,\theta)$ has $m_2(\phi_0)=0$, $m_1(\phi_0)=m$ and $n_1(\phi_0)> 0$. If the almost complex structure on $S^2\times [0,1]\times \R$  is sufficiently stretched along $c_{\alpha}$, then $\cM(\phi_0,\ve{d})$ is empty.
\end{subclaim}

\begin{proof} The argument is similar to our proof of Lemma~\ref{lem:relative-lengths-unimportant}. As we let the stretching parameter approach $+\infty$, we can extract broken limiting curves on $(S^2,\alpha_1,\beta_2^l)$ and the degenerate diagram $(S^2,\beta_2^m)$. See Figure~\ref{fig::49}. The curves $\beta_2^l$ and $\beta_2^m$ are obtained by cutting $\beta_2$ along $c_\alpha$, and collapsing the endpoints.

 Consider the limiting curves on $(S^2,\beta_2^m)$. Since these arose as the limit of curves which matched $\ve{d}$, the limiting curves must also match $\ve{d}$ at the point $p_0$ (recall that $p_0$ corresponds to the circle $c$ in Figure~\ref{fig::49}). There are no $\as$ curves on the degenerate diagram $(S^2,\beta_2^m)$. Consequently, any limiting curve either has no boundary, or has boundary which maps to $\beta_2^m$. Any curve which has boundary on $\beta_2^m$ must map locally constantly to $\{0\}\times \R\subset [0,1]\times \R$, by the maximum principle. Consequently, the curves which match $\ve{d}$ can only be spheres which map to $S^2\times \{d\}\subset S^2\times [0,1]\times \R$ for $d\in \ve{d}$, together with boundary degenerations which do not cover $p_0$.  Such a sphere contributes equally to $m_1$ and $n_2$, while a boundary degeneration not covering $p_0$ only contributes to $n_2$. Consequently, if $\cM(\phi_0,\ve{d})$ is non-empty for sufficiently stretched along complex structure, then
\begin{equation}
m_1(\phi_0)\le n_2(\phi_0).\label{eq:inequality-multiplicity}
\end{equation}
Combined with Equation~\eqref{eq:m1=n1+n2}, since all multiplicities are non-negative, Equation~\eqref{eq:inequality-multiplicity} implies that if $\cM(\phi_0,\ve{d})$ is nonempty, then $m_1(\phi_0)=n_2(\phi_0)$ and $n_1(\phi_0)=0$, completing the proof.
\end{proof}

We now prove that $C$ is zero (recall $C$ counts representatives of classes in $\pi_2(\xs\times \theta^+,\ys\times \theta^-)$). For such classes, Equation~\eqref{eq:Maslov-index-H1.5} constrains $m_2(\phi_0)$ to be in $\{0,1\}$. 

If $m_2(\phi_0)=1$, then Equation~\eqref{eq:Maslov-index-H1.5} implies that $\mu(\phi)=0$, forcing $\phi$ to be a constant class, and $\phi_0$ to be the bigon $m_2(\phi_0)=1$. This class has a unique holomorphic representative.  If $m_2(\phi_0)=0$, then Equation~\eqref{eq:Maslov-index-H1.5} implies that if the almost complex structure is sufficiently stretched along $c$, then $\mu(\phi)=2$. It remains to count such classes and show that their total counts cancel the bigons with $m_2(\phi_0)=1$.

We prove a helpful subclaim:

\begin{subclaim}\label{subclaim:d_inearbeta} Suppose that $d_i\in [0,1]\times \R$ is a sequence of points approaching the line $\{0\}\times \R$, and suppose $\phi\in \pi_2(\xs,\ys)$ is a Maslov index 2 class on $(\Sigma,\as,\bs')$. If the matched moduli space $\cM(\phi,d_i)$ is nonempty for arbitrarily large $i$, then $\phi$ has domain equal to a connected component of $\Sigma\setminus \bs$.
\end{subclaim}

\begin{proof}
Let $u_i$ be a sequence of curves in $\cM(\phi,d_i)$. Since $p$ is contained in $\Sigma\setminus (\as\cup \bs)$, any curve $u$ in the limit with $n_p(u)\neq 0$ must have $\pi_{[0,1]\times \R}\circ u$ constant, by the maximum principle. Consequently, $u$ must have domain equal to $[\Sigma]$, or to a $\beta$-boundary degeneration. Since such curves have Maslov index at least 2, and any other curves which appear will achieve transversality by Proposition~\ref{prop:transversality}, no additional curves may appear in the limit. The proof is complete.
\end{proof}
%
%\begin{subclaim} 
%By picking appropriately long necks along $c$ and $c_\alpha$, we can ensure that if $\phi\# \phi_0\in \pi_2(\xs\times \theta^+, \ys\times \theta^-)$ has Maslov index 2 and $m_2(\phi_0)=0$, then $\phi_0$ is a bigon going once over $p_0$, and $\phi$ has domain equal to a connected component of $\Sigma\setminus \bs$.  
%\end{subclaim}  
%   \begin{proof}
 
  The only non-negative class $\phi_0\in \pi_2(\theta^+,\theta^-)$ with $m_1(\phi_0)=1$ and $m_2(\phi_0)=0$ is the bigon with $m_1(\phi_0)=1$. Define the point
  \[
  d(\phi_0):=(u\circ \pi_{[0,1]\times \R})\big( (u\circ \pi_{S^2})^{-1}(p_0)\big)\in [0,1]\times \R,
  \]
  where $u$ is a representative of the bigon $\phi_0$ (well defined up to $\R$ translation). Note that $d(\phi_0)$ depends on the choice of almost complex structure on $S^2\times [0,1]\times \R$. By stretching along $c_{\alpha}$, we can make $d(\phi_0)$ arbitrarily close to $\{0\}\times \R$. Let us fix a neck length along $c_\alpha$, so the conclusion of Subclaim~\ref{subclaim:d_inearbeta} holds.  
  
Having fixed a neck-length along $c_\alpha$, we now stretch along $c$. Suppose $\phi\# \phi_0$ admits a sequence of holomorphic representatives for a sequence of neck lengths along $c$ which approach $+\infty$. We can extract broken limits. The curve appearing on $S^2\times [0,1]\times \R$  will be a representative of the bigon $\phi_0$. On $\Sigma\times [0,1]\times \R$, there will be a curve in $\cM(\phi',d(\phi_0))$, for some class $\phi'$. We claim that $\phi'$ must actually be equal to the class $\phi$. Using Proposition~\ref{prop:transversality}, it is straightforward to show that any curves appearing in the limit either achieve transversality, or have the homology class of a boundary degeneration or have domain $[\Sigma]$. Classes which have domain $[\Sigma]$ or which have the domain of a boundary degeneration have Maslov index at least 2 by Equation~\eqref{eq:indexbounddegen}. Hence, other curves are prohibited from appearing in the limit by a dimension count, since $\mu(\phi)=2$. Consequently, $\phi=\phi'$.
  
Subclaim~\ref{subclaim:d_inearbeta} implies that since $\cM(\phi,d(\phi_0))$ is nonempty, $\phi$ must have domain equal to a connected component of $\Sigma\setminus \bs$.

Having restricted the homology classes which can contribute to $C$, we now count their holomorphic representatives. Let $\phi\# \phi_0\in \pi_2(\ve{x}\times \theta^+,\ve{x}\times \theta^-)$ denote an index 1 homology class with $m_1(\phi_0)=1$ and $m_2(\phi_0)=0$, such that the domain of $\phi$ is a connected component of $\Sigma\setminus \bs$.

 Let $\phi'$ denote the Maslov index 1 bigon going over $w'$. Splicing $\phi\# \phi_0$ and $\phi'$ together, we get the Maslov index 2 homology class $B'_{\xs\times \theta^+}=\phi'* (\phi\# \phi_0)\in \pi_2(\ve{x}\times \theta^+,\ve{x}\times \theta^+)$. The moduli space $\hat{\cM}(B'_{\xs\times \theta^+})$ is 1-dimensional. We count its ends. One end corresponds to the boundary degenerations in $\hat{\cN}^\beta(B'_{\xs\times \theta^+})$, which have total count equal to 1 by \cite{OSLinks}*{Theorem 5.5}. The other ends correspond to representatives of $B'_{\xs\times \theta^+}$ breaking into two holomorphic strips. Our previous argument implies that the only other end of  $\hat{\cM}(B)$ consists of the broken holomorphic strip consisting of a representative of $\phi\# \phi_0$  and a representative of the bigon $\phi'$. 
 
Summing over the ends of $\hat{\cM}(B'_{\xs\times \theta^+})$, we conclude
\[
\#\hat{\cN}^\beta(B'_{\xs\times \theta^+})+\#\hat{\cM}(\phi\# \phi_0)\cdot \# \hat{\cM}(\phi')=0.
\]
 Since $\# \hat{\cN}^\beta(B'_{\xs\times \theta^+})=\#\hat{\cM}(\phi')=1$, we conclude that $\# \hat{\cM}(\phi\# \phi_0)=1$.
 
 It follows that $C=0$, completing the proof.
\end{proof}

\begin{figure}[ht!]
\centering
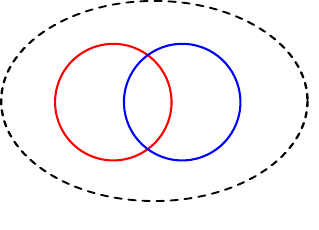
\caption{\textbf{Multiplicities on $\cH_1$.}}\label{fig::56}
\end{figure}

In a similar manner to Lemma~\ref{lem:differentialcompdegen}, we may compute the differentials on $\cH_1$ and $\cH_2$:

\begin{lem}\label{lem:differentialonH_1H_2} The differentials $\d_{\cH_1,J_\alpha}$ and $\d_{\cH_1,J_\beta}$ have the same form:
\[
\d_{\cH_1,J_\alpha}=\d_{\cH_1,J_\beta}=\begin{pmatrix}(\d_{\cH_0})^{U_z\to U_w}& U_w+U_{w'}\\
0& (\d_{\cH_0})^{U_z\to U_{w}}
\end{pmatrix}.
\]
 Furthermore, using the multiplicities from Figure~\ref{fig::56}, the index 1 $J_{\alpha}$-holomorphic curves counted by $\d_{\cH_1,J_{\alpha}}$ satisfy the following:

\begin{enumerate}
\item Any Maslov index 1 class in $\pi_2(\ve{x}\times \theta^+,\ve{y}\times \theta^+)$ or $\pi_2(\ve{x}\times \theta^-,\ve{y}\times \theta^-)$ with holomorphic representatives  has $n_1=m_2=0$ and $n_2=m_1$.
\item The index 1 classes in $\pi_2(\ve{x}\times \theta^+,\ve{y}\times \theta^-)$ with holomorphic representatives have domain equal to one of the two bigons with  $n_1=1$ or $n_2=1$ (and all other multiplicities zero).
\item The index 1 classes in $\pi_2(\ve{x}\times \theta^-,\ve{y}\times \theta^+)$ with holomorphic representatives have one of two domains. One domain is a bigon on  $(S^2,\alpha_1,\beta_1)$ with $m_2=1$ (and all other multiplicities zero). The other domain is the connected sum of the bigon on $(S^2,\alpha_1,\beta_1)$ with $m_1=1$, together with the domain on $(\Sigma,\as,\bs)$ consisting of the connected component of $\Sigma\setminus \bs$ which contains the connected sum point. Any class with either domain has one representative, modulo 2.
\end{enumerate}  
Similar statements hold for $\d_{\cH_1,J_{\beta}},$ $\d_{\cH_2,J_\alpha}$ and $\d_{\cH_2,J_{\beta}}$.
\end{lem}
\begin{proof}The differential $\d_{\cH_1,J_{\alpha}}$ counts the same holomorphic curves as $\d_{\cH_{1.5},J_{\alpha}}$, which is analyzed in Lemma~\ref{lem:differentialcompdegen}. The present claim is proven by repeating the argument therein, while keeping track of the multiplicities over the basepoints, as they appear in $\cH_1$.
\end{proof}

We now compute the change of almost complex structure map $\Psi_{J_\alpha\to J_\beta}$ on $\CF^-(\cH_{1.5},\frs)$.

\begin{lem}\label{lem:changealmostcomplexstructure} Consider the almost complex structures $J_\alpha$ and $J_{\beta}$ on $\cH_{1.5}$, obtained by stretching along $c$ and $c_\alpha$, or $c$ and $c_{\beta}$, respectively.  Whenever the necks along $c$ are sufficiently large,
\begin{equation}
\Psi_{J_\alpha\to J_\beta}=\begin{pmatrix}\id &0\\
*&\id 
\end{pmatrix}.
\label{eq:J_a->J_b-transition-matrix}
\end{equation}
If all necks are sufficiently long, then the $*$ component may be identified with
\[
*=\sum_{i,j\ge 0} U_w^{i} U_{w'}^j (\d_{i+j+1})_{U_{w},U_{w'}}.
\]
\end{lem}
\begin{proof}Lemma~\ref{lem:relative-lengths-unimportant} implies that the relative lengths along $c$ and $c_{\alpha}$ for $J_\alpha$ do not affect the transition map, and similarly the relative lengths along $c$ and $c_{\beta}$ for $J_{\beta}$ do not affect the transition map.

We fix a neck length along $c_{\alpha}$ for $J_{\alpha}$, and a neck length along $c_{\beta}$ for $J_{\beta}.$ The computation of the three components not marked with a $*$ follows from a small adaptation to Lemma~\ref{prop:freestab-Tsufflargeexist}.

To compute the entry marked with a $*$, we  stretch along all necks so that the differentials $\d_{\cH_{1.5}, J_{\alpha}}$ and $\d_{\cH_{1.5},J_{\beta}}$ take the form described in Lemma~\ref{lem:differentialcompdegen}. Write $C$ for the entry labeled $*$ in Equation~\eqref{eq:J_a->J_b-transition-matrix}. The transition map  $\Psi_{J_{\alpha}\to J_{\beta}}$ is a chain map. We view the relation
\[
\Psi_{J_{\alpha}\to J_{\beta}} \circ \d_{\cH_{1.5}, J_{\alpha}}+\d_{\cH_{1.5}, J_{\beta}}\circ \Psi_{J_{\alpha}\to J_{\beta}}=0
\]
as a matrix involving two-by-two matrices. The diagonal entries give
\[
(\d_{\cH_0})^{U_z\to U_w}+(\d_{\cH_0})^{U_z\to U_{w'}}=(U_{w}+U_{w'})\cdot C,
\]
which algebraically implies the stated form of the map $C$.

\end{proof}

We now perform a triangle map computation for a triple which has been stabilized as in Figure \ref{fig::50}.

\begin{figure}[ht!]
\centering
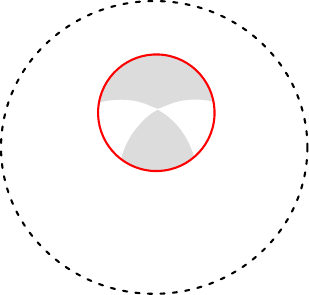
\caption{\textbf{The triple $\hat{\cT}$ in Proposition~\ref{prop:trianglecount1}}, a stabilization of $\cT$. The shaded regions aretwo  examples of small triangles, which might be counted.}\label{fig::50}
\end{figure}

\begin{prop}\label{prop:trianglecount1}Suppose that $\cT=(\Sigma, \as,\bs,\bs',\ve{w}_0\cup \{z\})$ is a Heegaard triple with a distinguished basepoint $z$,  and $\bs'$ are small isotopies of $\bs$, satisfying $|\beta_i\cap \beta_j'|=2\delta_{ij}$. Let $\hat{\cT}=(\Sigma, \as\cup \{\alpha_1\},\bs\cup \{\beta_1\},\bs'\cup \{\beta_2\},\ve{w}_0\cup \{w,w'\})$ be the Heegaard triple obtained by replacing a neighborhood of $z$ with the region shown in Figure~\ref{fig::50}. Let $J$ be an almost complex structure on $\cT$, and $J(T)$  an almost complex structure on $\hat{\cT}$ which is stretched along $c_\alpha$. Using matrix notation, we have
\[
F_{\hat{\cT},J(T)}(\ve{x}\times-, \Theta_{\beta,\beta'}^+\times \theta^+)=\begin{pmatrix} F_{\cT,J}(\ve{x},\Theta_{\beta,\beta'}^+)^{U_z\to U_{w}}&0\\
0& F_{\cT,J}(\ve{x},\Theta_{\beta,\beta'}^+)^{U_z\to U_{w}}
\end{pmatrix},
\]
for sufficiently large $T$.
\end{prop}

The following lemma will be useful in the proof of Proposition~\ref{prop:trianglecount1}:

\begin{lem}\label{lem:smallisotopiesMaslov} Suppose that $(\Sigma, \bs,\bs',\ve{w})$ is a diagram for $(S^1\times S^2)^{\# g(\Sigma)}$ such that $\bs'$ are small isotopies if $\bs$, and $|\beta_i\cap \beta_j'|=2\delta_{ij}$. Let $\Theta_{\beta,\beta'}^+$ denote the top graded intersection point of $\bT_{\beta}\cap \bT_{\beta'}$. If $\phi\in \pi_2(\Theta_{\beta,\beta'}^+,\ve{y})$ is a non-negative homology class, then $\mu(\phi)-n_{w_0}(\phi)\ge 0$ for all $w_0\in \ve{w}$. Furthermore,  $\mu(\phi)-n_{w_0}(\phi)=0$ for some $w_0\in \ws$ if and only if $\phi$ is the constant homology class $e_{\Theta_{\b,\b'}^+}$.
\end{lem}
\begin{proof}By the formula for the relative Maslov grading in \cite{OSDisks}, we have 
\begin{equation}
 \gr(\Theta_{\b,\b'}^+,\ve{y})=\mu(\phi)-2\sum_{w\in \ve{w}} n_w(\phi)\ge 0.\label{eq:grading-definition}
\end{equation}
 Hence, if $\phi$ is non-negative
\begin{equation}
\begin{split}
\mu(\phi)-n_{w_0}(\phi)&=\gr(\Theta_{\b,\b'}^+,\ys)+n_{w_0}(\phi)+2\sum_{w\in \ws\setminus \{w_0\}} n_w(\phi)\\
&\ge n_{w_0}(\phi)+2\sum_{w\in \ws\setminus \{w_0\}} n_w(\phi).\\
&\ge 0
\end{split} \label{eq:simple-diagram-maslov>pts}
\end{equation}
In particular, $\mu(\phi)-n_{w_0}(\phi)\ge 0$. Furthermore,  Equation~\eqref{eq:simple-diagram-maslov>pts} implies that if $\mu(\phi)-n_{w_0}(\phi)=0$ for some $w_0$ then $n_w(\phi)=0$ for all $w\in \ws$, and $\mu(\phi)=0$.  From Equation~\eqref{eq:grading-definition}, we see $\gr(\Theta_{\b,\b'}^+,\ys)=0$, so $\ys=\Theta_{\b,\b'}^+$.  By diagrammatic inspection, the only non-negative class $\phi\in \pi_2(\Theta^+_{\b,\b'}, \Theta^+_{\b,\b'})$ with $\mu(\phi)=0$ and $n_{w}(\phi)=0$ for all $w$ is the constant class.
\end{proof}

\begin{proof}[Proof of Proposition~\ref{prop:trianglecount1}]

We begin with a Maslov index computation:

\begin{subclaim} Suppose  $x\in \{x^+,x^-\}$ and $y\in \{y^+,y^-\}$, and $\psi_0\in \pi_2( x, \theta^+, y)$ is a triangle class on the triple $(S^2,\alpha_1,\beta_1,\beta_2)$ shown in  Figure~\ref{fig::50}. We claim
\begin{equation}
\mu(\psi_0)=(m_1+m_2+N_1+N_2)(\psi_0).\label{eq:odd-Maslov-index-formula}
\end{equation}
\end{subclaim}
\begin{proof} Equation~\eqref{eq:odd-Maslov-index-formula} holds for both of the two small triangle classes shaded in Figure~\ref{fig::50}. Furthermore, Equation~\eqref{eq:odd-Maslov-index-formula} respects splicing in doubly periodic domains on $(S^2,\beta_1,\beta_2)$ as well as bigons on $(S^2,\alpha_1,\beta_1)$ and $(S^2,\alpha_1,\beta_2)$, implying the formula in general.
\end{proof}
 
Suppose $\psi\in \pi_2(\xs,\Theta^+_{\b,\b'},\ys)$ is a class on $\cT$, and $\psi_0\in \pi_2(x,\theta^+,y)$ is a class on $(S^2,\alpha_1,\beta_1,\beta_2)$. We form the class $\psi\# \psi_0$ on $\hat{\cT}$ by taking the connected sum of $\psi$ and $\psi_0$, along $c$.  We claim
\begin{equation}\begin{split}\mu(\psi\# \psi_0)&=\mu(\psi)+\mu(\psi_0)-2m_1(\psi_0)\\
&=\mu(\psi)+(m_2-m_1+N_1+N_2)(\psi_0).\end{split}\label{eq:Maslovindexquasitriangles}
\end{equation}
The first equality of Equation~\eqref{eq:Maslovindexquasitriangles} follows from Sarkar's formula for the Maslov index \cite{SarkarMaslov} and the fact that a disk has Euler measure 1. The second equality follows from Equation~\eqref{eq:odd-Maslov-index-formula}.

Let $p\in \Sigma$ and $p_0\in S^2$ denote the points corresponding to $c_\alpha$, which arise after we cut $\Sigma$ along $c_\alpha$, and collapse each of the resulting boundary components. Write 
\[
\Sigma_0=\Sigma\setminus \{p\}\qquad \text{and}\qquad S^2_0=S^2\setminus \{p_0\}.
\]
 Let $\beta_1^r$ and $\beta_2^r$ denote the resulting arcs on $\Sigma_0$. 
 
 We will also be interested in the tube region, for which we write $(S^1\times \R, \beta_1^m,\beta_2^m)$. We write $p_m^l$ and $p_m^r$ for the two punctures of $S^1\times \R$.  See Figure~\ref{fig::92} for a schematic.
 
 If $T_i$ is a sequence of neck-lengths approaching $\infty$, and $J(T_i)$ denotes an almost complex structure on $\Sigma\times \Delta$ with neck-length $T_i$ along $c_\alpha$, then a sequence $u_i$ of $J(T_i)$-holomorphic triangles representing $\psi\# \psi_0$ has a subsequence which converges to three collections, $\cU$, $\cU_m$ and $\cU_0$, where $\cU$ is a broken representative  of $\psi$ on $(\Sigma_0, \as,\bs\cup \{\beta_1^r\},\bs'\cup \{\beta_2^r\})$, $\cU_0$ is a broken representative of $\psi_0$ on $(S^2_0,\alpha_1,\beta_1^l,\beta_2^l)$, and  $\cU_m$ is a collection of holomorphic curves in the tube region $(S^1\times \R,\beta_1^m,\beta_2^m)$. The collections $\cU$, $\cU_0$ and $\cU_m$ may contain both holomorphic curves mapping into $\Sigma\times \Delta$, $S^2\times \Delta$ or $S^1\times \R\times \Delta$, and holomorphic  curves mapping into $\Sigma\times [0,1]\times \R$, $S^2\times [0,1]\times \R$ or $S^1\times \R\times [0,1]\times \R$.

 \begin{figure}[ht!]
 \centering
 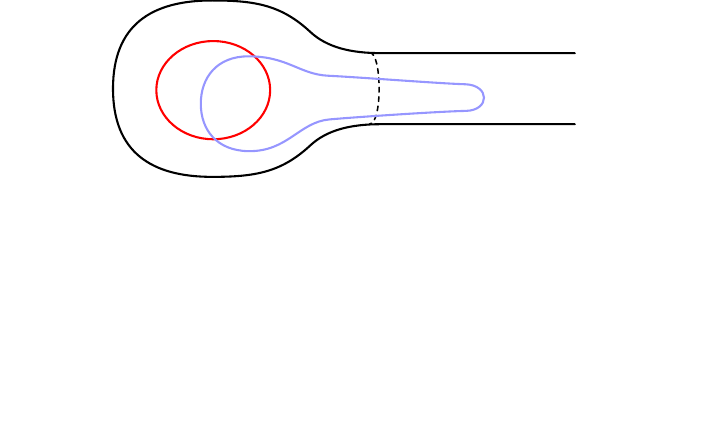
 \caption{\textbf{Decomposing $\hat{\cT}$ along $c_{\alpha}$ and the broken curves $\cU_0$, $\cU_m$ and $\cU$.}}\label{fig::92}
 \end{figure}

We now describe the possible asymptotic behavior of the curves appearing in  $\cU$, $\cU_m$ and $\cU_0$. For definiteness, let us focus on $\cU$. View a neigborhood of the puncture on $\Sigma_0$ as $S^1\times [0,\infty)$. If $u\in \cU$, then the source of $u$ may have punctures along the boundary, at which $u$ asymptotic to an intersection point on the Heegaard diagram. Additionally, a curve $u$ in $\cU$ may have the following types of asymptotics:
\begin{enumerate}[ref= a-\arabic*, label= (a-\arabic*):]
\item\label{num:asymp-1} The source of $u$ may have an interior puncture which is asymptotic to an orbit $S^1\times \{p\}\subset S^1\times \Delta$ for $p\in \Delta$ (multiple covers of such orbits are allowed). Similar asymptotics may occur for curves of $\cU$ mapping into $\Sigma\times [0,1]\times \R$.
\item\label{num:asymp-2} The source $u$ may have a boundary puncture which is asymptotic to a chord  $a\times \{p\}\subset S^1\times \Delta$, where $p$ is a point in one of the three components of $\d \Delta$, and $a$ is a subarc of $S^1$ (thought of as the boundary of $\Sigma$ with a point removed). The arc $a$ connects $\beta_1^r$ to $\beta_1^r$ or $\beta_2^r$ to $\beta_2^r$. The arc $a$ may wind multiple times around $S^1$. Similar asymptotics could also appear in curves in $\cU$ which map into $\Sigma\times [0,1]\times \R$. (These asymptotics are studied in \cite{LOTBordered}.)
\item\label{num:asymp-3}  The source of $u$ may have a boundary puncture which is asymptotic to a chord which connects $\beta_1^r$ and $\beta_2^r$. At this boundary puncture, $\pi_{\Delta}\circ u$ approaches $\infty$ in one of the cylindrical ends of $\Delta$. Also, $\pi_\Sigma\circ u$ is asymptotic to an arc which connects $\beta_1^r$ and $\beta_2^r$ (perhaps winding many times). Similar asymptotics could also appear in curves in $\cU$ which map into $\Sigma\times [0,1]\times \R$.
\end{enumerate}

 Examples of asymptotics~\eqref{num:asymp-2} and \eqref{num:asymp-3} are shown in Figure~\ref{fig::94}. We will rule out asymptotics of type~\eqref{num:asymp-2} and~\eqref{num:asymp-3} from appearing generically in $\cU$, $\cU_m$ or $\cU_0$.
 
 \begin{figure}[ht!]
 \centering
 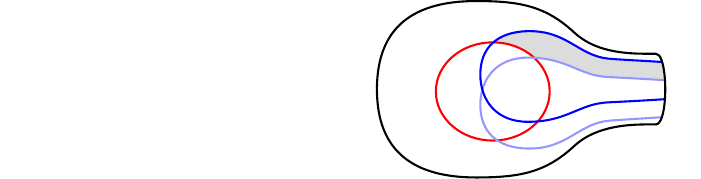
 \caption{\textbf{Domains of troublesome curves which have asymptotics of type~\eqref{num:asymp-2} and \eqref{num:asymp-3}.} We rule out such curves from appearing.}\label{fig::94}
 \end{figure}

We first consider $\cU$, the curves on $\Sigma_0$:
\begin{subclaim}\label{subclaim:U-simple} If $u\in \cU$, and $S_0$ is a connected component of the source of $u$, then $u|_{S_0}$ cannot have boundary on both $\as$ and $\beta_1^r$ or $\beta_2^r$.
\end{subclaim}
\begin{proof}
The claim follows from the maximum modulus principle  (compare the proof of \cite{MOIntSurg}*{Proposition 5.2}). Suppose $u\colon S\to \Sigma_0\times \Delta$ is a holomorphic triangle with boundary on $\as$ and at least one of $\beta_1^r$ or $\beta_2^r$. Let $\bar{S}$ denote the surface obtained by compactifying $S$ with at its boundary punctures. There must be a connected component of $\d \bar{S}$ which is mapped by $\pi_\Sigma \circ u$ to only $\beta_1^r$ and $\beta_2^r$, since there are no intersection points between $\beta_1^r$ or $\beta_2^r$ and $\as$, $\bs$ or $\bs'$. By the maximum modulus principle, $\pi_\Delta\circ u$  must be constant, which is a contradiction. A similar argument applies if $u$ is a holomorphic strip on one of the subdiagrams $(\Sigma_0,\as,\bs\cup \{\beta_1^r\})$ or $(\Sigma_0,\as,\bs'\cup \{\beta_2^r\})$.
\end{proof}

It follows from Subclaim~\ref{subclaim:U-simple} that $\cU$ may be arranged into the following collections:
\begin{enumerate}
\item A broken holomorphic triangle $u_\Sigma$ on $(\Sigma, \as,\bs,\bs')$ representing a class  $\psi_\Sigma\in \pi_2(\ve{x},\Theta',\ve{y})$.
\item A broken holomorphic strip $u_{\beta,\beta'}$ on $(\Sigma, \bs\cup\{\beta_1^r\},\bs'\cup \{\beta_2^r\})$ representing a  class $\phi_{\beta,\beta'}$ in $\pi_2(\Theta_{\beta,\beta'}^+\times p,\Theta'\times p)$.
\end{enumerate}

\begin{subclaim}\label{subclaim:cU=usigma} The class $\phi_{\beta,\beta'}$ must be the constant class, and $\cU$ consists only of the curve $u_\Sigma$ (and potentially some ghost curves). The class $\psi_\Sigma$ is equal to $\psi$, and has Maslov index 0. The asymptotics of $u_\Sigma$ at $p$ consist only of orbits of type~\eqref{num:asymp-1}.  After perturbing the almost complex structure, the orbit asymptotics of $u_\Sigma$ project to $m_1(\psi)$ distinct points in the interior of $\Delta$. After completing over the puncture $p$, the curve $u_{\Sigma}$ satisfies \eqref{def:M1}--\eqref{def:M6}.
\end{subclaim}
\begin{proof}
The proof is essentially combinatorial, and is based on obtaining convenient index formulas.

Let $\phi_{\beta,\beta'}^0$ denote the homology class in $\pi_2(\Theta_{\beta,\beta'}^+,\Theta')$ induced by $\phi_{\beta,\beta'}$ on $(\Sigma,\bs,\bs')$, obtained by removing a disk containing the $\beta_1^r$ and $\beta_2^r$ curves, and collapsing the resulting boundary component to a point.

Note that $\psi=\psi_\Sigma+\phi_{\beta,\beta'}^0$, since all multiplicities are represented. Hence
\begin{equation}
\mu(\psi)=\mu(\psi_\Sigma)+\mu(\phi_{\beta,\beta'}^0).\label{eq:additive-index-weird-triangles}
\end{equation}
We claim
\begin{equation}
\begin{split}
(m_2-m_1)(\psi_0)&=(m_2-m_1)(\psi_\Sigma)+(m_2-m_1)(\phi_{\beta,\beta'})\\
&=(m_2-m_1)(\phi_{\beta,\beta'}).
\end{split}
\label{eq:diff-in-multiplicity}
\end{equation}
The first equality of Equation~\eqref{eq:diff-in-multiplicity} follows since $\psi_0$ represents the entire homology class of the limiting curves on $(S^2_0,\alpha_1,\beta_1^l,\beta_2^l)$, while $\psi_\Sigma$ and $\phi_{\beta,\beta'}$ represent the entire homology class of the limiting curves on $(\Sigma_0,\as,\bs\cup \{\beta_1^r\}, \bs'\cup \{\beta_2^r\})$. The second equality of Equation~\eqref{eq:diff-in-multiplicity} follows from the fact that $m_2(\psi_\Sigma)=m_1(\psi_\Sigma)$, since $u_{\Sigma}$ has no boundary mapping to $\beta_1$ or $\beta_2$.

Combining Equations~\eqref{eq:Maslovindexquasitriangles}, ~\eqref{eq:additive-index-weird-triangles} and~\eqref{eq:diff-in-multiplicity}, we obtain
\begin{equation}
\begin{split}
\mu(\psi\# \psi_0)&= \mu(\psi)+(m_2-m_1+N_1+N_2)(\psi_0)\\
&=\mu(\psi_\Sigma)+\mu(\phi_{\beta,\beta'}^0)+(m_2-m_1+N_1+N_2)(\psi_0)\\
&=\mu(\psi_\Sigma)+\left(\mu(\phi_{\beta,\beta'}^0)-m_1(\phi_{\beta,\beta'})\right)+m_2(\phi_{\beta,\beta'})+(N_1+N_2)(\psi_0).
\end{split}
\label{eq:Maslov-index-sum-nonnegative-terms}
\end{equation}

By construction of the class $\phi_{\beta,\beta'}^0$, we have
 \begin{equation}
 m_1(\phi_{\beta,\beta'})=m_{1}(\phi_{\beta,\beta'}^0). \label{eq:m1phibb=m1phibb0}
 \end{equation}
Equation~\eqref{eq:m1phibb=m1phibb0} and  Lemma~\ref{lem:smallisotopiesMaslov} imply
  \[
 \mu(\phi_{\beta,\beta'}^0)-m_1(\phi_{\beta,\beta'})= \mu(\phi_{\beta,\beta'}^0)-m_1(\phi_{\beta,\beta'}^0)\ge 0.
  \]
   Consequently, Equation~\eqref{eq:Maslov-index-sum-nonnegative-terms} implies
\begin{equation}
\begin{split}
0&=\mu(\psi_\Sigma)\\
&=\mu(\phi_{\beta,\beta'}^0)-m_1(\phi_{\beta,\beta'})\\
&=m_2(\phi_{\beta,\beta'})\\
&=N_1(\psi_0)\\
&=N_2(\psi_0).
\end{split}
\label{eq:all-terms-zero}
\end{equation}

In particular, Equation~\eqref{eq:all-terms-zero} implies that
\[
\mu(\phi_{\beta,\beta'}^0)-m_1(\phi_{\beta,\beta'}^0)=\mu(\phi_{\beta,\beta'}^0)-m_1(\phi_{\beta,\beta'})=0,
\]
so Lemma~\ref{lem:smallisotopiesMaslov} implies that $\phi_{\beta,\beta'}^0$ must be a constant class.

Next, if $n\in \{n_1,n_2,m_1,m_2\}$, we note that
\[
n(\phi_{\beta,\beta'})+m_1(\psi_\Sigma)=n(\psi_0),
\]
since $\phi_{\beta,\beta'}$, $\psi_0$ and $\psi_{\Sigma}$, when spliced together, represent the entire class $\psi\# \psi_0$. Since $u_\Sigma$ has no boundary components on $\beta_1^r$ or $\beta_2^r$, it follows that
\[
n_1(\psi_\Sigma)=n_2(\psi_\Sigma)=m_1(\psi_\Sigma)=m_2(\psi_\Sigma).
\] 
Consequently, the class $\phi_{\beta,\beta'}$ satisfies the same vertex relation at $\theta^-$ that the class  $\psi_0$ satisfies:
 \begin{equation}
 (n_1+n_2)(\phi_{\beta,\beta'})=(m_1+m_2)(\phi_{\beta,\beta'}).\label{eq:vertex-multiplicities-phi-b-b'}
 \end{equation}
Since $\phi_{\beta,\beta'}^0$ is the constant class, we know  $m_1(\phi_{\beta,\beta'}^0)=0$. Equation~\eqref{eq:m1phibb=m1phibb0} implies that $m_1(\phi_{\beta,\beta'})=0$. From Equation~\eqref{eq:all-terms-zero} we know that $m_2(\phi_{\beta,\beta'})=0$. Combined with Equation~\eqref{eq:vertex-multiplicities-phi-b-b'}, we conclude
\[
n_1(\phi_{\beta,\beta'})=n_2(\phi_{\beta,\beta'})=m_1(\phi_{\beta,\beta'})=m_2(\phi_{\beta,\beta'})=0,
\]
so $\phi_{\beta,\beta'}$ is a constant class, as well.

Since $\phi_{\beta,\beta'}$ is the constant class, it follows that $\psi=\psi_{\Sigma}$.

Since $u_\Sigma$ has no boundary mapping to $\beta_1^r$ or $\beta_2^r$, it can only be asymptotic at $p$ to orbits of type~\eqref{num:asymp-1}. From Equation~\eqref{eq:all-terms-zero} we know that $\mu(\psi)=0$. Hence, for generic almost complex structure, after completing over $p$, $u_\Sigma$ will satisfy \eqref{def:M1}--\eqref{def:M6}.

Finally, by perturbing the almost complex structure slightly, we can ensure that the asymptotics of $u_\Sigma$ at $p$ consist of $m_1(\psi)$ once-covered orbits of type~\eqref{num:asymp-1}, which each project to a different point of $\Delta$. Such a perturbation can be done concretely, by perturbing the placement of the connected sum point so that it is disjoint from the image of the branch set of $\pi_{\Sigma}\circ v$, for any Maslov index 0 holomorphic triangle $v$ (a codimension 2 subset of $\Sigma$).
\end{proof}

We now analyze the curves in $\cU_0$ and $\cU_m$:

\begin{subclaim}\label{subclaim:Um}
The curves in $\cU_0$ can only be asymptotic at the puncture $p_0$ to an orbit of type~\eqref{num:asymp-1}. The curves in $\cU_m$ consist only of once-covered cylinders of the form $S^1\times \R\times \{d\}$, where $d$ is an interior point of $\Delta$, as well as possibly some ghost curves.
\end{subclaim}
\begin{proof}
 A version of compactness in symplectic field theory is described in \cite{BEHWZ}, where a linear ordering of levels appears. Our present situation is more similar to the compactification via holomorphic combs which appears in bordered Floer homology \cite{LOTBordered}*{Section~5.4}, since holomorphic curves can degenerate into the three cylindrical ends of $\Delta$, and also into the tube region which results from cutting along $c_\alpha$. Hence, the limiting curves may be arranged into a 2-component level structure. We refer to one component as the \emph{$\Delta$-level}, and the other is the \emph{$\Sigma$-level} (compare \cite{LOTBordered}*{Definition~5.20}). Furthermore, there is a single $\Delta$-level in each $\Sigma$-level which consists of curves mapping into $S^2_0\times \Delta$, $S^1\times \R\times \Delta$ or $\Sigma_0\times \Delta$. We refer to this level as the \emph{central} $\Delta$-level. All other curves map into one of the cylindrical 4-manifolds appearing in the ends.
 
 We begin at the central $\Delta$-level of $\cU$, which consists only of the curve $u_\Sigma$ by Subclaim~\ref{subclaim:cU=usigma} (and possibly some ghost curves, which we will later rule out). We proved that the asymptotics of $u_\Sigma$ consisted of $m_1(\psi)$  orbits of type~\eqref{num:asymp-1}, each of which projects to a distinct point in the interior of $\Delta$.

 There is a story $\cV_1$ of $\cU_m$ which matches $u_\Sigma$. Since $\cV_1$ matches $m_1(\psi)$ once-covered orbits of type~\eqref{num:asymp-1}, which each project to an interior point of $\Delta$, $\cV_1$ must also be in the central $\Delta$-level. Since there are no $\as$ curves on $(S^1\times \R,\beta_1^m,\beta_2^m)$, any holomorphic curve with connected source which has such an orbit at $p_m^r$ must project constantly to $\Delta$. Consequently, such a curve must also be asymptotic to a once-covered orbit at $p_m^l$. According to Subclaim~\ref{subclaim:cU=usigma}, we have 
 \[
n_1(\psi_0)=n_2(\psi_0)=m_1(\psi_0)=m_2(\psi_0)=m_1(\psi).
 \]
 Since $\cV_1$ must contain $m_1(\psi)$ once-covered cylinders, which each project constantly to $\Delta$, there can be no other holomorphic curves with non-constant image, since they would raise the multiplicity in some region of $\Sigma$ too high.
 
 We now consider the level $\cV_2$ of $\cU_m$ which matches the asymptotics of $\cV_1$. The previous argument implies that $\cV_2$ consists only of $m_1(\psi)$ once-covered cylinders, and some ghost curves. We continue in this manner until we reach the central $\Delta$-level of $\cU_0$. Since adjacent levels must have matching asymptotics, we conclude that the central $\Delta$-level of $\cU_0$ has asymptotics at $p_0$ consisting only of $m_1(\psi_0)$ once-covered orbits. Furthermore, the asymptotics match those of $u_\Sigma$.
\end{proof}

\begin{subclaim}\label{subclaim:U0} The collection $\cU_0$ consists of a single holomorphic triangle $u_0$, satisfying \eqref{def:M1}--\eqref{def:M6}, as well as possibly some ghost curves. Furthermore
\[
\rho^p(u_\Sigma)=\rho^{p_0}(u_0).
\]
\end{subclaim}
\begin{proof}
Subclaim~\ref{subclaim:U0} constrains the asymptotics at $p_0$ of the curves in $\cU_0$ to satisfy \eqref{num:asymp-1}. The Maslov index of $\psi_0$ is $2m_1(\psi_0)$, since $\mu(\psi)=0$ by the proof of Subclaim~\ref{subclaim:cU=usigma}. The set $X(\psi)\subset \Sym^{m_1(\psi_0)}(\Delta)$ defined by
\[
X(\psi):=\{\rho^{p}(u): u\in \cM(\psi)\}
\]
is finite. Consequently, as in the proof of Proposition~\ref{prop:free-stabdifferential}, dimension counting using Proposition~\ref{prop:transversality} implies that $\cU_0$ consists of a single holomorphic triangle $u_0$ which satisfies \eqref{def:M1}--\eqref{def:M6}. 
\end{proof}

Having constrained the curves of $\cU$, $\cU_m$ and $\cU_0$ in Subclaims~\ref{subclaim:cU=usigma}, \ref{subclaim:Um} and \ref{subclaim:U0},  the index argument in the proof of Proposition~\ref{prop:free-stabdifferential} applies to show that generically no ghost curves appear.

The  class $\psi_0$ is completely determined: since $N_1(\psi_0)=N_2(\psi_0)=0$ by Equation~\eqref{eq:all-terms-zero}, it follows that $\psi_0$ has domain consisting of one of the two shaded small triangles in Figure~\eqref{fig::50} together with $k$ copies of the component of $S^2\setminus \alpha_0$ which has non-zero multiplicity on the region marked $m_1$. In particular, this implies that $\psi_0\in \pi_2(x^+,\theta^+,y^+)$ or $\psi_0\in \pi_2(x^-,\theta^+,y^-)$, and hence the off-diagonal entries of the map $F_{\hat{\cT},J(T)}(\ve{x}\times -, \Theta_{\b,\b'}^+\times \theta^+)$ are zero. Let us write
\[
\psi_0^{k,+}\in \pi_2(x^+,\theta^+,y^+)\qquad \text{and} \qquad \psi_0^{k,-}\in \pi_2(x^-,\theta^+,y^+),
\]
for these two triangle classes.

Finally, it remains to count representatives of the class $\psi\# \psi_0$, when $\mu(\psi)=0$ and $\psi_0\in \left\{\psi_0^{m_1(\psi),+}, \psi_0^{m_1(\psi),-}\right\}$. If $u_\Sigma$ denotes the holomorphic triangle in $\cU$, and $u_0$ the triangle in $\cU_0$, then we can view the pair $(u_\Sigma,u_0)$ as a point in the compactification of the space
\[
\bigcup_{T>0} \cM_{J(T)}(\psi\# \psi_0).
\]
Gluing gives an identification of a neighborhood of the set of such pairs $(u_\Sigma,u_0)$ with the Cartesian product
\begin{equation}
\left\{(u_\Sigma,u_0)\in \cM(\psi)\times\cM^{p_0}(\psi_0): \rho^{p}(u_\Sigma)=\rho^{p_0}(u_0)\right\}\times [0,1), \label{eq:fibered-prod-0}
\end{equation}
where  $\cM^{p_0}(\psi_0)$ denotes set of holomorphic curves $u_0\colon S_0\to S^2_0\times [0,1]\times \R$, representing the class $\psi_0$ on $(S^2_0,\alpha_1,\beta_1^l,\beta_2^l)$, such that $(\pi_{S^2}\circ u_0)^{-1}(p_0)\cap \d S=\emptyset$ (i.e. the curves which only have asymptotics of type~\eqref{num:asymp-1} at $p_0$, and none of type~\eqref{num:asymp-2} or \eqref{num:asymp-3}).

Since  $\mu(\psi)=0$, it suffices to count the number of elements in the matched moduli space $\cM^{p_0}(\psi_0,\ve{d})$, where $\ve{d}\in \Sym^{m_1(\psi_0)}(\Delta)$ is a generic point. The strategy used by Ozsv\'{a}th and Szab\'{o} \cite{OSLinks}*{Lemma~6.4} to prove Equation~\eqref{eq:OS'scountmatched} now readily adapts to our present situation. Pick a path of points $\ve{d}_t\colon [0,\infty)\to \Sym^{m_1(\psi_0)}(\Delta)$, disjoint from the fat diagonal, consisting of $m_1(\psi_0)$ points which all travel into the $\alpha_1$-$\beta_1$ cylindrical end of $\Delta$, such that the points of $\ve{d}_t$ are spaced at least $t$ distance apart (with respect to a metric obtained by embedding $\Delta$ conformally in the complex plane, so that each cylindrical end is identified with $[0,1]\times [0,\infty)$).

\begin{subclaim}
 The only ends of the 1-dimensional moduli space $\bigcup_{t\in [0,\infty)}\cM^{p_0}(\psi_0,\ve{d}_t)$ at finite $t$ correspond to $\cM^{p_0}(\psi_0,\ve{d}_0)$.
 \end{subclaim} 
\begin{proof} 
	The proof is similar to our proof of Equation~\eqref{eq:divisorcounttriangles}. Degenerations may be analyzed by considering possible curves and arcs collapsing in the source curve. Such degenerations may be broadly classified into one of the following phenomena: holomorphic strips breaking off into one of the three cylindrical ends, boundary degenerations bubbling off, curves breaking off toward the puncture $p_0$, or the source curve becoming nodal.
	
	One example of a curve breaking off to $p_0$ would be a slit along $\beta_1^l$ or $\beta_2^l$ traveling out towards $p_0$. 

Strip breaking is prohibited as follows. Suppose $v\colon S\to S^2\times [0,1]\times \R$ is a hypothetical strip appearing in degeneration in $\cM^{p_0}(\psi_0,\ve{d}_t)$ at some finite $t$. Since $N_1(v)=N_2(v)=0$, the domain of $v$ must have non-zero multiplicity in one of the four regions adjacent to $p_0$. Consequently any holomorphic triangle also arising in the degeneration could not match any $\ve{d}_t$, since $\ve{d}_t$ is bounded away from the three components of $\d\Delta$.

 Boundary degenerations are prohibited similarly, since the only possible class of the resulting boundary degeneration would have non-zero multiplicity around $p_0$, prohibiting the triangular component in the limit from matching $\ve{d}_t$.

Curves breaking off in the direction of $p_0$ are also prohibited, since the resulting curve mapping into $S^1\times \R\times \Delta$ must match $\ve{d}_t$ at the puncture $p_m^r$. As argued in the proof of Subclaim~\ref{subclaim:cU=usigma}, this constrains any curves appearing in $S^1\times \R\times \Delta$ to consist only of cylinders, and possibly ghost curves. Ghost curves are prohibited from dimension counts, as in the proof of Proposition~\ref{prop:free-stabdifferential}.

Finally, the formation of nodal singularities is prohibited similarly to the proof of Equation~\eqref{eq:divisorcounttriangles}. Double points appearing on the interior of the source are prohibited by dimension counts. Boundary double points are prohibited since they result in the formation of a boundary degeneration, which we have already prohibited.

\end{proof}

Consequently, the limiting curves which appear in the ends of $\cM^{p_0}(\psi_0,\ve{d}_t)$ as $t\to \infty$ take the following form:
\begin{enumerate}
\item A single index 0 holomorphic triangle (with domain equal  to one of the two shaded regions in Figure~\ref{fig::50}), together with
\item\label{enum:curve-type-2}  $m_1(\psi_0)$ Maslov index 2 holomorphic curves with domain $A$ equal to the component of $S^2\setminus \alpha_0$ with $m_1(A)=1$. Each curve matches a single point $d\in [0,1]\times \R$.
\end{enumerate}
If $x\in \{x^+,x^-\}$,  write $A_x$ for the Maslov index 2 homology class in $\pi_2(x,x)$ corresponding to the curves of type-\eqref{enum:curve-type-2}. It remains to count the matched moduli space $\cM^{p_0}(A_x,d)$, for a generic point $d\in [0,1]\times \R$. This is achieved by letting $d$ approach $\{1\}\times \R$ (the $\alpha_1$-boundary of $[0,1]\times \R$). The limiting curve then consists of a single $\alpha_1$ boundary degeneration, as well as a representative of the constant class, $e_x$. There are no other ends. 

By \cite{OSLinks}*{Theorem~5.5}, the count of $\alpha_1$ boundary degenerations in the class $A_x$ is $1$, modulo 2. By the Riemann mapping theorem, each of the shaded triangles in Figure~\ref{fig::50} has a unique representative. Consequently, gluing together these curves, we conclude that $\cM^{p_0}(\psi_0,\ve{d})$ has a single element, modulo 2, for any generic $\ve{d}\in \Sym^{m_1(\psi_0)}(\Delta)$.

Using Equation~\eqref{eq:fibered-prod-0}, we conclude that for large $T$
\[
\# \cM_J(\psi)=\# \cM_{J(T)}(\psi\# \psi_0).
\]

The class $\psi\# \psi_0$ has multiplicity $m_1(\psi)$ over $w$, and zero multiplicity on $w'$. Hence any representative of $\psi\# \psi_0$ is counted with a factor of $U_{w'}^{n_z(\psi)}$, and no factor of $U_w$, completing the proof
\end{proof}

\begin{figure}[ht!]
\centering
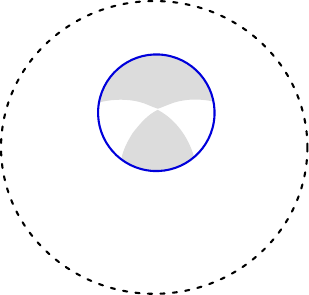
\caption{\textbf{The Heegaard triple $\hat{\cT}$ in Proposition \ref{prop:trianglecount2}.} The shaded regions are each examples of small triangles which might be counted.}\label{fig::51}
\end{figure}

\begin{prop}\label{prop:trianglecount2}
Suppose that $\cT=(\Sigma, \as',\as,\bs',\ve{w}_0\cup \{z\})$ is a Heegaard triple with a distinguished basepoint $z$, and $\as'$ are small Hamiltonian isotopies of $\as$, satisfying $|\alpha_i'\cap \alpha_j|=2\delta_{ij}$.  Let $\hat{\cT}=(\Sigma, \as'\cup \{\alpha_2\},\as\cup \{\alpha_1\},\bs'\cup \{\beta_2\},\ve{w}_0\cup \{w,w'\})$  be the Heegaard triple shown in Figure \ref{fig::51}. Write $J$ for an almost complex structure on $\Sigma\times \Delta$, for $\cT$, and write $J(T)$ for an almost complex structure for $\hat{\cT}$, which has had a neck of length $T$ inserted along $c_{\beta}$. For sufficiently large $T$,
\[
F_{\hat{\cT},J(T)}( \Theta_{\alpha',\alpha}^+\times \theta^+,\ve{x}\times -)=\begin{pmatrix} F_{\cT,J}(\Theta_{\alpha',\alpha}^+,\ve{x})^{U_z\to U_{w'}}&0\\
0& F_{\cT}(\Theta_{\alpha',\alpha}^+,\ve{x})^{U_z\to U_{w'}}.
\end{pmatrix}
\]
\end{prop}
\begin{proof}The proof is identical to the proof of Proposition~\ref{prop:trianglecount1}.
\end{proof}

\begin{lem}\label{lem:picknicecomplexstructure}Almost complex structures $J_\alpha$ and $J_\beta$ can be chosen on $\cH_1$, $\cH_{1.5}$ and $\cH_{2}$ so that the following hold:
\begin{enumerate}
\item Lemma~\ref{lem:differentialcompdegen} applies to compute the differentials on $\cH_{1.5}$. Lemma~\ref{lem:differentialonH_1H_2} applies to compute the differential on $\cH_{1}$ and $\cH_2$.
\item Proposition~\ref{prop:trianglecount1} applies to compute the transition map $\Psi_{(\cH_1,J_\alpha)\to (\cH_{1.5},J_{\alpha})}$ and Proposition~\ref{prop:trianglecount2} applies to compute $\Psi_{(\cH_{1.5}, J_{\beta})\to (\cH_{2}, J_{\beta})}$.
\item Lemma~\ref{lem:changealmostcomplexstructure} applies to compute  $\Psi_{(\cH_{1.5}, J_{\alpha})\to (\cH_{1.5},J_{\beta})}$.
\item $J_\alpha$ can be used to compute  $S_{w'}^+$ on $\cH_1$, and $J_{\beta}$ can be used to compute $S_{w}^-$ on  $\cH_2$.
\end{enumerate}
\end{lem}

\begin{proof}All of the results follow from the aforementioned results, together with Lemma~\ref{lem:relative-lengths-unimportant}, which shows that the transition map for changing between two choices of $J_{\alpha}$ with different relative neck lengths is the identity map, on the level of chain complexes.
\end{proof}

We now  prove Theorem~\ref{thm:transitionmapcomp}:
\begin{proof}[Proof of Theorem \ref{thm:transitionmapcomp}] Let $J_1$ and $J_2$ be the almost complex structures $J_\alpha$ and $J_\beta$ on $\cH_1$ and $\cH_2$, chosen so that the statements of  Lemma~\ref{lem:picknicecomplexstructure} hold.

 Decompose $\Psi_{(\cH_1,J_\alpha)\to (\cH_2,J_\beta)}$ as
\begin{equation}
\Psi_{(\cH_1,J_\alpha)\to (\cH_2,J_\beta)}=\Psi_{(\cH_{1.5},J_\beta)\to (\cH_2,J_\beta)}\circ \Psi_{(\cH_{1.5},J_\alpha)\to (\cH_{1.5}, J_\beta)} \circ \Psi_{(\cH_1,J_\alpha)\to (\cH_{1.5},J_\alpha)}.\label{eq:decompose-transitionH1H2}
\end{equation}
 Using Lemma~\ref{lem:picknicecomplexstructure}, Equation~\eqref{eq:decompose-transitionH1H2} implies that $\Psi_{(\cH_1,J_\alpha)\to (\cH_2,J_\beta)}$ is equal to, in matrix notation
\[
\begin{pmatrix}(\Psi_{\as\to \as'}^{\bs'})^{U_z\to U_{w'}}&0\\
0& (\Psi_{\as\to \as'}^{\bs'})^{U_z\to U_{w'}}
\end{pmatrix} \begin{pmatrix}\id &0\\
*& \id
\end{pmatrix} \begin{pmatrix}(\Psi_{\as}^{\bs\to \bs'})^{U_z\to U_{w}}&0\\
0& (\Psi_{\as}^{\bs\to \bs'})^{U_z\to U_{w}}
\end{pmatrix}
\]
\[
=\begin{pmatrix}(\Psi_{\as\to \as'}^{\bs'})^{U_z\to U_{w'}}\circ (\Psi_{\as}^{\bs\to \bs'})^{U_z\to U_{w}}& 0\\
*&(\Psi_{\as\to \as'}^{\bs'})^{U_z\to U_{w'}}\circ  (\Psi_{\as}^{\bs\to \bs'})^{U_z\to U_{w}}
\end{pmatrix},
\]
 as claimed. Furthermore, according to Lemma~\ref{lem:picknicecomplexstructure}, the almost complex structure $J_\alpha$ can be used to compute $S^{+}_{w'}$ on $\cH_1$, and  $J_\beta$ can be used to compute $S_{w}^{-}$ on $\cH_2$.
\end{proof}

\subsection{Basepoint moving maps and the $\pi_1$-action}
In this section, we prove our proposed formulas for the basepoint moving diffeomorphism maps.

We first need a computation of the relative homology map for the diagrams $\cH_1$ and $\cH_2$ considered in the previous section. In the diagrams $\cH_1$ and $\cH_{2}$, let $\lambda$ be the path shown in Figure \ref{fig::86}.

\begin{figure}[ht!]
\centering
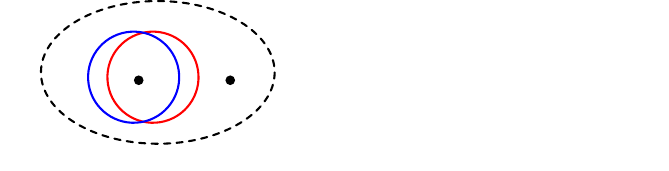
\caption{\textbf{The path $\lambda$ in the diagrams $\cH_1$ and $\cH_2$.}}\label{fig::86}
\end{figure}

\begin{lem}\label{lem:A_lambdacomputation} On $(\cH_{1},J_\alpha)$, the relative homology map $A_\lambda$ takes the form
\[
(A_\lambda)_{\cH_1,J_\alpha}=\begin{pmatrix}U_w\cdot(\Phi_z)^{U_z\to U_w}& U_{w}\\
\id& U_w\cdot(\Phi_z)^{U_z\to U_w}
\end{pmatrix},
\]
where $\Phi_z$ is the endomorphism of $\CF^-(\Sigma, \as,\bs,\ve{w}_0\cup \{z\})$
\[
\Phi_z(\ve{x})=\sum_{\ve{y}\in \bT_\alpha\cap \bT_\beta} \sum_{\substack{\phi\in \pi_2(\ve{x},\ve{y})\\ \mu(\phi)=1}}n_z(\phi) \# \hat{\cM}(\phi) U_{\ve{w}_0}^{n_{\ve{w}_0}(\phi)}U_z^{n_{z}(\phi)-1}\cdot \ve{y}.
\]

On $(\cH_2,J_\beta)$, the map $A_\lambda$ takes the form
\[
(A_\lambda)_{\cH_2,J_\beta}=\begin{pmatrix} 0& U_{w}\\
\id& 0\end{pmatrix}.
\]
\end{lem}

\begin{proof}The statement follows from the holomorphic disk counts in Lemma~\ref{lem:differentialonH_1H_2}, as we now explain.

We first consider the diagonal entries. The curves which contribute to the diagonal entries of $(A)_{\cH_1,J_\alpha}$ represent classes of the form $\phi\# \phi_0$ in $ \pi_2(\xs\times \theta,\ys\times \theta)$ for $\theta\in \{\theta^+,\theta^-\}$. Lemma~\ref{lem:differentialonH_1H_2} implies that these have no change across $\beta_1$ curve, and have a change of $n_{z}(\phi)$ across $\alpha_1$. The curves are counted by $(A_\lambda)_{\cH_1,J_\alpha}$ with an additional factor of $a(\lambda,\phi\# \phi_0),$ which is thus $n_z(\phi)$. It follows that the diagonal entries are equal to $U_w\cdot (\Phi_z)^{U_z\to U_{w}}$.  We leave it to the reader to verify using Lemma~\ref{lem:differentialonH_1H_2} that the off diagonal entries are as claimed.

An entirely analogous argument works to compute $(A_\lambda)_{\cH_1,J_\beta}$.
\end{proof}

\begin{thm}\label{thm:BM=graphact} Suppose  $\lambda$ is a path from $w$ to $w'$, and $\ws_0\subset Y$ is a (possibly empty) collection of basepoints which contains neither $w$ nor $w'$. 
The diffeomorphism map
\[
\lambda_*\colon \CF^-(Y,(\ws_0\cup \{w\})^\sigma, \frs)\to \CF^-(Y,(\ws_0\cup \{w'\})^{\sigma'}, \frs)
\]
satisfies
\[
\lambda_*\simeq S^-_w A_\lambda S^+_{w'},
\]
when $\sigma'$ is the coloring obtained by pushing forward $\sigma$ under $\lambda$.
\end{thm}

\begin{proof}Assume that a diagram $(\Sigma, \as,\bs,\ve{w}_0\cup \{w\})$ is chosen so that $\lambda$ is embedded in $\Sigma$ and is disjoint from $\as$ and $\bs$. Let $\phi_t\colon [0,1]\times \Sigma\to \Sigma$ be an isotopy of $\Sigma$, supported in a small neighborhood of $\lambda$, such that $\phi_0=\id_\Sigma$ and $\phi_1(w)=w'$. Further, assume that $\phi_t$ is the identity outside a small neighborhood of $\lambda$. If $J$ is an almost complex structure on $\Sigma\times [0,1]\times \R$, then pushing forward under $\phi_1\times \id$ yields a tautological map
\[
T\colon  \CF^-_{J}(\Sigma, \as,\bs,\ve{w}_0\cup\{w\})\to  \CF^-_{(\phi_1)*J}(\Sigma, \as,\bs,\ve{w}_0\cup\{w'\}).
\]
 By definition,
\[
\lambda_*\colon \CF^-_{J}(\Sigma, \as,\bs,\ve{w}_0\cup\{w\})\to \CF^-_{J}(\Sigma, \as,\bs,\ve{w}_0\cup\{w'\})
\]
is the composition
\begin{equation}
\lambda_*=\Psi_{(\phi_1)_* J\to J}\circ T.\label{eq:trivial-push-forward}
\end{equation}

As a first step, we show that the expression for $\lambda_*$ in Equation~\eqref{eq:trivial-push-forward} is equal to the tautological map on intersection points. The map $\Psi_{(\phi_1)_* J\to J}$ is obtained by counting Maslov index zero holomorphic disks for a dynamic almost complex structure which interpolates $(\phi_1)_* J$ and $J$. The isotopy $\phi_t$ induces an automorphism $\Phi$ of $\Sigma\times [0,1]\times \R$:
\[
\Phi(x,s,t)=(\phi_t(x),s,t).
\]
 We push $J$ forward along $\Phi$ to get a dynamic almost complex structure, interpolating $J$ and $(\phi_1)_* J$. However,  $\Phi_*(J)$-holomorphic disks on $\Sigma\times [0,1]\times \R$ pull back under $\Phi$ to $J$-holomorphic disks on $\Sigma\times [0,1]\times \R$. (The assumption that $\phi_t$ is fixed near $\as$ and $\bs$ is necessary since it implies that the pullback of a $\Phi_*(J)$-holomorphic disk has boundary mapping to the cylinders $\as\times \{0\}\times \R$ and $\bs\times \{0\}\times \R$). By transversality for $J$, a Maslov index zero $\Phi_*(J)$-holomorphic disk must be a constant disk. Hence, $\Psi_{(\phi_1)_* J\to J}$ is the tautological map on intersection points.

Hence, $\lambda_*$, being the composition of two tautological maps, is a tautological map.

We now consider the composition $S_w^- A_\lambda S_{w'}^+$. Let $\cH_1$ and $\cH_2$ be diagrams for $(Y,\ws_0\cup \{w,w'\})$ which are free-stabilizations of $(\Sigma,\as,\bs,\ws_0\cup \{w\})$ or $(\Sigma,\as,\bs,\ws_0\cup \{w'\})$, respectively, as in Figure~\ref{fig::86}. Similarly to Figure~\ref{fig::48}, let $J_\alpha$ and $J_\beta$ be almost complex structures on $\cH_1$ and $\cH_2$, obtained by stretching parallel to $\alpha_1$ and $\beta_2$, respectively.

 The map
$S_{w}^- A_\lambda S_{w'}^+$, viewed as a map from $\CF^-_{J}(\Sigma,\as,\bs,\ws_0\cup \{w\})$ to $\CF^-_{J}(\Sigma,\as,\bs,\ws_0\cup \{w'\})$, can be computed as the composition
\begin{equation}
S_{w}^- A_\lambda S_{w'}^+=\Psi_{\as'\to \as}^{\bs'\to \bs}\circ S_{w}^- \circ(A_\lambda)_{\cH_2,J_\beta}\circ \Psi_{(\cH_1,J_\alpha)\to (\cH_2, J_\beta)} \circ S_{w'}^+.\label{eq:basepoint-moving-penultimate-form}
\end{equation}

Using matrix notation, we can write
\[
S_{w'}^+=\begin{pmatrix} \id\\ 0\end{pmatrix} \qquad \text{and}\qquad S_{w}^-=\begin{pmatrix} 0& \id\end{pmatrix}.
\]
Using matrix notation, we rewrite Equation~\eqref{eq:basepoint-moving-penultimate-form} using  Theorem~\ref{thm:transitionmapcomp}, Lemma~\ref{lem:changealmostcomplexstructure}, and Lemma~\ref{lem:A_lambdacomputation} as
\begin{equation}
 S_{w}^- A_\lambda S_{w'}^+\simeq \Psi_{\as'\to \as}^{\bs'\to \bs} \begin{pmatrix} 0& \id\end{pmatrix}\begin{pmatrix} 0& U_{w}\\
\id& 0\end{pmatrix} \begin{pmatrix}\Psi_{\as\to \as'}^{\bs'}\circ  \Psi_{\as}^{\bs\to \bs'}& 0\\
*&\Psi_{\as\to \as'}^{\bs'} \circ \Psi_{\as}^{\bs\to \bs'}
\end{pmatrix}\begin{pmatrix}\id\\0
\end{pmatrix}.
\label{eq:matrix-expansion-basepoint-moving}
\end{equation}
%Note that by hypotheses, we are using a choice of coloring which sets $U_w=U_{w'}=U_z$.

Matrix multiplication reduces Equation~\eqref{eq:matrix-expansion-basepoint-moving} to  $\Psi_{\as'\to \as}^{\bs'\to \bs}\circ \Psi_{\as\to \as'}^{\bs'}\circ \Psi_{\as}^{\bs\to \bs'}$, which is chain homotopic to the tautological map by naturality. Since we already showed that $\lambda_*$ is the tautological map, the proof is complete.
\end{proof}

\subsection{The $\Phi_w$ map}

If $(\Sigma,\as,\bs,\ws)$ is a Heegaard diagram and  $w\in \ve{w}$, then we write $\Phi_w$ for the endomorphism
\begin{equation}
\Phi_w(\ve{x})=U_w^{-1}\sum_{\substack{\phi\in \pi_2(\ve{x},\ve{y})\\ \mu(\phi)=1}}n_w(\phi)\# \hat{\cM}(\phi) U_{\ve{w}}^{n_{\ve{w}}(\phi)}\cdot \ve{y}, \label{eq:Phi-map-def}
\end{equation}
of $\CF^-(\Sigma,\as,\bs,\ws,\frs)$.

\begin{rem}\label{rem:Phi=formal-derivative} The map $\Phi_w$ can be conveniently described algebraically as the \emph{formal derivative} of the differential. Viewing $\CF^-(\cH,\frs)$ as a free $\bF_2[U_{\ws}]$-module with basis $\bT_{\a}\cap \bT_{\b}$, we can write $\d$ as a matrix with entries in $\bF_2[U_{\ws}]$. The map $\Phi_w$ is obtained by differentiating each entry with respect to $U_w$.
\end{rem}

We now prove several properties about the endomorphism $\Phi_w$.

\begin{lem}\label{lem:claim:phi0} The map $\Phi_w$ is a chain map.
\end{lem}
\begin{proof} View the differential $\d$ as a square matrix with entries in $\bF_2[U_{\ws}]$. Differentiating $\d\circ \d=0$ with respect to $d/d U_w$, using the Leibniz rule, yields 
\[
\d \Phi_w+\Phi_w \d=0.
\]
\end{proof}

\begin{lem}\label{lem:claim:phi1} The map $\Phi_w$ commutes with transition maps up to chain homotopy. 
\end{lem}
\begin{proof} The transition maps  satisfy
\begin{equation}
\d\Psi_{\cH_1\to \cH_2}+\Psi_{\cH_1\to \cH_2}\d=0.\label{eq:change-of-diagrams-chain-maps}
\end{equation}
We apply $d/d U_w$ to Equation~\eqref{eq:change-of-diagrams-chain-maps}. The Leibniz rule implies
\[
\Phi_w \Psi_{\cH_1\to \cH_2}+\Psi_{\cH_1\to \cH_2}\Phi_w+\d \Psi_{\cH_1\to \cH_2}'+\Psi_{\cH_1\to \cH_2}'\d=0,
\]
which says that $\Phi_w$ commutes with $\Psi_{\cH_1\to \cH_2}$ up to chain homotopy.
\end{proof}

\begin{lem}\label{lem:claim:phi2}  If $w\in \ws$ and $|\ws|>1$ (so $S_w^+S_w^-$ is defined) then
\[
\Phi_w\simeq S^+_wS_w^-.
\]
\end{lem}
\begin{proof} Pick a diagram $\cH$ for $(Y,\ve{w})$ which is a free-stabilization of a diagram $\cH_0$ for $(Y,\ws\setminus \{w\})$ at $w$. Furthermore, assume that the free-stabilization is adjacent to another basepoint $w'\in \ws$. Using Lemma~ \ref{lem:differentialonH_1H_2}, we may pick an almost complex structure so that
\begin{equation}
\d_{\cH}=\begin{pmatrix}(\d_{\cH_0})_{U_{w}}&U_w+U_{w'}\\
0& (\d_{\cH_0})_{U_w}
\end{pmatrix},\label{eq:differentialPhi=S-S+}
\end{equation}
where  $(\d_{\cH_0})_{U_w}$ is the map obtained by extending $\d_{\cH_0}$ linearly over $U_w$, as in Equation~\eqref{eq:extension-of-scalars}.

 Differentiating Equation~\eqref{eq:differentialPhi=S-S+} with respect to $U_w$ yields
\[
\Phi_w=\begin{pmatrix}0& \id\\
0&0
\end{pmatrix},
\]
 which is $S_w^+S_w^-$.
\end{proof}

\begin{lem}\label{lem:claim:phi3} Suppose $\lambda$ is a path between two distinct basepoints in $\ws$. If $w\in \d \lambda$, then
\[
A_\lambda\Phi_{w}+\Phi_{w}A_\lambda\simeq \id.
\]
If $w\not\in \d\lambda$, then
\[
A_\lambda\Phi_w+\Phi_w A_\lambda\simeq 0.
\]
\end{lem}
\begin{proof} Assume first that $w\in \d \lambda$.  We apply $d/d U_w$ to the equation
 \[
\d A_\lambda+A_{\lambda}\d=U_w+U_{w'}
 \]
from  Lemma~\ref{lem:Alambdachainhomotopy}, obtaining the relation
\[
A_\lambda \Phi_w+\Phi_w A_\lambda+A'_\lambda \d+\d A_{\lambda}'=\id.
\]
This proves the claim when $w\in \d \lambda$. The claim when $w\not\in \d \lambda$ is proven similarly.
\end{proof}

\begin{lem}\label{lem:claim:phi4} If $w\neq w'$, then $\Phi_w S_{w'}^\circ+S_{w'}^\circ \Phi_w\simeq 0$, for $\circ\in \{+,-\}$.
\end{lem}
\begin{proof} 
Similar to Lemma~\ref{lem:claim:phi3}, the claim is proven by differentiating the expression $\d S_{w'}^\circ+S_{w'}^\circ \d=0$ with respect to $U_w$.
\end{proof}

\begin{lem}\label{lem:claim:phi5} The map $\Phi_w$ satisfies $(\Phi_w)^2\simeq 0$,  ($\bF_2[U_{\ve{w}}]$-equivariantly).
\end{lem}

\begin{proof} Write $\d=\sum_{n=0}^\infty \d_n U_w^n$, where $\d_n$ does not involve any powers of $U_w$. Define
\[
H:=\sum_{n=2}^\infty \frac{n(n-1)}{2} \d_n U^{n-2}_w,
\]
 (the quantity $n(n-1)/2$ is computed over $\Z$, then projected to $\bF_2$). The equality $\d^2=0$ implies that
\[
\sum_{i+j=k} \d_i \d_j =0,
\]
 for each $k\ge 0$. We now compute:
\begin{align*}\d H+H\d &=\sum_{i,j\ge 0} \left(\frac{i(i-1)}{2}+\frac{j(j-1)}{2} \right)\d_i\d_j U_w^{i+j-2}\\
&=\sum_{k=2}^\infty \left(\frac{k(k-1)}{2}\sum_{i+j=k} \d_i \d_j\right)U_w^{k-2} +\sum_{i,j\ge 0} ij \d_i \d_j U^{i+j-2}_w\\
&=0+\Phi_w^2,
\end{align*}
 completing the proof.
\end{proof}

\subsection{The $\pi_1$-action}

We  now prove our formula for the $\pi_1$-action:

\begin{customthm}{\ref{thm:D}}The action of $\gamma\in \pi_1(Y,w)$ satisfies
\[
\gamma_*\simeq \id+\Phi_w\circ A_\gamma.
\]
\end{customthm}
\begin{proof}Break $\gamma$ into the concatenation of two paths, and write $\gamma=\lambda_2*\lambda_1$, where $\lambda_1$ is a path from $w$ to $w'$ and $\lambda_2$ is a path from $w'$ to $w$. Using Theorem \ref{thm:BM=graphact}, we have
\begin{equation}
(\gamma)_*\simeq (\lambda_2)_*\circ (\lambda_1)_*\simeq (S_{w'}^- A_{\lambda_2} S_w^+)(S_w^- A_{\lambda_1} S_{w'}^+).\label{eq:pi-1-in-3-pieces}
\end{equation}
 We manipulate Equation~\eqref{eq:pi-1-in-3-pieces} as follows:
\begin{align*}(\gamma)_*&\simeq  S_{w'}^- A_{\lambda_2} \Phi_w A_{\lambda_1} S_{w'}^+&& \text{(Lemma \ref{lem:claim:phi2})}\\
&\simeq S_{w'}^- (\Phi_w A_{\lambda_2}+\id)A_{\lambda_1}S_{w'}^+&& \text{(Lemma \ref{lem:claim:phi3})}\\
&\simeq S_{w'}^- \Phi_ w A_{\lambda_2} A_{\lambda_1} S_{w'}^+ +S_{w'}^-A_{\lambda_1}S_{w'}^+&&\text{(algebra)}\\
&\simeq \Phi_ wS_{w'}^-  A_{\lambda_2} A_{\lambda_1} S_{w'}^++S_{w'}^-A_{\lambda_1}S_{w'}^+&&\text{(Lemma \ref{lem:claim:phi4})}\\
&\simeq  \Phi_w A_\gamma+S_{w'}^- A_{\lambda_1} S_{w'}^+&&\text{(Lemma \ref{lem:rel-hom-relation-subdivide})}\\
&\simeq  \Phi_w A_\gamma+\id &&\text{(Lemma \ref{lem:trivial-strand-rel})},
\end{align*}
completing the proof.

%
%\begin{align*}(\gamma)_*&\simeq  S_{w'}^- A_{\lambda_2} \Phi_w A_{\lambda_1} S_{w'}^+&& \text{(Lemma \ref{lem:claim:phi2})}\\
%&\simeq S_{w'}^- A_{\lambda_2}(A_{\lambda_1}\Phi_w+\id)S_{w'}^+&& \text{(Lemma \ref{lem:claim:phi3})}\\
%&\simeq S_{w'}^-A_{\lambda_2} A_{\lambda_1}\Phi_wS_{w'}^+ +S_{w'}^-A_{\lambda_2}S_{w'}^+&&\text{(algebra)}\\
%&\simeq S_{w'}^-A_{\lambda_2} A_{\lambda_1}S_{w'}^+\Phi_w+S_{w'}^-A_{\lambda_2}S_{w'}^+&&\text{(Lemma \ref{lem:claim:phi4})}\\
%&\simeq A_\gamma \Phi_w+S_{w'}^- A_{\lambda_2} S_{w'}^+&&\text{(Lemma \ref{lem:rel-hom-relation-subdivide})}\\
%&\simeq A_\gamma \Phi_w+\id &&\text{(Lemma \ref{lem:trivial-strand-rel})}.
%\end{align*}

%We finally note that $A_\gamma \Phi_w+A_\gamma\Phi_w \simeq 0$, which can be seen by differentiating $A_\gamma\d+\d A_\gamma=0$.
\end{proof}

We now consider the triviality of the $\pi_1$-action on homology. By Remark~\ref{rem:Phi=formal-derivative}, the map $\Phi_w$ can be viewed as the derivative of the differential, with respect to $U_w$. Note that to apply the map to  $d/d U_w$ to an element $\d(x)$, we must use the Leibniz rule:
\[
\frac{d}{d U_w} (\d (x))=\left(\frac{d}{d U_w} \d\right)(x)+\d \left(\frac{d}{d U_w} x\right).
\]
 Rearranging, we obtain
\begin{equation}\Phi_w=\frac{d}{d U_w}\circ \d+\d \circ\frac{d}{d U_w}. \label{eq:chainhomotopy}\end{equation}

The map $\Phi_w$ is an $\bF_2[U_{\ve{w}}]$-module homomorphism and hence induces an $\cR_{\bmP}$-module homomorphism after tensoring $\CF^-$ with the $\bF_2[U_{\ws}]$-module $\cR_{\bmP}$. The derivative $d/d U_w$ is not an $\bF_2[U_{\ve{w}}]$-module homomorphism and hence does not induce a chain null-homotopy on homology, unless the coloring is simple. Similarly the map $d/d U_w$ does not always persist to $\hat{\CF}$.

We summarize with the following:

\begin{cor}\label{cor:pi_1-actionvanishesnonequivariantly} The $\pi_1(Y,w)$-action vanishes on $\HF^\circ(Y,\ve{w}^\sigma,\frs)$ for $\circ\in \{+,-,\infty\}$ as long as $w$ has a coloring distinct from all the other basepoints. In particular, it vanishes on the uncolored modules $\HF^\circ(Y,\ve{w},\frs)$ for $\circ\in \{+,-,\infty\}$.
\end{cor}

 Corollary~\ref{cor:F} is also a consequence:

\begin{customcor}{\ref{cor:F}}If $(W,\gamma)$ is path cobordism between two singly based 3-manifolds $(Y_0,w_0)$ and $(Y_1,w_1)$, then the cobordism map
\[
F_{W,\gamma,\frs}\colon \HF^\circ(Y_0,w_0,\frs|_{Y_0})\to \HF^\circ(Y_1,w_1,\frs|_{Y_1})
\]
is independent of $\gamma$ if $\circ\in \{-,\infty,+\}$.
\end{customcor}
\begin{proof} Suppose that $\gamma$ and $\gamma'$ are two paths in $W$, connecting $w_0$ and $w_1$. Decompose $W$ as $W_3\circ W_2\circ W_1$, where $W_i$ is obtained by attaching $i$-handles. Let $\cY$ denote $W_2\cap W_1$. Let $w_0',w_1'\in \cY$ denote the images of $w_0$ and $w_1$, under the flow of gradient like vector fields on $W_3\circ W_2$, and $W_1$. By flowing $\gamma$ and $\gamma'$ downwards in $W_3\circ W_2$, and upwards in $W_1$, we obtain two paths, $\lambda$ and $\lambda'$, from $w_0'$ to $w_1'$ in $\cY$. We can decompose the two cobordism maps as
\[
F_{W,\gamma,\frs}=F_3\circ F_2\circ \lambda_*\circ F_1\qquad \text{and} \qquad F_{W,\gamma',\frs}=F_3\circ F_2\circ \lambda'_*\circ F_1,
\]
where $F_i$ denotes the $i$-handle map for $W_i$, and $\lambda_*$ and $\lambda_*'$ denote the two basepoint moving diffeomorphism maps. That $\lambda_*=\lambda_*'$ as homomorphisms from $\HF^\circ(\cY,w_0',\frs|_{\cY})$ to $\HF^\circ(\cY,w_1',\frs|_{\cY})$ follows from Corollary~\ref{cor:pi_1-actionvanishesnonequivariantly}.
\end{proof}

%\begin{proof}Suppose $\gamma$ and $\gamma'$ are two paths in $W$ connecting $w_0$ and $w_1$. Decompose $W$ into $W_2\circ W_1$, where $W_1$ is obtained by attaching 1-handles and $W_2$ is obtained by adding 2- and 3-handles. Write $\d W_1=-Y_1\sqcup Y'$.
%
%
%
% Since $\pi_1(Y')\to \pi_1(W)$ is surjective, we can homotope $\gamma$ and $\gamma'$ so that $\gamma'$ is obtained by splicing in a loop $\gamma_0$ in $Y'$. Since $W$ is a 4-manifold, such a homotopy can be taken to be an isotopy. Furthermore we can also assume that $\gamma\cap \Int (W_i)$ and $\gamma'\cap \Int (W_i)$ are paths for $i=1,2$. Using the composition law,  Theorem~\ref{thm:D}, and the fact that $\Phi_w$ vanishes on $\HF^-,$ $\HF^+$ and $\HF^\infty$ when $w$ is the only basepoint, the theorem follows.
%\end{proof}

\subsection{3-manifolds with non-vanishing $\pi_1$-action}

We are now equipped to prove Corollary~\ref{cor:E}, and give examples where the $\pi_1$-action is non-vanishing.

\begin{customcor}{\ref{cor:E}}
Let $Y$ be a 3-manifold and $\frs\in \Spin^c(Y)$.
\begin{enumerate}
\item \label{claim:pi1!=01} If $\frs$ is torsion and there is an $x\in \HF^+(Y,w,\frs)$ such that 
\[
U\cdot x=0 \quad \text{ and }\quad x\not \in U\cdot \HF^+(Y,w,\frs),
\]
then $\pi_1(Y\# S^1\times S^2,w)$ acts non-trivially on
\[
\hat{\HF}(Y\# S^1\times S^2,w).
\]
\item\label{claim:pi1!=02} Suppose $[\gamma]\in H_1(Y;\Z)$ is a class whose action on $\HF^+(Y,w,\frs)$ does not vanish. If $|\ws|\ge 2$, then the diffeomorphism map $\gamma_*$ acts non-trivially on the $\bF_2[U]$-module
\[
\HF^-(Y,\ws^\sigma,\frs),
\]
where $\sigma$ denotes the coloring which assigns all basepoints the variable $U$.
\end{enumerate}
\end{customcor}
\begin{proof}
We begin with Claim~\eqref{claim:pi1!=01}. The classification theorem for finitely generated chain complexes over a PID (see, e.g. \cite{HMZConnectedSum}*{Lemma~6.1}) implies that $\CF^-(Y,w,\frs)$ can be written as a direct sum of 1-step complexes (complexes with a single generator over $\bF_2[U]$, and vanishing differential) and 2-step complexes $a\xrightarrow{p(U)}b$ (i.e. complexes with two generators over $\bF_2[U]$, $a$ and $b$, with $\d(b)=0$ and $\d(a)=p(U)\cdot b$).  

If $\frs\in \Spin^c(Y)$ is torsion, then $\CF^-(Y,w,\frs)$ admits a relative $\Z$ grading \cite{OSDisks}*{Section~4.2}. Consequently, an algebraic argument (see \cite{HMZConnectedSum}*{Lemma~6.2}) implies each of the 2-step complexes appearing must have $p(U)=U^i$ for some $i$.

The existence of an $x\in \HF^+(Y,w,\frs)$ such that $U\cdot x=0$ and $x\not \in U\cdot \HF^+(Y,w,\frs)$ implies that at least one summand of $\CF^-(Y,w,\frs)$ must be a 2-step complex of the form $a\xrightarrow{U} b$. The $\Phi_w$ map for such a complex satisfies $\Phi_w(a)=b$. In particular, on $\hat{\HF}(Y,w, \frs)$ the map $\Phi_w$ is non-zero.

We view $\hat{\HF}(S^1\times S^2,\frs_0)$ as the 2-dimensional vector space $V=\langle \theta^+, \theta^-\rangle$. If $\gamma\subset S^1\times S^2$ is a curve which generates $H_1(S^1\times S^2)$, then
\[
A_\gamma(\theta^+)=\theta^-\qquad \text{and} \qquad A_{\gamma}(\theta^-)=0.
\]

Using the connected sum formula, $\hat{\HF}(Y\# S^1\times S^2, w,\frs\# \frs_0)$ is isomorphic to $\hat{\HF}(Y,w, \frs)\otimes V$. From Theorem~\ref{thm:D} and the connected sum formula, the map $\gamma_*$ satisfies
\[
\gamma_*(a\otimes \theta^+)=a\otimes \theta^++b\otimes \theta^-,\qquad \text{and}\qquad \gamma_*(a\otimes \theta^-)=a\otimes \theta^-,
\]
which is not equal to the identity map.

We now consider Claim~\eqref{claim:pi1!=02}. Since $|\ws|\ge 2$,  pick a $w\in \ws$ and write $\ws_0:=\ws\setminus \{w\}$. By Lemma~\ref{lem:differentialonH_1H_2}, we obtain the decomposition
\[
\HF^-(Y,\ws^\sigma,\frs)\iso \HF^-(Y, \ws_0^\sigma, \frs)\otimes V.
\]
Analyzing the curve counts appearing in Lemma~\ref{lem:differentialonH_1H_2}, we conclude that the maps $\Phi_w$ and $A_\gamma$ have the matrix descriptions
\begin{equation}
\Phi_w=\begin{pmatrix} 0& \id\\
0& 0
\end{pmatrix}\qquad \text{and} \qquad A_\gamma=\begin{pmatrix} A_\gamma & 0\\
0& A_\gamma
\end{pmatrix}.\label{eq:formula-stabilized-pi_1}
\end{equation}
In Equation~\eqref{eq:formula-stabilized-pi_1}, we are abusing notation slightly and writing $A_\gamma$ for the induced map on both $\HF^-(Y,\ws^\sigma,\frs)$ and $\HF^-(Y,\ws^\sigma_0,\frs)$.

Consequently
\begin{equation}
\gamma_*=\id+\Phi_wA_\gamma=\begin{pmatrix}\id& A_\gamma\\
0& \id
\end{pmatrix},\label{eq:free-stab-pi-1}
\end{equation}
which is not the identity if $A_\gamma$ acts non-trivially on $\HF^-(Y,\ws_0^\sigma,\frs)$. Using the formula from Equation~\eqref{eq:formula-stabilized-pi_1}, we see that $A_\gamma$ acts non-trivially on $\HF^-(Y,\ws_0^\sigma,\frs)$ if and only if it acts non-trivially on the singly based complex. The main statement now follows.
\end{proof}

\subsection{Illustrating the $\pi_1$-action on multi-pointed diagrams}
\label{subsec:alternateproof}

In this section, we sketch an alternate proof of the formula for the $\pi_1$-action, when there are at least two basepoints. In contrast to our proof of the normalization axiom (in particular Theorem~\ref{thm:BM=graphact}),  the proof we describe in this section is a direct holomorphic curve count, and does not use any functorial aspects of the graph TQFT.

Suppose $\ws_0$ is a non-empty collection of basepoints in $Y$, and suppose  $p\in Y\setminus \ws_0$ is a new basepoint. If $\cH=(\Sigma,\as,\bs,\ws_0)$ is a Heegaard diagram for $(Y,\ws_0)$, let 
\[
\cH^+_p=(\Sigma,\as\cup \{\alpha_0\}, \bs\cup \{\beta_0\}, \ws_0\cup \{p\})
\]
 denote the free-stabilization of $\cH$ at $p$. As we observed in the proof of Corollary~\ref{cor:E} (see Equation~\eqref{eq:free-stab-pi-1}), the $\pi_1$-action takes a simple form for free-stabilized diagrams. We now show that Equation~\eqref{eq:free-stab-pi-1} can be proven using a direct holomorphic curve count:

\begin{prop}\label{prop:alternate-pi-1} Suppose $\cH$ is a diagram for $(Y,\ws_0)$ and $\cH^+_w$ is obtained by free-stabilizing at $w$.  If $\gamma$ is a closed loop based at $w$, then
\begin{equation}
\gamma_*\simeq \begin{pmatrix}\id& A_\gamma\\
0& \id
\end{pmatrix},\label{eq:matrix-formula-pi-1}
\end{equation}
with respect to the matrix decomposition $\CF^-(\cH_w^+,\frs)\iso \CF^-(\cH,\frs)\otimes_{\bF_2} V$, where $V=\langle \theta^+,\theta^- \rangle$ is a 2-dimensional vector space.
\end{prop}

If $\cH=(\Sigma,\as,\bs,\ws)$ is a Heegaard diagram, and $p\in \Sigma\setminus \as\cup \bs$, we can define an endomorphism  $\Omega_p\colon \CF^-(\cH,\frs)\to \CF^-(\cH,\frs)$ via the formula
\begin{equation}
\Omega_p(\ve{x}):=\sum_{\substack{\phi\in \pi_2(\ve{x},\ve{y})\\ \mu(\phi)=1}} n_p(\phi) \# \hat{\cM}(\phi) U_{\ve{w}_0}^{n_{\ve{w}_0}(\phi)} \cdot \ve{y}. \label{eq:Omega-p-def}
\end{equation}

\begin{rem}The map $\Omega_p$ appearing in Equation~\eqref{eq:Omega-p-def} may not be a chain map. Instead $\Omega_p$ is a chain homotopy between the actions $U_{w_\alpha}$ and $U_{w_\beta}$, where $w_\alpha\in \ws_0$ is the basepoint in the component of $\Sigma\setminus \as$ containing $p$, and $w_\beta$ is the basepoint in the component of $\Sigma\setminus \bs$ containing $p$.
\end{rem}

We need the following change of almost complex structure map computation:

\begin{prop}\label{lem:explicitchangeofalmostcomplexstructure} Suppose $\cH$ is a Heegaard diagram for $(Y,\ws_0)$, and $\cH_p^+$ is the free-stabilization of $\cH$ at the point $p$. Let $J_\alpha$ and $J_\beta$ denote almost complex structures which have been stretched as in Figure~\ref{fig::48}. For sufficiently long necks, the transition map $\Psi_{J_\beta\to J_\alpha}$ takes the form
\[
\Psi_{J_\beta\to J_\alpha}=\begin{pmatrix}\id & (\Omega_p)_{U_p}\\
0& \id
\end{pmatrix},
\]
 where $\Omega_p\colon \CF^-(\cH,\frs)\to \CF^-(\cH,\frs)$ is the map defined in Equation~\eqref{eq:Omega-p-def}.
\end{prop}

\begin{proof}[Proof of Proposition \ref{lem:explicitchangeofalmostcomplexstructure}] Similar to Lemma~\ref{lem:changealmostcomplexstructure}, we fix two neck lengths along $c_\alpha$ and $c_\beta$ for $J_\alpha$ and $J_{\beta}$, respectively, and then stretch along $c$.  Adapting the argument therein, for an appropriately chosen almost complex structure $\tilde{J}$ on $\Sigma\times [0,1]\times \R$, interpolating $J_{\alpha}$ and $J_{\beta}$ with sufficiently large neck length along $c$, the transition map satisfies 
\begin{equation}
\Psi_{J_\beta\to J_\alpha}:=\Psi_{\tilde{J}}=\begin{pmatrix}\id & *\\
0& \id
\end{pmatrix}.\label{eq:Psi-with-*}
\end{equation}
The $*$ appears in the upper right corner, whereas in Lemma~\ref{lem:changealmostcomplexstructure} it appeared in the lower left corner, since the basepoint configuration is now different, and the Maslov grading of the points of $\alpha_0\cap \beta_0$ is now reversed from Lemma~\ref{lem:changealmostcomplexstructure}.
 
  We now show $*=\Omega_p$.   This amounts to counting $\tilde{J}$-holomorphic representatives of index 0 classes
 $\phi\# \phi_0\in \pi_2(\ve{x}\times \theta^-,\ve{y}\times \theta^+)$. Let $n_1(\phi_0)$, $n_2(\phi_0)$, $m_1(\phi_0)$ and $n_p(\phi_0)$ denote the multiplicities shown in Figure~\ref{fig::57}.

\begin{figure}[ht!]
\centering
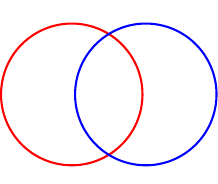
\caption{\textbf{Multiplicities in the free-stabilization region.}}\label{fig::57}
\end{figure} 
 
  For classes $\phi\# \phi_0\in \pi_2(\xs\times \theta^-, \ys\times \theta^+)$, Equation~\eqref{eq:indexbrokenlimitacstr} implies
 \begin{equation}
\mu(\phi\# \phi_0)=\mu(\phi)+2n_p(\phi_0)-1. \label{eq:Maslov-index-Omega-p}
\end{equation}

 If $\phi\# \phi_0$ is a class with $\tilde{J}$-holomorphic representatives with arbitrarily large neck length along $c$, then a limiting argument as in Lemma~\ref{lem:relative-lengths-unimportant} shows that $\phi$ has a broken representative for a cylindrical almost complex structure. Hence $\mu(\phi)\ge 0$, by transversality. Consequently, Equation~\eqref{eq:Maslov-index-Omega-p} implies that if $\phi\# \phi_0\in \pi_2(\xs\times \theta^-,\ys\times \theta^+)$ has Maslov index 0 and has representatives for arbitrarily long neck length along $c$, then 
 \[
 \mu(\phi)=1\qquad \text{and} \qquad n_{p}(\phi_0)=0.
 \]

 If $k$ is a fixed non-negative integer, there are $k$ non-negative homology classes in $\pi_2(\theta^-,\theta^+)$ with $n_p=0$ and $m_1=k$. If $k_1,k_2$ are non-negative integers with $k_1+k_2=k-1$, we write $\phi_0^{k_1,k_2}$ for the class in $\pi_2(\theta^-,\theta^+)$ with
 \begin{equation}
n_p(\phi_0^{k_1,k_2})=0,\qquad m_1(\phi_0^{k_1,k_2})=k,\qquad n_1(\phi_0^{k_1,k_2})=k_1,\qquad \text{ and } \qquad n_2(\phi_0^{k_1,k_2})=k_2.\label{eq:mult-phik1k2}
\end{equation}
 
Hence, to compute the component marked $*$ in Equation~\eqref{eq:Psi-with-*}, it is  sufficient to compute
\[
\#\cM_{\tilde{J}}(\phi\# \phi_0^{k_1,k_2})
\]
when $\phi$ has Maslov index 1, and $k_1+k_2=m_1(\phi)-1$.   To this end, we will show that
\begin{equation}
\# \cM_{\tilde{J}}(\phi\# \phi_0^{k_1,k_2})\equiv \#\hat{\cM}(\phi)\pmod{ 2} \label{eq:countphiphik1k2}
\end{equation}
 for each non-negative pair $k_1,k_2$ with $k_1+k_2=k-1$. Note that the main statement follows quickly from Equation~\eqref{eq:countphiphik1k2}.

  Similar to the classes $\phi_0^{k_1,k_2}$, there are $k+1$ non-negative homology classes in $\pi_2(\theta^+,\theta^+)$ with $n_p=0$ and $m_1=k$.  If $K_1+K_2=k$, write $\xi_{0}^{K_1,K_2}$ for the homology class in $\pi_2(\theta^+,\theta^+)$ defined similarly to Equation~\eqref{eq:mult-phik1k2}. To establish Equation~\eqref{eq:countphiphik1k2}, we count the ends of the 1-dimensional spaces $\cM_{\tilde{J}}(\phi\# \xi_{0}^{K_1,K_2})$.

As in the proof that $\Psi_{\tilde{J}}$ is a chain map, the ends of $\cM_{\tilde{J}}(\phi\#\xi_{0}^{K_1,K_2})$ all result from strip breaking, i.e. correspond to pairs $(\tilde{u}_1,u_2^{\beta})$ or $(u_1^{\alpha}, \tilde{u}_2)$ where $\tilde{u}_i$ denotes an index 0 $\tilde{J}$-holomorphic curve, and $u_2^{\beta}$ and  $u_1^{\alpha}$ denote index 1 $J_{\beta}$- or $J_{\alpha}$-holomorphic curves.  Furthermore, each end consists of one the following configurations:
\begin{enumerate}[ref= e-\arabic*, label= (e-\arabic*):]
\item\label{case:ut1-u2a:1} A pair $(\tilde{u}_1, u_2^\alpha)$ where $\tilde{u}_1$ represents an index 0 class in $\pi_2(\xs\times \theta^+, \zs\times \theta^+)$ and $u_2^\alpha$ represents an index 1 class in $\pi_2(\zs\times \theta^+, \ys\times \theta^+)$.
\item\label{case:ut1-u2a:2} A pair $(\tilde{u}_1,u_2^{\alpha})$ where $\tilde{u}_1$ represents an index 0 class in $\pi_2(\xs\times \theta^+, \zs\times \theta^-)$ and $u_2^\alpha$ represents an index 1 class in $\pi_2(\zs\times \theta^-, \ys\times \theta^+)$.
\item\label{case:ub-ut:1} A pair $(u_1^\beta,\tilde{u}_2)$ where $u_1^\beta$ represents an index 1 class in $\pi_2(\xs\times \theta^+, \zs\times \theta^+)$ and $\tilde{u}_2$ represents an index 0 class in $\pi_2(\zs\times \theta^+,\ys\times \theta^+)$.
\item\label{case:ub-ut:2} A pair $(u_1^\beta,\tilde{u}_2)$ where $u_1^\beta$ represents an index 1 class in $\pi_2(\xs\times \theta^+, \zs\times \theta^-)$ and $\tilde{u}_2$ represents an index 0 class in $\pi_2(\zs\times \theta^-, \ys\times \theta^+)$.
\end{enumerate}

In case~\eqref{case:ut1-u2a:1}, the curve $\tilde{u}_1$ contributes to the first diagonal entry of $\Psi_{\tilde{J}}$. The argument used to establish Equation~\eqref{eq:Psi-with-*} implies that $\tilde{u}_1$ must represent a constant class $e_{\xs}\times e_{\theta^+}$.

In case~\eqref{case:ut1-u2a:2}, the curve $\tilde{u}_1$ contributes to the lower left entry of $\Psi_{\tilde{J}}$. Our argument to establish Equation~\eqref{eq:Psi-with-*} showed that no such curves $\tilde{u}_1$ exist, prohibiting the existence of these ends.

In case~\eqref{case:ub-ut:1}, the curve $\tilde{u}_2$ contributes to the first diagonal entry of $\Psi_{\tilde{J}}$. As with case~\eqref{case:ut1-u2a:1}, the curve $\tilde{u}_2$ must represent the constant class $e_{\ys}\times e_{\theta^+}$.

In case~\eqref{case:ub-ut:2} the curve $u_1^\beta$ contributes to the lower left entry of the differential $\d_{\cH_p^+, J_\alpha}$, which vanishes. Hence $u_1^\beta$ represents one of two bigon classes in the free-stabilization region. See the proof of Lemma~\ref{lem:differentialcompdegen}. The class $\tilde{u}_2$ contributes to the upper right component of the matrix for $\Psi_{\tilde{J}}$ (which we are trying to compute). We write $B_{0,1}$ and $B_{1,0}$ for these bigons, chosen so that $n_1(B_{1,0})=1$ and $n_2(B_{0,1})=1$.

In summary, the ends of $\cM_{\tilde{J}}(\phi\# \xi_{0}^{K_1,K_2})$ are constrained to the following:
\begin{enumerate}
\item A $\tilde{J}$-holomorphic representative of $e_{\xs}\times e_{\theta^+}$ and a $J_{\alpha}$-holomorphic representative of $\phi\# \xi_{0}^{K_1,K_2}$.
\item A $\tilde{J}$-holomorphic representative of $e_{\ys}\times e_{\theta^+}$ and a $J_{\beta}$-holomorphic representative of $\phi\#\xi_{0}^{K_1,K_2}$.
\item A $\tilde{J}$-holomorphic representative of $\phi\# \phi_0^{K_1-1,K_2}$ and a $J_{\beta}$-holomorphic representative of $B_{1,0}$.
\item A $\tilde{J}$-holomorphic representative of $\phi\# \phi_0^{K_1,K_2-1}$ and a $J_{\beta}$-holomorphic representative of $B_{0,1}$.
\end{enumerate}

We can write out the ends as

\begin{equation}
\begin{split}
\d \bar{\cM}_{\tilde{J}}(\phi\# \xi^{K_1,K_2}_{0})&=\hat{\cM}_{J_\alpha}(\phi\#\xi^{K_1,K_2}_{0})\times \cM_{\tilde{J}}(e_{\ve{x}}\times e_{\theta^+})\,\,
\sqcup \,\,  \cM_{\tilde{J}}(e_{\ve{y}}\times e_{\theta^+})\times \hat{\cM}_{J_\beta}(\phi\#\xi^{K_1,K_2}_{0})\\ &\,\, \sqcup\,\, \hat{\cM}_{J_\beta}(B_{1,0})\times  \cM_{\tilde{J}}(\phi\#\phi_{0}^{K_1-1,K_2})\,\,\sqcup \,\,  \hat{\cM}_{J_\beta}(B_{0,1})\times \cM_{\tilde{J}}(\phi\#\phi_0^{K_1,K_2-1}).
\end{split}
\label{eq:count-ends-phiphiK1K2}
\end{equation}

The bigons and constant classes appearing in Equation~\eqref{eq:count-ends-phiphiK1K2} have unique holomorphic representatives. Since the total ends of a compact 1-manifold are 0, Equation~\eqref{eq:count-ends-phiphiK1K2} implies 
\begin{equation}
\#\hat{\cM}_{J_\alpha}(\phi\#\xi^{K_1,K_2}_{0})+\#\hat{\cM}_{J_\beta}(\phi\#\xi^{K_1,K_2}_{0})+\#\cM_{\tilde{J}}(\phi\#\phi_0^{K_1-1,K_2})+ \#\cM_{\tilde{J}}(\phi\#\phi_0^{K_1,K_2-1})=0.
\label{eq:recursion}
\end{equation}
The proof of Lemma~\ref{lem:differentialonH_1H_2} adapts immediately to show that
\begin{equation}
\#\hat{\cM}_{J_\beta}(\phi\# \xi^{K_1,K_2}_{0})=\begin{cases}\# \hat{\cM}(\phi)& \text{ if } (K_1,K_2)=(k,0)\\
0& \text{ otherwise},
\end{cases}
\label{eq:recursion-initial}
\end{equation}
 and
\begin{equation}
\#\hat{\cM}_{J_\alpha}(\phi\# \xi^{K_1,K_2}_{0})=\begin{cases}\# \hat{\cM}(\phi)& \text{ if } (K_1,K_2)=(0,k)\\
0& \text{ otherwise}.
\end{cases}
\label{eq:recursion-final}
\end{equation}

By starting at $(K_1,K_2)=(k,0)$ and using Equations~\eqref{eq:recursion}, \eqref{eq:recursion-initial} and~\eqref{eq:recursion-final} to go through all pairs $(K_1,K_2)$ with $K_1+K_2=k$ we obtain  Equation~\eqref{eq:countphiphik1k2}, completing the proof.
\end{proof}

\begin{figure}[ht!]
\centering
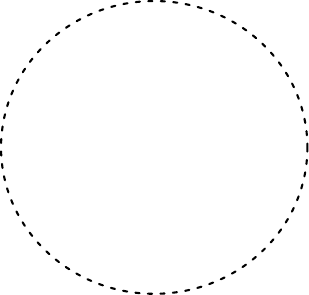
\caption{\textbf{A Heegaard triple which has been free-stabilized.} We stretch the almost complex structure along $c_{\beta,\beta'}$.}\label{fig::53}
\end{figure}

We need the following triangle count:

\begin{lem}\label{lem:quasi-stabtrianglemap}Suppose $\cT=(\Sigma,\as,\bs,\bs',\ve{w}_0)$ is a Heegaard triple (we do not assume $\bs'$ are small isotopies of $\bs$). Let $\cT^+_p$ be the Heegaard triple obtained by free-stabilizing at $p$, as in Figure~\ref{fig::53}. Let $J$ be an almost complex structure on $\Sigma\times [0,1]\times \R$, and let $J(T)$ denote an almost complex structure with a neck of length $T$ added along $c_{\beta,\beta'}$. For sufficiently large $T$,
\[
F_{\cT^+_p, J(T)}(\ve{x}\times -, \ve{y}\times \theta^+)=\begin{pmatrix}F_{\cT,J}(\ve{x},\ve{y})&0\\
0& F_{\cT,J}(\ve{x},\ve{y})
\end{pmatrix}.
\]
%The matrix decomposition is in terms of the relative grading of the intersection points $\alpha_0\cap \beta_0$ and $\alpha_0\cap \beta_0'$.
\end{lem}
\begin{proof}
 The statement is a special case of \cite{MOIntSurg}*{Proposition 5.2}.
\end{proof}

We now sketch our proof of the formula for the $\pi_1$-action when there is more than one basepoint:

\begin{proof}[Proof of Proposition~\ref{prop:alternate-pi-1}] 
 Immerse  $\gamma$ in $\Sigma$, and break $\gamma$ into a sequence of embedded arcs, $\lambda_1,\dots, \lambda_n$, such that each $\lambda_i$ crosses a single $\as$ or $\bs$ curve, exactly once. See~Figure~\ref{fig::87}.

\begin{figure}[ht!]
\centering
%% Creator: Inkscape inkscape 0.92.3, www.inkscape.org
%% PDF/EPS/PS + LaTeX output extension by Johan Engelen, 2010
%% Accompanies image file '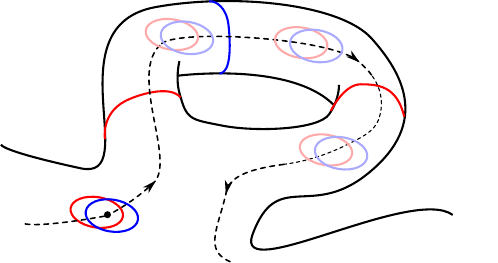' (pdf, eps, ps)
%%
%% To include the image in your LaTeX document, write
%%   \input{<filename>.pdf_tex}
%%  instead of
%%   \includegraphics{<filename>.pdf}
%% To scale the image, write
%%   \def\svgwidth{<desired width>}
%%   \input{<filename>.pdf_tex}
%%  instead of
%%   \includegraphics[width=<desired width>]{<filename>.pdf}
%%
%% Images with a different path to the parent latex file can
%% be accessed with the `import' package (which may need to be
%% installed) using
%%   \usepackage{import}
%% in the preamble, and then including the image with
%%   \import{<path to file>}{<filename>.pdf_tex}
%% Alternatively, one can specify
%%   \graphicspath{{<path to file>/}}
%% 
%% For more information, please see info/svg-inkscape on CTAN:
%%   http://tug.ctan.org/tex-archive/info/svg-inkscape
%%
\begingroup%
  \makeatletter%
  \providecommand\color[2][]{%
    \errmessage{(Inkscape) Color is used for the text in Inkscape, but the package 'color.sty' is not loaded}%
    \renewcommand\color[2][]{}%
  }%
  \providecommand\transparent[1]{%
    \errmessage{(Inkscape) Transparency is used (non-zero) for the text in Inkscape, but the package 'transparent.sty' is not loaded}%
    \renewcommand\transparent[1]{}%
  }%
  \providecommand\rotatebox[2]{#2}%
  \newcommand*\fsize{\dimexpr\f@size pt\relax}%
  \newcommand*\lineheight[1]{\fontsize{\fsize}{#1\fsize}\selectfont}%
  \ifx\svgwidth\undefined%
    \setlength{\unitlength}{238.62819115bp}%
    \ifx\svgscale\undefined%
      \relax%
    \else%
      \setlength{\unitlength}{\unitlength * \real{\svgscale}}%
    \fi%
  \else%
    \setlength{\unitlength}{\svgwidth}%
  \fi%
  \global\let\svgwidth\undefined%
  \global\let\svgscale\undefined%
  \makeatother%
  \begin{picture}(1,0.52824002)%
    \lineheight{1}%
    \setlength\tabcolsep{0pt}%
    \put(0,0){\includegraphics[width=\unitlength,page=1]{fig87.pdf}}%
    \put(0.46042208,0.10140971){\color[rgb]{0,0,0}\makebox(0,0)[lt]{\lineheight{0}\smash{\begin{tabular}[t]{l}$\gamma$\end{tabular}}}}%
    \put(0.33272783,0.21617283){\color[rgb]{0,0,0}\makebox(0,0)[lt]{\lineheight{0}\smash{\begin{tabular}[t]{l}$\lambda_1$\end{tabular}}}}%
    \put(0.47416116,0.46267098){\color[rgb]{0,0,0}\makebox(0,0)[lt]{\lineheight{0}\smash{\begin{tabular}[t]{l}$\lambda_2$\end{tabular}}}}%
    \put(0.74717336,0.27428749){\color[rgb]{0,0,0}\makebox(0,0)[rt]{\lineheight{0}\smash{\begin{tabular}[t]{r}$\lambda_3$\end{tabular}}}}%
    \put(0.12138259,0.14072937){\color[rgb]{0,0,0}\makebox(0,0)[rt]{\lineheight{0}\smash{\begin{tabular}[t]{r}$w$\end{tabular}}}}%
    \put(0,0){\includegraphics[width=\unitlength,page=2]{fig87.pdf}}%
    \put(0.84868343,0.12314245){\color[rgb]{0,0,0}\makebox(0,0)[lt]{\lineheight{0}\smash{\begin{tabular}[t]{l}$\Sigma$\end{tabular}}}}%
    \put(0,0){\includegraphics[width=\unitlength,page=3]{fig87.pdf}}%
  \end{picture}%
\endgroup%

\caption{\textbf{Breaking the closed curve $\gamma$ into a sequence of arcs $\lambda_1,\dots, \lambda_n$.}}\label{fig::87}
\end{figure}

Write  $\d \lambda_i=\{p_i,p_{i+1}\}$. Note that $p_1=p_{n+1}=w$. Let $\cH^+_{p_i}$ denote a free-stabilization of $\cH$ at $p_i$.

Let $J_{\alpha,i}$ denote an almost complex structure for $\cH^+_{p_i}$ stretched along a circle $c_\alpha$ parallel to the $\alpha$ circle in the free-stabilization region, and let $J_{\beta,i}$ denote one stretched along a circle $c_{\beta}$ parallel to the $\beta$ circle.

  Each arc $\lambda_i$ induces a diffeomorphism map
  \begin{equation}
(\lambda_i)_*\colon \CF^-_{J_{\beta,i}}(\cH_{p_i}^+,\frs)\to \CF^-_{J_{\beta,i+1}}(\cH_{p_{i+1}}^+,\frs).\label{eq:lambda-diffeo-domainJbeta}
  \end{equation}

First, we claim that if $\lambda_i$ intersects a single $\bs$ curve (and no $\as$ curves), then $(\lambda_i)_*$ is chain homotopic to
\begin{equation}
(\lambda_i)_*\simeq \begin{pmatrix}\id& 0\\
0& \id
\end{pmatrix}.
\label{eq:diffeomorphism-map-if-single-beta}
\end{equation}

To establish Equation~\eqref{eq:diffeomorphism-map-if-single-beta}, we decompose $\lambda_i$ into two maps, a holomorphic triangle count to handleslide a $\bs$ curve over $p_i$, followed by a tautological diffeomorphism map to move $p_{i}$ to $p_{i+1}$. See Figure~\ref{fig::90}.

\begin{figure}[ht!]
\centering
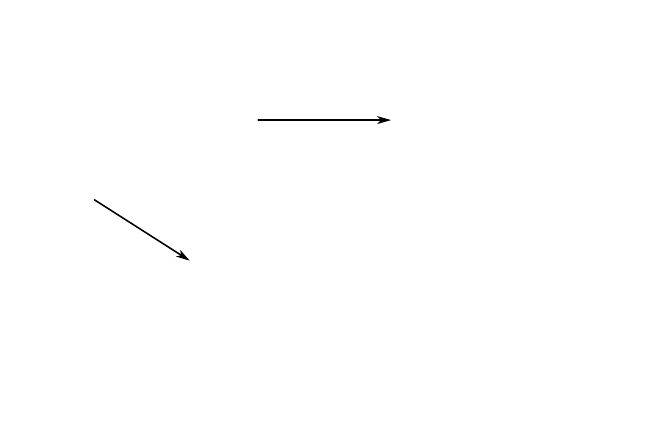
\caption{\textbf{Decomposing $(\lambda_i)_*$ into a triangle map followed by a tautological diffeomorphism map.}}\label{fig::90}
\end{figure}

Equation~\eqref{eq:diffeomorphism-map-if-single-beta} follows from Lemma~\ref{lem:quasi-stabtrianglemap}, since we are using the almost complex structure $J_{\beta,i}$. Hence the triangle counts on the free-stabilized Heegaard triple for  the handleslide are obtained from the triangle counts on the unstabilized diagram. Once we compose with the tautological map for an isotopy, the diagonal entries are chain homotopic to the identity, by naturality.

Next, we claim that if $\lambda_i$ intersects a single $\as$ curve (and no $\bs$ curves), then $(\lambda_i)_*$ is chain homotopic to
\begin{equation}
(\lambda_i)_*\simeq \begin{pmatrix}\id& \Omega_{p_i}+\Omega_{p_{i+1}}\\
0& \id
\end{pmatrix}.
\label{eq:diffeomorphism-map-if-single-alpha}
\end{equation}

Equation~\eqref{eq:diffeomorphism-map-if-single-alpha} is established as follows. By simply switching the roles of the $\as$ and $\bs$ curves, our previous argument shows that the map
\[
(\lambda_i)_*\colon \CF^-_{J_{\alpha,i}}(\cH_{p_i}^+,\frs)\to \CF^-_{J_{\alpha,i}}(\cH_{p_{i+1}}, \frs)
\]
takes the same form as in Equation~\eqref{eq:diffeomorphism-map-if-single-beta}. To obtain the form induced map with the same choice of almost complex structures as in Equation~\eqref{eq:lambda-diffeo-domainJbeta}, we must pre-compose with $\Psi_{J_{\beta,i}\to J_{\alpha,i}}$ and post-compose with $\Psi_{J_{\alpha,{i+1}}\to J_{\beta,i+1}}$. These two transition maps may be computed using Lemma~\ref{lem:explicitchangeofalmostcomplexstructure}. Upon multiplying the three matrices together, we obtain Equation~\eqref{eq:diffeomorphism-map-if-single-alpha}.

Finally, to compute
\[
\gamma_*\colon \CF^-_{J_{\beta,1}}(\cH_w^+,\frs)\to \CF^-_{J_{\beta,1}}(\cH_w^+,\frs),
\]
we write
\[
\gamma_*=(\lambda_{n})_*\circ \cdots \circ (\lambda_1)_*,
\]
and use the expressions for $\lambda_i$ from Equations~\eqref{eq:diffeomorphism-map-if-single-beta} and ~\eqref{eq:diffeomorphism-map-if-single-alpha}. Upon multiplying all the matrices out, we obtain that
\begin{equation}
\gamma_*=\begin{pmatrix}\id & \displaystyle\sum_{\substack{i\in\{1,\dots, n\}\\
 \lambda_i\cap \as\neq \emptyset}} (\Omega_{p_{i}}+\Omega_{p_{i+1}})\\
 0
 & \id
\end{pmatrix}
\label{eq:almost-final-form-2-basepoint-pi-1}
\end{equation}

The map
\[
\displaystyle\sum_{\substack{i\in\{1,\dots, n\}\\
 \lambda_i\cap \as\neq \emptyset}} (\Omega_{p_{i}}+\Omega_{p_{i+1}})
 \]
 is exactly equal to the map which counts index 1 holomorphic disks, with a factor equal to their total change across the $\as$ curves. This is exactly the homology action $A_\gamma$. Consequently Equation~\eqref{eq:almost-final-form-2-basepoint-pi-1} coincides with Equation~\eqref{eq:matrix-formula-pi-1}, and the proof is complete.
\end{proof}

\subsection{Basepoint swapping diffeomorphism}
We now compute an additional diffeomorphism map. Suppose $w_1,w_2\in \ws$ are two distinct basepoints,  and $\lambda$ is a path connecting $w_1$ and $w_2$. Let
\[
\Sw_\lambda\colon (Y,\ws)\to (Y,\ws)
\]
denote the diffeomorphism corresponding to swapping $w_1$ and $w_2$, along $\lambda$. The diffeomorphism $\Sw_\lambda$ is well defined up to isotopy, since $Y$ is 3-dimensional.

The diffeomorphism $\Sw_{\lambda}$ is the map induced by the graph cobordism $([0,1]\times Y,\Gamma)$ where $\Gamma$ is the union of the arcs $[0,1]\times \{w\}$ for $w\in \ws\setminus \{w_1,w_2\}$, as well as two arcs, one connecting $\{w_1\}\times \{0\}$ to $\{w_2\}\times \{1\}$ and the other connecting $\{w_2\}\times \{0\}$ to $\{w_1\}\times \{1\}$, both of which project to $\lambda$, up to isotopy.

Note that $\Sw_\lambda$ only induces an $\cR_{\bmP}$-equivariant map on $\CF^-(Y,\ve{w}^\sigma,\frs)$ for colorings $\sigma$  satisfying $\sigma(w_1)=\sigma(w_2)$.

\begin{prop}\label{prop:basepointswappingmap}Let $\Sw_\lambda\colon (Y,\ve{w})\to (Y,\ve{w})$ be the basepoint swapping diffeomorphism. The induced map $(\Sw_{\lambda})_*$ satisfies
\begin{equation}
(\Sw_{\lambda})_*\simeq \Phi_{w_1} A_\lambda+A_\lambda \Phi_{w_2}\simeq A_{\lambda}\Phi_{w_1}+\Phi_{w_2}A_{\lambda}.
\label{eq:swap-diffeo-formula}
\end{equation}
\end{prop}
\begin{proof}The second chain homotopy in Equation~\eqref{eq:swap-diffeo-formula} can be obtained from Lemma~\ref{lem:claim:phi3}, so we focus on the first.

 Let $w'\not\in \ve{w}$ be a new basepoint, given the same color as $w_1$ and $w_2$. Pick paths $\lambda_1$ and $\lambda_2$ from $w_1$ to $w'$ and from $w'$ to $w_2$ (respectively) such that the concatenation $\lambda_1*\lambda_2$ is isotopic to $\lambda$. Break $\Sw_\lambda$ into the composition of the following three diffeomorphisms:
 \begin{enumerate}
 \item An isotopy of $w_1$ to $w'$, along $\lambda_1$.
 \item An isotopy of $w_2$ to $w_1$, along $\lambda$.
 \item An isotopy of $w'$ to $w_2$, along $\lambda_2$.
 \end{enumerate}
 
 Using Theorem~\ref{thm:BM=graphact}, we obtain
\begin{equation}
(\Sw_{\lambda})_*\simeq (S^-_{w'} A_{\lambda_2} S_{w_2}^+)(S_{w_2}^- A_\lambda S_{w_1}^+)(S_{w_1}^- A_{\lambda_1} S_{w'}^+).\label{eq:basepoint-swapping-map-1}
\end{equation}

 Using Lemmas~\ref{lem:claim:phi0}--\ref{lem:claim:phi5}, we manipulate Equation~\eqref{eq:basepoint-swapping-map-1} as follows:
\begin{equation}
\begin{split}(\Sw_{\lambda})_*&\simeq S_{w'}^- A_{\lambda_2}\Phi_{w_2} A_\lambda \Phi_{w_1} A_{\lambda_1} S_{w'}^+\\
&\simeq \Phi_{w_2} S_{w'}^- A_{\lambda_2} A_{\lambda} \Phi_{w_1} A_{\lambda_1} S^+_{w'}+S_{w'}^-A_{\lambda} \Phi_{w_1} A_{\lambda_1} S_{w'}^+\\
&\simeq \Phi_{w_2} S_{w'}^- A_{\lambda_2} A_{\lambda} A_{\lambda_1} S_{w'}^+ \Phi_{w_1}+\Phi_{w_2}S_{w'}^{-} A_{\lambda_2} A_{\lambda} S_{w'}^+ + S_{w'}^-A_{\lambda} \Phi_{w_1} A_{\lambda_1} S_{w'}^+\\
&\simeq \Phi_{w_2} S_{w'}^- A_{\lambda_2} A_{\lambda} A_{\lambda_1} S_{w'}^+ \Phi_{w_1}+\Phi_{w_2}S_{w'}^{-} A_{\lambda_2} A_{\lambda} S_{w'}^++\Phi_{w_1} S^{-}_{w'} A_{\lambda} A_{\lambda_1} S_{w'}^++S_{w'}^-A_{\lambda_1} S_{w'}^+.
\end{split}
\label{eq:Swlambda=first-manipulation}
\end{equation}

We focus on the first term of the last line of Equation~\eqref{eq:Swlambda=first-manipulation}. By Lemma~\ref{lem:splicingrelhom}, $A_\lambda=A_{\lambda_2}+A_{\lambda_1}$ when all are defined. Also $A_{\lambda_i}^2\simeq U$ (where $U$ denotes any of $U_{w_1}$, $U_{w_2}$, $U_{w'}$, which are identified by the coloring) by Lemma~\ref{lem:Alambda-squares-to-zero-or-U}. Hence
\begin{equation}
\begin{split}
&A_{\lambda_2}A_{\lambda}A_{\lambda_1}\\
\simeq &A_{\lambda_2}(A_{\lambda_1}+A_{\lambda_2})A_{\lambda_1}\\
\simeq &A_{\lambda_2} A_{\lambda_1}^2+A_{\lambda_2}^2A_{\lambda_1}\\
\simeq &U(A_{\lambda_1}+A_{\lambda_2})\\
\simeq &UA_\lambda.
\end{split}
\label{eq:A-l2-l-l1}
\end{equation}
Using Equation~\eqref{eq:A-l2-l-l1}, as well as Lemmas~\ref{lem:free-stab-H1-act-commute} and~\ref{lem:disjoint-strand-induces-zero}, we see
\begin{equation}
\begin{split}
&\Phi_{w_2} S_{w'}^- A_{\lambda_2} A_{\lambda} A_{\lambda_1} S_{w'}^+ \Phi_{w_1}\\
\simeq &U S_{w'}^- A_\lambda S_{w'}^+ \Phi_{w_1}\\
\simeq  &UA_\lambda S_{w'}^-  S_{w'}^+ \Phi_{w_1}\\
\simeq &0.
\end{split}
\label{eq:Sw-l-first-term-0}
\end{equation}

We now consider the second term of the last line of Equation~\eqref{eq:Swlambda=first-manipulation}. Lemma~\ref{lem:free-stab-H1-act-commute} and \ref{lem:trivial-strand-rel} imply
\begin{equation}
\begin{split}
&\Phi_{w_2}S_{w'}^{-} A_{\lambda_2} A_{\lambda} S_{w'}^+
\\
\simeq &\Phi_{w_2} S_{w'}^-A_{\lambda_2} S_{w'}^+ A_\lambda\\
\simeq &\Phi_{w_2}A_{\lambda}.
\end{split}
\label{eq:Sw-l-2nd-term}
\end{equation}

Similarly, the third term of the last line of Equation~\eqref{eq:Swlambda=first-manipulation} becomes 
\begin{equation}
\begin{split}
&\Phi_{w_1} S^{-}_{w'} A_{\lambda} A_{\lambda_1} S_{w'}^+
\\ \simeq &\Phi_{w_1}A_{\lambda}S_{w'}^-A_{\lambda_1} S_{w'}^+\\
\simeq &\Phi_{w_1}A_\lambda.
\end{split}
\label{eq:Sw-l-3rd-term}
\end{equation}

The last term of the last line of Equation~\eqref{eq:Swlambda=first-manipulation} satisfies
\begin{equation}
S_{w'}^+A_{\lambda_1} S_{w'}^+\simeq \id,
\label{eq:Sw-l-4th-term}
\end{equation}
by Lemma~\ref{lem:trivial-strand-rel}.

 Combining Equations~~\eqref{eq:Swlambda=first-manipulation}, \eqref{eq:Sw-l-first-term-0}, ~\eqref{eq:Sw-l-2nd-term}, ~\eqref{eq:Sw-l-3rd-term} and ~\eqref{eq:Sw-l-4th-term}, we obtain
\begin{equation}
(\Sw_{\lambda})_*\simeq \Phi_{w_2}A_{\lambda}+\Phi_{w_1}A_\lambda+\id.\label{eq:basepoint-swapping-almost}
\end{equation}
Equation~\eqref{eq:basepoint-swapping-almost} is chain homotopic to the expression in the statement since $\Phi_{w_1}A_{\lambda}+A_{\lambda}\Phi_{w_1}\simeq \id$ by Lemma~\ref{lem:claim:phi3}.
\end{proof}

\bibliographystyle{custom}
\bibliography{biblio}

\end{document}